\newlength{\depthofsumsign}
\newcommand{\nsum}[1][1.0]{
    \mathop{%
        \raisebox
            {-#1\depthofsumsign+1\depthofsumsign}
            {\scalebox
                {#1}
                {$\displaystyle\sum$}%
            }
    }
}
\newcommand{\nprod}[1][1.0]{
    \mathop{%
        \raisebox
            {-#1\depthofsumsign+1\depthofsumsign}
            {\scalebox
                {#1}
                {$\displaystyle\prod$}%
            }
    }
}
\let\I\@undefined
\DeclareMathOperator{\lcm}{lcm}
\DeclareMathOperator{\Gal}{Gal}
\DeclareMathOperator{\sn}{sn}
\DeclareMathOperator{\cn}{cn}
\DeclareMathOperator{\dn}{dn}
\DeclareMathOperator{\JZ}{Z}
\DeclareMathOperator{\D}{d}
\DeclareMathOperator{\I}{Im}
\DeclareMathOperator{\R}{Re}
\def\Xint#1{\mathchoice
   {\XXint\displaystyle\textstyle{#1}}%
   {\XXint\textstyle\scriptstyle{#1}}%
   {\XXint\scriptstyle\scriptscriptstyle{#1}}%
   {\XXint\scriptscriptstyle\scriptscriptstyle{#1}}%
   \!\int}
\def\XXint#1#2#3{{\setbox0=\hbox{$#1{#2#3}{\int}$}
     \vcenter{\hbox{$#2#3$}}\kern-.5\wd0}}
\def\RotSymbol#1#2#3{\rotatebox[origin=c]{#1}{$#2#3$}}
\def\rotcirclearrowleft{\mathpalette{\RotSymbol{-14}}\circlearrowleft}
\def\counterint{\Xint\rotcirclearrowleft}
\def\eor{\hfill$ \square$}
\theoremstyle{plain}
\newtheorem{theorem}{Theorem}[subsection]
\newtheorem{proposition}[theorem]{Proposition}
\newtheorem{lemma}[theorem]{Lemma}
\theoremstyle{definition}
\theoremstyle{remark}
\newtheorem{remark}{Remark}[theorem]
\numberwithin{equation}{subsection}
\begin{document}

\selectlanguage{english}
\pagenumbering{roman}
\title[Kontsevich--Zagier Integrals for  Automorphic Green's Functions. I]{Kontsevich--Zagier Integrals for  Automorphic Green's Functions.\\ I
}
\author{Yajun Zhou}
\address{Program in Applied and Computational Mathematics, Princeton University, Princeton, NJ 08544, USA}
\email{yajunz@math.princeton.edu}



\maketitle
\begin{abstract}


     In the framework of Kontsevich--Zagier periods, we derive  integral representations for weight-$k$ automorphic Green's functions  invariant under modular transformations in  $\varGamma_0(N)$ ($N\in\mathbb Z_{\geq1} $), provided that there are no cusp forms on the respective Hecke congruence groups  with  an even integer weight $k\geq4$. These Kontsevich--Zagier integral representations for automorphic Green's functions give explicit formulae for certain Eichler--Shimura maps connecting   Eichler cohomology  to  Maa{\ss } cusp forms. We construct integral representations for weight-4 Gross--Zagier renormalized Green's functions (automorphic self-energy) from limit scenarios of the respective Kontsevich--Zagier integrals. We reduce the weight-4 automorphic self-energy  on $ X_0(4)(\mathbb C)=\varGamma_0(4)\backslash\mathfrak H^*$  to an explicit form, which  supports  an algebraicity conjecture of Gross and Zagier. \\\\\textit{Keywords}: Kontsevich--Zagier periods, automorphic Green's functions, Ramanujan's alternative
base theory, Jacobi elliptic functions, Eichler--Shimura maps, Gross--Zagier renormalization\\\\\textit{Subject Classification (AMS 2010)}:          11F03, 11F37,          11F67, 11G07, 11M06,          11Y70, 33B15, 33C05, 33C20,         33C75,          33C80, 33E05, 33E20, 33E30  \end{abstract}

\tableofcontents
\clearpage

\pagenumbering{arabic}
\section{Introduction}\subsection{Background and Motivations\label{subsec:background}}
Let $ \varGamma$ be a congruence subgroup of the modular group $ SL(2,\mathbb Z):=\left\{ \left.\left(\begin{smallmatrix}a&b\\c&d\end{smallmatrix}\right)\right| a,b,c,d\in\mathbb Z;ad-bc=1\right\}$   with projective counterpart $ \overline{\varGamma}=\varGamma/Z(\varGamma)$,  where $ Z(\varGamma):=\{\hat\zeta\in\varGamma|\hat \zeta\hat\gamma=\hat\gamma\hat\zeta,\forall\hat\gamma\in\varGamma\}$ is the center of the group $ \varGamma$. For an even number $k>2$,  the automorphic Green's function of weight $k$ on the projective   congruence subgroup $ \overline{\varGamma}$ is explicitly given as
\begin{align}G^{\mathfrak H/\overline {\varGamma}}_{k/2}(z_1,z_2):=-\frac{2}{[\varGamma:\overline{\varGamma}]}\sum_{\hat  \gamma\in\varGamma}Q_{\frac{k}{2}-1}
\left( 1+\frac{\vert z_{1} -\hat  \gamma z_2\vert ^{2}}{2\I z_1\I(\hat\gamma z_2)} \right)=-2\sum_{\hat  \gamma\in\overline{\varGamma}}Q_{\frac{k}{2}-1}
\left( 1+\frac{\vert z_{1} -\hat  \gamma z_2\vert ^{2}}{2\I z_1\I(\hat\gamma z_2)} \right),\label{eq:auto_Green_defn_Q_nu}\end{align}where $ \hat \gamma z:=\frac{az+b}{cz+d}$ for any transformation $\hat  \gamma={\left(\begin{smallmatrix}a&b\\c& d\end{smallmatrix}\right)}$, and  $Q_\nu $ is the Legendre function of the second kind  defined by  the Laplace integral\begin{align}\label{eq:Q_nu_Laplace_int}Q_{\nu}(t):=\int_0^\infty\frac{\D u}{(t+\sqrt{t^2-1}\cosh u)^{\nu+1}},\quad t>1,\nu>-1.\end{align}The function $ G^{\mathfrak H/\overline {\varGamma}}_{k/2}(z_1,z_2)$ is bi-$ \varGamma$-invariant: $ G^{\mathfrak H/\overline {\varGamma}}_{k/2}(z_1,z_2)=G^{\mathfrak H/\overline {\varGamma}}_{k/2}(\hat \gamma z_1,\hat \gamma' z_2),\forall\hat \gamma,\hat\gamma'\in\varGamma$ (thus referred to as ``automorphic''), and serves as the propagator of a Maa{\ss} wave equation on the Riemann surface $ (\mathfrak H\cup\mathbb Q\cup\{i\infty\})\backslash\varGamma$ (hence the name  ``Green's function'').  Here in Eq.~\ref{eq:auto_Green_defn_Q_nu}, we have adopted the normalization in~\cite[][p.~207]{GrossZagier1985},~\cite[][pp.~238--239]{GrossZagierI} and~\cite[][p.~544]{GrossZagierII}; the same function has also been defined in~\cite[][\S6.6]{Hejhal1983} and~\cite[][\S5]{IwaniecGSM53},  up to different normalizing constants. In the automorphic Green's function, the arguments  $ z_1$ and $ z_2$ both reside in the upper half-plane $ \mathfrak H=\{z\in\mathbb C|\I z>0\}$. When the automorphic Green's function on $ \overline{\varGamma}$ assumes a finite value,  its arguments are not equivalent per any transformation in the respective symmetry group: $ z_1\notin \varGamma z_2$. In view of the $ \varGamma$-invariance,  the automorphic Green's function $G^{\mathfrak H/\overline {\varGamma}}_{k/2}(z_1,z_2)$ is effectively defined on the ``off-diagonal'' points in the Cartesian product of two orbit spaces $(\varGamma\backslash\mathfrak H)\times(\varGamma\backslash\mathfrak H) $. The notation  $G^{\mathfrak H/\overline {\varGamma}}_{k/2}$  with  superscript $\mathfrak H/\overline {\varGamma} $ (as opposed to $\overline \varGamma\backslash\mathfrak H $) is a matter of historical convention.

The automorphic Green's function $ G^{\mathfrak H/\overline {\varGamma}}_{k/2}(z_1,z_2),z_1\notin \varGamma z_2$ has intriguing behavior when its arguments are CM points,  \textit{i.e.}~when $ \R z_1$, $\R z_2$, $(\I z_1)^2$, $(\I z_2)^2$ are all rational numbers.
 In the absence of holomorphic cusp forms of even weights $k\geq 4$ on Hecke congruence groups $ \varGamma=\varGamma_0(N):=\left\{ \left.\left(\begin{smallmatrix}a&b\\c&d\end{smallmatrix}\right)\right| a,b,c,d\in\mathbb Z;ad-bc=1;c\equiv0\pmod N\right\}$ of levels $N\in\mathbb Z_{\geq1}$  (\textit{i.e.}~$\dim\mathcal S_k(\varGamma)=\dim\mathcal S_k(\varGamma_0(N))=0$), it has been  postulated that the values of the automorphic Green's function at CM points are expressible in terms of the logarithms of certain algebraic numbers   (see~\cite[][p.~317]{GrossZagierI} and~\cite[][p.~556]{GrossZagierII}):\begin{empheq}[box=\fbox]{align*}{}&\exp \left[(\I z\I z')^{(k-2)/2}G^{\mathfrak H/\overline {\varGamma}}_{k/2}(z,z')\right]\in\overline{\mathbb Q}\\ \text{if } az^2+bz+c={}&a'z'^2+b'z'+c'=0\text{ for some integers } a,b,c,a',b',c'.\end{empheq} This is known as (the ``cusp-form-free version'' of) the Gross--Kohnen--Zagier algebraicity conjecture. It has also been speculated that the minimal polynomial for the (conjecturally) algebraic number $ \exp \left[(\I z\I z')^{(k-2)/2}G^{\mathfrak H/\overline {\varGamma}}_{k/2}(z,z')\right]$  always has  solvable Galois group. For situations where there are cusp forms of even weights $k\geq4$ (\textit{i.e.}~$\dim\mathcal S_k(\varGamma)=\dim\mathcal S_k(\varGamma_0(N))>0$), a more technical algebraicity conjecture has been formulated for certain linear combinations of automorphic Green's functions that are modified by Hecke operators~\cite{GrossZagierI,GrossZagierII}.
In \cite[][p.~317]{GrossZagierI}, Gross and Zagier originally formulated an algebraicity conjecture  for $ \dim\mathcal S_k(\varGamma_0(N))\geq0$ with the additional constraint that the CM points  $ z$ and $z'$ share the same discriminant, that is, $ b^2-4ac=b'^2-4a'c'$ for the coefficients of their respective minimal polynomials. Such a restricted version of the Gross--Kohnen--Zagier algebraicity conjecture~\cite[][p.~556]{GrossZagierII} is also termed as the Gross--Zagier algebraicity conjecture.

The  Gross--Kohnen--Zagier algebraicity conjecture has triggered a series of critical developments in arithmetic algebraic geometry and automorphic function theory during the past few decades, among which the Chow group method
and the Borcherds lift approach have produced  proofs of the conjecture in various particular scenarios. In the cases where the  discriminants for the two CM arguments coincide~\cite[][p.~317, Conjecture (4.4)]{GrossZagierI}, S.-W. Zhang has interpreted the automorphic Green's function on  $\varGamma_0(N)$ in terms of  CM-Chow-cycles and has demonstrated the truth of the Gross--Zagier algebraicity conjecture for non-degenerate height pairings on CM-cycles~\cite{SWZhang1997}. In his PhD thesis~\cite{MellitThesis},
A. Mellit has proposed an extension of Zhang's method~\cite{SWZhang1997}, leading to a confirmation of the statement $\exp( G_2^{\mathfrak H/PSL(2,\mathbb Z)}(z,i)\I z)\in\overline {\mathbb Q}$ for all the CM points $z$. Building upon the work of R.~E.~Borcherds \cite{Borcherds,Borcherds1999},  J.~H.~Bruiner and T.~H.~Yang have furnished explicit formulae for certain special CM values of automorphic Green's functions associated with Hilbert modular groups~\cite{BruinierYang2006}. M.~S.~Via\-zov\-ska has adopted the Borcherds lift~\cite{Borcherds} to verify the  algebraicity conjecture on the full modular group $ PSL(2,\mathbb Z)$ for a pair of points $ z_1,z_2\in\mathbb Q(i\sqrt{|D|})$ belonging to the same imaginary quadratic field~\cite{Viazovska2011,Viazovska2012}, along with explicit factorization formulae \cite{Viazovska2012} for the algebraic number in question, up to valuation at ramified primes in the ring of integers for $\mathbb Q(i\sqrt{|D|}) $.
These theoretical developments accommodate to generic combinations of weights $k$ and  groups $ \varGamma$ (allowing the presence of cusp forms $ \dim\mathcal S_k(\varGamma)\geq0$), but the algebraicity results for automorphic Green's functions are established  only for restrictive types of CM points.

In this series of works, we shall access the values of automorphic Green's functions at arbitrary CM points via Kontsevich--Zagier periods \cite{KontsevichZagier} and elliptic function theory. In Part I, we first construct integral representations for automorphic Green's functions fulfilling the cusp-form-free condition $ \dim\mathcal S_k(\varGamma_0(N))=0$, in the spirit of Kontsevich and Zagier; we then  explicitly compute the Kontsevich--Zagier integrals for automorphic Green's functions in several reducible scenarios.
\subsection{Notations  and Statement of Results}Before stating the main results of Part I, we fix the notations and terminologies for certain arithmetic functions and give a brief overview of their analytic and algebraic properties.

For $ z\in\mathfrak H$, we write  \begin{align}\left\{\begin{array}{r@{\;:=\;}l}E_4(z)&\dfrac{45}{\pi^{4}}\nsum\limits_{\substack{m,n\in \mathbb Z\\m^2+n^2\neq0}}\dfrac{1}{(m+nz)^{4}}=1+240\nsum\limits_{n=1}^\infty\dfrac{n^3e^{2\pi inz}}{1-e^{2\pi inz}}\\E_6(z)&\dfrac{945}{2\pi^{6}}\nsum\limits_{\substack{m,n\in \mathbb Z\\m^2+n^2\neq0}}\dfrac{1}{(m+nz)^{6}}=1-504\nsum\limits_{n=1}^\infty\dfrac{n^5e^{2\pi inz}}{1-e^{2\pi inz}}\end{array}\right.\quad \label{eq:E4_E6_defn}\end{align}for the Eisenstein series, and \begin{align}\Delta(z):=\frac{[E_4(z)]^{3}-[E_6(z)]^2}{1728}=e^{2\pi iz}\left[\prod_{n=1}^\infty(1-e^{2\pi inz})\right]^{24}\label{eq:Delta_defn}\end{align}for the modular discriminant of Weierstra{\ss}.
(Some authors may define $ E_4$, $ E_6$ and $ \Delta$ with other normalizing constants, making each of them differ from our conventions in Eqs.~\ref{eq:E4_E6_defn} and \ref{eq:Delta_defn} by a rational number times a certain integer power of $ \pi$.) Writing  $ \eta(z)=e^{\pi iz/12}\prod_{n=1}^\infty(1-e^{2\pi inz}),z\in\mathfrak H$ for  the Dedekind eta function, we may recast Eq.~\ref{eq:Delta_defn} into $ \Delta(z)=[\eta(z)]^{24}$.

 In this work, we set $ \Delta'(z):=\partial \Delta(z)/\partial z$, and use the \textit{ad hoc} notation for the ``weight-2 Eisenstein series'' as follows:\begin{align}E_2^{\vphantom{*}}(z)=\frac{1}{2\pi i}\left[ \frac{\Delta'(z)}{\Delta(z)} -\frac{6i}{\I z}\right],\quad E_2^*(z)=\frac{1}{2\pi i} \frac{\Delta'(z)}{\Delta(z)}=1-24\sum_{n=1}^\infty\frac{ne^{2\pi inz}}{1-e^{2\pi inz}}.\label{eq:E2_defn}\end{align}Accordingly, one has the transformation laws under $ SL(2,\mathbb Z) $~\cite[][p.~68]{Schoeneberg}:  \begin{align} E_2^*(\hat\gamma z)=(cz+d)^2E^*_2(z)-\frac{6ic}{\pi}(cz+d),\quad \forall\hat\gamma=\begin{pmatrix}a & b \\
c & d \\
\end{pmatrix}\in SL(2,\mathbb Z)\end{align}  and under $ \varGamma_0(N)$~\cite[][p.~484]{RN3}: \begin{align} NE^*_2(N(\hat\gamma z))-E_2^*(\hat\gamma z)=(cz+d)^2[NE^*_2(N z)- E_2^*( z)],\quad \forall\hat\gamma=\begin{pmatrix}a & b \\
c & d \\
\end{pmatrix}\in \varGamma_0(N),N\in\mathbb Z_{>0}.\label{eq:E2_star_HeckeN_transf}\end{align}  We reserve the notation $ E_2$ (\textit{without} an asterisk) for a non-holomorphic function (see \cite[][Chap.~11]{RLN2}) so that  the functions $ E_2$, $ E_4$ and $ E_6$ follow similar transformation laws~\cite[][p.~67]{Schoeneberg}:\begin{align}
E_{2}(\hat \gamma z)=(cz+d)^2 E_2(z),\quad E_{4}(\hat \gamma z)=(cz+d)^4 E_4(z),\quad E_{6}(\hat \gamma z)=(cz+d)^6 E_6(z)\label{eq:E2E4E6_mod_transf}
\end{align}for $ z\in\mathfrak H$ and $\hat\gamma={\left(\begin{smallmatrix}a&b\\c& d\end{smallmatrix}\right)}\in SL(2,\mathbb Z) $. (Some authors may switch the nomenclature for $ E_2^{\vphantom{*}}$ and $ E_2^*$, placing priority on holomorphy.)
The following Eisenstein series are descendants of $ E_4$ and $ E_6$:\begin{align}
\left\{\begin{array}{r@{\;=\;}l}
E_{8}(z) & [E_4(z)]^2=1+480\nsum\limits_{n=1}^{\infty}\dfrac{n^{7}e^{2\pi inz}}{1-e^{2\pi inz}},\\[12pt]
E_{10}(z) & E_4(z)E_6(z)=1-264\nsum\limits_{n=1}^\infty\dfrac{n^{9}e^{2\pi inz}}{1-e^{2\pi inz}}, \\[12pt]
E_{14}(z) & [E_4(z)]^{2}E_6(z)=1-24\nsum\limits_{n=1}^\infty\dfrac{n^{13}e^{2\pi inz}}{1-e^{2\pi inz}}. \\
\end{array}\right.
\label{eq:E8E10E14_defn}\end{align}

We normalize  Klein's $j$-invariant as\begin{align} j(z):=\frac{1728[E_4(z)]^{3}}{[E_4(z)]^{3}-[E_6(z)]^2}=\frac{[E_4(z)]^{3}}{\Delta(z)},\end{align}so that  $ j(i)=1728$ and $ j(e^{\pi i/3})=0$. If $N$ is a positive integer and $z\in\mathfrak H$ is a CM point (such that $ [\mathbb Q(z):\mathbb Q]=2$), then each of the following expressions\begin{align}
j(z),\quad \frac{E_2(Nz)}{E_2(z)},\quad \frac{E_4(Nz)}{E_4(z)},\quad \frac{E_6(Nz)}{E_6(z)},\quad \frac{\Delta(Nz)}{\Delta(z)},\quad \frac{[E_2(z)]^{2}}{E_4(z)},\quad \frac{[E_2(z)]^{3}}{E_6 (z)}\label{eq:fns_alg_val_at_CM_pts}
\end{align} either is infinity or represents an algebraic number solvable in radical form~\cite[][p.~86, Proposition~27]{Zagier2008Mod123}. Furthermore, these algebraic numbers generate abelian extensions of the imaginary quadratic field $ \mathbb Q(z)$.
The rationale behind the aforementioned assertions on algebraicity and field extensions is sketched in Appendix~\ref{app:algebraicity}.

The main results of this article are summarized in the following two theorems.
In what follows,
the real and imaginary parts of a point $ z$ (sometimes marked with an additional  superscript or subscript) in the upper half-plane $ \mathfrak H$ are  abbreviated as $ x\equiv \R z$ and $ y\equiv \I z$ (sometimes marked with a corresponding superscript or subscript, as is applicable to $ z$).
\begin{theorem}[Integral Representations for $ G_{k/2}^{\mathfrak H/\overline{\varGamma}_0(N)}(z,z')$ where  $\dim\mathcal S_k(\varGamma_0(N))=0 $]\label{thm:KZ_int_repns}\begin{enumerate}[label=\emph{(\alph*)}, ref=(\alph*), widest=a] \item \label{itm:KZ_b} For any pair of non-equivalent points $ z$ and $z'$ (such that $ z\notin\varGamma_0(N)z'$) with $ N\in\{2,3,4\}$, the following integral identity holds\begin{align}
G_2^{\mathfrak H/\overline{\varGamma}_0(N)}(z,z')={}&\frac{4\pi^{2}}{y'}\R\int_{z'}^{i\infty} \alpha_N(\zeta)[1-\alpha_N(\zeta)][NE_{2}(N\zeta)-E_2(\zeta)]^{2}\varrho^{\mathfrak H/\overline\varGamma_0(N)}_2(\zeta,z)\frac{(\zeta-z')(\zeta-\overline{z'})\D \zeta}{i(N-1)^{2}}\notag\\&-\frac{4\pi^{2}}{y'}\R\int_{0}^{i\infty} \alpha_N(\zeta)[1-\alpha_N(\zeta)][NE_{2}(N\zeta)-E_2(\zeta)]^{2}\varrho^{\mathfrak H/\overline\varGamma_0(N)}_2(\zeta,z)\frac{\zeta^{2}\D \zeta}{i(N-1)^{2}},\label{eq:G2HeckeN_unified}
\end{align}where \begin{align}\alpha_N(z):=
\left\{1+\frac{1}{N^{6/(N-1)}}\left[ \frac{\eta(z)}{\eta(Nz)} \right]^{24/(N-1)}\right\}^{-1}
\end{align}is  $ \varGamma_0(N)$-invariant for $ N\in\{2,3,4\}$, and \begin{align}
\varrho^{\mathfrak H/\overline\varGamma_0(N)}_2(\zeta,z)={}&-(N-1)\frac{y}{2\pi }\frac{\partial}{\partial y}\left\{ \frac{1}{y} \frac{1}{\alpha_{N}(\zeta)-\alpha_{N}(z)}\frac{1}{NE_{2}(Nz)-E_{2}(z)}\right\}.
\end{align} \item \label{itm:KZ_c} Suppose that  $z\notin\varGamma_0(2)z'$, then we have the following integral formula:\begin{align}
G_3^{\mathfrak H/\overline{\varGamma}_0(2)}(z,z')={}&\frac{4\pi^{2}}{(y')^{2}}\R\int_{z'}^{i\infty} \alpha_2(\zeta)[1-\alpha_2(\zeta)][2E_{2}(2\zeta)-E_2(\zeta)]^{3}\varrho^{\mathfrak H/\overline\varGamma_0(2)}_3(\zeta,z)\frac{(\zeta-z')^{2}(\zeta-\overline{z'})^{2}\D \zeta}{i}\notag\\&-\frac{4\pi^{2}}{(y')^{2}}\R\int_{0}^{i\infty} \alpha_2(\zeta)[1-\alpha_2(\zeta)][2E_{2}(2\zeta)-E_2(\zeta)]^{3}\varrho^{\mathfrak H/\overline\varGamma_0(2)}_3(\zeta,z)\frac{\zeta^{4}\D \zeta}{i},\label{eq:G3Hecke2_KZ_int}
\end{align}where\begin{align}
\varrho^{\mathfrak H/\overline\varGamma_0(2)}_3(\zeta,z)=\frac{ y^2}{8\pi}\left( \frac{\partial}{\partial y}\frac{1}{y} \right)^2\left\{ \frac{1}{\alpha_{2}(\zeta)-\alpha_2(z)}\frac{1}{[2E_{2}(2z)-E_2(z)]^2} \right\}.\label{eq:rho_3_Hecke2}
\end{align}\item \label{itm:KZ_a}  The following integral representations hold for weights $k\in\{4,6,8,10,14\}$  and  $ j(z)\neq j(z')$:\begin{align}
G_{k/2}^{\mathfrak H/PSL(2,\mathbb Z)}(z,z')={}&\frac{1728\pi^{2}}{ (y')^{(k-2)/2}}\R\int_{z'}^{i\infty} \frac{E_{k}(\zeta)}{j(\zeta)}\rho^{\mathfrak H/PSL(2,\mathbb Z)}_{k/2}(\zeta,z)\frac{(\zeta-z')^{(k-2)/2}(\zeta-\overline{z'})^{(k-2)/2}\D \zeta}{i}\notag\\&-\frac{1728\pi^{2}}{ (y')^{(k-2)/2}}\R\int_{0}^{i\infty} \frac{E_{k}(\zeta)}{j(\zeta)}\rho^{\mathfrak H/PSL(2,\mathbb Z)}_{k/2}(\zeta,z)\frac{\zeta^{k-2}\D \zeta}{i}\label{eqn:GkPSL2Z}
\end{align} where\begin{align}\label{eqn:rho_k_PSL2Z}
\rho^{\mathfrak H/PSL(2,\mathbb Z)}_{k/2}(\zeta,z)={}&\frac{(-1)^{k/2}}{2^{(k-4)/2}}\frac{1}{\left( \frac{k-2}{2} \right)!}\frac{ y^{(k-2)/2}}{864 \pi}\left( \frac{\partial}{\partial y}\frac{1}{y} \right)^{(k-2)/2}\left[  \frac{j(\zeta)j(z)}{j(\zeta)-j(z)}\frac{{E_{6}(z)}}{E_{4}(z)E_{k}(z)}\right].
\end{align}Here in Eq.~\ref{eqn:rho_k_PSL2Z},  the Eisenstein series follow the definitions in Eqs.~\ref{eq:E4_E6_defn} and \ref{eq:E8E10E14_defn};  it is also understood that $ j(z)E_6(z)/[E_4(z)E_k(z)]=[E_4(z)]^2 E_6(z)/[\Delta(z)E_k(z)]$ extends to  a smooth function in $ z\in\mathfrak H$, so that   $ \rho^{\mathfrak H/PSL(2,\mathbb Z)}_{k/2}(\zeta,z)$ is well-behaved whenever $ j(\zeta)\neq j(z)$.\end{enumerate}\end{theorem}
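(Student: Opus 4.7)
The plan is to characterize the automorphic Green's function $G^{\mathfrak H/\overline\varGamma}_{k/2}(z,z')$ uniquely by its defining properties and then verify that the right-hand sides of \ref{eq:G2HeckeN_unified}, \ref{eq:G3Hecke2_KZ_int}, \ref{eqn:GkPSL2Z} exhibit the same properties. Concretely, $G^{\mathfrak H/\overline\varGamma}_{k/2}$ is pinned down (modulo holomorphic cusp forms, which vanish under our hypothesis $\dim\mathcal S_k(\varGamma)=0$) by: bi-$\varGamma$-invariance in $(z,z')$; the Maass eigenvalue equation $\bigl[\Delta^{\text{wt-}k}_{\text{hyp}}-\tfrac{k}{2}(1-\tfrac{k}{2})\bigr]G^{\mathfrak H/\overline\varGamma}_{k/2}=0$ away from $z\in\varGamma z'$; the $Q_{k/2-1}$-singularity along the diagonal sheet $z\in\varGamma z'$; and bounded/moderate growth at the cusps of $X_0(N)(\mathbb C)$. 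The three parts of the theorem are then proved by reduction to the same template, with the Hauptmodul ($\alpha_N$ in parts \ref{itm:KZ_b}--\ref{itm:KZ_c}; $1728/j$ in part \ref{itm:KZ_c}) playing a structural role.

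The first technical step is to recognize the integrand as a weight-$k$ holomorphic modular form on the appropriate group times a ``reproducing kernel.'' Under the cusp-form-free constraint, every holomorphic modular form of weight $k$ on $\varGamma_0(N)$ is a polynomial in Eisenstein-type generators; by Ramanujan's alternative base theory, for $N\in\{2,3,4\}$ the combination $\alpha_N(1-\alpha_N)[NE_2(N\zeta)-E_2(\zeta)]^{2}$ (resp.\ its cube, for part \ref{itm:KZ_c}) is precisely the natural weight-$4$ (resp.\ weight-$6$) form derived from $d\alpha_N/d\zeta$, while on $PSL(2,\mathbb Z)$ the factor $E_k(\zeta)/j(\zeta)$ together with $d(1728/j)/d\zeta$ generates the weight-$k$ form. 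The kernels $\bigl[\alpha_N(\zeta)-\alpha_N(z)\bigr]^{-1}$ and $\bigl[j(\zeta)-j(z)\bigr]^{-1}$ are the canonical reproducing kernels for meromorphic $\varGamma$-invariant functions on $X_0(N)(\mathbb C)\cong\mathbb P^1(\mathbb C)$, since the Hauptmodul trivializes the modular curve.

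Next, I would lift these weight-$0$ kernels to the correct modular weight in the variable $z$ by applying the iterated lowering operator $\bigl(\tfrac{\partial}{\partial y}\tfrac{1}{y}\bigr)^{(k-2)/2}$, which is (up to a constant) the Maass lowering operator raised to a power that shifts the holomorphic weight by $-(k-2)$ and produces a weight-$k$ non-holomorphic counterpart. A straightforward but careful computation shows that this operator converts $[NE_2(Nz)-E_2(z)]^{-1}$-type holomorphic building blocks into the non-holomorphic weight-$k$ expressions appearing in $\varrho^{\mathfrak H/\overline\varGamma_0(N)}_{k/2}$ and $\rho^{\mathfrak H/PSL(2,\mathbb Z)}_{k/2}$, with the factor $(\zeta-z')^{(k-2)/2}(\zeta-\overline{z'})^{(k-2)/2}$ coming from an Eichler--Shimura style integration kernel that is $\varGamma$-equivariant. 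The Maass PDE in the variable $z'$ is then verified by applying $\Delta^{\text{wt-}k}_{\text{hyp}}$ under the integral sign and noting that this polynomial factor in $\zeta-z'$, $\zeta-\overline{z'}$ is annihilated away from the ``collision'' locus $\zeta\in\varGamma z$.

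The singularity of the Green's function along the diagonal is produced by the residue of the Hauptmodul-difference kernel: as $z'\to z$ (modulo $\varGamma$), one deforms the contour past the pole $\zeta\in\varGamma z$ of $[\alpha_N(\zeta)-\alpha_N(z)]^{-1}$ (resp.\ $[j(\zeta)-j(z)]^{-1}$), and the resulting residue reproduces precisely the Laplace-integral form of $Q_{k/2-1}$ in \ref{eq:Q_nu_Laplace_int}. The two integrals --- one from $z'$ to $i\infty$ and one from $0$ to $i\infty$ --- should be interpreted as the image of a chain on $X_0(N)(\mathbb C)$ joining the image of $z'$ to a reference cusp, with the second piece fixing the additive constant so that the Green's function satisfies the correct cuspidal normalization. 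The main obstacle will be the precise contour-deformation argument: I expect the hardest part to be justifying (i) that the subtraction of the $(0\to i\infty)$ contour removes exactly the right holomorphic/anti-holomorphic ambiguity coming from the absence of cusp forms and the presence of Eisenstein-like boundary terms at the cusps $0,i\infty$ of $\varGamma_0(N)$, and (ii) that the iterated lowering operator $(\partial_y y^{-1})^{(k-2)/2}$ commutes with the real-part operator and the $\zeta$-integration uniformly in $z$, including across the diagonal stratum where term-by-term differentiation is merely formal. Once the contour bookkeeping at $0$ and $i\infty$ is controlled --- using the $q$-expansions $\alpha_N(\zeta)\sim e^{2\pi i\zeta}$ and $1/j(\zeta)\sim e^{2\pi i\zeta}$ at $i\infty$, together with the Atkin--Lehner involution to handle the cusp $0$ --- the identifications \ref{eq:G2HeckeN_unified}, \ref{eq:G3Hecke2_KZ_int}, \ref{eqn:GkPSL2Z} follow from the uniqueness characterization.
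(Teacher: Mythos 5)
Your overall strategy is the paper's: characterize $G_{k/2}^{\mathfrak H/\overline\varGamma}$ uniquely by invariance, the eigenvalue equation, the diagonal log-singularity, and decay at the cusps (the paper's Lemma~\ref{lm:spec_AGF} and criteria (AGF1$^*$)--(AGF3$^*$), with uniqueness coming from the spectral positivity of $k(k-2)/4$ rather than from ``modulo cusp forms''), and then verify these properties for the proposed integrals, with path independence secured by the reality of the residues at $\zeta=\hat\gamma z$. For the weight-$4$ cases this outline, fleshed out with the dominated-convergence and Fricke-involution bookkeeping you anticipate, does carry through. One stylistic overstatement: the diagonal singularity is not recovered as a full $Q_{k/2-1}$ Laplace integral via a residue; only the leading $2m\log|z-z'|$ term is needed, and it is read off from the double-pole term of $\varrho_2$ near the endpoint $\zeta=z'$.

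The genuine gap is in parts \ref{itm:KZ_c} and \ref{itm:KZ_a} for $k>4$. Invariance of the proposed integral under the inversion $z'\mapsto-1/z'$ (resp.\ the Fricke involution) forces the vanishing of all the intermediate moments
\begin{align*}
\R\int_0^{i\infty}F_k(\zeta,z)\,\frac{\zeta^n\,\D\zeta}{i}=0,\qquad n=1,\dots,k-3,
\end{align*}
where $F_k$ is the weight-$k$ integrand. For odd $n$ these follow from a reality-plus-tessellation argument (on the imaginary $\zeta$-axis with $z$ on $\partial\mathfrak D_N$ the integrand times $\zeta^n\D\zeta/i$ is purely imaginary), but for even $n\in[2,k-4]$ the same trick fails, and your proposal's ``contour bookkeeping at $0$ and $i\infty$ using $q$-expansions and Atkin--Lehner'' cannot produce these identities: they are not consequences of cusp asymptotics or of any symmetry of the integrand. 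The paper must compute these moments explicitly --- converting them to integrals of products of complete elliptic integrals, applying the imaginary-modulus and inverse-modulus transformations, and evaluating by residues in the $\lambda$-plane (Eqs.~\ref{eq:G3Hecke2_mid_int0}--\ref{eq:alpha2_weight6_int0'} for weight $6$ on $\varGamma_0(2)$, and the polynomials $p_{k,n}$ of Eqs.~\ref{eq:p_k_n_defn}--\ref{eq:p_k_n_list} for $PSL(2,\mathbb Z)$, weights $8$, $10$, $14$). Without supplying this elliptic-integral input, your argument establishes only the weight-$4$ part of the theorem.
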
\begin{theorem}[Evaluations of Certain Weight-4 Automorphic Self-Energies]\begin{enumerate}[label=\emph{(\alph*)}, ref=(\alph*), widest=a]\label{thm:GZ_rn} \item\label{itm:GZ_rn_a}  We have the following special values of weight-4 Gross--Zagier renormalized Green's functions (automorphic self-energies):{\allowdisplaybreaks\begin{align}
G_2^{\mathfrak H/PSL(2,\mathbb Z)}(i):={}&\lim_{z\to i}\left[ G_2^{\mathfrak H/PSL(2,\mathbb Z)}(z,i) -4\log(2\pi|z-i|)\right]-\frac{2\log|\Delta(i)|}{3}=-4(\log2+\log3);\\G_2^{\mathfrak H/\overline\varGamma_{0}(2)}\left( \frac{i}{\sqrt{2}} \right):={}&\lim_{z\to i/\sqrt{2}}\left[ G_2^{\mathfrak H/\overline\varGamma_{0}(2)}\left(z,\frac{i}{\sqrt{2}}\right) -2\log\left(2\pi\left\vert z-\frac{i}{\sqrt{2}}\right\vert\right)\right]-\frac{\log|\Delta(i/\sqrt{2})|}{3}=-3\log2;\\G_2^{\mathfrak H/\overline\varGamma_{0}(3)}\left( \frac{i}{\sqrt{3}} \right):={}&\lim_{z\to i/\sqrt{3}}\left[ G_2^{\mathfrak H/\overline\varGamma_{0}(3)}\left(z,\frac{i}{\sqrt{3}}\right) -2\log\left(2\pi\left\vert z-\frac{i}{\sqrt{3}}\right\vert\right)\right]-\frac{\log|\Delta(i/\sqrt{3})|}{3}=-2\log\frac{3}{\sqrt[3]{4}};\\G_2^{\mathfrak H/\overline\varGamma_{0}(4)}\left( \frac{i}{\sqrt{4}} \right):={}&\lim_{z\to i/\sqrt{4}}\left[ G_2^{\mathfrak H/\overline\varGamma_{0}(4)}\left(z,\frac{i}{\sqrt{4}}\right) -2\log\left(2\pi\left\vert z-\frac{i}{\sqrt{4}}\right\vert\right)\right]-\frac{\log|\Delta(i/\sqrt{4})|}{3}=-\log2;\end{align}\begin{align}G_2^{\mathfrak H/PSL(2,\mathbb Z)}\left(\frac{1+i\sqrt{3}}{2}\right):={}&\lim_{z\to\frac{1+ i\sqrt{3}}{2}}\left[ G_2^{\mathfrak H/PSL(2,\mathbb Z)}\left(z,\frac{1+i\sqrt{3}}{2}\right) -6\log\left(2\pi\left\vert z-\frac{1+i\sqrt{3}}{2}\right\vert\right)\right]-\log\left\vert \Delta\left( \frac{1+i\sqrt{3}}{2} \right)\right\vert\notag\\={}&-3(2\log2+\log3);\\G_2^{\mathfrak H/\overline\varGamma_{0}(2)}\left(\frac{i-1}{2}\right):={}&\lim_{z\to \frac{i-1}{2}}\left[ G_2^{\mathfrak H/\overline\varGamma_{0}(2)}\left(z,\frac{i-1}{2}\right) -4\log\left(2\pi\left\vert z-\frac{i-1}{2}\right\vert\right)\right]-\frac{2}{3}\log\left\vert \Delta\left( \frac{i-1}{2} \right)\right\vert\notag\\={}&-4\log2;\\G_2^{\mathfrak H/\overline\varGamma_{0}(3)}\left(\frac{3+i\sqrt{3}}{6}\right):={}&\lim_{z\to \frac{3+i\sqrt{3}}{6}}\left[ G_2^{\mathfrak H/\overline\varGamma_{0}(3)}\left(z,\frac{3+i\sqrt{3}}{6}\right) -6\log\left(2\pi\left\vert z-\frac{3+i\sqrt{3}}{6}\right\vert\right)\right]-\log\left\vert \Delta\left( \frac{3+i\sqrt{3}}{6} \right)\right\vert\notag\\={}&-3\log3.
\end{align}} \item\label{itm:GZ_rn_b} The weight-4, level-4 automorphic self-energy can be evaluated in closed form:\begin{align}
G_2^{\mathfrak H/\overline{\varGamma}_0(4)}(z):=\lim_{z'\to z}\left[G_2^{\mathfrak H/\overline{\varGamma}_0(4)}(z,z')-2\log|2\pi(z-z')|\right]-\frac{\log|\Delta(z)|}{3}=-\frac{1}{3}\log\left\vert\frac{\Delta(z)}{\Delta(2z)}\right\vert,\quad \forall z\in\mathfrak H.\label{eq:G2_Hecke4_KZ_rn_statement}
\end{align} In particular, when $z$ is a CM point, Eq.~\ref{eq:G2_Hecke4_KZ_rn_statement} represents the logarithm of an algebraic number, solvable in radical form (see~Eq.~\ref{eq:fns_alg_val_at_CM_pts}), and the Galois group $ \Gal(\mathbb Q(z,e^{-6G_2^{\mathfrak H/\overline{\varGamma}_0(4)}(z)})/\mathbb Q(z))$  is abelian.\end{enumerate}\end{theorem}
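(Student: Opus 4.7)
My plan is to establish part~\ref{itm:GZ_rn_b} first by combining the integral representation of Theorem~\ref{thm:KZ_int_repns}\ref{itm:KZ_b} at $N=4$ with a holomorphic-derivative matching argument, and then to obtain the seven CM values of part~\ref{itm:GZ_rn_a} either as a corollary of (b) (for the $\varGamma_{0}(4)$ case $z=i/\sqrt{4}$) or by specialising the KZ integrals of Theorem~\ref{thm:KZ_int_repns} at the remaining CM arguments, where Fricke or elliptic symmetries force a drastic collapse.

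\textbf{Plan for part~\ref{itm:GZ_rn_b}.} Starting from Theorem~\ref{thm:KZ_int_repns}\ref{itm:KZ_b} with $N=4$, I would first isolate the diagonal singularity as $z'\to z$. The only source of a logarithm is the simple pole of $\varrho_{2}^{\mathfrak H/\overline{\varGamma}_{0}(4)}(\zeta,z)$ at $\zeta=z$ carried by the factor $[\alpha_{4}(\zeta)-\alpha_{4}(z)]^{-1}$: expanding $\alpha_{4}(\zeta)-\alpha_{4}(z)=\alpha_{4}'(z)(\zeta-z)+O((\zeta-z)^{2})$ near the lower endpoint of $\int_{z'}^{i\infty}$ and parametrising the contour by arc length extracts exactly a $2\log|z-z'|$ divergence, which the Gross--Zagier counter-term $2\log|2\pi(z-z')|$ absorbs. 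Rather than evaluate the remaining finite part by brute force, I would verify the claimed identity by matching holomorphic $z$-derivatives:
\begin{align*}
\frac{\partial}{\partial z}\!\left[-\tfrac{1}{3}\log\bigl|\tfrac{\Delta(z)}{\Delta(2z)}\bigr|\right]=\tfrac{\pi i}{3}\bigl[2E_{2}^{*}(2z)-E_{2}^{*}(z)\bigr],
\end{align*}
a genuine weight-$2$ Eisenstein series on $\varGamma_{0}(2)\supset\varGamma_{0}(4)$. Differentiating the KZ integral under the sign, applying Ramanujan--Serre derivatives to $\alpha_{4}$, $4E_{2}^{*}(4\zeta)-E_{2}^{*}(\zeta)$ and $E_{4},E_{6}$, and then letting $z'\to z$ should reproduce the same expression; the cuspidal contributions at $\zeta=0,i\infty$ cancel by the transformation law~\ref{eq:E2_star_HeckeN_transf}. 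Since both sides of~\ref{eq:G2_Hecke4_KZ_rn_statement} are real-valued $\varGamma_{0}(4)$-invariants on the connected surface $X_{0}(4)(\mathbb C)$, matching holomorphic derivatives reduces the proof to fixing a single additive constant, which I would pin down at $z=i/2$ using the Weber value $\sqrt{2}\,\eta(2i)/\eta(i)=\mathfrak f_{2}(i)=2^{1/8}$ together with $\Delta(i/2)=(2i)^{12}\Delta(2i)$, yielding $|\Delta(i/2)/\Delta(i)|=8$. The algebraicity and abelian Galois claims then follow because $\Delta(z)/\Delta(2z)=[\eta(z)/\eta(2z)]^{24}$ appears in the list~\ref{eq:fns_alg_val_at_CM_pts}, whose entries generate abelian extensions of $\mathbb Q(z)$ by Appendix~\ref{app:algebraicity}.

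\textbf{Plan for part~\ref{itm:GZ_rn_a}.} The $\varGamma_{0}(4)$ value $G_{2}^{\mathfrak H/\overline{\varGamma}_{0}(4)}(i/\sqrt{4})=-\log 2$ is immediate from part~\ref{itm:GZ_rn_b} combined with $|\Delta(i/2)/\Delta(i)|=8$ just sketched. For the remaining six values I would specialise the corresponding KZ integral in Theorem~\ref{thm:KZ_int_repns} at the chosen CM argument $z=z_{\ast}$ and change variables to the appropriate Hauptmodul, namely $\alpha_{N}(\zeta)$ for the $\varGamma_{0}(N)$ cases and $j(\zeta)$ for the $PSL(2,\mathbb Z)$ cases. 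The two Fricke fixed points $z_{\ast}=i/\sqrt{2}$ and $z_{\ast}=i/\sqrt{3}$ are invariant under the Atkin--Lehner involution $W_{N}:\zeta\mapsto -1/(N\zeta)$, which pairs the two integrals $\int_{z_{\ast}}^{i\infty}$ and $\int_{0}^{i\infty}$ and collapses their real parts to elementary periods. At the four elliptic fixed points $z_{\ast}\in\{i,(1+i\sqrt{3})/2,(i-1)/2,(3+i\sqrt{3})/6\}$ the local stabiliser has order $\ell\in\{2,3\}$, which both explains the elevated counter-terms $2\ell\log(2\pi|z-z_{\ast}|)$ and $\tfrac{\ell}{3}\log|\Delta(z_{\ast})|$ in the listed definitions and forces the Hauptmodul factor in the integrand to acquire an $(\ell-1)$-fold zero at $\zeta=z_{\ast}$, enabling a residue-style reduction of the double KZ integral to a single Eisenstein-series evaluation. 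The final arithmetic step is to evaluate $j$, $E_{2},E_{4},E_{6}$ and $\Delta$-ratios at the seven CM points via Ramanujan's alternative base theory and the techniques of Appendix~\ref{app:algebraicity}; the surviving logarithms then reduce to $\mathbb Q$-linear combinations of $\log 2$ and $\log 3$ as claimed.

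\textbf{Main obstacle.} The principal technical difficulty lies in the controlled diagonal expansion of the KZ integral for part~\ref{itm:GZ_rn_b}: the derivative-matching argument requires commuting $\partial_{z}$ with the $z'\to z$ limit and verifying that no stray boundary contribution survives at $\zeta=0$ or $\zeta=i\infty$, which is delicate precisely because $E_{2}^{*}$ is only quasi-modular. The quasi-modular defect~\ref{eq:E2_star_HeckeN_transf}, together with the Fourier expansions of $4E_{2}^{*}(4z)-E_{2}^{*}(z)$ at each cusp of $\varGamma_{0}(4)$, will be indispensable for showing that these boundary terms cancel exactly; once this bookkeeping is done, neither the CM evaluations of part~\ref{itm:GZ_rn_a} nor the Galois statement in (b) pose further conceptual obstacles.
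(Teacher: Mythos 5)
Your overall architecture (renormalize the Kontsevich--Zagier integrals of Theorem~\ref{thm:KZ_int_repns}, exploit Fricke and elliptic symmetries, observe that the right-hand side of Eq.~\ref{eq:G2_Hecke4_KZ_rn_statement} is the harmonic function $-\tfrac13\log|\Delta(z)/\Delta(2z)|$ with $\partial_z$-derivative $\tfrac{\pi i}{3}[2E_2^*(2z)-E_2^*(z)]$) is sound and partly parallel to the paper, but the proposal has two genuine gaps in part~\ref{itm:GZ_rn_b}. First, the step you dismiss with ``should reproduce the same expression'' is where essentially all of the work lives. When one passes to the diagonal in the KZ integral, the finite part of the singular piece is \emph{not} a priori a logarithm of a modular quantity: it contains $z$-dependent contributions proportional to $\mathbf K(\sqrt{\lambda(2z)})\,\mathbf K(\sqrt{1-\lambda(2z)})$ and $[\mathbf K(\sqrt{\lambda(2z)})]^2$ coming from the regularized limits of $\int_0^{\lambda}[\mathbf K(\sqrt t)]^2(t-\lambda)^{-1}\,\D t$ and its two companions, and the identity only emerges because these transcendental terms cancel among the three pieces of the integrand. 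Establishing those cancellations is precisely the content of the paper's Lemma~\ref{lm:GZ_lim} (Eqs.~\ref{eq:GZ_lim1}--\ref{eq:GZ_lim3}) and the computation in Eq.~\ref{eq:G2_Hecke4_close_pts}, which in turn rest on the Ramanujan-rotation and Jacobi-$\Theta$ machinery of \S\ref{subsec:Ramanujan_Jacobi}; nothing in your outline supplies a mechanism for them, and differentiating under the integral sign does not make them disappear. Second, your normalization is circular as written: the Weber value only evaluates the \emph{right-hand} side at $z=i/2$, so fixing the additive constant there requires an independent evaluation of $G_2^{\mathfrak H/\overline{\varGamma}_0(4)}(i/\sqrt4)$ --- yet in part~\ref{itm:GZ_rn_a} you propose to deduce exactly that value from part~\ref{itm:GZ_rn_b}. (The paper avoids this by computing $G_2^{\mathfrak H/\overline{\varGamma}_0(4)}(i/\sqrt4)=-\log 2$ independently in Remark~\ref{rmk:spec_G2_rn} and by proving~\ref{itm:GZ_rn_b} for generic $z$ directly, with no normalization step.)

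For part~\ref{itm:GZ_rn_a}, saying that the Fricke and elliptic symmetries ``collapse their real parts to elementary periods'' understates what remains. After the reduction one is left with definite integrals of Legendre functions --- $\R\int_{-1}^1[P_\nu(\xi)]^2\xi^{-2}\D\xi$, the subtracted integral of Eq.~\ref{eq:Pnu_sqr_diff_int_log_over_sqr}, $\int_1^\infty[P_\nu(\xi)Q_\nu(\xi)-((2\nu+1)\xi)^{-1}]\D\xi$ and $\int_1^\infty[Q_\nu(\xi)]^2\D\xi$ for $\nu\in\{-1/2,-1/3,-1/4,-1/6\}$ --- whose closed-form polygamma evaluations (\S\ref{subsec:hypergeo_Pnumu}, especially Eqs.~\ref{eq:Pnu_sqr_diff_int_log_over_sqr} and \ref{eq:PQ-1_over_x_int}) are the actual content of the proof. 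Moreover, at the elliptic points the integral representations produce Catalan-constant and $L(2,\chi_{-3})$ contributions (via Eq.~\ref{eq:L2chi_4_Pnu} and its analogues) that must cancel exactly before a pure $\mathbb Q$-linear combination of $\log 2$ and $\log 3$ survives; your final step of ``evaluating $j$, $E_2$, $E_4$, $E_6$ and $\Delta$-ratios at the seven CM points'' accounts only for the leading logarithmic term and gives no mechanism for this $L$-value cancellation.
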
\subsection{Plan of the Proof}
We devote the entire  \S\ref{sec:int_repn_AGF} to the proof of Theorem~\ref{thm:KZ_int_repns}, namely, the analytic derivation of    integral representations for all the automorphic Green's functions satisfying the cusp-form-free condition $ \dim\mathcal S_k(\varGamma_0(N))=0$. (See Appendix~\ref{app:dimSk_vanish} for the reason why the claimed formulae in Theorem~\ref{thm:KZ_int_repns} have exhausted all the cusp-form-free scenarios.) Our analysis in   \S\ref{sec:int_repn_AGF}  is inspired by and amplified from the following laconic expression for  $ G_2^{\mathfrak H/PSL(2,\mathbb Z)}(i,i\sqrt{2})$ mentioned in the thought-provoking survey  of Kontsevich and   Zagier~\cite[][\S3.4]{KontsevichZagier}:\begin{align}-\frac{G_2^{\mathfrak H/PSL(2,\mathbb Z)}(i,i\sqrt{2})}{\sqrt{2}}\equiv\frac{20G}{\pi}+1728\pi^2\int_{\sqrt{2}}^\infty\frac{E_4(iy)\Delta(iy)}{[E_6(iy)]^{2}}(y^2-2)\D y.\label{eq:KZ_laconic}\end{align}Here, $ G=\sum_{\ell=0}^{\infty}{(-1)^{\ell}}{(2\ell+1)^{-2}}$ is Catalan's constant.
In    \S\ref{sec:int_repn_AGF}, the guiding principle for our confirmation of the integral representations stated in  Theorem~\ref{thm:KZ_int_repns}
will be a uniqueness theorem (Lemma~\ref{lm:spec_AGF}) that characterizes automorphic Green's functions.

  The analytic techniques we employ in     \S\ref{sec:int_repn_AGF} may appear familiar to experts in the Eichler--Shimura theory~\cite{Eichler1957,Shimura1959,Manin1973}. As seen from the statement of Theorem~\ref{thm:KZ_int_repns}, all these integral representations  for automorphic Green's functions (\textit{d'apr\`es} Kontsevich--Zagier) can be written in terms of $ \int_{z'}^{i\infty}F_{k}(\zeta,z)(\zeta-z')^{(k-2)/2}(\zeta-\overline{z'})^{(k-2)/2}\D \zeta$, where $ F_{k}(\zeta,z)$ is a weight-$k$ meromorphic cusp form in the variable $\zeta$. If one replaces $   F_{k}(\zeta,z)$ by a holomorphic cusp form $ F_k(\zeta)$, then the Eichler integrals $ \int_{z}^{i\infty}F_{k}(\zeta)(z-\zeta)^{k-2}\D \zeta$ are known to be connected to the harmonic Maa{\ss}  forms~\cite{BringmannGuerzhoyKentOno2013,MuehlenbruchRaji}, via the Eichler--Shimura map. Our analysis in      \S\ref{sec:int_repn_AGF} is tailored for meromorphic versions of Eichler integrals, and effectively presents some explicit formulae for Eichler--Shimura maps that send meromorphic versions of Eichler cohomology (integral representations of automorphic Green's functions in Theorem~\ref{thm:KZ_int_repns}) to  Maa{\ss}  cusp forms (definitions of automorphic Green's functions as infinite series in Eq.~\ref{eq:auto_Green_defn_Q_nu}). As we have to cope with the meromorphic versions of Eichler integrals $ \int_{z'}^{i\infty}F_{k}(\zeta,z)(\zeta-z')^{(k-2)/2}(\zeta-\overline{z'})^{(k-2)/2}\D \zeta$ in our proof of Theorem~\ref{thm:KZ_int_repns}, some non-trivial residue analysis is required to establish the path independence and modular invariance of the proposed integral representations of automorphic Green's functions. In particular, the proof of parts \ref{itm:KZ_c} and \ref{itm:KZ_a} in Theorem~\ref{thm:KZ_int_repns} for weights $ k>4$ (see \S\ref{subsec:high_weight_KZ})  draws heavily on the knowledge of elliptic integrals and Ramanujan's elliptic function theory to alternative bases (see \S\ref{subsec:add_form_Legendre_Ramanujan}).

The major purpose of Theorem~\ref{thm:KZ_int_repns}  is to construct some identities involving \textit{integrals} and \textit{series}, which are relevant to the Gross--Kohnen--Zagier algebraicity conjecture in the cusp-form-free scenarios (boxed equation in \S\ref{subsec:background}). This provides some concrete examples for an abstract theoretical framework of Eichler--Shimura isomorphisms between the Eichler cohomology groups (``\textit{integrals}'') and the space of Maa{\ss} cusp forms (``\textit{series}''), which  has been  laid out by Mellit in~\cite[][Chap.~1]{MellitThesis}.  In the current and subsequent instalments for this series of works, we shall treat the integral representations in  Theorem~\ref{thm:KZ_int_repns} with explicit computations.
This computational approach is motivated by the following observation:  the Gross--Kohnen--Zagier algebraicity conjecture for automorphic Green's functions at CM points amounts to the analytic verifications of certain proposed identities involving only Kontsevich--Zagier periods (absolutely convergent integrals of algebraic functions over algebraic domains)~\cite[][\S1.1]{KontsevichZagier}. A ``motivic Hodge
conjecture'' formulated by Kontsevich and Zagier~\cite[][\S1.2 and \S4.1]{KontsevichZagier} suggests the feasibility to prove any ``identity involving periods'' using only finitely many steps of permissible algebraic manipulations on integrals.

In \S\ref{sec:analysis_KZ_int_Part1} of this article, we prove Theorem~\ref{thm:GZ_rn}, thereby accomplishing a modest goal of evaluating Kontsevich--Zagier integrals for certain weight-4 automorphic self-energies. (For a formulation of the Gross--Zagier renormalization procedure that leads to automorphic self-energies, see~\cite[][Chap.~II, \S5]{GrossZagierI} or \S\ref{subsec:int_repn_G2_GZ_rn}  of this article.)
Upon appropriate  variable substitutions, the integrands we encounter in \S\ref{sec:analysis_KZ_int_Part1}  are certain products of complete elliptic integrals and elementary functions. Such integrals over elliptic integrals are sometimes referred to as \textit{multiple elliptic integrals}, on which various developments  have been reported recently \cite{Bailey2008,BBGW,BNSW2011,Zucker2011,Wan2012,Zhou2013Pnu,Zhou2013Spheres,RogersWanZucker2013preprint,Zhou2013Int3Pnu}.

 To achieve our goal stated in  Theorem~\ref{thm:GZ_rn}, we develop a variety of analytic tools (with combinatorial, modular or geometric flavor) to simplify  some Kontsevich--Zagier integrals into closed-form expressions.  In \S\ref{subsec:hypergeo_Pnumu}, we present a glimpse into the kaleidoscope of hypergeometric transformations
on  Kontsevich--Zagier integrals, which lead to the proof of Theorem~\ref{thm:GZ_rn}\ref{itm:GZ_rn_a} in  \S\ref{subsec:int_repn_G2_GZ_rn}.
We prepare some multiple integral identities in \S\ref{subsec:Ramanujan_Jacobi}, which are inspired by spherical geometry and modular transformations of elliptic functions. These efforts
culminate in the closed-form evaluation of the weight-4, level-4 automorphic self-energy  in \S\ref{subsec:G2_Hecke4_GZ_rn}, which proves Theorem~\ref{thm:GZ_rn}\ref{itm:GZ_rn_b}.
 We note that the analytic expression in Eq.~\ref{eq:G2_Hecke4_KZ_rn_statement}   lends  evidence to a special case of the extended Gross--Zagier algebraicity conjecture \cite[][p.~317, Conjecture (4.4)]{GrossZagierI}.
\subsection*{Acknowledgements} This work was partly supported by the Applied Mathematics Program within the Department
of Energy (DOE) Office of Advanced Scientific Computing Research (ASCR) as part
of the Collaboratory on Mathematics for Mesoscopic Modeling of Materials (CM4). The manuscript was completed during the author's visit to Prof.\ Weinan E at Princeton University in 2013. The author thanks
Dr.\ Stefan T. Patrikis (Harvard/MIT) for his inspiring lectures on ``$L$-Functions and Arithmetic'' at Harvard University in 2007, which formed an excellent introduction to the wonderful world of modular forms and elliptic curves. The author is indebted to  Prof.\ Xiao\-wei Zhuang (Harvard) for her support of research related to elliptic functions in early 2008. The author is grateful to Prof.\ Bruce  C. Berndt (UIUC), Prof.\ Michael P. Brenner (Harvard),  Prof.\ Benedict H. Gross (Harvard), Prof.\ M. Lawrence Glasser (Clarkson) and   Prof.\ Shou-Wu Zhang (Princeton) for their encouragements. The author acknowledges  Prof.\ Bruce C. Berndt (UIUC), Dr.\ Qingtao Chen (ICTP, Trieste), Prof.\ M. Lawrence Glasser (Clarkson University),  Prof.\ Fei Han (National University of Singapore), and an anonymous referee for their valuable help on improving the readability of this manuscript.
\section{\label{sec:int_repn_AGF}Integral Representations of Certain Automorphic Green's Functions }
\setcounter{equation}{0}\setcounter{theorem}{0}

In this section, we shall derive  integral representations of  automorphic Green's functions on certain Hecke congruence groups, for  even weights $ k\geq4$. The main focus of \S\S\ref{subsec:KZ_integrals} and \ref{subsec:add_form_Legendre_Ramanujan} will be the weight-4 cases (covering Theorem~\ref{thm:KZ_int_repns}\ref{itm:KZ_b} and the $ k=4$ part of Theorem~\ref{thm:KZ_int_repns}\ref{itm:KZ_a}), and the higher weight scenarios will be treated in \S\ref{subsec:high_weight_KZ} (proving Theorem~\ref{thm:KZ_int_repns}\ref{itm:KZ_c} and  the $ k>4$ part of Theorem~\ref{thm:KZ_int_repns}\ref{itm:KZ_a}).

As we may recall, the Hecke congruence group of level $ N$ is defined as~\cite[][Eq.~1.6.5]{Shimura1994} \begin{align}\varGamma_0(N):=\left\{ \left.\begin{pmatrix}a & b \\
Nc & d \\
\end{pmatrix}\right|a,b,c,d\in\mathbb Z;ad-Nbc=1 \right\},\end{align}and its projective counterpart is given by $\overline {\varGamma}_0(N)=\varGamma_0(N)/\{\hat I,-\hat I\} $. The Hecke congruence group $ \varGamma_0(N)$ ($ N>1$) is normalized by the Fricke involution   $ \widehat w_N:z\mapsto -1/(Nz)$, as the matrix multiplication\begin{align} \begin{pmatrix}0&-1/\sqrt{N}\\\sqrt{N}&0 \\
\end{pmatrix}\begin{pmatrix}a&b\\cN&d \\
\end{pmatrix} =\begin{pmatrix}d&-c\\-b N&a \\
\end{pmatrix}\begin{pmatrix}0&-1/\sqrt{N}\\\sqrt{N}&0 \\
\end{pmatrix}\label{eq:Fricke_inv_norm_Hecke}\end{align} entails the conjugation relation  $\widehat w_N\varGamma_0(N)=\varGamma_0(N)\widehat w_N$. Now that  the  resolvent kernel of the upper half-plane
has  $ SL(2,\mathbb R)$-homogeneity at any pair of distinct points $ z_1,z_2\in\mathfrak H$: \begin{align}
G^{\mathfrak H}_{s}(z_1,z_2):=-2Q_{s-1}
\left( 1+\frac{\vert z_{1} - z_2\vert ^{2}}{2y_1y_2} \right)=G^{\mathfrak H}_{s}(\hat\gamma z_1,\hat \gamma z_2),\quad \forall\hat \gamma\in SL(2,\mathbb R),\label{eq:SL2_homogeneity}
\end{align}one can deduce from Eqs.~\ref{eq:Fricke_inv_norm_Hecke}--\ref{eq:SL2_homogeneity} the  following identity~\cite[][Chap.~II, \S2, Eq.~2.25]{GrossZagierI}:\begin{align}
G_s^{\mathfrak H/\overline{\varGamma}_0(N)}(z,z')=G_s^{\mathfrak H/\overline{\varGamma}_0(N)}\left( -\frac{1}{Nz} ,-\frac{1}{Nz'}\right),\quad \R s>1,
\label{eq:Fricke_inv_HeckeN_GZ}\end{align}whenever both sides of Eq.~\ref{eq:Fricke_inv_HeckeN_GZ} are finite.

We will be mainly interested in weight-$k$ automorphic Green's functions on $ \varGamma_0(N)$ when there are no cusp forms on the same Hecke congruence group with the same weight: $ \dim\mathcal S_k(\varGamma_0(N))=0$.   For even weights $ k\geq4$, the solutions to the equation  $ \dim\mathcal S_k(\varGamma_0(1))=\dim\mathcal S_k(SL(2,\mathbb Z))=0$ are exhausted by  $ k=4,6,8,10,14$~\cite[][Theorem~3.5.2]{DiamondShurman}. The following dimension formulae are  standard: $\dim\mathcal S_4(\varGamma_0(2))=\dim\mathcal S_4(\varGamma_0(3))=\dim\mathcal S_4(\varGamma_0(4))=\dim\mathcal S_6(\varGamma_0(2))=0$~\cite[][Table~A]{Miyake1989J1}. In fact, there are no other solutions to $\dim\mathcal S_k(\varGamma_0(N))=0 $ for even weights $ k\geq4$ and integers $ N>1$, as one can prove using lower bounds on the dimensions of the spaces of cusp forms~\cite{Martin2005} (see ~Appendix~\ref{app:dimSk_vanish}). \begin{figure}[ht]\begin{center}\includegraphics[width=15cm]{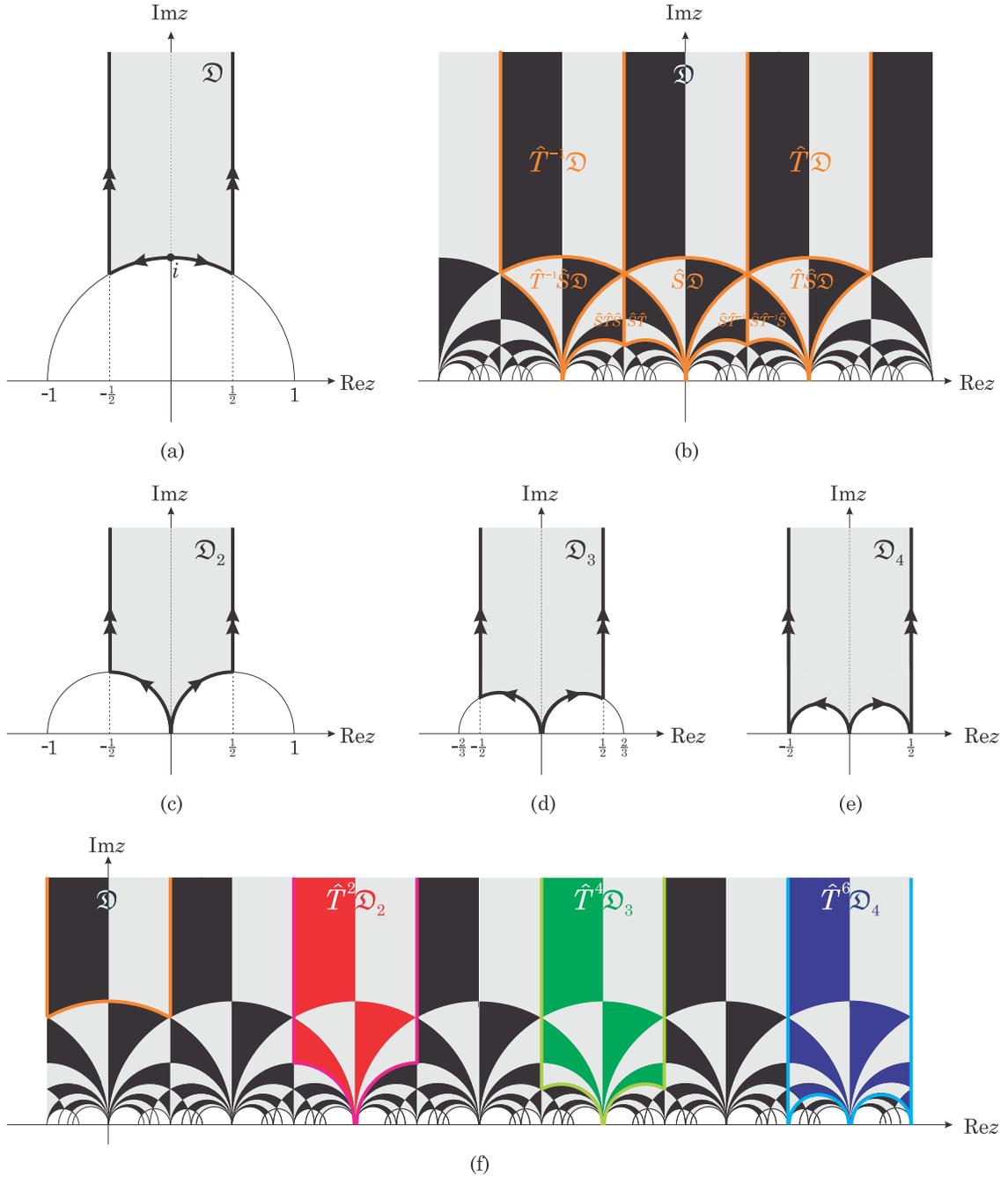}\end{center}
\caption[Fundamental domain of $ PSL(2,\mathbb Z)$ and its tessellations]{(Adapted from Fig.~61 in \cite{FrickeKlein}.)
(\textit{a})  Fundamental domain $ \protect\mathfrak D$ of the full modular group $ PSL(2,\protect\mathbb Z)$. The modular elliptic curve $ X_0(1)(\mathbb C)=SL(2,\mathbb Z)\backslash\mathfrak H^*$ compactifies $ Y_0(1)(\mathbb C)=SL(2,\mathbb Z)\backslash \mathfrak H$, a quotient space that identifies the corresponding sides of the boundary   $\partial \protect\mathfrak D$  along  the  arrows.     (\textit{b}) Tessellation of the upper half-plane $ \protect\mathfrak H$ by   successive translations [generator $\hat T=\protect\left(\protect\begin{smallmatrix}1&1\\0&1\protect\end{smallmatrix}\protect\right)$]   and inversions  [generator $\hat S=\protect\left(\protect\begin{smallmatrix}0&-1\\1&0\protect\end{smallmatrix}\protect\right)$] of the fundamental domain $\protect \mathfrak D$. Each tile  is  then subdivided and colored in black or grey   according as the pre-image satisfies  $ \R z<0$ or $ \R z>0$ in the fundamental domain $\protect \mathfrak D$. (\textit{c})--(\textit{e}) Fundamental domains $ \mathfrak D_N$ of the projective Hecke congruence subgroups $ \overline{\varGamma}_0(N)$ where $ N\in\{2,3,4\}$. (\textit{f}) Overlay of some horizontal translates of the fundamental domains in panels \textit{c}--\textit{e} on top of panel \textit{b}. It is graphically evident that $ [\varGamma_0(1):\varGamma_0(2)]=3$,  $ [\varGamma_0(1):\varGamma_0(3)]=4$ and  $ [\varGamma_0(1):\varGamma_0(4)]=6$.
}
\label{fig:fundomain}
\end{figure}
Instead of converting the infinite sum in  Eq.~\ref{eq:auto_Green_defn_Q_nu} directly into integrals,   we shall resort to an alternative characterization of automorphic Green's functions  (see~\cite{GrossZagier1985,SWZhang1997,MellitThesis,Viazovska2011}; see also~\cite[][\S6.6]{Hejhal1983} and~\cite[][\S5.1]{IwaniecGSM53} for the same function with different normalizing constants) recapitulated in the next lemma. \begin{lemma}[Spectral Characterization of Automorphic Green's Functions]\label{lm:spec_AGF}The function $ G^{\mathfrak H/\overline {\varGamma}}_{k/2}(z_1,z_2)$ (where $\R k>2 $) can be prescribed as the unique solution to the following set of requirements:\begin{enumerate}[leftmargin=*,  label=\emph{(AGF\arabic*)},ref=(AGF\arabic*),
widest=a, align=left]\item \label{itm:AGF1}\emph{(\textbf{Symmetry})}
For any modular transformation $\hat  \gamma\in\varGamma$ and two points $ z_1,z_2\in\mathfrak H$ in the upper half-plane, one has $ G^{\mathfrak H/\overline {\varGamma}}_{k/2}(z_1,z_2)=G^{\mathfrak H/\overline {\varGamma}}_{k/2}(z_2,z_1)=G^{\mathfrak H/\overline {\varGamma}}_{k/2}(\hat  \gamma z_1,z_2)=G^{\mathfrak H/\overline {\varGamma}}_{k/2}( z_1,\hat  \gamma z_2)\in\mathbb C\cup\{-\infty\}$. Moreover, if $z_1\notin \varGamma z_2 :=\{\hat\gamma z_2|\hat \gamma\in\varGamma\}$, then $ G^{\mathfrak H/\overline {\varGamma}}_{k/2}(z_1,z_2)\in\mathbb C$.\item\label{itm:AGF2} \emph{(\textbf{Differential Equation})} The function $ G_{k/2}^{\mathfrak H/\overline {\varGamma}}(z_1,z_2)$ is smooth on the set $\mathfrak H\times\mathfrak H\smallsetminus\{(z,\hat\gamma z)|z\in\mathfrak H,\hat \gamma\in\varGamma\} $,\footnote{In the current work, we use the notation $A\smallsetminus B $ with a ``lowercase'' backslash ($ \smallsetminus$) to indicate set minus operation, while writing the ``uppercase'' backslash~($\backslash $) for orbit spaces such as $ SL(2,\mathbb Z)\backslash\mathfrak H$.}  and we have the following differential equation\[y_{m}^2\left( \frac{\partial^2}{\partial x_{m}^2}+ \frac{\partial^2}{\partial y_{m}^2}\right)G_{k/2}^{\mathfrak H/\overline {\varGamma}}(z_1,z_2)=\frac{k}{2}\left(\frac{k}{2}-1\right)G_{k/2}^{\mathfrak H/\overline {\varGamma}}(z_1,z_2),\quad m=1,2\]when  $z_1\notin \varGamma z_2 $.\item\label{itm:AGF3}\emph{(\textbf{Asymptotic Behavior})} In the limit of  $ z_1\to z_2$, one has  $ G_{k/2}^{\mathfrak H/\overline {\varGamma}}(z_1,z_2)=2m^{\overline {\varGamma}}_{z_2}\log|z_1-z_2|+O(1)$, where $ m^{\overline {\varGamma}}_{z_2}:=\#\{\hat \gamma\in\overline{\varGamma}|\hat \gamma z_2=z_2\}$ counts the number of inequivalent transformations that leave the point $ z_2$ intact. As $ z_1$ tends to a cusp of the orbit space  $  \varGamma\backslash\mathfrak H^*\equiv\varGamma\backslash(\mathfrak H\cup\mathbb Q\cup\{i\infty\})$, the function $ G_{k/2}^{\mathfrak H/\overline {\varGamma}}(z_1,z_2)$ tends to zero.
\end{enumerate}\end{lemma}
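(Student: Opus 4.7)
The plan is to organize the argument in two stages: first, verify that the Poincaré sum in Eq.~\ref{eq:auto_Green_defn_Q_nu} satisfies axioms \ref{itm:AGF1}--\ref{itm:AGF3}; second, prove uniqueness by a maximum principle on the quotient surface. For the existence step, axiom \ref{itm:AGF1} is immediate from the definition: reindexing $\hat\gamma\mapsto\hat\gamma'\hat\gamma$ inside the sum absorbs the substitution $z_2\mapsto\hat\gamma' z_2$, the $SL(2,\mathbb R)$-homogeneity in Eq.~\ref{eq:SL2_homogeneity} of $|z_1-\hat\gamma z_2|^{2}/(2\I z_1\,\I(\hat\gamma z_2))$ handles the $z_1$-substitution, and exchange symmetry follows from the symmetry of this same argument in $(z_1,z_2)$. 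Absolute convergence of the sum for $\R k>2$, together with the asymptotic $Q_\nu(t)=O(t^{-\R\nu-1})$ as $t\to\infty$ and the standard hyperbolic-volume estimate on the number of lattice points within a hyperbolic ball of radius $R$, yields both convergence and the cuspidal decay claimed in the last sentence of \ref{itm:AGF3}.

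For axiom \ref{itm:AGF2}, I would verify the eigenvalue equation term-by-term. Writing each summand as $Q_\nu(\cosh d)$ with $d=d(z_1,\hat\gamma z_2)$ the hyperbolic distance, the Legendre ODE $(t^{2}-1)Q_\nu''+2tQ_\nu'=\nu(\nu+1)Q_\nu$ transforms under $t=\cosh d$ into the radially symmetric hyperbolic-Laplace equation $y^{2}(\partial_x^{2}+\partial_y^{2})f=\nu(\nu+1)f$, with $\nu(\nu+1)=(k/2)(k/2-1)$; termwise summation preserves the eigenvalue. For axiom \ref{itm:AGF3}, the local expansion $Q_\nu(1+\varepsilon)=-\tfrac{1}{2}\log\varepsilon+O(1)$ applied to the $m_{z_2}^{\overline\varGamma}$ stabilizer terms gives the contribution $-2\cdot m_{z_2}^{\overline\varGamma}\cdot(-\tfrac{1}{2})\log(|z_1-z_2|^{2}/(2y_1y_2))+O(1)=2m_{z_2}^{\overline\varGamma}\log|z_1-z_2|+O(1)$, while all non-stabilizer summands remain bounded near the diagonal.

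For the uniqueness step, I set $D=F_1-F_2$ for two candidate solutions. By \ref{itm:AGF1}, $D$ is bi-$\varGamma$-invariant; by \ref{itm:AGF3} the logarithmic singularities of $F_1$ and $F_2$ on the diagonal cancel, so $D$ extends smoothly across each $\{(z,\hat\gamma z)\}$; by \ref{itm:AGF2} the extended $D$ satisfies $y_m^{2}(\partial_{x_m}^{2}+\partial_{y_m}^{2})D=\lambda D$ with $\lambda=(k/2)(k/2-1)>0$ in each variable separately. Fixing $z_2$ and viewing $D(\,\cdot\,,z_2)$ on the compactified quotient $\overline\varGamma\backslash\mathfrak H^{*}$, it is continuous, vanishes at every cusp, and hence attains its supremum; at an interior positive maximum the hyperbolic Laplacian is non-positive, so $\lambda D\leq 0$ forces $D\leq 0$ there. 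Repeating with $-D$ gives $D\equiv 0$ on $\mathfrak H\times\mathfrak H$.

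The principal technical obstacle is the passage to the compactified quotient: one must verify that $D$ descends to a continuous function on $(\overline\varGamma\backslash\mathfrak H^{*})\times(\overline\varGamma\backslash\mathfrak H^{*})$, that smoothness persists across elliptic fixed points of $\overline\varGamma$ (handled by a local stabilizer-averaging argument), and that the Poincaré series converges uniformly enough near cusps to justify interchanging summation with the $y\to\infty$ limit. A secondary caveat is that for genuinely complex $k$ with $\R k>2$ the eigenvalue $\lambda$ is complex and the pointwise maximum principle fails directly; one must either split $D$ into real and imaginary parts and argue on each (since $\R\lambda>0$ when $k$ is real, and analyticity in $k$ extends the result), or appeal to the $L^{2}$ spectral theory of the hyperbolic Laplacian on $\overline\varGamma\backslash\mathfrak H$ to rule out bounded eigenfunctions at $\lambda=(k/2)(k/2-1)$ in the positive-weight regime.
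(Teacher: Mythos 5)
Your existence step matches the paper's (which simply asserts that Eq.~\ref{eq:auto_Green_defn_Q_nu} "can be verified directly" to satisfy \ref{itm:AGF1}--\ref{itm:AGF3}; your term-by-term verification via the Legendre ODE and the expansion $Q_{\nu}(1+\varepsilon)=-\tfrac{1}{2}\log\varepsilon+O(1)$ is exactly the content being elided there). Where you genuinely diverge is the uniqueness step. The paper forms $h=F-G_{k/2}^{\mathfrak H/\overline{\varGamma}}(\cdot,z')$, observes that \ref{itm:AGF1}--\ref{itm:AGF3} force $h$ into the space $\mathcal D(\varGamma\backslash\mathfrak H)$ of smooth, bounded, $\varGamma$-invariant functions with bounded Laplacian, and kills $h$ in one stroke because the hyperbolic Laplacian is negative semi-definite on that space while the putative eigenvalue $k(k-2)/4$ lies in $\mathbb C\smallsetminus(-\infty,0]$. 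You instead run a maximum principle on the compactified quotient. Your route is more elementary and self-contained (no appeal to the spectral theory of \cite[\S4.1]{IwaniecGSM53}), and the "technical obstacles" you list at the compactification are actually avoidable by arguing upstairs on $\mathfrak H$: the difference $D$ is smooth and $\varGamma$-invariant on all of $\mathfrak H$, so combined with decay at the cusps its supremum is either $0$ or attained at an interior point of $\mathfrak H$, and no care at elliptic points is needed. The price is that the sign argument $\lambda D\le 0$ at an interior maximum only closes for \emph{real} eigenvalue $\lambda>0$, i.e.\ real $k>2$, whereas the lemma is stated for $\R k>2$; for complex $k$ the real and imaginary parts of $D$ satisfy a \emph{coupled} system $\Delta u=(\R\lambda)u-(\I\lambda)v$, $\Delta v=(\R\lambda)v+(\I\lambda)u$, so splitting does not decouple, and $\R\lambda=[\sigma(\sigma-2)-\tau^{2}]/4$ can even be negative for $k=\sigma+i\tau$ with large $\tau$. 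Your suggestion that "analyticity in $k$ extends the result" does not repair this: a rogue candidate $F$ at a single complex $k_{0}$ need not sit in any analytic family, so there is nothing to continue. Your second fallback --- ruling out bounded eigenfunctions via the spectral theory of the Laplacian --- is precisely the paper's argument and is what one should use to cover the full range $\R k>2$; with that substitution your proposal is complete.
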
\begin{proof}One can verify directly that Eq.~\ref{eq:auto_Green_defn_Q_nu} satisfies all the requirements in \ref{itm:AGF1}--\ref{itm:AGF3}. Now pick the point $ z_2=z'$, and suppose that we have a function $ F(z),z\in\mathfrak H$ satisfying the following properties:\begin{enumerate}[leftmargin=*,  label={(AGF\arabic*$^*$)},
widest=a, align=left]\item \label{itm:AGF1'}{(\textbf{Symmetry})}
For any  $\hat  \gamma\in\varGamma$,  one has $ F(z)=F(\hat  \gamma z)\in\mathbb C\cup\{-\infty\}$. Moreover, if $z\notin \varGamma z' $, then $ F(z)\in\mathbb C$.\item\label{itm:AGF2'} {(\textbf{Differential Equation})} The smooth function $ F(z),z\notin \varGamma z'$ satisifies the following differential equation\[\Delta_z^{\mathfrak H}F(z)\equiv y^2\left( \frac{\partial^2}{\partial x^2}+ \frac{\partial^2}{\partial y^2}\right)F(z)=\frac{k}{2}\left(\frac{k}{2}-1\right)F(z).\]\item\label{itm:AGF3'}{(\textbf{Asymptotic Behavior})} As  $ z\to z'$, one has  $ F(z)=2m^{\overline {\varGamma}}_{z'}\log|z-z'|+O(1)$.  As $ z$ tends to a cusp of   $  \varGamma\backslash\mathfrak H^*$, the function $ F(z)$ tends to zero.
\end{enumerate}Then, the expression $ h(z)=F(z)-G_{k/2}^{\mathfrak H/\overline {\varGamma}}(z,z')$ extends to  a smooth and bounded automorphic  function on $ \varGamma\backslash\mathfrak H$, and so does $ \Delta_z^{\mathfrak H}h(z)$.  In other words, we have $ h\in\mathcal D(\varGamma\backslash\mathfrak H)=\{f:\mathfrak H\longrightarrow \mathbb C|f\in C^\infty(\mathfrak H);f(\hat\gamma z)=f(z),\forall\hat\gamma\in\varGamma;\sup_{z\in\mathfrak H}|f(z)|<+\infty;\sup_{z\in\mathfrak H}|\Delta_z^{\mathfrak H}f(z)|<+\infty\}$~\cite[][\S4.1]{IwaniecGSM53}. Had $h(z)$ been not identically zero, it would serve as an eigenfunction of the (negative semi-definite) operator $ \Delta_z^{\mathfrak H}:\mathcal D(\varGamma\backslash\mathfrak H)\longrightarrow\mathcal D(\varGamma\backslash\mathfrak H)$, with eigenvalue $ k(k-2)/4\in\mathbb C\smallsetminus(-\infty,0]$ --- a contradiction.   \end{proof}
\begin{remark}
Here, we recall that the totality of cusps for   $  \varGamma\backslash\mathfrak H^*$ is represented by the orbit space $ \varGamma\backslash\mathbb P^1(\mathbb Q)=\varGamma\backslash(\mathbb Q\cup\{i\infty\})$~\cite[][p.~58]{DiamondShurman}. In particular, the orbit space  $ X_{0}(1)(\mathbb C):=SL(2,\mathbb Z)\backslash\mathfrak H^*$ has only one cusp at infinity. For the orbit spaces $ X_0(N)(\mathbb C):=\varGamma_0(N)\backslash\mathfrak H^*$ related to the Hecke congruence groups of levels $N=2$, $3$ and $4$, the following facts are also familiar: $ \varGamma_0(2)\backslash(\mathbb Q\cup\{i\infty\})=\varGamma_0(3)\backslash(\mathbb Q\cup\{i\infty\})=\{0,i\infty\}$, $\varGamma_0(4)\backslash(\mathbb Q\cup\{i\infty\})=\{0,\frac{1}{2},i\infty\} $. The geometric constructions for the modular  curves $ Y_0(N)(\mathbb C)=\varGamma_0(N)\backslash\mathfrak H$ ($ N\in\{1,2,3,4\}$) are illustrated in Fig.~\ref{fig:fundomain}. For a technical description  of  $ X_0(N)(\mathbb C)=\varGamma_0(N)\backslash\mathfrak H^*$ as a compact Riemann surface, see \cite[][Chap.~1]{Shimura1994} or \cite[][Chap.~2]{DiamondShurman}. \eor\end{remark}\begin{remark}\label{rmk:Green_recip}One can use Eq.~\ref{eq:auto_Green_defn_Q_nu} to demonstrate ``Green's reciprocity''  $ G_{k/2}^{\mathfrak H/\overline {\varGamma}}(z,z')=G_{k/2}^{\mathfrak H/\overline {\varGamma}}(z',z)$ by transitivity of the group actions. According to the uniqueness argument in the proof of Lemma \ref{lm:spec_AGF}, we see that Green's reciprocity will automatically be honored once criteria~\ref{itm:AGF1'}--\ref{itm:AGF3'} are met. Therefore, it is \textit{not} a requirement that an integral representation for an automorphic Green's function  should display the reciprocal symmetry explicitly.\eor\end{remark}

\subsection{Kontsevich--Zagier Integrals for Weight-4 Automorphic Green's Functions on  $\overline \varGamma_0(N)$    ($ N\in\{1,2,3,4\}$)\label{subsec:KZ_integrals}}
We shall start constructing integral representations for automorphic Green's functions following a suggestion of  Kontsevich and Zagier~\cite{KontsevichZagier}, after reading between the lines around their statement of the integral formula for $ G_2^{\mathfrak H/PSL(2,\mathbb Z)}(i,i\sqrt{2})$ (Eq.~\ref{eq:KZ_laconic}). We shall refer to such formulae for automorphic Green's functions as ``Kontsevich--Zagier integrals'', both to acknowledge the source of our inspirations and to reckon the fact that, up to a scaling factor that is an integer power of $ \pi$, such integral formulae turn out to be ``Kontsevich--Zagier periods'' (integrals of algebraic functions over algebraic domains) when the arguments are CM points.

\begin{proposition}[Kontsevich--Zagier Integral Representations for $ G_2^{\mathfrak H/\overline{\varGamma}_0(N)}(z,z')$ where $N\in\{2,3,4\}$]\label{prop:G2HeckeNunified} We have the following integral representations for weight-4 automorphic Green's functions evaluated at a pair of non-equivalent points $ z$ and $z'$ (such that $ z\notin\varGamma_0(N)z'$) for $ N\in\{2,3,4\}$:\begin{align}
G_2^{\mathfrak H/\overline{\varGamma}_0(N)}(z,z')={}&\frac{4\pi^{2}}{y'}\R\int_{z'}^{i\infty} \alpha_N(\zeta)[1-\alpha_N(\zeta)][NE_{2}(N\zeta)-E_2(\zeta)]^{2}\varrho^{\mathfrak H/\overline\varGamma_0(N)}_2(\zeta,z)\frac{(\zeta-z')(\zeta-\overline{z'})\D \zeta}{i(N-1)^{2}}\notag\\&-\frac{4\pi^{2}}{y'}\R\int_{0}^{i\infty} \alpha_N(\zeta)[1-\alpha_N(\zeta)][NE_{2}(N\zeta)-E_2(\zeta)]^{2}\varrho^{\mathfrak H/\overline\varGamma_0(N)}_2(\zeta,z)\frac{\zeta^{2}\D \zeta}{i(N-1)^{2}},\label{eq:G2HeckeN_unified}
\end{align}where \begin{align}
\varrho^{\mathfrak H/\overline\varGamma_0(N)}_2(\zeta,z)={}&\frac{\alpha_N(z)[1-\alpha_N(z)]}{[\alpha_N(\zeta)-\alpha_N(z)]^{2}}-\frac{N-1}{12}\frac{N^{2}[E_{2}(N z)]^2-N^{2}E_{4}(Nz)-[E_{2}(z)]^2+E_{4}(z)}{[\alpha_N(\zeta)-\alpha_N(z)][NE_{2}(N z)-E_{2}(z)]^2},\label{eq:rho2HeckeN_unified}\\ \text{with }\frac{1}{\alpha_N(z)}={}&1+\frac{1}{N^{6/(N-1)}}\left[ \frac{\eta(z)}{\eta(Nz)} \right]^{24/(N-1)}=\frac{1}{1- \alpha_N(-1/(Nz))},\quad \lim_{z\to i\infty}\alpha_N(z)=0.\label{eq:alpha_HeckeN_unified}
\end{align} Here in Eq.~\ref{eq:G2HeckeN_unified}, the paths of  integration  can be chosen as arbitrary curves joining the end points in the complex $ \zeta$-plane, so long as the singularities of the integrands are circumvented.\end{proposition}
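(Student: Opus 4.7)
The strategy is to apply the spectral uniqueness theorem (Lemma~\ref{lm:spec_AGF}) with $k=4$. Writing $F_N(z,z')$ for the right-hand side of Eq.~\ref{eq:G2HeckeN_unified}, I will verify that, as a function of $z'$ with $z\notin\varGamma_0(N)z'$ fixed, $F_N$ satisfies the analogues of the three axioms used in the proof of Lemma~\ref{lm:spec_AGF} --- $\varGamma_0(N)$-invariance, the weight-4 Maa{\ss} eigenequation, and the prescribed logarithmic/cusp asymptotics --- so that uniqueness forces $F_N(z,z')=G_2^{\mathfrak H/\overline\varGamma_0(N)}(z,z')$. As a preliminary, the identity $NE_2(N\zeta)-E_2(\zeta)=NE_2^*(N\zeta)-E_2^*(\zeta)$ (the non-holomorphic $1/\I\zeta$ contributions cancel) combined with Eq.~\ref{eq:E2_star_HeckeN_transf} shows that $NE_2(N\zeta)-E_2(\zeta)$ is a holomorphic weight-$2$ form on $\varGamma_0(N)$, so the $\zeta$-integrand is a weight-$4$ holomorphic cusp form on $\varGamma_0(N)$ multiplied by the kernel $\varrho_2^{\mathfrak H/\overline\varGamma_0(N)}(\zeta,z)$, which is meromorphic in $\zeta$ with poles only on the orbit $\varGamma_0(N)z$. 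Using the local expansion of $\alpha_N(\zeta)-\alpha_N(z)$ near $\zeta=z$, I would verify that the specific linear combination chosen in Eq.~\ref{eq:rho2HeckeN_unified} is the unique one for which the full residue of the $\zeta$-integrand (including the $(\zeta-z')(\zeta-\overline{z'})$ weighting) vanishes at every $\hat\gamma z\in\varGamma_0(N)z$; this establishes path-independence of the first integral on $\mathfrak H\smallsetminus\varGamma_0(N)z$.

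A short direct computation shows that $(\zeta-z')(\zeta-\overline{z'})/y'$ is a weight-$4$ hyperbolic Laplace eigenfunction in $z'$ with eigenvalue $k(k-2)/4=2$ at $k=4$, so differentiation under the integral sign yields $\Delta^{\mathfrak H}_{z'}F_N=2F_N$ away from the diagonal orbit. For the logarithmic asymptotic as $z'\to z$, I would deform the contour to detour around $\zeta=z$; the residue of the pole of the integrand at $\zeta=z$ is arranged, by the cancellation built into Eq.~\ref{eq:rho2HeckeN_unified}, to contribute exactly a $\log|z'-z|$ term whose coefficient matches the stabiliser order $m_z^{\overline\varGamma_0(N)}$, because the ramification index of the Hauptmodul $\alpha_N$ at $\zeta=z$ equals $m_z^{\overline\varGamma_0(N)}$.

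For $\varGamma_0(N)$-invariance in $z'$ and vanishing at the cusps, I would substitute $z'\mapsto\hat\gamma z'$ with $\hat\gamma\in\varGamma_0(N)$ and change variables $\zeta\mapsto\hat\gamma\zeta$ in the first integral. The weight-$4$ modularity of the $\zeta$-integrand together with the invariance of $(\zeta-z')(\zeta-\overline{z'})\D\zeta/y'$ under the simultaneous $\hat\gamma$-action imply that only the upper endpoint changes, becoming the cusp $\hat\gamma^{-1}(i\infty)$. For $N\in\{2,3,4\}$ the entire cusp set of $X_0(N)(\mathbb C)$ is exhausted by $\{i\infty\}$ and $\{0\}$ (the cusp $\{1/2\}$ for $N=4$ being $\overline\varGamma_0(4)$-equivalent to $\{0\}$ through $\widehat w_4$), so, by the Fricke normalisation~\ref{eq:Fricke_inv_norm_Hecke}, the discrepancy collapses to the $\R\int_0^{i\infty}$ counter-term --- this is precisely the design principle behind the second integral in Eq.~\ref{eq:G2HeckeN_unified}. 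Vanishing at cusps then follows from the exponential decay of the cusp form at $i\infty$ and its $\widehat w_N$-image at $0$.

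The principal difficulty is the simultaneous bookkeeping that pins down the explicit shape of Eq.~\ref{eq:rho2HeckeN_unified}: the \emph{same} kernel $\varrho_2^{\mathfrak H/\overline\varGamma_0(N)}$ must (i) produce vanishing residues at every $\zeta\in\varGamma_0(N)z$ for path-independence, (ii) yield the exact logarithmic coefficient $2m_z^{\overline\varGamma_0(N)}$ in the diagonal asymptotic, and (iii) make the $\R\int_0^{i\infty}$ subtraction match the cusp defect under every $\hat\gamma\in\varGamma_0(N)$. Executing these three compatibility checks requires the $\overline\varGamma_0(N)$-transformation laws of $E_2^*$ (Eq.~\ref{eq:E2_star_HeckeN_transf}) and of the Hauptmoduln $\alpha_N$, together with the Ramanujan alternative-base identities developed in \S\ref{subsec:add_form_Legendre_Ramanujan}, and this is where I expect the bulk of the technical effort to lie.
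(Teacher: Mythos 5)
Your overall strategy (invoke the uniqueness characterization of Lemma~\ref{lm:spec_AGF} and check symmetry, the weight-4 eigenequation, and the diagonal/cusp asymptotics) is the right one, and it is the paper's strategy too, except that the paper runs the three criteria in the variable $z$ rather than $z'$, which makes $\varGamma_0(N)$-invariance manifest from the invariance of $\alpha_N$. The fatal problem in your proposal is the claim that $\varrho_2^{\mathfrak H/\overline\varGamma_0(N)}$ is "the unique linear combination for which the full residue of the $\zeta$-integrand vanishes at every $\hat\gamma z\in\varGamma_0(N)z$." Those residues do \emph{not} vanish. Writing the integrand in the form $\frac{[\alpha_N'(\zeta)]^2}{\alpha_N(\zeta)[1-\alpha_N(\zeta)]}\,\zeta^n\, y\frac{\partial}{\partial y}\bigl\{\frac{1}{y}\frac{\alpha_N(z)[1-\alpha_N(z)]}{[\alpha_N(\zeta)-\alpha_N(z)]\alpha_N'(z)}\bigr\}$, its residue at $\zeta=\hat\gamma z$ equals $y\frac{\partial}{\partial y}\bigl\{\frac{(\hat\gamma z)^n}{y}\frac{1}{\partial(\hat\gamma z)/\partial z}\bigr\}$, i.e.\ $-1/\I(\hat\gamma z)$, $-\R(\hat\gamma z)/\I(\hat\gamma z)$, $-|\hat\gamma z|^2/\I(\hat\gamma z)$ for $n=0,1,2$. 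These are nonzero \emph{real} numbers, and that is the whole point: crossing a pole changes the contour integral by $2\pi i$ times a real quantity, which is purely imaginary, so only the \emph{real part} of the integral is path-independent. This is exactly why the operator $\R$ sits in front of both integrals in Eq.~\ref{eq:G2HeckeN_unified} and cannot be removed. If you pursue your residue-vanishing bookkeeping you will find it has no solution, and without the real-residue observation the expression is not even well-defined for "arbitrary paths."

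A secondary gap: your reduction of $\varGamma_0(N)$-invariance in $z'$ to "the discrepancy collapses to the $\R\int_0^{i\infty}$ counter-term" is not automatic. Even after the endpoint shift, matching the two sides forces vanishing identities for the intermediate moments, of the type
\begin{align*}
\R\int_{0}^{i\infty} \frac{i[\alpha_N'(\zeta)]^{2}\varrho^{\mathfrak H/\overline\varGamma_0(N)}_2(\zeta,z)}{\alpha_N(\zeta)[1-\alpha_N(\zeta)]}\,\zeta\,\D \zeta=0 ,
\end{align*}
which the paper establishes by checking reality of the integrand on $\partial\mathfrak D_N$, solving a homogeneous Dirichlet problem for the operator $\Delta^{\mathfrak H}-2$, and tessellating. (The same identity is what makes the Fricke involution, hence the cusp at $0$, work out.) Your sketch does not supply this, and it is one of the genuinely nontrivial steps. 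Finally, a small correction: the $\log|z-z'|$ term does not come from a residue at an interior pole but from the behaviour of the double-pole term of $\varrho_2$ near the lower endpoint $\zeta=z'$ as $z\to z'$; and the elliptic points ($m_{z'}>1$) are best handled by continuity in $z'$ after proving the non-elliptic case, rather than through the ramification index of $\alpha_N$.
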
\begin{proof}We shall establish Eq.~\ref{eq:G2HeckeN_unified} by going through criteria~\ref{itm:AGF1'}--\ref{itm:AGF3'} with respect to the variable $z\in\mathfrak H$.

It is well known that the functions $ \alpha_N(z),N\in\{2,3,4\}$ (Eq.~\ref{eq:alpha_HeckeN_unified}) are modular invariants that induce bijective mappings $ \alpha_N:\varGamma_0(N)\backslash\mathfrak H^*\longrightarrow\mathbb C\cup\{\infty\}$ on the respective Riemann surfaces (see~\cite[][Theorem~4.9]{ApostolVol2} and~\cite[][Table~2]{Maier2009}). Thus, whenever the function $ \varrho^{\mathfrak H/\overline\varGamma_0(N)}_2(\zeta,z)$ defined in Eq.~\ref{eq:rho2HeckeN_unified} is regular, one can directly check the modular invariance $ \varrho^{\mathfrak H/\overline\varGamma_0(N)}_2(\zeta,\hat \gamma z) =\varrho^{\mathfrak H/\overline\varGamma_0(N)}_2(\zeta,z),\forall\hat \gamma\in\varGamma_0(N)$ where $ N\in\{2,3,4\}$. However, the regularity of an integral representation with respect to the variable $z$ cannot be taken for granted: one needs two additional steps  to verify the symmetry criterion~\ref{itm:AGF1'}, as we explain in the next couple of paragraphs.

First, we use residue calculus to demonstrate the path independence of the integral  formula in Eq.~\ref{eq:G2HeckeN_unified}. Simple computations reveal that\footnote{Hereafter, the abbreviation ``a.e.'' that precedes a complex variable refers to  ``almost every'' complex number in question. An equality that holds for  ``almost every''  complex number in a specified domain is allowed to fail in a subset of measure zero.  } \begin{align}
\alpha_N'(w):=\frac{\partial\alpha_{N}(w)}{\partial w}={}&\frac{2\pi i}{N-1}\alpha_{N}(w)[1-\alpha_{N}(w)][NE_{2}(Nw)-E_{2}(w)],&& \text{a.e. }w\in\mathfrak H;
\label{eq:alpha_N_deriv_E2}\\
\varrho^{\mathfrak H/\overline\varGamma_0(N)}_2(\zeta,z)={}&\frac{y}{i }\frac{\partial}{\partial y}\left\{ \frac{1}{y} \frac{\alpha_{N}(z)[1-\alpha_N(z)]}{[\alpha_{N}(\zeta)-\alpha_{N}(z)]\alpha_{N}'(z)}\right\},&& \text{a.e. }\zeta,z\in\mathfrak H,\label{eq:rho2HeckeN_unified_deriv_form}
\end{align}so the right-hand side of   Eq.~\ref{eq:G2HeckeN_unified} may be rewritten as\begin{align}\mathcal I_N(z,z')\equiv{}&
\R\int_{z'}^{i\infty} \frac{[\alpha_N'(\zeta)]^{2}}{\alpha_N(\zeta)[1-\alpha_N(\zeta)]}y\frac{\partial}{\partial y}\left\{ \frac{1}{y} \frac{\alpha_{N}(z)[1-\alpha_N(z)]}{[\alpha_{N}(\zeta)-\alpha_{N}(z)]\alpha_{N}'(z)}\right\}\frac{(\zeta-z')(\zeta-\overline{z'})\D \zeta}{y'}\notag\\&-\R\int_{0}^{i\infty} \frac{[\alpha_N'(\zeta)]^{2}}{\alpha_N(\zeta)[1-\alpha_N(\zeta)]}y\frac{\partial}{\partial y}\left\{ \frac{1}{y} \frac{\alpha_{N}(z)[1-\alpha_N(z)]}{[\alpha_{N}(\zeta)-\alpha_{N}(z)]\alpha_{N}'(z)}\right\}\frac{ \zeta^2\D \zeta}{y'}.\label{eq:deriv_form_G2Hecke234}\tag{\ref{eq:G2HeckeN_unified}$'$}
\end{align}  Here as usual, a complex-analytic   function $ f(z)=f(x+iy)$ is regarded as a bivariate function of $ (x,y)$ during the computations of partial derivatives with respect to $ y=\I z$. For any two well-behaved analytic functions $ f$ and $g$ that are $ \varGamma$-invariant (\textit{i.e.~}$f(\hat\gamma z)=f(z)$, $ g(\hat \gamma z)=g( z)$, $ \forall\hat\gamma\in\varGamma\leq SL(2,\mathbb Z)$), the residues of the following three functions of $ \zeta\in\mathfrak H$: \begin{align}\frac{[f'(\zeta)]^{2}\zeta^n}{g(\zeta)}y\frac{\partial}{\partial y}\left\{ \frac{1}{y} \frac{g(z)}{f(\zeta)-f(z)}\frac{1}{f'(z)}\right\} ,\quad n\in\{0,1,2\}\label{eq:G2_three_fns}\end{align} at the point $ \zeta=\hat \gamma z$ are equal to \begin{align}
y\frac{\partial}{\partial y}\left\{ \frac{( \hat \gamma z)^{n}}{y} \frac{1}{\partial( \hat \gamma z)/\partial z}\right\} ,\quad n\in\{0,1,2\}.
\end{align}These three residues are all real numbers:\begin{align*}-\frac{1}{\I( \hat \gamma z)},\quad(n=0);\qquad-\frac{\R ( \hat \gamma z)}{\I ( \hat \gamma z)},\quad (n=1);\qquad -\frac{|( \hat \gamma z)|^2}{\I ( \hat \gamma z)},\quad (n=2).\end{align*}
Therefore, the real parts of the contour integrals in Eq.~\ref{eq:deriv_form_G2Hecke234} are always path independent.

Second, we  check that the expression for  $ \varrho^{\mathfrak H/\overline\varGamma_0(N)}_2(\zeta,z)$ given in Eq.~\ref{eq:rho2HeckeN_unified_deriv_form} is indeed regular whenever $ z\in\mathfrak H\smallsetminus(\varGamma_0(N)\zeta)$. With the formulae:\footnote{The  identities listed in Eq.~\ref{eq:alpha_N_ratio_ids} are classical. They can be traced back to Ramanujan's alternative-base representations for the Weierstra{\ss} discriminant $ \Delta(z)=[\eta(z)]^{24}$ \cite[][Chap.~33, Corollary~3.4, Theorem~9.10 and Theorem~11.6]{RN5} and weight-2 Eisenstein series~\cite[][Chap.~33, Theorem~9.11]{RN5},~\cite[][Chap.~33, Corollary~2.11]{RN5},~\cite[][Chap.~17, Entries 13(viii)--(ix)]{RN3}. For an arithmetic perspective on such identities and their generalizations, see~\cite[][]{Maier2009}.}\begin{align}
\left.\begin{array}{r@{\;=\;}l}
\left\{\dfrac{\alpha_{2}(z)[1-\alpha_2(z)]}{\alpha_{2 }'(z)}\right\} ^6& -\dfrac{\alpha_{2}(z)[1-\alpha_2(z)]^{2}}{2^{12} \pi ^6 [\eta (z)]^{24}} \\[10pt]
\left\{\dfrac{\alpha_{3}(z)[1-\alpha_3(z)]}{\alpha_{3 }'(z)}\right\}^6 & -\dfrac{\alpha_{3}(z)[1-\alpha_3(z)]^{3}}{2^6 3^3 \pi ^6[\eta (z)]^{24}} \\[10pt]\left\{\dfrac{\alpha_{4}(z)[1-\alpha_4(z)]}{\alpha_{4 }'(z)}\right\}^6 & -\dfrac{\alpha_{4}(z)[1-\alpha_4(z)]^{4}}{2^{10} \pi ^6[\eta (z)]^{24}} \end{array}\right\}\label{eq:alpha_N_ratio_ids}
\end{align}and the non-vanishing property of the Dedekind eta function $ \eta(z)\neq0,\forall z\in\mathfrak H$, one can confirm that the function  $ \varrho^{\mathfrak H/\overline\varGamma_0(N)}_2(\zeta,z)$ is smooth  wherever $ \alpha_N(z)\in\mathbb C\smallsetminus\{\alpha_N(\zeta)\}$.  It remains to show that   $ \varrho^{\mathfrak H/\overline\varGamma_0(N)}_2(\zeta,z)$ is regular when $ \alpha_N(z)\to\infty$ for certain points $ z\in\mathfrak H$. By direct computation on the expressions in Eq.~\ref{eq:rho2HeckeN_unified}, one can show that (see Remark~\ref{rmk:classical_invariants} below) \begin{align}\lim_{a_2(z)\to\infty}
\varrho^{\mathfrak H/\overline\varGamma_0(2)}_2(\zeta,z)=-\frac{1}{2},\quad \alpha_2(\zeta)\in\mathbb  C;\qquad  \lim_{a_3(z)\to\infty}
\varrho^{\mathfrak H/\overline\varGamma_0(3)}_2(\zeta,z)=-\frac{1}{3}, \quad \alpha_3(\zeta)\in\mathbb  C,\label{eq:alpha_inf_rho_lim}
\end{align}    and $ \alpha_4(z) $ never diverges for $z\in\mathfrak H$. (Here, we note that  $ \alpha_2(z)=\infty$ corresponds to the period-2 elliptic point $ m_{z}^{\overline\varGamma_0(2)}=2$, $ z\in\varGamma_0(2)\frac{i-1}{2}$; while  $ \alpha_3(z)=\infty$ corresponds to the period-3 elliptic point $ m_{z}^{\overline\varGamma_0(3)}=3$,  $z\in\varGamma_0(3)\frac{3+i\sqrt{3}}{6}$. The orbit space $ X_0(4)(\mathbb C)=\varGamma_0(4)\backslash\mathfrak H^*$ contains no elliptic points: $ m_z^{\overline\varGamma_0(4)}\equiv1,\forall z\in\mathfrak H$.) This completes the verification of criterion~\ref{itm:AGF1'}.

It is relatively straightforward to show that the integral representation in Eq.~\ref{eq:deriv_form_G2Hecke234} satisfies the correct differential equation in the variable $z$, as dictated by criterion~\ref{itm:AGF2'}. This is because\begin{align}\left[ y^2\left( \frac{\partial^{2}}{\partial x^{2}} +\frac{\partial^{2}}{\partial y^{2}}\right)-2\right]\left[y\frac{\partial}{\partial y}\frac{f(z)}{y}\right]=0\label{eq:weight_4_diff_eqn_test}\end{align}holds for any complex-analytic function $ f(z)=f(x+iy)$.

The verification of criterion~\ref{itm:AGF3'} naturally breaks down into two subtasks: the logarithmic asymptotics at the diagonal and the vanishing behavior at the cusps.

For the logarithmic asymptotics, we may momentarily assume that $ z'$ is not an elliptic point, so that $ m_{z'}^{\overline\varGamma_0(N)}=1$ and $ \alpha_N(z') $ is finite. As $ z$ approaches $z'$, the first term in Eq.~\ref{eq:rho2HeckeN_unified} dominates the contribution to   $ \varrho^{\mathfrak H/\overline\varGamma_0(N)}_2(\zeta,z)$, so the integral representation in Eq.~\ref{eq:G2HeckeN_unified}  goes asymptotically as \begin{align}
\sim {}&\frac{4\pi^{2}}{y'}\R\int_{z'}^{i\infty} \alpha_N(\zeta)[1-\alpha_N(\zeta)][NE_{2}(N\zeta)-E_2(\zeta)]^{2}\frac{\alpha_N(z)[1-\alpha_N(z)]}{[\alpha_N(\zeta)-\alpha_N(z)]^{2}}\frac{(\zeta-z')(\zeta-\overline{z'})\D \zeta}{i(N-1)^{2}}\notag\\\sim{}&-\R\int_{z'}^{2z'} \frac{[\alpha_N'(\zeta)]^{2}}{\alpha_N(\zeta)[1-\alpha_N(\zeta)]}\frac{\alpha_N(z)[1-\alpha_N(z)]}{[\alpha_N'(\zeta)]^{2}(\zeta-z)^2}\frac{(\zeta-z')(\zeta-\overline{z'})\D \zeta}{iy'}=-\R\int_{z'}^{2z'} \frac{1}{\zeta-z}\frac{(z'-\overline{z'})\D \zeta}{iy'}+O(1)\notag\\={}&2\log|z'-z|+O(1).
\end{align}

Before analyzing the  behavior of Eq.~\ref{eq:G2HeckeN_unified} near the infinite cusp $ z\to i\infty$,  we note that Eqs.~\ref{eq:alpha_N_deriv_E2} and \ref{eq:rho2HeckeN_unified_deriv_form} allow us to compute the following  limit:\begin{align}
\varrho^{\mathfrak H/\overline\varGamma_0(N)}_2(\zeta,z)={}&(1-N)\frac{y}{2\pi }\frac{\partial}{\partial y}\left\{ \frac{1}{y} \frac{1}{[\alpha_{N}(\zeta)-\alpha_{N}(z)][NE_{2}(Nz)-E_{2}(z)]}\right\}\to0,\quad\text{as }z\to i\infty\label{eq:rho2_cusp_limit}
\end{align}for every fixed $ \zeta\in\mathfrak H$.

We   then exploit the path independence of the  integral representation  in  Eq.~\ref{eq:deriv_form_G2Hecke234} and recast it into\begin{align}
\mathcal I_N(z,z')={}&-\R\int_{0}^{z'} \frac{i[\alpha_N'(\zeta)]^{2}\varrho^{\mathfrak H/\overline\varGamma_0(N)}_2(\zeta,z)}{\alpha_N(\zeta)[1-\alpha_N(\zeta)]}\frac{ \zeta^2\D \zeta}{y'}-\R\int_{z'}^{i\infty} \frac{i[\alpha_N'(\zeta)]^{2}\varrho^{\mathfrak H/\overline\varGamma_0(N)}_2(\zeta,z)}{\alpha_N(\zeta)[1-\alpha_N(\zeta)]}{\frac{ 2\zeta x'\D \zeta}{y'}}\notag\\&+\R\int_{z'}^{i\infty} \frac{i[\alpha_N'(\zeta)]^{2}\varrho^{\mathfrak H/\overline\varGamma_0(N)}_2(\zeta,z)}{\alpha_N(\zeta)[1-\alpha_N(\zeta)]}{\frac{ |z'|^{2}\D \zeta}{y'}}.\label{eq:G2HeckeN_unified''}\tag{\ref{eq:G2HeckeN_unified}$''$}
\end{align}We need to treat the three integrals in  Eq.~\ref{eq:G2HeckeN_unified''}  separately in order to conclude that $ \lim_{z\to i\infty} \mathcal I_N(z,z')=0$. Here, by Eq.~\ref{eq:rho2_cusp_limit}, we can conclude that the first integral in Eq.~\ref{eq:G2HeckeN_unified''} vanishes as $ z$ tends to the infinite cusp:\begin{align}
\lim_{z\to i\infty}\R\int_{0}^{z'} \frac{i[\alpha_N'(\zeta)]^{2}\varrho^{\mathfrak H/\overline\varGamma_0(N)}_2(\zeta,z)}{\alpha_N(\zeta)[1-\alpha_N(\zeta)]}\frac{ \zeta^2\D \zeta}{y'}=0.\label{eq:cusp_lim0a}
\end{align} In   Eq.~\ref{eq:cusp_lim0a}, it is easy to bound $ |\varrho^{\mathfrak H/\overline\varGamma_0(N)}_2(\zeta,z)|$ by a finite constant $ C\in(0,+\infty)$ for $ z\to i\infty$ and bounded $ |\zeta|$, and we have absolute integrability:\begin{align}C\int_{0}^{z'} \left\vert\frac{i[\alpha_N'(\zeta)]^{2}}{\alpha_N(\zeta)[1-\alpha_N(\zeta)]}\right\vert\frac{ |\zeta|^2|\D \zeta|}{y'}<+\infty.\end{align}(Bearing in mind that $ 1-\alpha_N(\zeta)=\alpha_N(-1/(N\zeta))=O(e^{-2\pi i/(N\zeta)})$ and $ \alpha'_N(\zeta)=O(|\zeta|^{-2}e^{-2\pi i/(N\zeta)})$ when $\I\zeta\to0^+$, one sees that the integrand above is bounded, hence absolutely integrable.) So, one can interchange the limit and the integral sign in   Eq.~\ref{eq:cusp_lim0a}, according to the  dominated convergence theorem. Meanwhile, one can also apply the dominated convergence theorem to the second integral in Eq.~\ref{eq:G2HeckeN_unified''} after transforming it into an integral from $ 0$ to $z'$:\begin{align}
-\R\int_{z'}^{i\infty} \frac{i[\alpha_N'(\zeta)]^{2}\varrho^{\mathfrak H/\overline\varGamma_0(N)}_2(\zeta,z)}{\alpha_N(\zeta)[1-\alpha_N(\zeta)]}{\frac{ 2\zeta x'\D \zeta}{y'}}=\R\int_{0}^{z'} \frac{i[\alpha_N'(\zeta)]^{2}\varrho^{\mathfrak H/\overline\varGamma_0(N)}_2(\zeta,z)}{\alpha_N(\zeta)[1-\alpha_N(\zeta)]}{\frac{ 2\zeta x'\D \zeta}{y'}}\to0,\quad\text{as }z\to i\infty.\label{eq:cusp_lim0b}
\end{align}Here, the first equality in Eq.~\ref{eq:cusp_lim0b} owes to the following vanishing identity:\begin{align}
\R\int_{0}^{i\infty} \frac{i[\alpha_N'(\zeta)]^{2}\varrho^{\mathfrak H/\overline\varGamma_0(N)}_2(\zeta,z)\zeta\D \zeta}{\alpha_N(\zeta)[1-\alpha_N(\zeta)]}=0,\quad \forall z\in\mathfrak H.\label{eq:rho2_int0}
\end{align}One can easily verify Eq.~\ref{eq:rho2_int0} for boundary points $ z\in\mathfrak H\cap\partial\mathfrak D_N$ (Fig.~\ref{fig:fundomain}\textit{c}--\textit{e}), that is, for all the  points $ z\in\mathfrak H$ satisfying\begin{align}
|\R z|=\frac{1}{2},\I z>\frac{1}{2}\sqrt{\frac{4-N}{N}}\quad\text{or}\quad-\frac{1}{2}<\R z<0 ,\left|z+\frac{1}{N}\right |=\frac{1}{N}\quad \text{or}\quad 0<\R z<\frac{1}{2} ,\left|z-\frac{1}{N}\right |=\frac{1}{N},\label{eq:HeckeN_fun_dom_bd}
\end{align} by choosing the contour of integration along the imaginary axis $ \zeta/i\in(0,+\infty)$. This is because in such scenarios,  $ \alpha_N(z) $, $ \alpha_N(\zeta)$, $ i\alpha'_N(\zeta)$ and $ \varrho^{\mathfrak H/\overline\varGamma_0(N)}_2(\zeta,z)$ are all real numbers. As the integral in Eq.~\ref{eq:rho2_int0} is annihilated by the differential operator \begin{align*}-2+ y^2 \left( \frac{\partial^2}{\partial x^2}+\frac{\partial^2}{\partial y^2}\right)=-2-(z-\overline{z})^{2}\frac{\partial}{\partial z}\frac{\partial}{\partial\overline{ z}},\end{align*}and satisfies homogeneous Dirichlet boundary condition on the domain $ \mathfrak D_N$ bounded by the curves specified in Eq.~\ref{eq:HeckeN_fun_dom_bd}, we see that  Eq.~\ref{eq:rho2_int0}  must hold within  $ \mathfrak D_N$. Such a vanishing identity then extends to every  $z\in\mathfrak H$, because the closure of the aforementioned domain  $ \mathfrak H\cap(\mathfrak D_N\cup\partial\mathfrak D_N)$ tessellates the upper half-plane  under the actions of $ \varGamma_0(N)$ ($ N\in\{2,3,4\}$). After the confirmation of the vanishing asymptotics for the first two integrals in   Eq.~\ref{eq:G2HeckeN_unified''}, we need to handle the last one with integration by parts:{\allowdisplaybreaks\begin{align}&
\R\int_{z'}^{i\infty} \frac{i[\alpha_N'(\zeta)]^{2}\varrho^{\mathfrak H/\overline\varGamma_0(N)}_2(\zeta,z)}{\alpha_N(\zeta)[1-\alpha_N(\zeta)]}{\D \zeta}\notag\\={}&(N-1)\frac{y}{2\pi }\frac{\partial}{\partial y}\R\left\{ \frac{1}{y} \frac{1}{NE_{2}(Nz)-E_{2}(z)}\int_{z'}^{i\infty} \frac{[\alpha_N'(\zeta)]^{2}}{i\alpha_N(\zeta)[1-\alpha_N(\zeta)]}{\frac{\D \zeta}{\alpha_{N}(\zeta)-\alpha_{N}(z)}}\right\}\notag\\={}&y\frac{\partial}{\partial y}\R\left\{ \frac{1}{y} \frac{1}{NE_{2}(Nz)-E_{2}(z)}\int_{z'}^{i\infty} [NE_{2}(N\zeta)-E_{2}(\zeta)]\D\log[\alpha_{N}(\zeta)-\alpha_{N}(z)]\right\}\notag\\={}&-y\frac{\partial}{\partial y}\R\left\{ \frac{1}{y} \frac{1}{NE_{2}(Nz)-E_{2}(z)}\int_{z'}^{i\infty} \log[\alpha_{N}(\zeta)-\alpha_{N}(z)]\frac{\partial[NE_{2}(N\zeta)-E_{2}(\zeta)-N+1]}{\partial\zeta}\D\zeta\right\}\notag\\&-y\frac{\partial}{\partial y}\R\left\{ \frac{1}{y} \frac{NE_{2}(Nz')-E_{2}(z')}{NE_{2}(Nz)-E_{2}(z)}\log[\alpha_{N}(z')-\alpha_{N}(z)]\right\}.\label{eq:cusp_lim0c}
\end{align}}As $ y\to+\infty$, one has $ \alpha_N(z)=O(e^{2\pi iz})$, $ NE_{2}(Nz)-E_{2}(z)=N-1+O(e^{2\pi iz})$, so accordingly,\begin{align*}&\int_{z'}^{i\infty} \log[\alpha_{N}(\zeta)-\alpha_{N}(z)]\frac{\partial[NE_{2}(N\zeta)-E_{2}(\zeta)-N+1]}{\partial\zeta}\D\zeta\notag\\\to{}&\int_{z'}^{i\infty} \log[\alpha_{N}(\zeta)]\frac{\partial[NE_{2}(N\zeta)-E_{2}(\zeta)-N+1]}{\partial\zeta}\D\zeta\in\mathbb C\end{align*}and the contributions to\footnote{Exploiting the  symmetry of the integral in question, one may assume that the points  $ z$ and  $z'$ are both in the closure of the fundamental domain $ \mathfrak D_N\cup\partial\mathfrak D_N$. For sufficiently large $ y=\I z$, one can then  designate   the integration paths  in the $ \zeta$-plane as follows: the path joining $ z'$ to $ z-\frac i3$ is a straight line segment; the path connecting $ z-\frac i3$ to $ z+\frac i3$ is a semi-circle with radius $ 1/3$; the path extending from $ z+\frac i3$ to $ i\infty$ runs parallel to the $ \I \zeta$-axis. It is ready to check  that $ \alpha_N(z)\neq \alpha_N(\zeta)$ along these integration  paths. }\begin{align*}&\frac{1}{NE_{2}(Nz)-E_{2}(z)}\frac{\partial}{\partial z}\int_{z'}^{i\infty} \log[\alpha_{N}(\zeta)-\alpha_{N}(z)]\frac{\partial[NE_{2}(N\zeta)-E_{2}(\zeta)-N+1]}{\partial\zeta}\D\zeta\notag\\={}&\frac{2\pi i[1-\alpha_{N}(z)]}{N-1}\left(\int_{z'}^{z-\frac{i}{3}}+\int_{z-\frac{i}{3}}^{z+\frac{i}{3}}+\int_{z+\frac{i}{3}}^{i\infty}\right) \frac{\alpha_{N}(z)}{\alpha_{N}(z)-\alpha_N(\zeta)}\frac{\partial[NE_{2}(N\zeta)-E_{2}(\zeta)-N+1]}{\partial\zeta}\D\zeta\end{align*} can be decomposed into three parts:{\allowdisplaybreaks\begin{align*}\left|\int_{z'}^{z-\frac{i}{3}}\frac{\alpha_{N}(z)}{\alpha_{N}(z)-\alpha_N(\zeta)}\frac{\partial[NE_{2}(N\zeta)-E_{2}(\zeta)-N+1]}{\partial\zeta}\D\zeta\right|\leq{}&C_1\left\vert\int_{z'}^{z-\frac{i}{3}}|\alpha_{N}(z)|\D\zeta\right\vert=O(|z\alpha_{N}(z)|);\notag\\\left|\int_{z-\frac{i}{3}}^{z+\frac{i}{3}}\frac{\alpha_N(z)}{\alpha_{N}(z)-\alpha_N(\zeta)}\frac{\partial[NE_{2}(N\zeta)-E_{2}(\zeta)-N+1]}{\partial\zeta}\D\zeta\right|\leq{}& C_2\int_{0}^\pi\frac{|e^{2\pi iz+\frac{2\pi i}{3} e^{i\phi}}|}{|1-e^{\frac{2\pi i}{3} e^{i\phi}}|}\D\phi=O(e^{-\pi y});\notag\\\left|\int_{z+\frac{i}{3}}^{i\infty}\frac{\alpha_{N}(z)}{\alpha_{N}(z)-\alpha_N(\zeta)}\frac{\partial[NE_{2}(N\zeta)-E_{2}(\zeta)-N+1]}{\partial\zeta}\D\zeta\right|\leq{}& C_3\left|\int_{z+\frac{i}{3}}^{i\infty}\frac{ \alpha_{N}(z)e^{2\pi i\zeta}}{\alpha_{N}(z)-\alpha_N(\zeta)}\D\zeta\right|=O(|z\alpha_{N}(z)|),\end{align*}}where $ C_1$, $ C_2$ and $ C_3$ are  constant bounds, deducible from the mean value theorems for integration. As a consequence, one can verify that the expression in Eq.~\ref{eq:cusp_lim0c} has order $ O(1/y)$ as $ z\to i\infty$.

From the result  $ \lim_{z\to i\infty} \mathcal I_N(z,z')=0$ proved in the last paragraph, we can also deduce the vanishing behavior at another cusp    $ \lim_{z\to0} \mathcal I_N(z,z')=0$. This becomes possible once we confirm that $\mathcal I_N(z,z')$ respects the Fricke involution in the same way as $ G_2^{\mathfrak H/\overline{\varGamma}_0(N)}(z,z')$ (Eq.~\ref{eq:Fricke_inv_HeckeN_GZ}). Indeed, by   the symmetry $ \varrho^{\mathfrak H/\overline\varGamma_0(N)}_2(\zeta,z)=\varrho^{\mathfrak H/\overline\varGamma_0(N)}_2(-1/(N\zeta),-1/(Nz))$ along with  Eqs.~\ref{eq:G2HeckeN_unified''} and \ref{eq:rho2_int0}, we can verify that \begin{align}\mathcal
I_N(z,z')-\mathcal
I_N\left( -\frac{1}{Nz} ,-\frac{1}{Nz'}\right)=-\R\int_{0}^{i\infty} \frac{i[\alpha_N'(\zeta)]^{2}\varrho^{\mathfrak H/\overline\varGamma_0(N)}_2(\zeta,z)}{\alpha_N(\zeta)[1-\alpha_N(\zeta)]}{\frac{ 2\zeta x'\D \zeta}{y'}}\equiv0.\label{eq:I_N_Fricke_inv_check}
\end{align}

The orbit space $ \varGamma_0(4)\backslash\mathfrak H^{*}$ has a third cusp at $ z=1/2$. Akin to  Eq.~\ref{eq:alpha_inf_rho_lim}, one can compute\begin{align}
\lim_{z\to1/2}
\varrho^{\mathfrak H/\overline\varGamma_0(4)}_2(\zeta,z)=\lim_{a_4(z)\to\infty}
\varrho^{\mathfrak H/\overline\varGamma_0(4)}_2(\zeta,z)=0, \quad \alpha_4(\zeta)\in\mathbb  C.\label{eq:rho2Hecke4cusp_half}
\end{align}As we can always choose the paths of integration for $ \mathcal I_4(z,z')$  (Eq.~\ref{eq:G2HeckeN_unified''}) away from the cusp $ z=1/2$, the vanishing limit $ \lim_{z\to1/2}\mathcal I_4(z,z')=0$ follows directly from  Eq.~\ref{eq:rho2Hecke4cusp_half} and  an application of the dominated convergence theorem.

To summarize, we have verified that the integral representation $ \mathcal I_N(z,z')$ ($ N\in\{2,3,4\}$) satisfies all the three criteria \ref{itm:AGF1'}--\ref{itm:AGF3'}, provided that $z'$ is not an elliptic point: $ m_{z'}^{\overline\varGamma_0(N)}=1$. Hence, in these scenarios,  $ \mathcal I_N(z,z')$ must be identical to the automorphic Green's function $G_2^{\mathfrak H/\overline\varGamma_0(N)}(z,z') $, according to Lemma~\ref{lm:spec_AGF}. For the isolated elliptic points where  $ m_{z'}^{\overline\varGamma_0(N)}>1$, $ N\in\{2,3\}$, one can justify the integral representation in Eq.~\ref{eq:G2HeckeN_unified} by continuity in the variable $z'$.
\end{proof}\begin{remark}\label{rmk:classical_invariants}The function $ \alpha_3(z)$ is just the Ramanujan cubic invariant (see \cite[][Chap.~33, \S\S2--8]{RN5} and \cite[][Chap.~9]{RLN2}):\begin{align}
\alpha_{3}(z):=\left( \frac{[\eta(z/3)]^3}{3[\eta(3z)]^3}+1 \right)^{-3}=\left( \frac{[\eta(z)]^{12}}{27[\eta(3z)]^{12}}+1 \right)^{-1},\quad z\in\mathfrak H\smallsetminus\left({\varGamma}_0(3)\frac{3+i\sqrt{3}}{6}\right)\label{eq:alpha3_defn}
\end{align}where the equivalence of the two definitions for  $ \alpha_{3}(z)$ in Eq.~\ref{eq:alpha3_defn}  follows from the cubic modular equation for the Dedekind eta function~\cite[][p.~345, Eq.~(iv)]{RN3}, which is also provable by the cubic identity of Borwein--Borwein--Garvan~\cite{BorweinBorweinGarvan}. The function $ 1/\alpha_3(z)$ extends to  a holomorphic function on $ \mathfrak H$, with a third-order zero at $ z=(3+i\sqrt{3})/6$, which corresponds to the period-3 elliptic point on $ X_0(3)(\mathbb C)=\varGamma_0(3)\backslash\mathfrak H^*$.
Meanwhile,
the modular lambda function  $ \lambda(z):=16[\eta (z/2)]^8 [\eta (2 z)]^{16}/[\eta (z)]^{24},z\in\mathfrak H $ is related to the modular invariants $\alpha_2 $ and $ \alpha_4$ as follows:\begin{align}
\alpha_{2}(z)=1-\frac{1}{[1-2\lambda(2z+1)]^2};\quad \alpha_4(z)=\lambda(2z).\label{eq:alpha2_alpha4}
\end{align}One has $ \lambda(i)=1/2$, so $ \alpha_2(z)$ diverges at  $z= (i-1)/2$, which corresponds to the period-2 elliptic point on  the orbit space $ X_0(2)(\mathbb C)=\varGamma_0(2)\backslash\mathfrak H^*$.
 \eor\end{remark}\begin{remark}We note that the viability of the integral representations in Eq.~\ref{eq:G2HeckeN_unified} draws heavily on the special geometric properties (Fig.~\ref{fig:fundomain}\textit{c}--\textit{e}) of the compact Riemann surfaces $ \varGamma_0(N)\backslash\mathfrak H^*$ ($ N\in\{2,3,4\}$). While the mapping $ \alpha_N:\varGamma_0(N)\backslash\mathfrak H^*\longrightarrow \mathbb C\cup\{\infty\}$ remains bijective so long as $ (N-1)$ divides $24 $ \cite[][Table~2]{Maier2009}, the regularity of $ \alpha_N(z)[1-\alpha_N(z)]/{\alpha'_N}(z)$ (see Eq.~\ref{eq:alpha_N_ratio_ids}) will be lost for $ N>4$. For example, the reader may check that $ \alpha_5'(z)$ vanishes at $ z=(\pm 3+i)/5$, which are the two non-equivalent period-2 elliptic points on $ \varGamma_0(5)\backslash\mathfrak H^*$ (see \cite[][Table~2, Theorem~5.5, and Corollary~5.5.2]{Maier2009}). Therefore, for $ N=5$,  the integral in Eq.~\ref{eq:G2HeckeN_unified} is not regular  at $ z=(\pm 3+i)/5$, thus failing criterion~\ref{itm:AGF1'}. In fact, Eq.~\ref{eq:G2HeckeN_unified} ceases to be a valid integral representation for automorphic Green's functions on the Hecke congruence group $ \overline\varGamma_0(N)$ for all the higher levels $N\geq5$. Nonetheless, integral representations like Eq.~\ref{eq:G2HeckeN_unified}  do adapt to certain automorphic Green's functions on higher level Hecke congruence groups with   Nebentypus characters.  As an example, for a special  Dirichlet character \[ \chi_5(d)\equiv\left( \frac{d}{5} \right)=\left( \frac{5}{d} \right)=\begin{cases}1, & d\equiv\pm1\pmod5 \\
-1, & d\equiv\pm2\pmod5 \\
0, & d\equiv0\pmod5 \\
\end{cases}\] we have \begin{align}
G_2^{\mathfrak H/(\overline \varGamma_0(5),\chi_5)}(z,z'):={}&-2\sum_{\hat  \gamma=\left(\begin{smallmatrix}a&b\\c&d\end{smallmatrix}\right)\in\overline{\varGamma}_0(5)}\chi _{5}(d)Q_{1}
\left( 1+\frac{\vert z -\hat  \gamma z'\vert ^{2}}{2\I z\I(\hat\gamma z')} \right)\notag\\={}&\frac{\pi^{2}}{y'}\R\int_{z'}^{i\infty} \alpha_5(\zeta)\frac{[\eta(\zeta)]^{5}}{\eta(5\zeta)}[5E_{2}(5\zeta)-E_2(\zeta)]\varrho^{\mathfrak H/(\overline \varGamma_0(5),\chi_5)}_2(\zeta,z)\frac{(\zeta-z')(\zeta-\overline{z'})\D \zeta}{i}\notag\\&-\frac{\pi^{2}}{y'}\R\int_{0}^{i\infty} \alpha_5(\zeta)\frac{[\eta(\zeta)]^{5}}{\eta(5\zeta)}[5E_{2}(5\zeta)-E_2(\zeta)]\varrho^{\mathfrak H/(\overline \varGamma_0(5),\chi_5)}_2(\zeta,z)\frac{\zeta^{2}\D \zeta}{i},
\end{align} where the function\begin{align}
\varrho^{\mathfrak H/(\overline \varGamma_0(5),\chi_5)}_2(\zeta,z):=-\frac{y}{2\pi\ }\frac{\partial}{\partial y}\left\{ \frac{1}{y} \frac{1-\alpha_{5}(z)}{\alpha_{5}(\zeta)-\alpha_{5}(z)}\frac{\eta(5z)}{[\eta(z)]^{5}}\right\}
\end{align}  is regular for $ z\in\mathfrak H\smallsetminus(\varGamma_0(5)\zeta)$.
\eor\end{remark}

The projective counterpart for  $ SL(2,\mathbb Z)\equiv\varGamma_0(1)$ is   the full modular group $ PSL(2,\mathbb Z)=SL(2,\mathbb Z)/\{\hat I,-\hat I\}$. The fundamental domain (Fig.~\ref{fig:fundomain}\textit{a})\begin{align}
\mathfrak D:=\left\{ z\in\mathfrak H\left| |z|>1,-\frac12<\R z\leq\frac{1}{2} \right. \right\}\cup \left\{ z\in\mathfrak H\left| |z|=1,0\leq \R z\leq\frac{1}{2} \right. \right\}\label{eq:fun_domain_SL2Z}
\end{align}tessellates the upper half plane $ \mathfrak H$ under actions of the  full modular group $ PSL(2,\mathbb Z)$, generated by the translation $ \hat{T}:z\mapsto z+1$ and the inversion $ \hat{S}:z\mapsto-1/z$ (Fig.~\ref{fig:fundomain}\textit{b}). It is possible to assign two univalent branches to the square root  expression
$ \sqrt{\smash[b]{(j(z)-1728)/j(z)}}$ so that it induces a smooth bijection between $ (\mathfrak D\cup \hat{S}\mathfrak D)\smallsetminus\{e^{\pi i/3},e^{2\pi i/3}\}$ and $ \mathbb C$. In particular, we shall require that  $ \sqrt{\smash[b]{(j(z)-1728)/j(z)}}>0$ for $ z/i>1$, and    $ \sqrt{\smash[b]{(j(z)-1728)/j(z)}}<0$ for $ 0<z/i<1$. In this manner, we have $ j(z)=j(-1/z),\forall z\in\mathfrak H$, but  $\sqrt{\smash[b]{(j(z)-1728)/j(z)}}=-\sqrt{\smash[b]{(j(-1/z)-1728)/j(-1/z)}},\forall z\in\mathfrak D\smallsetminus\{e^{\pi i/3}\}$.

Define
\begin{align}
\alpha_1(z)=\frac{1-\sqrt{(j(z)-1728)/j(z)}}{2},\quad \forall z\in(\mathfrak D\cup \hat{S}\mathfrak D)\smallsetminus\{e^{\pi i/3},e^{2\pi i/3}\}\label{eq:alpha1_defn}
\end{align}with the aforementioned convention about square roots, then we have \begin{align}
\alpha_1(z)=1-\alpha_1\left(-\frac1z\right),\quad\lim_{z\to i\infty}\alpha_1(z)=0,
\end{align}in parallel to Eq.~\ref{eq:alpha_HeckeN_unified}. The definition in Eq.~\ref{eq:alpha1_defn} can be smoothly extended to all $ z\in\mathfrak H\smallsetminus(SL(2,\mathbb Z)e^{\pi i/3})$, by actions of an index-2 normal subgroup of $ SL(2,\mathbb Z)$ \cite[][p.~86]{Schoeneberg}.
The derivative of $\alpha_1(z)$ can be computed from the differentiation
 formula for the $ j$-invariant:\begin{align}i
\frac{\partial j(z)}{\partial z}=2\pi  j(z)\frac{E_{6}(z)}{E_4(z)},\quad \text{a.e. }z\in\mathfrak H.\label{eq:j'z_E4E6}
\end{align}

With the preparations in the last two paragraphs, we can state and prove the Kontsevich--Zagier integral representations for $ G_2^{\mathfrak H/PSL(2,\mathbb Z)}(z,z')$ in the next proposition.

\begin{proposition}[Kontsevich--Zagier Integral Representations for $ G_2^{\mathfrak H/PSL(2,\mathbb Z)}(z,z')$]\label{prop:G2PSL2Z}For a pair of non-equivalent points $ z,z'\in\mathfrak H$ such that $ j(z)\neq j(z')$, one has \begin{align}G_2^{\mathfrak H/PSL(2,\mathbb Z)}(z,z')={}&\frac{1728\pi^{2}}{y'}\R\int_{z'}^{i\infty} \frac{E_{4}(\zeta)}{j(\zeta)}\rho^{\mathfrak H/PSL(2,\mathbb Z)}_2(\zeta,z)\frac{(\zeta-z')(\zeta-\overline{z'})\D \zeta}{i}\notag\\&-\frac{1728\pi^{2}}{y'}\R\int_{0}^{i\infty} \frac{E_{4}(\zeta)}{j(\zeta)}\rho^{\mathfrak H/PSL(2,\mathbb Z)}_2(\zeta,z)\frac{\zeta^{2}\D \zeta}{i},
\label{eq:G2PSL2Z_Eisenstein_form}
\end{align}where \begin{align}
\rho^{\mathfrak H/PSL(2,\mathbb Z)}_2(\zeta,z)=\frac{[j(\zeta)]^{2}j(z)}{432[j(\zeta)-j(z)]^{2}}-\frac{2j(\zeta)[j(\zeta)+j(z)]}{[j(\zeta)-j(z)]^{2}}+\frac{j(\zeta)j(z)}{2592[j(\zeta)-j(z)]}{\frac{E_6(z)}{[E_4(z)]^2}}\left[ E_{2}(z)-\frac{E_6(z)}{E_4(z)} \right].\label{eq:rho2PSL2Z_Eisenstein}
\end{align}Here in Eq.~\ref{eq:G2PSL2Z_Eisenstein_form}, the integration paths can be arbitrary, provided that  $ j(\zeta)\neq j(z)$ along the way.\end{proposition}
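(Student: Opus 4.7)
The plan is to verify the criteria \ref{itm:AGF1'}--\ref{itm:AGF3'} in Lemma~\ref{lm:spec_AGF} for the right-hand side of Eq.~\ref{eq:G2PSL2Z_Eisenstein_form} as a function of $z\in\mathfrak H$, mimicking the strategy employed in the proof of Proposition~\ref{prop:G2HeckeNunified}. The first step is to recast the proposed integral representation in a form parallel to Eq.~\ref{eq:deriv_form_G2Hecke234}. Squaring Eq.~\ref{eq:alpha1_defn} gives the clean identity $\alpha_{1}(z)[1-\alpha_{1}(z)]=432/j(z)$, and differentiation together with Eq.~\ref{eq:j'z_E4E6} yields an explicit formula for $\alpha_{1}'(z)$ in terms of $j,E_{4},E_{6}$. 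Using these, one can verify by direct algebra that
\begin{align*}
\rho^{\mathfrak H/PSL(2,\mathbb Z)}_2(\zeta,z)=\frac{y}{i}\frac{\partial}{\partial y}\left\{\frac{1}{y}\frac{\alpha_{1}(z)[1-\alpha_{1}(z)]}{[\alpha_{1}(\zeta)-\alpha_{1}(z)]\,\alpha_{1}'(z)}\right\},
\end{align*}
which places Eq.~\ref{eq:G2PSL2Z_Eisenstein_form} into the same template as Eq.~\ref{eq:deriv_form_G2Hecke234} with $\alpha_{N}$ replaced by $\alpha_{1}$.

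With this reformulation in hand, path independence (and consequently criterion~\ref{itm:AGF1'} in the part concerning $\hat\gamma$-invariance of the contour integral) follows verbatim from the residue calculation presented below Eq.~\ref{eq:G2_three_fns}, since $\alpha_{1}$ is itself $PSL(2,\mathbb Z)$-invariant. The differential equation \ref{itm:AGF2'} follows immediately from the annihilation identity in Eq.~\ref{eq:weight_4_diff_eqn_test}. The logarithmic asymptotic as $z\to z'$ (with $z'$ not elliptic, so $m_{z'}^{PSL(2,\mathbb Z)}=1$) is extracted by noting that the leading singular contribution of $\rho^{\mathfrak H/PSL(2,\mathbb Z)}_2(\zeta,z)$ near $\zeta=z$ comes from the double pole $[\alpha_{1}(\zeta)-\alpha_{1}(z)]^{-2}$, yielding the coefficient $2\log|z-z'|$ exactly as in the proof of Proposition~\ref{prop:G2HeckeNunified}.

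The cusp vanishing \ref{itm:AGF3'} as $z\to i\infty$ is handled following the three-step scheme built on Eq.~\ref{eq:G2HeckeN_unified''}: split the contour integral into the pieces with $\D\zeta$-weights $\zeta^{2}$, $2\zeta x'$, and $|z'|^{2}$, then apply dominated convergence to the first two after invoking the vanishing identity
\begin{align*}
\R\int_{0}^{i\infty}\frac{i[\alpha_{1}'(\zeta)]^{2}\rho^{\mathfrak H/PSL(2,\mathbb Z)}_2(\zeta,z)\,\zeta\,\D\zeta}{\alpha_{1}(\zeta)[1-\alpha_{1}(\zeta)]}=0,\quad\forall\,z\in\mathfrak H,
\end{align*}
which is first checked for $z\in\mathfrak H\cap\partial\mathfrak D$ (where all ingredients of the integrand are real on the imaginary axis) and then extended to the whole upper half-plane via the uniqueness argument for the Laplacian eigenfunction problem on $PSL(2,\mathbb Z)\backslash\mathfrak H$. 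The remaining third piece is treated by integration by parts, converting the integral into a boundary contribution plus a term governed by $NE_{2}(N\zeta)-E_{2}(\zeta)$; in the present $N=1$ setting the analogue is $[E_{6}/E_{4}](\zeta)$ arising from Eq.~\ref{eq:j'z_E4E6}, and one derives an $O(1/y)$ decay by the same three-arc decomposition (near-cusp segment, small semicircle, and the remainder out to $i\infty$).

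The main obstacle is verifying regularity of $\rho^{\mathfrak H/PSL(2,\mathbb Z)}_2(\zeta,z)$ at the two elliptic points $z\in SL(2,\mathbb Z)i$ (order $2$, where $E_{6}$ vanishes) and $z\in SL(2,\mathbb Z)e^{\pi i/3}$ (order $3$, where $E_{4}$ vanishes). The factors $E_{6}(z)/[E_{4}(z)]^{2}$ and $E_{2}(z)-E_{6}(z)/E_{4}(z)$ in Eq.~\ref{eq:rho2PSL2Z_Eisenstein} each have poles at these loci, but the specific linear combination must cancel these singularities, as dictated by the smooth extension of $[E_{4}]^{2}E_{6}/[\Delta\,E_{k}]$ noted after Eq.~\ref{eqn:rho_k_PSL2Z}. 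One verifies this by computing the Laurent expansions of $\alpha_{1}'(z)$ and $\alpha_{1}(z)-\alpha_{1}(z_{\mathrm{ell}})$ at each elliptic fixed point, showing that the apparent singularities are precisely compensated, and moreover that the correct logarithmic coefficient $2m_{z_{\mathrm{ell}}}^{PSL(2,\mathbb Z)}$ (equal to $4$ or $6$) emerges from the enhanced vanishing of $\alpha_{1}'$ there. Once this regularity and the elliptic-point asymptotics are confirmed, the uniqueness statement in Lemma~\ref{lm:spec_AGF} identifies the integral representation with $G_{2}^{\mathfrak H/PSL(2,\mathbb Z)}(z,z')$, with the elliptic-point cases handled by continuity in $z'$ as in the proof of Proposition~\ref{prop:G2HeckeNunified}.
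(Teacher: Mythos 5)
There is a genuine gap, and it sits at the very first step on which everything else in your argument rests. You claim that direct algebra gives
\begin{align*}
\rho^{\mathfrak H/PSL(2,\mathbb Z)}_2(\zeta,z)=\frac{y}{i}\frac{\partial}{\partial y}\left\{\frac{1}{y}\frac{\alpha_{1}(z)[1-\alpha_{1}(z)]}{[\alpha_{1}(\zeta)-\alpha_{1}(z)]\,\alpha_{1}'(z)}\right\},
\end{align*}
but this identity is false. The right-hand side depends on $\zeta$ only through $1/[\alpha_1(\zeta)-\alpha_1(z)]$, and $\alpha_1(\zeta)=\frac{1}{2}\bigl[1-\sqrt{\smash[b]{(j(\zeta)-1728)/j(\zeta)}}\bigr]$ genuinely involves the choice of square-root branch (it distinguishes $\zeta$ from $-1/\zeta$ even though $j$ does not), whereas the left-hand side (Eq.~\ref{eq:rho2PSL2Z_Eisenstein}) is a rational function of $j(\zeta)$. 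The correct statements are the rational reformulation $\rho^{\mathfrak H/PSL(2,\mathbb Z)}_2(\zeta,z)=\frac{y}{864\pi}\frac{\partial}{\partial y}\bigl\{\frac{1}{y}\frac{j(\zeta)j(z)}{j(\zeta)-j(z)}\frac{E_6(z)}{[E_4(z)]^2}\bigr\}$ and, separately, the \emph{symmetrized} decomposition $\rho^{\mathfrak H/PSL(2,\mathbb Z)}_2(\zeta,z)=\varrho(\zeta,z)+\varrho(\zeta,-1/z)$, where $\varrho$ is the single term you wrote; the sum over $z\mapsto-1/z$ is what eliminates the square root in the $z$-variable. Relatedly, your assertion that ``$\alpha_1$ is itself $PSL(2,\mathbb Z)$-invariant'' is wrong: $\alpha_1(-1/z)=1-\alpha_1(z)$, so $\alpha_1$ is invariant only under an index-2 subgroup of $SL(2,\mathbb Z)$. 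Consequently path independence does \emph{not} follow verbatim from the residue computation below Eq.~\ref{eq:G2_three_fns}; one must instead use the variant \ref{eq:G2_three_fns'} built from the genuine invariant $f=j$ and a weight-4 form $M$ (here $M=E_4$, via Eq.~\ref{eq:j'z_E4E6}), applied to the rational form of $\rho_2$ above.

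The downstream steps inherit this defect. The cusp analysis cannot be run on $\rho_2$ directly in the template of Eq.~\ref{eq:G2HeckeN_unified''}; one first replaces $\rho_2$ by $\varrho(\zeta,z)$ to define an auxiliary $\mathcal I_1(z,z')$, proves $\lim_{z\to 0}\mathcal I_1=\lim_{z\to i\infty}\mathcal I_1=0$ exactly as for $N\in\{2,3,4\}$ (using $\varrho(\zeta,z)=\varrho(-1/\zeta,-1/z)$), and then observes that the proposed right-hand side equals $\mathcal I_1(z,z')+\mathcal I_1(-1/z,z')$, which therefore vanishes at the cusp. Finally, your treatment of the elliptic points is more elaborate than needed and partly misdirected: regularity of $\rho_2$ at $E_4(z)=0$ and $j(z)=0$ is settled by the single limit computation $\lim_{j(z)\to0}\rho_2=\lim_{E_4(z)\to0}\rho_2=-4/3$, and the logarithmic coefficient $2m_{z'}$ at elliptic $z'$ is not something to be verified during the main argument --- the criteria are checked with $z'$ non-elliptic and the elliptic cases recovered by continuity in $z'$ at the very end.
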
\begin{proof}As in Proposition~\ref{prop:G2HeckeNunified},  we shall check criteria~\ref{itm:AGF1'}--\ref{itm:AGF3'} for the variable $z\in\mathfrak H$.

The symmetry property in \ref{itm:AGF1'} follows from two simple observations. First, we point out that the function $ \rho^{\mathfrak H/PSL(2,\mathbb Z)}_2(\zeta,z)$ defined in Eq.~\ref{eq:rho2PSL2Z_Eisenstein} is regular so long as $ j(\zeta)\neq j(z)$, according to the following computation:\begin{align}\lim_{j(z)\to0}
\rho^{\mathfrak H/PSL(2,\mathbb Z)}_2(\zeta,z)=\lim_{E_{4}(z)\to0}
\rho^{\mathfrak H/PSL(2,\mathbb Z)}_2(\zeta,z)=-\frac{4}{3}.\label{eq:alpha_inf_rho_lim_PSL2Z}
\end{align}Second, using Eq.~\ref{eq:j'z_E4E6} and Ramanujan's differential equation for the Eisenstein series \cite[][Eq.~30]{Ramanujan1916}, one can rewrite Eq.~\ref{eq:rho2PSL2Z_Eisenstein} as\begin{align}\rho^{\mathfrak H/PSL(2,\mathbb Z)}_2(\zeta,z)=\frac{y}{864 \pi}\frac{\partial}{\partial y}\left\{ \frac{1}{y} \frac{j(\zeta)j(z)}{j(\zeta)-j(z)}\frac{{E_{6}(z)}}{[E_{4}(z)]^2}\right\},
\label{eq:rho2PSL2Z_Eisenstein'}\tag{\ref{eq:rho2PSL2Z_Eisenstein}$'$}
\end{align} so that one may   confirm path independence by a variant of  Eq.~\ref{eq:G2_three_fns}.
 Namely, given that  $ f(\hat \gamma z)=f(z)$ and $ M(\hat \gamma z)=(cz+d)^4 M(z)$ for $ \forall\hat\gamma=\left( \begin{smallmatrix}a&b\\c&d\end{smallmatrix}\right)\in SL(2,\mathbb Z)$, the residues of the following three functions of $ \zeta\in\mathfrak H$:
\begin{align}&M(\zeta)\zeta^ny\frac{\partial}{\partial y}\left\{ \frac{1}{y} \frac{f'(z)}{f(\zeta)-f(z)}\frac{1}{M(z)}\right\}\notag\\ ={}&M(\zeta)\zeta^ny\frac{\partial}{\partial y}\left\{ \frac{1}{y} \frac{f'(z)}{f(\hat\gamma^{-1}\zeta)-f(z)}\frac{1}{M(z)}\right\} ,\quad n\in\{0,1,2\}\label{eq:G2_three_fns'}\tag{\ref{eq:G2_three_fns}$'$}\end{align} at the point $ \zeta=\hat \gamma z$ are all real numbers:\begin{align*}-\frac{1}{\I( \hat \gamma z)},\quad(n=0);\qquad-\frac{\R ( \hat \gamma z)}{\I ( \hat \gamma z)},\quad (n=1);\qquad -\frac{|( \hat \gamma z)|^2}{\I ( \hat \gamma z)},\quad (n=2).\end{align*}

To verify the differential equation in criterion~\ref{itm:AGF2'}, it would suffice to combine Eq.~\ref{eq:rho2PSL2Z_Eisenstein'} with Eq.~\ref{eq:weight_4_diff_eqn_test}.

For criterion~\ref{itm:AGF3'}, the procedures for logarithmic asymptotics are essentially similar to that of  Proposition~\ref{prop:G2HeckeNunified}. For the verification of cusp behavior, we need a  decomposition $ \rho^{\mathfrak H/PSL(2,\mathbb Z)}_2(\zeta,z)=\varrho(\zeta,z)+\varrho(\zeta,-1/z)$ where\begin{align}
\varrho(\zeta,z)=\frac{y}{i }\frac{\partial}{\partial y}\left\{ \frac{1}{y} \frac{\alpha_{1}(z)[1-\alpha_1(z)]}{[\alpha_{1}(\zeta)-\alpha_{1}(z)]\alpha_{1}'(z)}\right\}=\varrho\left( -\frac{1}{\zeta} ,-\frac{1}{z}\right).
\end{align}If we replace $ \rho^{\mathfrak H/PSL(2,\mathbb Z)}_2(\zeta,z)$ by $ \varrho(\zeta,z)$ in  the integrands of Eq.~\ref{eq:G2PSL2Z_Eisenstein_form} and call the result $ \mathcal I_1(z,z')$, then we can justify the limits $ \lim_{z\to0}\mathcal I_1(z,z')=\lim_{z\to i\infty}\mathcal I_1(z,z')=0$ in the same fashion as what we did for $ \mathcal I_N(z,z'),N\in\{2,3,4\}$ in Proposition~\ref{prop:G2HeckeNunified}. The right-hand side of Eq.~\ref{eq:G2PSL2Z_Eisenstein_form}, being equal to  $\mathcal  I_1(z,z')+\mathcal  I_1(-1/z,z')$, should thus vanish as $ z$ goes to the infinite cusp $ i\infty$.

This completes the justification of the integral representation in Eq.~\ref{eq:G2PSL2Z_Eisenstein_form}.       \end{proof}\begin{remark}Recalling the  asymptotic behavior of automorphic Green's functions~\cite[][\S5]{GrossZagier1985}:\begin{align}G_{s}^{\mathfrak H/PSL(2,\mathbb Z)}(z_1,z_2)\sim{}&\frac{2\pi}{1-2s}\frac{y_1^{1-s}y_{2}^{s}}{\zeta(2s)}\sum_{\substack{m,n\in\mathbb Z\\m^2+n^2\neq0}}\frac{1}{|mz_{2}+n|^{2s}},\quad y_1\to+\infty,\R s>1\label{eq:Gs_asympt}\end{align} we now have\begin{align}
\label{eq:Epstein_s2_int_star}
\sum_{\substack{m,n\in\mathbb Z\\m^2+n^2\neq0}}\frac{ y^2}{|mz+n|^{4}}={}&-\frac{\pi^3}{60}\lim_{y'\to+\infty}G_2^{\mathfrak H/PSL(2,\mathbb Z)}(z,z')y'\notag\\={}&\frac{144\pi^{5}}{5}\R\int_{0}^{i\infty} \frac{E_{4}(\zeta)}{j(\zeta)}\rho^{\mathfrak H/PSL(2,\mathbb Z)}_2(\zeta,z)\frac{\zeta^{2}\D \zeta}{i},
\end{align}where each one of the three terms is $ SL(2,\mathbb Z)$-invariant with respect to $z$.

For $ z=i$, one has the asymptotic behavior\begin{align}
G_s^{\mathfrak H/PSL(2,\mathbb Z)}(z,i)\sim\frac{4\pi}{1-2s}\frac{2\zeta(s)L(s,\chi_{-4})}{\zeta(2s) y^{s-1}}:=\frac{4\pi}{1-2s}\frac{2\zeta(s)}{\zeta(2s) y^{s-1}}\sum^\infty_{n=0}\frac{(-1)^n}{(2n+1)^s},\quad y\to+\infty,
\label{eq:Gs_zi_asympt}\end{align}according to Eq.~\ref{eq:Gs_asympt} and an integration of the two-squares theorem~\cite[][p.~298]{SteinII}\begin{align}\label{eq:two_squares}\sum_{\substack{m,n\in\mathbb Z\\m^2+n^2\neq0}}y^{s-1}e^{-(m^{2}+n^2)y}=2\sum_{\ell=1}^\infty\frac{y^{s-1}}{\cosh\ell y}\end{align}over $ y\in(0,+\infty)$. In Eq.~\ref{eq:Gs_zi_asympt}, we have used the following notation for $ L$-functions:
\begin{align}
L(s,\chi_D):=\sum_{n=1}^\infty\left( \frac{D}{n} \right)\frac{1}{n^s}\label{eq:L_s_chi_D_defn}
\end{align}where $ \left(\frac{\cdot}{ n}\right)$ is the Jacobi--Kronecker symbol.

In particular, for $ s=2$, the right-hand side of Eq.~\ref{eq:Gs_zi_asympt} involves Catalan's constant $ G:=\sum_{n=0}^\infty(-1)^n(2n+1)^{-2}=\frac{1}{2}\int_0^\infty y/(\cosh y)\D y$. Noting that Eq.~\ref{eq:rho2PSL2Z_Eisenstein} entails $  \rho^{\mathfrak H/PSL(2,\mathbb Z)}_2(\zeta,i)=2j(\zeta)/[j(\zeta)-1728]=2j(\zeta)\Delta(\zeta)/[E_{6}(\zeta)]^{2}$, one may combine Eqs.~\ref{eq:G2PSL2Z_Eisenstein_form} and \ref{eq:Epstein_s2_int_star} into the following  identity:\begin{align}&
G_2^{\mathfrak H/PSL(2,\mathbb Z)}(i,z)=-\frac{40 G}{\pi y}-3456\pi^{2}\R\int_{z/i}^\infty\frac{E_4(it)\Delta(it)}{[E_6(it)]^{2}}\frac{(t+ix)^2- y^2}{y}\D t,\quad j(z)\neq 1728,\label{eq:G2_PSL2Z_z_i}
\end{align}which incorporates the integral representation of  $ G_2^{\mathfrak H/PSL(2,\mathbb Z)}(i,i\sqrt{2})$ (Eq.~\ref{eq:KZ_laconic}) mentioned in the survey of Kontsevich--Zagier \cite{KontsevichZagier} as a special case.
\eor\end{remark}\begin{remark}So far, for an arbitrary pair of non-equivalent points $ z,z'\in\mathfrak H$, we have furnished integral representations for weight-4 automorphic Green's functions on the  congruence subgroups  $ PSL(2,\mathbb Z)=\overline \varGamma_0(1),$ $\overline \varGamma_0(2),$ $\overline \varGamma_0(3)$ and $\overline \varGamma_0(4)$, in the spirit of Kontsevich and Zagier. This completes the proof of Theorem~\ref{thm:KZ_int_repns}\ref{itm:KZ_b}, as well as the $ k=4$ case of Theorem~\ref{thm:KZ_int_repns}\ref{itm:KZ_a}. The Gross--Kohnen--Zagier algebraicity conjecture (boxed equation in \S\ref{subsec:background}) can be accessed by the analysis of the corresponding integral representations of automorphic Green's functions at all the CM points.
One may  wish to carry on Mellit's algebro-geometric proof  of the claim $ \exp(G_2^{\mathfrak H/PSL(2,\mathbb Z)}(z,i)\I z)\in\overline{\mathbb Q}$ at CM values of $ z$ \cite{MellitThesis} to the analysis of other integral representations in  this subsection (as well as the rest of Theorem \ref{thm:KZ_int_repns}), in an attempt to verify the  Gross--Kohnen--Zagier algebraicity conjecture for all the cusp-form-free scenarios. Whilst it is plausible to pursue such an abstract approach to the Gross--Kohnen--Zagier algebraicity of automorphic Green's functions in the language of Arakelov intersections and Chow groups for elliptic curves~\cite{SWZhang1997,MellitThesis} as well as   Mellit's criterion of ``geometric representability''~\cite{MellitThesis}, it is beyond the scope of this article. In the rest of this paper (see, in particular,  \S\S\ref{subsec:add_form_Legendre_Ramanujan}, \ref{subsec:int_repn_G2_GZ_rn} and \ref{subsec:G2_Hecke4_GZ_rn}), we will only evaluate a special subset of weight-4 automorphic Green's functions, using explicit and constructive methods.\eor
\end{remark}
\subsection{Addition Formulae and  Legendre--Ramanujan Representations for Automorphic Green's Functions\label{subsec:add_form_Legendre_Ramanujan}}
The automorphic Green's functions on different congruence subgroups are related to each other by some ``addition formulae''. Apart from the  congruence subgroups $ \varGamma_0(N)$ ($ N\in\{1,2,3,4\}$) studied in \S\ref{subsec:KZ_integrals}, it is sometimes useful to consider the $ \varLambda$-group and $ \vartheta$-group, so as to facilitate the statements for certain types of  ``addition formulae''.

Here, the  $ \varLambda$-group $\varLambda\equiv \varGamma(2)$ is identical to the principal congruence subgroup of level 2, where~\cite[][Eq.~1.6.1]{Shimura1994}: \[ SL(2,\mathbb Z)\supseteq \varGamma_0(N)\supseteq\varGamma(N):=\left\{\left.\begin{pmatrix}Na+1&Nb\\Nc&Nd+1\end{pmatrix}\right|a,b,c,d\in\mathbb Z,(Na+1)(Nd+1)-N^{2}bc=1\right\}.\]The   $ \varLambda$-group characterizes the symmetry of the modular lambda function   $ \lambda(z):=\frac{16[\eta (z/2)]^8 [\eta (2 z)]^{16}}{[\eta (z)]^{24}},$ $ z\in\mathfrak H $, as $ \lambda(\hat \gamma z)=\lambda(z)$ for any $ \hat \gamma\in\varLambda$. The canonical isomorphism $ \varGamma_0(4)\longrightarrow \varGamma(2)$ is induced by  the duplication map $ \left( \begin{smallmatrix}2&0\\0&1\end{smallmatrix} \right):z\mapsto 2z$~\cite[][p.~28]{Zagier2008Mod123}.
Therefore, one has (see Eq.~\ref{eq:alpha2_alpha4}) \begin{align}
G_s^{\mathfrak H/\overline{\varGamma}_{0}(4)}(z_1,z_2)=G_{s}^{\mathfrak H/\overline{\varGamma}(2)}(2z_{1},2z_{2}),\quad z_1\in\mathfrak H\smallsetminus(\varGamma_0(4)z_{2}),\R s>1.\label{eq:Hecke4_add}
\end{align}

The function $ 1-\alpha_2((z-1)/2)=1/[1-2\lambda(z)]^{2}$ (see Eq.~\ref{eq:alpha2_alpha4}) is invariant under the transformations $\hat T^2:z\mapsto z+2$ and $\hat S:z\mapsto -1/z$, which are generators of the projective $ \vartheta$-group $ \overline{\varGamma}_{\vartheta}$~\cite[][p.~85]{Schoeneberg}.
The $ \vartheta$-group $ \varGamma_{\vartheta}$ is  related to the Hecke congruence group $ \varGamma_0(2)$ by a conjugation~\cite[][p.~86]{Schoeneberg}:\begin{align}\begin{pmatrix}1 & -1 \\
1 & 0 \\
\end{pmatrix}\varGamma_0(2)=\varGamma_{\vartheta}\begin{pmatrix}1 & -1 \\
1 & 0 \\
\end{pmatrix}
.\label{eq:theta_Hecke2_conj}
\end{align}Naturally, one can show that\begin{align}
G_{s}^{\mathfrak H/\overline{\varGamma}_\vartheta}(z,z')=G_s^{\mathfrak H/\overline{\varGamma}_0(2)}\left( -\frac{1}{z-1},-\frac{1}{z'-1} \right)=G_s^{\mathfrak H/\overline{\varGamma}_0(2)}\left( \frac{z-1}{2} ,\frac{z'-1}{2}\right),\label{eq:GsTheta_GsHecke2_reln}
\end{align}by  invoking the conjugation relation $ \varGamma_0(2)\cong \varGamma_\vartheta$ (Eq.~\ref{eq:theta_Hecke2_conj}) as well as the Fricke involution on $ \varGamma_0(2)\backslash\mathfrak H^*$ (Eq.~\ref{eq:Fricke_inv_HeckeN_GZ}).
\begin{proposition}[Some Addition Formulae for Automorphic Green's Functions]\label{prop:add_form_auto_Green}We have the following algebraic identities whenever all the summands are finite:\begin{align}G_s^{\mathfrak H/PSL(2,\mathbb Z)}(z,z')={}&G_s^{\mathfrak H/\overline{\varGamma}_\vartheta}(z,z')+G_s^{\mathfrak H/\overline{\varGamma}_\vartheta}\left(-\frac{z+1}{z},z'\right)+G_s^{\mathfrak H/\overline{\varGamma}_\vartheta}(z+1,z'),
\label{eq:SL2Z_Theta_add}\\
G_s^{\mathfrak H/PSL(2,\mathbb Z)}(z,z')={}&G_s^{\mathfrak H/\overline{\varGamma}_{0}(3)}(z,z')+G_s^{\mathfrak H/\overline{\varGamma}_{0}(3)}\left( -\frac{1}{z} ,z'\right)+G_s^{\mathfrak H/\overline{\varGamma}_{0}(3)}\left( -\frac{1}{z+1} ,z'\right)+G_s^{\mathfrak H/\overline{\varGamma}_{0}(3)}\left( -\frac{1}{z-1} ,z'\right),\label{eq:Gs_Hecke3_add_G2_PSL}\\G_s^{\mathfrak H/PSL(2,\mathbb Z)}(z,z')={}&G_s^{\mathfrak H/\overline{\varGamma}(2)}(z,z')+G_s^{\mathfrak H/\overline{\varGamma}(2)}\left(-\frac{1}{z},z'\right)+G_s^{\mathfrak H/\overline{\varGamma}(2)}\left(-\frac{z+1}{z},z'\right)+G_s^{\mathfrak H/\overline{\varGamma}(2)}\left(\frac{z\vphantom{1}}{z+1},z'\right)\notag\\& +G_s^{\mathfrak H/\overline{\varGamma}(2)}(z+1,z')+G_s^{\mathfrak H/\overline{\varGamma}(2)}\left(-\frac{1}{z+1},z'\right),\label{eq:SL2Z_Gamma2_add}
\\G_s^{\mathfrak H/\overline{\varGamma}_{0}(2)}(z,z')={}&G_s^{\mathfrak H/\overline{\varGamma}(2)}(z,z')+G_s^{\mathfrak H/\overline{\varGamma}(2)}(z+1,z'),\label{eq:Hecke2_Gamma2_add}\\G_s^{\mathfrak H/\overline{\varGamma}_\vartheta}(z,z')={}&G_s^{\mathfrak H/\overline{\varGamma}(2)}(z,z')+G_s^{\mathfrak H/\overline{\varGamma}(2)}\left(-\frac1z,z'\right).\label{eq:G_s_Theta_add_form}\end{align}\end{proposition}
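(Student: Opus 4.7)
The plan is to derive each of Eqs.~\ref{eq:SL2Z_Theta_add}--\ref{eq:G_s_Theta_add_form} as a direct consequence of the coset decomposition of the larger group into translates of the smaller one, combined with the $SL(2,\mathbb R)$-invariance of the free-space resolvent kernel recorded in Eq.~\ref{eq:SL2_homogeneity}. Each asserted identity reflects an inclusion $\overline\varGamma\leq\overline\varGamma'$ of projective congruence subgroups whose index matches the number of summands on the right-hand side: $[\overline\varGamma':\overline\varGamma]=3,4,6,2,2$ for Eqs.~\ref{eq:SL2Z_Theta_add}--\ref{eq:G_s_Theta_add_form} respectively, as one reads off from the tessellations in Fig.~\ref{fig:fundomain}.

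Starting from Eq.~\ref{eq:auto_Green_defn_Q_nu}, Green's reciprocity (Remark~\ref{rmk:Green_recip}) combined with the symmetry of the $Q$-kernel in its two points and Eq.~\ref{eq:SL2_homogeneity} allows one to transport the group action onto the first slot:
\begin{align*}
G_s^{\mathfrak H/\overline\varGamma'}(z,z')=-2\sum_{\hat\gamma'\in\overline\varGamma'}Q_{s-1}\!\left(1+\frac{|\hat\gamma' z-z'|^2}{2\,\I(\hat\gamma' z)\,y'}\right).
\end{align*}
Fixing a right coset decomposition $\overline\varGamma'=\bigsqcup_{i=1}^{n}\overline\varGamma\,\hat\delta_i$ and writing each $\hat\gamma'=\hat\beta\hat\delta_i$ uniquely with $\hat\beta\in\overline\varGamma$, the inner sum over $\hat\beta$ collapses to $-\tfrac12 G_s^{\mathfrak H/\overline\varGamma}(\hat\delta_i z,z')$, yielding the universal rule
\begin{align*}
G_s^{\mathfrak H/\overline\varGamma'}(z,z')=\sum_{i=1}^{n}G_s^{\mathfrak H/\overline\varGamma}(\hat\delta_i z,z').
\end{align*}
Each individual identity then follows by matching the list $\{\hat\delta_i z\}$ (up to an inner $\overline\varGamma$-shift removed by invariance) with the arguments written in the proposition. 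For instance, in Eq.~\ref{eq:SL2Z_Theta_add} the transformations $z,\,-\tfrac{z+1}{z},\,z+1$ correspond to $\hat\delta_i\in\{\hat I,\hat T^{-1}\hat S,\hat T\}$, whose disjointness as right cosets of $\overline\varGamma_\vartheta$ is checked by a mod-$2$ reduction; the four cosets underlying Eq.~\ref{eq:Gs_Hecke3_add_G2_PSL} are verified analogously via the $SL(2,\mathbb Z)$-action on $\mathbb P^1(\mathbb F_3)$; and for Eqs.~\ref{eq:SL2Z_Gamma2_add}--\ref{eq:G_s_Theta_add_form} the normality of $\overline\varGamma(2)$ in $PSL(2,\mathbb Z)$ (with quotient $S_3$) makes the six-coset decomposition automatic, the pair decompositions in Eqs.~\ref{eq:Hecke2_Gamma2_add}--\ref{eq:G_s_Theta_add_form} following by restriction together with the conjugation in Eq.~\ref{eq:theta_Hecke2_conj}.

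The main obstacle is bookkeeping for the two non-normal inclusions $\overline\varGamma_\vartheta,\overline\varGamma_0(3)\leq PSL(2,\mathbb Z)$: the canonical representatives one picks may produce values $\hat\delta_i z$ that differ from the arguments in the statement by an element of $\overline\varGamma$, which must be absorbed by invariance (for instance $\hat T^{\,2}\in\overline\varGamma(2)$ turns $z-1$ into $z+1$ in Eq.~\ref{eq:Hecke2_Gamma2_add}). The interchange of summations is legitimate because $Q_{s-1}\geq 0$ and the defining series converges absolutely for $\R s>1$, while the hypothesis ``whenever all the summands are finite'' simply ensures that every individual term of the resulting identity is meaningful, since $z\notin\overline\varGamma'z'$ on the left-hand side is equivalent to $\hat\delta_i z\notin\overline\varGamma z'$ for each $i$ on the right.
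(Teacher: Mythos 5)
Your proof is correct, but it follows a genuinely different route from the paper's. The paper disposes of Proposition~\ref{prop:add_form_auto_Green} in one line by invoking the uniqueness machinery of Lemma~\ref{lm:spec_AGF}: one checks that the right-hand side of each identity satisfies criteria \ref{itm:AGF1'}--\ref{itm:AGF3'} in the variable $z$ (periodicity under the larger group, the Maa{\ss} differential equation, the logarithmic singularity at $\varGamma' z'$ and decay at the cusps), and the details are left to the reader. You instead work directly from the series definition in Eq.~\ref{eq:auto_Green_defn_Q_nu}: using the symmetry and $SL(2,\mathbb R)$-homogeneity of the resolvent kernel (Eq.~\ref{eq:SL2_homogeneity}) to move the group action to the first slot, and then splitting the sum over $\overline\varGamma'$ along a right-coset decomposition $\overline\varGamma'=\bigsqcup_i\overline\varGamma\,\hat\delta_i$, you obtain the universal rule $G_s^{\mathfrak H/\overline\varGamma'}(z,z')=\sum_i G_s^{\mathfrak H/\overline\varGamma}(\hat\delta_i z,z')$ and then only need to identify the representatives. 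Your approach is the more elementary and more transparent one for this particular statement: it makes the count of summands (the remark following the proposition about indices $3,4,6,2,2$) a tautology rather than an observation, it handles all five identities uniformly, and the rearrangement is justified by the absolute convergence of the defining series for $\R s>1$ (your appeal to $Q_{s-1}\geq 0$ is only literally correct for real $s$, but absolute convergence, which you also cite, covers complex $s$). What the paper's route buys is consistency with the method used everywhere else in \S\ref{sec:int_repn_AGF}, where the objects being compared are \emph{integral} representations rather than series, so that a coset rearrangement is unavailable and the spectral characterization is the only tool; for the present proposition, where both sides are absolutely convergent Poincar\'e-type series, your argument is the natural one. Your coset identifications (mod-$2$ reduction for $\overline\varGamma_\vartheta$, the action on $\mathbb P^1(\mathbb F_3)$ for $\overline\varGamma_0(3)$, normality of $\overline\varGamma(2)$ with quotient $S_3$) and the equivalence of the finiteness hypotheses on the two sides are all correct.
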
\begin{proof}One simply goes through criteria \ref{itm:AGF1'}--\ref{itm:AGF3'} for the right-hand side of each proposed formula,  with respect to the variable $z $. The details of these routine procedures are left to the readers.
  \end{proof}

\begin{remark}The numbers of terms on both sides of Eqs.~\ref{eq:SL2Z_Theta_add}--\ref{eq:SL2Z_Gamma2_add} are consistent with the indices of subgroups $ [\varGamma_0(1):\varGamma_0(2)]=3$,  $ [\varGamma_0(1):\varGamma_0(3)]=4$ and  $ [\varGamma_0(1):\varGamma_0(4)]=6$ (see Fig.~\ref{fig:fundomain}\textit{f}), while the situations in Eqs.~\ref{eq:Hecke2_Gamma2_add} and \ref{eq:G_s_Theta_add_form} are compatible with $[\varGamma_0(2):\varGamma_0(4)] =[\varGamma_0(1):\varGamma_0(4)]/[\varGamma_0(1):\varGamma_0(2)]=2$. If our scope is not restricted to  the groups  isomorphic to $ \varGamma_0(N)$ ($N\in\{1,2,3,4\}$), then it is also possible to construct other addition formulae in a similar spirit. \eor\end{remark}

The integral representations for $ G_2^{\mathfrak H/\overline\varGamma_0(N)}(z,z')$ ($ N\in\{1,2,3,4\}$) in Eqs.~\ref{eq:G2HeckeN_unified} and \ref{eq:G2PSL2Z_Eisenstein_form} involve the Eisenstein series. One can recast such integral representations in terms of fractional degree Legendre functions $P_{-1/2} $, $ P_{-1/3}$, $ P_{-1/4}$ and $ P_{-1/6}$. The case of $P_{-1/2} $ is directly related  to the complete elliptic integral of the first kind    $\mathbf K(\sqrt{\lambda})=\frac{\pi}{2}P_{-1/2}(1-2\lambda)=\int_0^{\pi/2}(1-\lambda\cos^2\theta)^{-1/2}\D \theta,\lambda\in\mathbb C\smallsetminus[1,+\infty)$.  By convention, for $ \lambda>1$, one defines $ \mathbf K(\sqrt{\lambda}):=\mathbf K(\sqrt{\lambda-i0^+})=\overline{\mathbf K(\sqrt{\lambda+i0^+})}$. The other three Legendre functions  $ P_{-1/3}$, $ P_{-1/4}$ and $ P_{-1/6}$ feature prominently in Ramanujan's elliptic function theory to alternative bases \cite[][Chap.~33]{RN5}.

For any complex degree $\nu$, the Legendre function of the first kind $P_\nu $ is defined via the Mehler--Dirichlet integral
\begin{align}P_\nu(1)=1;\quad P_\nu(\cos\theta):=\frac{2}{\pi}\int_0^\theta\frac{\cos\frac{(2\nu+1)\beta}{2}}{\sqrt{\smash[b]{2(\cos\beta-\cos\theta)}}}\D\beta,\quad \theta\in(0,\pi),\nu\in\mathbb C, \label{eq:Pnu_defn_MD}\end{align}and admits an analytic continuation as a well-defined function on the slit plane: $ P_{\nu }(\xi),\xi\in\mathbb C\smallsetminus(-\infty,-1]$.
In particular, for $ -1<\nu<0$, one can  use the Euler integral representation of hypergeometric functions:\footnote{In our notation, the upright $ \Gamma$ is reserved for the Euler integral of the second kind $ \Gamma(\xi):=\int_0^\infty t^{\xi-1}e^{-t}\D t,\xi>0$ and its analytic continuation, in contrast to the  congruence subgroup   $ \varGamma$ set in slanted typeface.}\begin{align}
_2F_1\left( \left.\begin{array}{c}
a,b \\
c \\
\end{array}\right|w \right)=\frac{\Gamma(c)}{\Gamma(b)\Gamma(c-b)}\int_0^1t^{b-1}(1-t)^{c-b-1}(1-tw)^{-a}\D t,\quad \R c>\R b>0,-\pi<\arg(1-w)<\pi\label{eq:Euler_int}
\end{align}to define  $ P_{\nu}(\xi)={_2}F_1\left( \left.\begin{smallmatrix}-\nu,\nu+1\\1\end{smallmatrix}\right| \smash{\frac{1-\xi}{2}}\right),\xi\in\mathbb C\smallsetminus(-\infty,-1] $.
Hereinafter, unless explicitly specified otherwise (such as the sign convention for $ \sqrt{\smash[b]{(j(z)-1728)/j(z)}}$), fractional powers of non-zero complex numbers are taken as $ w^\nu:=e^{\nu\log w}$ where $ \log w:=\log|w|+i\arg w$, $ \log|w|\in\mathbb R$ and $ -\pi<\arg w\leq \pi$.

\begin{proposition}[Integral Representations for $ G_2^{\mathfrak H/\overline{\varGamma}_0(N)}(z,z')$, $N\in\{1,2,3,4\}$ Using Legendre Functions]\label{prop:G2_HeckeN_Pnu}For degrees $ \nu\in\{-1/2,-1/3,-1/4,-1/6\}$, the function \begin{align}
R_{\nu}( \xi):={}&\frac{1-\xi^{2}}{P_{\nu}(\xi)}\frac{\D P_{\nu}(\xi)}{\D \xi}+\frac{1-\xi^{2}}{P_{\nu}(-\xi)}\frac{\D P_{\nu}(-\xi)}{\D \xi}-\frac{\sin(\nu\pi)}{\pi}\left\{\frac{1}{ [P_{\nu}(\xi)]^{2}\I \frac{iP_{\nu}(-\xi)}{P_{\nu}(\xi)}}-\frac{1}{[P_{\nu}(-\xi)]^{2}\I \frac{iP_{\nu}(\xi)}{P_{\nu}(-\xi)}}\right\},\label{eq:R_nu_defn}
\end{align}is well-defined for $ \xi\in\mathbb C\smallsetminus((-\infty,-1]\cup[1,+\infty))$, and  extends continuously to all $ \xi\in\mathbb C$,  smoothly across the branch cut of $ P_\nu(\xi),\xi\in\mathbb C\smallsetminus(-\infty,-1]$:\begin{align}
R_\nu(1)=-R_\nu(-1):=\lim_{\xi\to1}R_\nu(\xi)=0;\quad R_\nu(\xi)=-R_\nu(-\xi):= R_\nu(\xi+i0^+)\equiv R_\nu(\xi-i0^+),\quad \forall \xi>1.\label{eq:R_nu_defn_ext}
\end{align}For degrees $ \nu\in\{-1/2,-1/3,-1/4,-1/6\}$, and the corresponding levels $ N=4\sin^2(\nu\pi)\in\{1,2,3,4\}$, define \begin{align}\varrho_{2,\nu}(\xi|z):=\frac{4\alpha_N(z)[1-\alpha_N(z)]}{[\xi-1+2\alpha_N(z)]^{2}}-\frac{R_{\nu}(1-2\alpha_N(z))}{\xi-1+2\alpha_N(z)},\quad \emph{a.e. }\xi\in\mathbb C, z\in\mathfrak H,\label{eq:rho_2_nu_xi_z_defn}\end{align}and $ \rho_{2,-1/6}(\xi|z)=\varrho_{2,-1/6}(\xi|z)+\varrho_{2,-1/6}(\xi|-1/z)$, then we have an integral representation
{\allowdisplaybreaks\begin{align}
G_2^{\mathfrak H/PSL(2,\mathbb Z)}(z,z')={}&\frac{\pi}{\I \frac{iP_{-1/6}(-\sqrt{(j(z')-1728)/j(z')})}{P_{-1/6}(\sqrt{(j(z')-1728)/j(z')})}}\R\int_{\sqrt{(j(z')-1728)/{j(z')}}}^1[P_{-1/6}(\xi)]^2\rho_{2,-1/6}(\xi|z)\times\notag\\&\times\left[\frac{iP_{-1/6}(-\xi)}{P_{-1/6}(\xi)}-\vphantom{\overline{\frac{\sqrt{j}}{}}}\frac{iP_{-1/6}(-\sqrt{(j(z')-1728)/j(z')})}{P_{-1/6}(\sqrt{(j(z')-1728)/j(z')})}\right]\times\notag\\&\times\left[\frac{iP_{-1/6}(-\xi)}{P_{-1/6}(\xi)}-\overline{\left(\frac{iP_{-1/6}(-\sqrt{(j(z')-1728)/j(z')})}{P_{-1/6}(\sqrt{(j(z')-1728)/j(z')})}\right)}\right]\D \xi\notag\\&+\frac{\pi}{\I \frac{iP_{-1/6}(-\sqrt{(j(z')-1728)/j(z')})}{P_{-1/6}(\sqrt{(j(z')-1728)/j(z')})}
}\R\int_{-1}^1[P_{-1/6}(\xi)]^2\rho_{2,-1/6}(\xi|z)\D \xi,\quad \emph{a.e. }z,z'\in\mathfrak H\label{eq:G2_z_z'_arb}
\intertext{and the following integral representations for $ \nu\in\{-1/2,-1/3,-1/4\}$ and  $ N=4\sin^2(\nu\pi)\in\{2,3,4\}$:}G_2^{\mathfrak H/\overline{\varGamma}_0(N)}(z,z')={}&\frac{\pi}{\sqrt{N}\I\frac{iP_\nu(2\alpha_{N}(z')-1)}{P_\nu(1-2\alpha_{N}(z'))}}\R\int_{1-2\alpha_{N}(z')}^1[P_\nu(\xi)]^2\varrho_{2,\nu}(\xi|z)\times\notag\\&\times\left[\frac{iP_\nu(-\xi)}{P_\nu(\xi)}-\vphantom{\overline{\frac{1}{}}}\frac{iP_\nu(2\alpha_{N}(z')-1)}{P_\nu(1-2\alpha_{N}(z'))}\right]\left[\frac{iP_\nu(-\xi)}{P_\nu(\xi)}-\overline{\left(\frac{iP_\nu(2\alpha_{N}(z')-1)}{P_\nu(1-2\alpha_{N}(z'))}\right)}\right]\D \xi\notag\\&+\frac{\pi}{\sqrt{N}\I \frac{iP_{\nu}(2\alpha_{N}(z')-1)}{P_\nu(1-2\alpha_{N}(z'))}
}\R\int_{-1}^1[P_{\nu}(-\xi)]^2\varrho_{2,\nu}(\xi|z)\D \xi,\quad \emph{a.e. }z,z'\in\mathfrak H.\label{eq:G2Hecke234_Pnu}\end{align}}Here in Eqs.~\ref{eq:G2_z_z'_arb} and \ref{eq:G2Hecke234_Pnu}, the paths of integration can be taken as any curves in the double slit plane $ \xi\in \mathbb C\smallsetminus((-\infty,-1]\cup[1,+\infty))$ that miss the singularities of the integrands.
\end{proposition}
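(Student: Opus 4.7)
The plan is to derive Eqs.~\ref{eq:G2_z_z'_arb} and \ref{eq:G2Hecke234_Pnu} from the Eisenstein-series representations established in Propositions~\ref{prop:G2HeckeNunified} and \ref{prop:G2PSL2Z}, by performing the change of variables $\xi = 1-2\alpha_N(\zeta)$ in the outer integration variable. Ramanujan's alternative-base theory \cite[Chap.~33]{RN5}, applied at signature $r=-1/\nu\in\{2,3,4,6\}$ with matching level $N = 4\sin^2(\nu\pi)\in\{1,2,3,4\}$, supplies the quasi-period parametrisation $\sqrt{N}\,\zeta = iP_\nu(-\xi)/P_\nu(\xi)$ for $N\in\{2,3,4\}$ (with an $\hat S$-symmetrised analogue for $N=1$), together with an Eisenstein-type identity of the form $NE_2(N\zeta)-E_2(\zeta)\propto [P_\nu(\xi)]^2$. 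In conjunction with Eq.~\ref{eq:alpha_N_deriv_E2}, these identities convert every Eisenstein factor appearing in the integrands of Eqs.~\ref{eq:G2HeckeN_unified} and \ref{eq:G2PSL2Z_Eisenstein_form} into a $P_\nu(\pm\xi)$-expression, and transform $d\zeta$ into $d\xi/[P_\nu(\xi)]^2$ up to an explicit constant depending only on $N$.

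Next I would identify the transformed kernel. A direct computation shows that the squared-denominator piece of $\varrho^{\mathfrak H/\overline\varGamma_0(N)}_2(\zeta,z)$ from Eq.~\ref{eq:rho2HeckeN_unified} produces the first summand $4\alpha_N(z)[1-\alpha_N(z)]/[\xi-1+2\alpha_N(z)]^2$ of $\varrho_{2,\nu}(\xi|z)$, while the $E_2$-dependent single-pole piece, evaluated \emph{at the fixed point} $z$, yields the second summand with coefficient $R_\nu(1-2\alpha_N(z))$. Decomposing $E_2 = E_2^* - 6/(\pi y)$, the holomorphic $E_2^*$-part yields the first two summands of $R_\nu$ in Eq.~\ref{eq:R_nu_defn} via the logarithmic derivatives $P_\nu'(\pm\xi)/P_\nu(\pm\xi)$, whereas the non-holomorphic $-6/(\pi y)$ correction is responsible for the last two summands involving $1/\I[iP_\nu(\mp\xi)/P_\nu(\pm\xi)]$, since $y$ is proportional to that imaginary part through the quasi-period relation. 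The weight factors $(\zeta-z')(\zeta-\overline{z'})$ and $\zeta^2$ translate, after substitution at $\zeta=z'$, into precisely the Blaschke-style brackets displayed in Eqs.~\ref{eq:G2_z_z'_arb} and \ref{eq:G2Hecke234_Pnu}, and the prefactor $1/(\sqrt{N}\,\I[iP_\nu(2\alpha_N(z')-1)/P_\nu(1-2\alpha_N(z'))])$ absorbs the factor $y' = \I z'$. For $N=1$ the symmetrisation $\rho_{2,-1/6}(\xi|z) = \varrho_{2,-1/6}(\xi|z)+\varrho_{2,-1/6}(\xi|-1/z)$ accounts for the two-sheeted character of $\alpha_1$ (Eq.~\ref{eq:alpha1_defn}) and reproduces $\rho^{\mathfrak H/PSL(2,\mathbb Z)}_2$ of Eq.~\ref{eq:rho2PSL2Z_Eisenstein}.

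The main obstacle is the regularity claim \ref{eq:R_nu_defn_ext} for $R_\nu$: continuous extension to $\xi=\pm1$ with $R_\nu(\pm1)=0$, and smooth extension across the branch cuts $(-\infty,-1]\cup[1,+\infty)$ of $P_\nu$. As $\xi\to 1$, $P_\nu(\xi)\to 1$ while $P_\nu(-\xi)$ acquires the standard logarithmic singularity $\sim -(\sin\nu\pi/\pi)\log(1-\xi)$, so each of the four summands in Eq.~\ref{eq:R_nu_defn} individually diverges. The cancellation rests on the Legendre Wronskian
\begin{align*}
P_\nu(\xi)\,\frac{dP_\nu(-\xi)}{d\xi}-P_\nu(-\xi)\,\frac{dP_\nu(\xi)}{d\xi} = \frac{2\sin(\nu\pi)}{\pi(1-\xi^2)},
\end{align*}
which matches the leading singular behaviour of the first pair of summands against that of the second pair, yielding the finite limit $0$. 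Across the branch cut $\xi>1$ the Kummer-type monodromy relations for $P_\nu$ imply $R_\nu(\xi+i0^+) = R_\nu(\xi-i0^+)\in\mathbb R$ as well as the odd symmetry $R_\nu(-\xi) = -R_\nu(\xi)$. Once $R_\nu$ is established as a continuous real-valued function on $\mathbb C$, the path-independence asserted in Eqs.~\ref{eq:G2_z_z'_arb} and \ref{eq:G2Hecke234_Pnu} is inherited from the residue analysis already carried out in Propositions~\ref{prop:G2HeckeNunified} and \ref{prop:G2PSL2Z}, so the claimed $P_\nu$-representations follow.
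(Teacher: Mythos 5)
Your main line of attack coincides with the paper's: both proofs perform the substitution $\xi=1-2\alpha_N(\zeta)$ in the Eisenstein-series representations of Propositions~\ref{prop:G2HeckeNunified} and \ref{prop:G2PSL2Z}, using Ramanujan's alternative-base identities $[P_\nu(1-2\alpha_N(z))]^2=\frac{NE_2(Nz)-E_2(z)}{N-1}$ and $[P_{-1/6}(\sqrt{(j(z)-1728)/j(z)})]^{12}=\Delta(z)j(z)$, the quasi-period relation $z=\frac{iP_\nu(2\alpha_N(z)-1)}{\sqrt{N}P_\nu(1-2\alpha_N(z))}$, and the derivative formula $\frac{\D}{\D\xi}\frac{P_\nu(-\xi)}{P_\nu(\xi)}=-\frac{2\sin(\nu\pi)}{\pi(1-\xi^2)[P_\nu(\xi)]^2}$ to convert the measure. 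Your identification of the transformed kernel (squared-pole piece versus the $E_2$-dependent simple-pole piece, with the holomorphic/non-holomorphic split of $E_2$ producing respectively the logarithmic-derivative and the $\I$-type summands of $R_\nu$) is the right heuristic; note only that $E_2=E_2^*-\frac{3}{\pi y}$ under the paper's normalization \ref{eq:E2_defn}, not $E_2^*-\frac{6}{\pi y}$. Where the two arguments part ways is the regularity claim \ref{eq:R_nu_defn_ext}: the paper disposes of it by exhibiting $R_\nu(1-2\alpha_N(z))$ explicitly as the Eisenstein-series combination in Eqs.~\ref{eq:R_sixth_Eisenstein}--\ref{eq:Rnu_Eisenstein}, which is manifestly real-analytic in $z\in\mathfrak H$ and simultaneously completes the kernel matching $\varrho_{2,\nu}(1-2\alpha_N(\zeta)|z)=\varrho_2^{\mathfrak H/\overline\varGamma_0(N)}(\zeta,z)$; you instead propose a direct local analysis at $\xi=\pm1$ plus monodromy across the cuts.

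That local analysis contains a genuine error. As $\xi\to1$ none of the four summands of \ref{eq:R_nu_defn} diverges, so there is no Wronskian cancellation to invoke: the first summand is $O(1-\xi^2)$ since $P_\nu(\xi)\to1$ with bounded derivative; in the second, $(1-\xi^2)\frac{\D P_\nu(-\xi)}{\D\xi}$ stays bounded (by the very logarithmic singularity you cite) while the denominator $P_\nu(-\xi)$ blows up, so the quotient tends to $0$; and for real $\xi\in(-1,1)$ one has $[P_\nu(\xi)]^2\I\frac{iP_\nu(-\xi)}{P_\nu(\xi)}=P_\nu(\xi)P_\nu(-\xi)=[P_\nu(-\xi)]^2\I\frac{iP_\nu(\xi)}{P_\nu(-\xi)}$, so the braced expression vanishes identically there (and each of its two terms is $O(1/|P_\nu(-\xi)|)$ for nearby complex $\xi$). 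The conclusion $R_\nu(\pm1)=0$ is therefore correct but for a different, and easier, reason than you give. The substantive half of \ref{eq:R_nu_defn_ext} — that $R_\nu(\xi+i0^+)\equiv R_\nu(\xi-i0^+)$ for $\xi>1$, i.e.\ smooth continuation across the cut — is only asserted in your sketch via unspecified ``Kummer-type monodromy relations''; carrying that out requires the connection formula \ref{eq:Pnu_Qnu_rln} and a nontrivial computation, whereas the paper gets it for free from the real-analyticity of the right-hand sides of Eqs.~\ref{eq:R_sixth_Eisenstein}--\ref{eq:Rnu_Eisenstein}. To close your argument you would either need to complete that monodromy computation or, more efficiently, prove the two Eisenstein-series identities for $R_\nu$ directly (via Ramanujan's differential system and \ref{eq:P_nu_ratio_deriv}), which is what the paper does.
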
\begin{proof}From Ramanujan's elliptic function theory to alternative bases, one may read off the following identities~(see \cite[][Chap.~33,  Theorem~11.6, Theorem~9.11, Corollary~2.11]{RN5} and~\cite[][Chap.~17, Entries 13(viii)--(ix)]{RN3}):\begin{align}
\left[ P_{-1/6}\left( \sqrt{\frac{j(z)-1728}{j(z)}} \right) \right]^{12}={}&\Delta(z)j(z)=[E_{4}(z)]^{3},&&\I z>0,|\R z|<\frac{1}{2},|z+1|>1,|z-1| >1;\label{eq:P_sixth_eta}\\{[}P_\nu(1-2\alpha_N(z))]^2={}&\frac{NE_{2}(Nz)-E_2(z)}{N-1},&&\I z>0,|\R z|<\frac{1}{2},\left\vert z+\frac{1}{N} \right\vert>\frac{1}{N},\left\vert z-\frac{1}{N} \right\vert>\frac{1}{N}.\label{eq:P_nu_sqr_E2_diff}
\end{align}Here, in Eq.~\ref{eq:P_nu_sqr_E2_diff}, the degree $ \nu\in\{-1/2,-1/3,-1/4\}$ corresponds to level  $ N=4\sin^2(\nu\pi)\in\{2,3,4\}$. Referring back to Eqs.~\ref{eq:alpha_N_deriv_E2} and \ref{eq:alpha_N_ratio_ids}, we see that with $ \nu\in\{-1/2,-1/3,-1/4,-1/6\}$, the relation $ P_\nu(\xi)\neq0$ holds for $ \xi\in\mathbb C\smallsetminus((-\infty,-1]\cup[1,+\infty))$. Thus, Eq.~\ref{eq:R_nu_defn} is well defined.

By taking ratios of a formula (any one among Eq.~\ref{eq:P_sixth_eta} and the three forms of Eq.~\ref{eq:P_nu_sqr_E2_diff}) at a pair of points $ z$ and $ -1/(Nz)$, where $ N\in\{1,2,3,4\}$, one can verify the following formulae:\begin{align}z=\frac{i P_\nu(2\alpha_N(z)-1)}{\sqrt{N}P_\nu(1-2\alpha_N(z))},&&
\I z>0,|\R z|<\frac{1}{2},\left\vert z+\frac{1}{N} \right\vert>\frac{1}{N},\left\vert z-\frac{1}{N} \right\vert>\frac{1}{N}\label{eq:z_Pnu_ratios}
\end{align} after extracting appropriate roots, and recalling that $E_2(-1/(Nw))=N^2 w^2 E_2(Nw) $, $ \Delta(-1/z)=z^{12}\Delta(z)$.
With the Legendre differential equation and special values of the Legendre function, one can verify the following differentiation formula that appeared  in Ramanujan's notebooks~\cite[][p.~88]{RN2}: \begin{align}\frac{\D }{\D \xi}\frac{P_{\nu }(-\xi)}{P_{\nu }(\xi)}=-\frac{2}{\pi}\frac{\sin(\nu\pi)}{(1-\xi^{2})[P_{\nu }(\xi)]^2},\quad \xi\in\mathbb C\smallsetminus((-\infty,-1]\cup[1,+\infty)).\label{eq:P_nu_ratio_deriv}\end{align}
Using the information above, together with  Ramanujan's  differential equation for the Eisenstein series \cite[][Eq.~30]{Ramanujan1916}, one can verify the following identities:\begin{align}
R_{-1/6}\left( \sqrt{\frac{j(z)-1728}{j(z)}}
\right)\sqrt{\frac{j(z)-1728}{j(z)}}={}&-\frac{E_{6}(z)}{3[E_{4}(z)]^{2}}\left[ E_2(z)-\frac{ E_6(z)}{E_4(z)} \right],\label{eq:R_sixth_Eisenstein}\\ R_{\nu}(1-2\alpha_N(z))={}&-\frac{N-1}{6}\frac{N^{2}[E_{2}(Nz)]^2-N^{2}E_{4}(N z)-[E_{2}(z)]^2+E_{4}(z)}{[NE_{2}(Nz)-E_{2}(z)]^{2}},\label{eq:Rnu_Eisenstein}
\end{align} where $\nu\in\{-1/2,-1/3,-1/4\}$ corresponds to  $ N=4\sin^2(\nu\pi)\in\{2,3,4\}$. This  proves the continuous extensibility of $R_{\nu}$ for $\nu\in\{-1/2,-1/3,-1/4,-1/6\}$ (Eq.~\ref{eq:R_nu_defn_ext}). Upon variable substitutions according to Eq.~\ref{eq:z_Pnu_ratios}, one can  show that $ \rho_{2,-1/6}(1-2\alpha_1(\zeta)|z)=\rho_2^{\mathfrak H/PSL(2,\mathbb Z)}(\zeta,z)$  as well as $ \varrho_{2,\nu}(1-2\alpha_N(\zeta)|z)=\varrho_2^{\mathfrak H/\overline\varGamma_0(N)}(\zeta,z)$ for $ \nu\in\{-1/2,-1/3,-1/4\}$ and $N=4\sin^2(\nu\pi)\in\{2,3,4\} $. This finally reveals the analytic connections between Eqs.~\ref{eq:G2_z_z'_arb} and \ref{eq:G2PSL2Z_Eisenstein_form} (resp.~\ref{eq:G2Hecke234_Pnu} and \ref{eq:G2HeckeN_unified}) .
\end{proof}\begin{remark}If $z$ is a CM point, then one can use the explicit formulae in Eqs.~\ref{eq:R_sixth_Eisenstein} and \ref{eq:Rnu_Eisenstein} to verify that the respective values of $ R_\nu$ are solvable algebraic numbers (see Eq.~\ref{eq:fns_alg_val_at_CM_pts} and Appendix~\ref{app:algebraicity}). Consequently, the related expression $ \varrho_\nu(\xi|z)$ will be an algebraic function of $ \xi$ with algebraic coefficients. Recalling the Euler integral representation for hypergeometric functions from Eq.~\ref{eq:Euler_int}, we see that  Eqs.~\ref{eq:G2_z_z'_arb} and \ref{eq:G2Hecke234_Pnu}  (up to an overall factor that is an integer power of $ \pi$) are integrals of algebraic functions over algebraic domains, if both $ z$ and $z'$ are CM points. This qualifies them as Kontsevich--Zagier periods \cite[][\S1.1]{KontsevichZagier}. As pointed out in the survey of Kontsevich and Zagier~\cite[][\S3.4]{KontsevichZagier}, the conjectural relation $ \sqrt{y_1y_2}G_2^{\mathfrak H/\overline{\varGamma}_0(N)}(z_1,z_2)\in\log\overline{\mathbb Q}$ (where $ N\in\{1,2,3,4\}$, $ [\mathbb Q(z_1):\mathbb Q]=[\mathbb Q(z_2):\mathbb Q]=2$, $ z_1\notin\varGamma_0(N)z_2$) is a type of ``period identity''. \eor\end{remark}\begin{remark}\label{rmk:spec_val_G2}Now we showcase a few examples where the ``period identity at CM points''  $ \sqrt{y_1y_2}$ $G_2^{\mathfrak H/\overline{\varGamma}_0(N)}(z_1,z_2)\in\log\overline{\mathbb Q}$ can be proved by elementary manipulations of integrals.

For $ \nu\in\{-1/6,-1/4,-1/3\}$, the integral representations in Eqs.~\ref{eq:G2_z_z'_arb} and \ref{eq:G2Hecke234_Pnu} immediately bring us some special values of automorphic Green's functions:\begin{align}
G_{2}^{\mathfrak H/PSL(2,\mathbb Z)}\left(\frac{1+i\sqrt{3}}{2},\frac{i}{\sqrt{1}}\right)={}&-\frac{8\pi}{3}\int_{0}^{1}[P_{-1/6}(\xi)]^2\D \xi=-\frac{12}{\sqrt{3}}\log(2+\sqrt{3}),\label{eq:G2_ell3_i_spec_val}\\G_2^{\mathfrak H/\overline{\varGamma}_{0}(2)}\left( \frac{i-1}{2},\frac{i}{\sqrt{2}} \right)={}&-\frac{\pi}{\sqrt{2}}\int_{0}^{1}[P_{-1/4}(\xi)]^2\D \xi=-\frac{4}{\sqrt{2}}\log(1+\sqrt{2}),\label{eq:G2_Hecke2_spec_val}\\G_2^{\mathfrak H/\overline{\varGamma}_{0}(3)}\left( \frac{3+i\sqrt{3}}{6},\frac{i}{\sqrt{3}} \right)={}&-\frac{2\pi}{3\sqrt{3}}\int_{0}^{1}[P_{-1/3}(\xi)]^2\D \xi=-2\log2.\label{eq:G2_Hecke3_spec_val}
\end{align}In all these three cases of $ G_2^{\mathfrak H/\overline\varGamma_0(N)}(z,z')$, we have $ \alpha_N(z)=\infty$ (see Eqs.~\ref{eq:alpha_inf_rho_lim} and \ref{eq:alpha_inf_rho_lim_PSL2Z}) and $ \alpha_N(z')=1/2$, so it is clear that the values of the respective automorphic Green's functions are certain algebraic multiples of \begin{align} \pi\int_0^1[P_\nu(\xi)]^2\D \xi=\frac{\pi}{2 \nu +1}\left\{1+\frac{\sin (\nu\pi   )}{\pi} \left[ \psi ^{(0)}\left(\frac{\nu+2 }{2}\right)-\psi ^{(0)}\left(\frac{\nu+1 }{2}\right )\right]\right\},\quad \nu\in\mathbb C\smallsetminus\{-1/2\}.\label{eq:Pnu_sqr_0to1}\end{align}Here, the logarithmic derivatives of Euler's gamma function are the polygamma functions $ \psi^{(m)}(w):=\D^{m+1}\log\Gamma(w)/\D w^{m+1}$ for $ m\in\mathbb Z_{\geq0}$. The integral formula in Eq.~\ref{eq:Pnu_sqr_0to1} (see \cite[][item~7.113.1]{GradshteynRyzhik}) can be proved by simple applications of the Legendre differential equations. For $ w\in\mathbb Q\cap(0,1)$, one  can evaluate  $ \psi^{(0)}(w)$ using the explicit formula provided by the digamma theorem of Gau{\ss}~\cite[][\S1.7.3]{HTF1}. This leads to the logarithmic expressions in Eqs.~\ref{eq:G2_ell3_i_spec_val}--\ref{eq:G2_Hecke3_spec_val}.

Exploiting the addition formulae in Proposition~\ref{prop:add_form_auto_Green}, one can deduce a few  special values of  $ G_2^{\mathfrak H/\overline\varGamma_0(4)}(z/2,i/\sqrt{4})=G_2^{\mathfrak H/\overline\varGamma(2)}(z,i)=G_2^{\mathfrak H/\overline\varGamma_{\vartheta}}(z,i)/2=G_2^{\mathfrak H/\overline\varGamma_{0}(2)}((z-1)/2,(i-1)/2)/2$ (see Eqs.~\ref{eq:Hecke4_add}, \ref{eq:G_s_Theta_add_form} and \ref{eq:GsTheta_GsHecke2_reln}) that are related to Eqs.~\ref{eq:G2_ell3_i_spec_val} and \ref{eq:G2_Hecke2_spec_val}:{\allowdisplaybreaks\begin{align}-\frac{2}{\sqrt{3}}\log(2+\sqrt{3})={}&\frac{G_2^{\mathfrak H/\overline{\varGamma}_0(2)}(e^{\pi i/3},i)}{2}\notag\\={}&G_2^{\mathfrak H/\overline{\varGamma}(2)}(e^{\pi i/3},i)=G_2^{\mathfrak H/\overline{\varGamma}(2)}(i\sqrt{3},i)=G_2^{\mathfrak H/\overline{\varGamma}(2)}(e^{2\pi i/3},i)=G_2^{\mathfrak H/\overline{\varGamma}(2)}(i/\sqrt{3},i),\label{eq:log2_plus_sqrt3_more}\\
-\frac{2}{\sqrt{2}}\log(1+\sqrt{2})={}&G_2^{\mathfrak H/\overline\varGamma(2)}(-1+i\sqrt{2},i)=G_2^{\mathfrak H/\overline\varGamma(2)}\left(\frac{-1+i\sqrt{2}}{3},i\right)\notag\\={}&G_2^{\mathfrak H/\overline\varGamma(2)}\left(\frac{1+i\sqrt{2}}{3},i\right)=G_2^{\mathfrak H/\overline\varGamma(2)}(1+i\sqrt{2},i).\label{eq:G2Gamma2spec_val}
\end{align}}The main idea here is the following four-fold symmetry:\begin{align}
G_s^{\mathfrak H/\overline\varGamma(2)}(z,i)=G_s^{\mathfrak H/\overline\varGamma(2)}\left( \frac{1+z}{1-z} ,i\right)=G_s^{\mathfrak H/\overline\varGamma(2)}\left( -\frac{1}{z} ,i\right)= G_s^{\mathfrak H/\overline\varGamma(2)}\left( \frac{z-1}{z+1} ,i\right),\label{eq:GsGamma2i_D4}
\end{align}which descends from the identity $G_s^{\mathfrak H/\overline\varGamma(2)}(z,i) =G_s^{\mathfrak H/\overline\varGamma_0(4)}(z/2,i/2)=G_s^{\mathfrak H/\overline\varGamma_0(4)}(-1/(2z),i/2)$ $=G_s^{\mathfrak H/\overline\varGamma(2)}(-1/z,i) $ (see Eq.~\ref{eq:Fricke_inv_HeckeN_GZ}) along with Eqs.~\ref{eq:GsTheta_GsHecke2_reln} and \ref{eq:G_s_Theta_add_form}.

One can derive a special addition formula\begin{align}
G_s^{\mathfrak H/PSL(2,\mathbb Z)}(z,i)=2[G_s^{\mathfrak H/\overline{\varGamma}(2)}(z,i)+G_s^{\mathfrak H/\overline{\varGamma}(2)}(z+1,i)+G_s^{\mathfrak H/\overline{\varGamma}(2)}(2z+1,i)]\label{eq:GsPSL2Zi_GsGamma2i_add_form}
\end{align}from Eqs.~\ref{eq:SL2Z_Gamma2_add} and \ref{eq:GsGamma2i_D4}. \eor\end{remark}Recalling the definition of the $L$-function $ L(s,\chi_D)$ from Eq.~\ref{eq:L_s_chi_D_defn}, and that\begin{align}G\equiv L(2,\chi_{-4})={}&\sum_{\ell=0}^\infty\frac{(-1)^\ell}{(2\ell+1)^2}=\frac{\psi ^{(1)}\left(\frac{1}{4}\right)-\psi ^{(1)}\left(\frac{3}{4}\right)}{16},\label{eq:L2chi_4_defn_G}\\
L(2,\chi_{-3})={}&\sum_{\ell=0}^\infty\left[ \frac{1}{(3\ell+1)^{2}} -\frac{1}{(3\ell+2)^{2}}\right]=\frac{\psi ^{(1)}\left(\frac{1}{3}\right)-\psi ^{(1)}\left(\frac{2}{3}\right)}{9},\label{eq:L2chi_3_defn}\\L(2,\chi_{-8})=L(2,\chi_{-2})={}&\sum _{n=0}^{\infty } \frac{(-1)^{ n (n-1)/2}}{(2 n+1)^2}=\frac{\psi ^{(1)}\left(\frac{1}{8}\right)+\psi ^{(1)}\left(\frac{3}{8}\right)-\psi ^{(1)}\left(\frac{5}{8}\right)-\psi ^{(1)}\left(\frac{7}{8}\right)}{64},
\end{align}  we may state and prove some integral identities involving fractional degree Legendre functions and these special $L$-values.\begin{proposition}[Integral Representations of Some Special $L$-Values]\label{prop:Pnu_sec_L_weight4}We have the following identities:{\allowdisplaybreaks\begin{align}
 L(2,\chi_{-4})={}&-\frac{\pi}{40}\lim_{y'\to+\infty}G_2^{\mathfrak H/PSL(2,\mathbb Z)}(i,z')y'=-\frac{\pi^{2}}{20}\R\int_{-1}^1\frac{[P_{-1/6}(\xi)]^2}{\xi^{2}}\D \xi\notag\\={}&-\frac{\pi}{16}\lim_{y'\to+\infty}G_2^{\mathfrak H/\overline\varGamma_{\vartheta}}(i,z')y'=\frac{\pi^{2}}{32}\int_{-1}^1[P_{-1/4}(\xi)]^2\D \xi\notag\\={}&-\frac{\pi}{8}\lim_{y'\to+\infty}G_2^{\mathfrak H/\overline\varGamma(2)}(i,z')y'=-\frac{\pi^{2}}{16}\R\int_{-1}^1\frac{[P_{-1/2}(\xi)]^2}{\xi^2}\D \xi;\label{eq:L2chi_4_Pnu}\\ L(2,\chi_{-3})={}&-\frac{\pi}{45}\lim_{y'\to+\infty}G_{2}^{\mathfrak H/PSL(2,\mathbb Z)}\left( \frac{1+i\sqrt{3}}{2},z'\right)y'=\frac{4\pi^{2}}{135}\int_{-1}^1[P_{-1/6}(\xi)]^2\D \xi\notag\\={}&-\frac{2\pi}{9}\lim_{y'\to+\infty}G_{2}^{\mathfrak H/\overline{\varGamma}_0(3)}\left( \frac{3+i\sqrt{3}}{6},z'\right)y'=\frac{2\pi^2}{81}\int_{-1}^1[P_{-1/3}(\xi)]^2\D \xi\notag\\={}&-\frac{2\pi}{135}\lim_{y'\to+\infty}G_2^{\mathfrak H/PSL(2,\mathbb Z)}(i\sqrt{3},z')y'\notag\\={}&-\frac{56\pi}{135}\lim_{y'\to+\infty}G_2^{\mathfrak H/\overline{\varGamma}_{0}(3)}\left(  \frac{i}{\sqrt{3}},z'\right)y'=-\frac{4\pi^{2}}{81}\R\int_{-1}^1\frac{[P_{-1/3}(\xi)]^2}{\xi^2}\D \xi;\label{eq:L2chi3}\\ L(2,\chi_{-2})={}&-\frac{\pi}{40}\lim_{y'\to+\infty}G_2^{\mathfrak H/PSL(2,\mathbb Z)}(i\sqrt{2},z')y'=-\frac{\pi}{16}\lim_{y'\to+\infty}G_2^{\mathfrak H/\overline\varGamma_{\vartheta}}(1+i\sqrt{2},z')y'\notag\\={}&-\frac{\pi}{16}\lim_{y'\to+\infty}G_2^{\mathfrak H/\overline\varGamma_{0}(2)}\left( \frac{i}{\sqrt{2}},\frac{z'-1}{2} \right)y'=-\frac{\pi^2}{16}\R\int_{-1}^1\frac{[P_{-1/4}(\xi)]^2}{\xi^2}\D\xi
.\label{eq:L2chi_2_Pnu}\end{align}}Here, all the integration paths are taken as curves  in the slit plane $ \mathbb C\smallsetminus(-\infty,-1]$ that circumvent the singularities of the integrands.\end{proposition}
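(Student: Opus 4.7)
The plan is to decouple each chain of equalities in Proposition~\ref{prop:Pnu_sec_L_weight4} into two classes of substatements: a \emph{limit identity} that equates the displayed $L$-value with a scaled cusp-limit of $y'G_2$, and an \emph{integral identity} that equates the same cusp-limit with an explicit integral of a squared Legendre function over $[-1,1]$. The three chains then follow by matching the two classes through their common middle.

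For the limit identities I would begin with the cusp asymptotic~\ref{eq:Gs_asympt} at $s=2$, which after inserting $\zeta(4)=\pi^4/90$ reads
\[
\lim_{y'\to+\infty}y'\,G_2^{\mathfrak H/PSL(2,\mathbb Z)}(z,z')=-\frac{60\,y^2}{\pi^3}\sum_{(m,n)\neq(0,0)}\frac{1}{|mz+n|^4}.
\]
At the CM points $z\in\{i,\,(1+i\sqrt3)/2,\,i\sqrt2,\,i\sqrt3\}$, the Epstein zeta on the right is a rational multiple of $\zeta(2)L(2,\chi_D)$ with $D\in\{-3,-4,-8\}$, since the binary quadratic forms of discriminants $-4,-3,-8,-12$ are each alone in their genus and the zeta of the corresponding imaginary quadratic order splits as a product of $\zeta(s)$ and a quadratic Dirichlet $L$-series. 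This delivers the $PSL(2,\mathbb Z)$-limit identities, e.g.\ Catalan's constant $G=-(\pi/40)\lim y'G_2^{\mathfrak H/PSL(2,\mathbb Z)}(i,z')$. For the analogous limit identities on $\overline\varGamma_0(N)$, $\overline\varGamma(2)$ and $\overline\varGamma_\vartheta$, I would invoke the addition formulae of Proposition~\ref{prop:add_form_auto_Green}, the Fricke-conjugate relation~\ref{eq:GsTheta_GsHecke2_reln}, the duplication~\ref{eq:Hecke4_add}, and, at $z=i$, the $D_4$-symmetry~\ref{eq:GsGamma2i_D4} to transfer the $PSL(2,\mathbb Z)$ statement term by term; the index-of-subgroup weights built into those identities account for the rescaling factors $-\pi/16$, $-\pi/8$, $-2\pi/9$, $-56\pi/135$, etc.\ in the proposition.

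For the integral identities I would specialise~\ref{eq:G2_z_z'_arb} or~\ref{eq:G2Hecke234_Pnu} at the designated CM point $z=z_\ast$, then let $z'=iy'\to i\infty$. These CM values satisfy $\alpha_N(z_\ast)\in\{0,\tfrac12,\infty\}$, at which $R_\nu$ vanishes (by the oddness $R_\nu(\xi)=-R_\nu(-\xi)$ evident from~\ref{eq:R_nu_defn}--\ref{eq:R_nu_defn_ext}, together with the boundary limits~\ref{eq:alpha_inf_rho_lim} and~\ref{eq:alpha_inf_rho_lim_PSL2Z}), so the kernel $\varrho_{2,\nu}(\xi\vert z_\ast)$ of~\ref{eq:rho_2_nu_xi_z_defn} collapses to an elementary rational function such as $1/\xi^2$ or $4/(\xi\mp 1)^2$. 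The identity~\ref{eq:z_Pnu_ratios} gives $\I[iP_\nu(-\xi')/P_\nu(\xi')]=\sqrt N\,y'$, so multiplying through by $y'$ cancels the diverging prefactor and leaves $\pi/N$ times the stated integral over $[-1,1]$.

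The main technical obstacle is to show that the ``short'' contour integral $\R\int_{\xi'}^{1}[\,\cdot\,]\,\D\xi$ in~\ref{eq:G2_z_z'_arb} and~\ref{eq:G2Hecke234_Pnu} vanishes in the limit $\xi'\to 1^{-}$: its integrand contains the pointwise diverging factor $[iP_\nu(-\xi)/P_\nu(\xi)-iP_\nu(-\xi')/P_\nu(\xi')]^2$ while the interval shrinks only as $1-\xi'=2\alpha_N(iy')=O(e^{-2\pi y'/N})$. The natural substitution $\xi=1-2\alpha_N(i\eta)$ (sending $\xi\in(\xi',1)$ to $\eta\in(y',+\infty)$ and turning $iP_\nu(-\xi)/P_\nu(\xi)$ into $\sqrt N\,i\eta$ via~\ref{eq:z_Pnu_ratios}) recasts the short integral as an integral over $\eta\in(y',+\infty)$ of a polynomial in $\eta$ multiplied by the Jacobian $|\D\xi/\D\eta|=2|\alpha_N'(i\eta)|$, which decays exponentially; this should admit a uniform dominant, and the dominated convergence theorem then forces the short integral to zero. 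Once this vanishing is in hand, the limit identities and integral identities dovetail, yielding Proposition~\ref{prop:Pnu_sec_L_weight4}.
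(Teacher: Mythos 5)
Your proposal is correct, and its overall architecture --- the cusp asymptotics of Eq.~\ref{eq:Gs_asympt}, the addition formulae of Proposition~\ref{prop:add_form_auto_Green} (and their Fricke-involuted variants) to transfer between groups, and the specialization of Eqs.~\ref{eq:G2_z_z'_arb} and \ref{eq:G2Hecke234_Pnu} at the CM points where $\alpha_N\in\{1/2,\infty\}$ so that the kernel collapses --- matches the paper's proof. The one genuine divergence is in how the Epstein zeta values $\sum_{(m,n)\neq(0,0)}|mz+n|^{-4}$ are identified with rational multiples of $\zeta(2)L(2,\chi_D)$: the paper integrates Ramanujan's explicit theta-series identities (the two-squares theorem, Eq.~\ref{eq:two_squares}, and its analogues for $m^2+mn+n^2$, $3m^2+n^2$ and $2m^2+n^2$) against $y\,\D y$ and reads off the Dirichlet series term by term, whereas you invoke the one-class-per-genus arithmetic of the relevant binary quadratic forms to factor the Epstein zeta directly. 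Both are standard; your route is conceptually cleaner but needs care at the non-fundamental discriminant $-12$, where the order of conductor $2$ in $\mathbb Q(\sqrt{-3})$ contributes a rational Euler-factor correction ($1+2^{1-2s}=9/8$ at $s=2$) that is precisely what produces the coefficients $-2\pi/135$ and $-56\pi/135$ in Eq.~\ref{eq:L2chi3}; the paper's theta identities deliver this correction automatically. You also supply a detail the paper hides behind ``it is clear'': the vanishing, as $y'\to+\infty$, of the contribution from the shrinking interval $(\xi',1)$ despite the quadratically growing bracket factors, and your substitution $\xi=1-2\alpha_N(i\eta)$ with its exponentially decaying Jacobian is a sound way to justify it.
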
\begin{proof}It is clear that all these integrals involving Legendre functions can be associated with the claimed asymptotic behavior of automorphic Green's functions, by Eqs.~\ref{eq:G2_z_z'_arb} and \ref{eq:G2Hecke234_Pnu}. What remains to be shown are their connections to the special $ L$-values.  We shall explain such connections for Eq.~\ref{eq:L2chi3} in detail and sketch the proof for the rest.

The first equality in  Eq.~\ref{eq:L2chi3}  follows from  Eq.~\ref{eq:Gs_asympt} and an integration of Ramanujan's formula (see~\cite{BorweinBorweinCubic},~\cite{Berndt1992} or~\cite[][Theorem~3.7.10]{NTRamanujan})\begin{align*}\sum_{\substack{m,n\in\mathbb Z\\m^2+n^2\neq0}}ye^{-(m^{2}+mn+n^2)y}=6\sum_{\ell=0}^\infty\left[\frac{y}{e^{(3\ell+1)y}-1} -\frac{y}{e^{(3\ell+2)y}-1}\right]\end{align*}over $ y\in(0,+\infty)$. The first two lines in  Eq.~\ref{eq:L2chi3} are related to each other by an addition formula for  $ z\in\mathfrak H\smallsetminus(SL(2,\mathbb Z)\frac{1+i\sqrt{3}}{2})$:\begin{align}&
G_2^{\mathfrak H/PSL(2,\mathbb Z)}\left(z,\frac{1+i\sqrt{3}}{2}\right)\notag\\={}&G_2^{\mathfrak H/\overline{\varGamma}_{0}(3)}\left( z ,\frac{3+i\sqrt{3}}{6}\right)+G_2^{\mathfrak H/\overline{\varGamma}_{0}(3)}\left( \frac{z}{3} ,\frac{3+i\sqrt{3}}{6}\right)+G_2^{\mathfrak H/\overline{\varGamma}_{0}(3)}\left( \frac{z+1}{3} ,\frac{3+i\sqrt{3}}{6}\right)+G_2^{\mathfrak H/\overline{\varGamma}_{0}(3)}\left( \frac{z-1}{3} ,\frac{3+i\sqrt{3}}{6}\right),\label{eq:G2_Hecke3_add_G2_PSL}
\end{align} which can be proved by applying the Fricke involution (Eq.~\ref{eq:Fricke_inv_HeckeN_GZ}) to the last three terms on the right-hand side of Eq.~\ref{eq:Gs_Hecke3_add_G2_PSL}.  In particular, Eq.~\ref{eq:G2_Hecke3_add_G2_PSL} entails the ratio between the asymptotic behavior of two automorphic Green's functions:  \begin{align}\lim_{y\to+\infty}G_2^{\mathfrak H/PSL(2,\mathbb Z)}\left(z,\frac{1+i\sqrt{3}}{2}\right)y=(1+3+3+3)\lim_{y\to+\infty}G_2^{\mathfrak H/\overline{\varGamma}_{0}(3)}\left( z ,\frac{3+i\sqrt{3}}{6}\right)y.\label{eq:PSL_Hecke3_asympt_conv}\end{align}     The third line in   Eq.~\ref{eq:L2chi3} is related to $ L(2,\chi_{-3})$ by Eq.~\ref{eq:Gs_asympt} and  another generalization of the two-squares theorem (Eq.~\ref{eq:two_squares}) due to Ramanujan~\cite[][p.~75, Eq.~3.7.8]{NTRamanujan}:\begin{align}
\sum_{\substack{m,n\in\mathbb Z\\m^2+n^2\neq0}}ye^{-(3m^{2}+n^2)y}=2\sum_{\ell=0}^\infty y\left( \frac{1}{e^{(3\ell+1)y}-1}- \frac{1}{e^{(3\ell+2)y}-1}\right)+4\sum_{\ell=0}^\infty y\left( \frac{1}{e^{4(3\ell+1)y}-1}- \frac{1}{e^{4(3\ell+2)y}-1}\right).
\end{align}   The last line in    Eq.~\ref{eq:L2chi3}  follows from  an addition formula\begin{align}&
G_2^{\mathfrak H/PSL(2,\mathbb Z)}(z,i\sqrt{3})=G_2^{\mathfrak H/\overline{\varGamma}_{0}(3)}\left( z ,\frac{i}{\sqrt{3}}\right)+G_2^{\mathfrak H/\overline{\varGamma}_{0}(3)}\left( \frac{z}{3} ,\frac{i}{\sqrt{3}}\right)+G_2^{\mathfrak H/\overline{\varGamma}_{0}(3)}\left( \frac{z+1}{3} ,\frac{i}{\sqrt{3}}\right)+G_2^{\mathfrak H/\overline{\varGamma}_{0}(3)}\left( \frac{z-1}{3} ,\frac{i}{\sqrt{3}}\right),\label{eq:G2PSL2Zsqrt3}\end{align}which can be verified in a similar vein as Eq.~\ref{eq:G2_Hecke3_add_G2_PSL}.

The readers may fill in the details for Eqs.~\ref{eq:L2chi_4_Pnu} and \ref{eq:L2chi_2_Pnu} by referring back to the addition formulae in Eqs.~\ref{eq:SL2Z_Theta_add}, \ref{eq:G_s_Theta_add_form} and \ref{eq:GsPSL2Zi_GsGamma2i_add_form}, as well as Ramanujan's generalization of the two-squares theorem to the sum $|i\sqrt{2}m+n|^2=2m^2+n^2 $~\cite[][Theorems~3.7.2 and 3.7.3]{NTRamanujan}.
\end{proof}\begin{remark}The following integral formula is a standard result \cite[][item~7.112.3]{GradshteynRyzhik}:\begin{align}
\label{eq:P_nu_sqr}\int_{-1}^1[P_\nu(\xi)]^2\D \xi={}&\frac{2}{2\nu+1}\left[1-\frac{2\sin^2(\nu\pi)}{\pi^{2}}\psi^{(1)}(\nu+1)\right],\quad \nu\in\mathbb C\smallsetminus(\mathbb Z_{<0}\cup\{-1/2\}),
\end{align}and can be proved by Legendre differential equations. The first two lines in  Eq.~\ref{eq:L2chi3}  imply the following identity\begin{align}5\int_{-1}^1[P_{-1/3}(\xi)]^2\D \xi=6\int_{-1}^1[P_{-1/6}(\xi)]^2\D \xi.\label{eq:5third_6sixth}\end{align}This can also be verified by spelling out both sides of Eq.~\ref{eq:5third_6sixth} with the help of Eq.~\ref{eq:P_nu_sqr}, before invoking the duplication and reflection formulae of $ \psi^{(1)}(w)=\D^2\log\Gamma(w)/\D w^2$: \begin{align}4\psi^{(1)}(2w)={}&\psi^{(1)}(w)+\psi^{(1)}\left(w+\frac{1}{2}\right)&&\Longrightarrow&&4\psi^{(1)}\left(\frac{1}{3}\right)=\psi^{(1)}\left(\frac{1}{6}\right)+\psi^{(1)}\left(\frac{2}{3}\right),\label{eq:psi1_id1}\\\frac{\pi^{2}}{\sin^2(\pi w)}={}&\psi^{(1)}(w)+\psi^{(1)}(1-w)&&\Longrightarrow&& 4\pi^{2}=\psi^{(1)}\left(\frac{1}{6}\right)+\psi^{(1)}\left(\frac{5}{6}\right)=3\left[ \psi^{(1)}\left(\frac{1}{3}\right) +\psi^{(1)}\left(\frac{2}{3}\right)\right],\label{eq:psi1_id2}\end{align}which descend from the respective properties of Euler's gamma function. We will encounter some higher-weight analogs of Eq.~\ref{eq:5third_6sixth} later in Proposition~\ref{prop:high_w_L_values} of \S\ref{subsec:high_weight_KZ}.

 A generic  integral formula  for {\allowdisplaybreaks\begin{align*}
\R\int_{-1}^1\frac{[P_\nu(\xi)]^2}{\xi^2}\D \xi,\quad -1<\nu<0
\end{align*}}will be given in Eq.~\ref{eq:PnuPnu_xx_int_unit_interval_polygamma}. The readers are invited to check  Eq.~\ref{eq:PnuPnu_xx_int_unit_interval_polygamma} against the special cases $ \nu\in\{-1/2,-1/3,-1/4,-1/6\}$ evaluated in the proposition above.  \eor\end{remark}\subsection{Kontsevich--Zagier Integrals for Higher Weight Automorphic Green's Functions\label{subsec:high_weight_KZ}}In this subsection, we will treat the five remaining cases where there are no cusp forms on $ \varGamma_0(N)$, namely, $\dim\mathcal S_6(\varGamma_0(2))=\dim\mathcal S_6(\varGamma_0(1))=\dim\mathcal S_8(\varGamma_0(1))=\dim\mathcal S_{10}(\varGamma_0(1))=\dim\mathcal S_{14}(\varGamma_0(1))=0 $.\begin{proposition}[Kontsevich--Zagier Integral Representation for   $ G_3^{\mathfrak H/\overline{\varGamma}_0(2)}(z,z')$]\label{prop:G3Hecke2}For  $z\notin\varGamma_0(2)z'$, we have the following integral representation:\begin{align}
G_3^{\mathfrak H/\overline{\varGamma}_0(2)}(z,z')={}&\frac{4\pi^{2}}{(y')^{2}}\R\int_{z'}^{i\infty} \alpha_2(\zeta)[1-\alpha_2(\zeta)][2E_{2}(2\zeta)-E_2(\zeta)]^{3}\varrho^{\mathfrak H/\overline\varGamma_0(2)}_3(\zeta,z)\frac{(\zeta-z')^{2}(\zeta-\overline{z'})^{2}\D \zeta}{i}\notag\\&-\frac{4\pi^{2}}{(y')^{2}}\R\int_{0}^{i\infty} \alpha_2(\zeta)[1-\alpha_2(\zeta)][2E_{2}(2\zeta)-E_2(\zeta)]^{3}\varrho^{\mathfrak H/\overline\varGamma_0(2)}_3(\zeta,z)\frac{\zeta^{4}\D \zeta}{i},\label{eq:G3Hecke2_KZ_int}
\end{align}where\begin{align}
\varrho^{\mathfrak H/\overline\varGamma_0(2)}_3(\zeta,z)=\frac{ y^2}{8\pi}\left( \frac{\partial}{\partial y}\frac{1}{y} \right)^2\left\{ \frac{1}{\alpha_{2}(\zeta)-\alpha_2(z)}\frac{1}{[2E_{2}(2z)-E_2(z)]^2} \right\}.\label{eq:rho_3_Hecke2}
\end{align}Here, the invariant $ \alpha_2(z)$ is the same as the one prescribed in Eq.~\ref{eq:alpha_HeckeN_unified} or \ref{eq:alpha2_alpha4}, and the  paths of integration should avoid the singularities of the integrands.\end{proposition}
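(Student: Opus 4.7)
The proof plan mirrors that of Proposition \ref{prop:G2HeckeNunified}, with Lemma \ref{lm:spec_AGF} invoked at weight $k=6$. One checks that the right-hand side of Eq.~\ref{eq:G3Hecke2_KZ_int}, viewed as a function of $z$ with $z'$ held fixed, fulfills the three criteria \ref{itm:AGF1'}--\ref{itm:AGF3'}; by the uniqueness assertion in Lemma \ref{lm:spec_AGF}, it then coincides with $G_3^{\mathfrak H/\overline{\varGamma}_0(2)}(z,z')$. The cases $z'$ elliptic will be handled by continuity in $z'$ at the very end, exactly as in Proposition \ref{prop:G2HeckeNunified}.

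For \textbf{symmetry and path independence} (criterion \ref{itm:AGF1'}), both $\alpha_2(z)$ and the weight-2 combination $2E_2(2z)-E_2(z)$ behave well under $\varGamma_0(2)$ (the latter via Eq.~\ref{eq:E2_star_HeckeN_transf}), so the inner bracket in Eq.~\ref{eq:rho_3_Hecke2} is $\varGamma_0(2)$-invariant in $z$ wherever defined. For path independence, one rewrites the integrand in the form $M(\zeta)\zeta^n\,[y^2(\partial_y/y)^2\{\cdot\}]$ with $M$ of weight 6 (hence matching $\zeta^n$, $n\in\{0,1,2,3,4\}$, against the factors from $(\zeta-z')^2(\zeta-\overline{z'})^2$), and verifies that the residue at each pole $\zeta=\hat\gamma z$ is real. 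Because the pole is now of order $2$, the reality check requires extracting both leading and subleading Laurent coefficients; the outcome is the iterated analogue of Eq.~\ref{eq:G2_three_fns}, in which the residues reduce to real combinations of $|\hat\gamma z|^{2j}/\I(\hat\gamma z)$.

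For \textbf{the differential equation} (criterion \ref{itm:AGF2'}), the essential new identity, replacing Eq.~\ref{eq:weight_4_diff_eqn_test}, is
\[
\left[y^2\left(\frac{\partial^2}{\partial x^2}+\frac{\partial^2}{\partial y^2}\right)-6\right]\left[y^2\left(\frac{\partial}{\partial y}\frac{1}{y}\right)^{2}f(z)\right]=0,\qquad f \text{ holomorphic in } z=x+iy,
\]
which is an immediate computation using the Cauchy--Riemann equations and matches the weight-6 eigenvalue $k(k-2)/4 = 6$. For \textbf{the asymptotic behavior} (criterion \ref{itm:AGF3'}), the logarithmic singularity $2m^{\overline\varGamma_0(2)}_{z'}\log|z-z'|+O(1)$ as $z\to z'$ is extracted from the highest-order pole of $\varrho_3^{\mathfrak H/\overline\varGamma_0(2)}(\zeta,z)$ at $\zeta=z'$ after the two applications of $\partial_y(1/y\,\cdot)$, paralleling the end-of-proof computation in Proposition \ref{prop:G2HeckeNunified}. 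Vanishing at the cusp $i\infty$ follows from the analog of Eq.~\ref{eq:rho2_cusp_limit}, namely $\varrho_3^{\mathfrak H/\overline\varGamma_0(2)}(\zeta,z)\to 0$ as $z\to i\infty$, combined with the vanishing identity $\R\int_0^{i\infty}\frac{i[\alpha_2'(\zeta)]^2\varrho_3^{\mathfrak H/\overline\varGamma_0(2)}(\zeta,z)\zeta^n\,\D\zeta}{\alpha_2(\zeta)[1-\alpha_2(\zeta)]}=0$ for the relevant exponents $n$, the latter verified on the fundamental domain $\mathfrak D_2$ via reality along the imaginary axis together with the weight-6 eigenvalue equation and Dirichlet-type boundary conditions. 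The vanishing at the cusp $0$ then transfers from the infinite cusp by the Fricke-involution symmetry, exactly as in Eq.~\ref{eq:I_N_Fricke_inv_check}.

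The \textbf{main obstacle} is the second-order differential operator $(\partial_y/y)^2$ in $\varrho_3^{\mathfrak H/\overline\varGamma_0(2)}$: it raises the pole order at $\zeta=\hat\gamma z$ from one to two, complicating both the residue reality argument (subleading Laurent coefficients must be tracked) and the integration-by-parts estimate near the infinite cusp (analogue of Eq.~\ref{eq:cusp_lim0c}), where two successive integrations by parts are needed and the boundary terms must be shown to decay; these decay bounds rest on the $q$-expansions $\alpha_2(\zeta)=O(e^{2\pi i\zeta})$ and $2E_2(2\zeta)-E_2(\zeta)=1+O(e^{2\pi i\zeta})$ combined with the bijectivity of $\alpha_2\colon\varGamma_0(2)\backslash\mathfrak H^{*}\to\mathbb C\cup\{\infty\}$ to control the denominators in $\alpha_2(\zeta)-\alpha_2(z)$ along the integration paths.
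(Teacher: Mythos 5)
Your overall scaffolding (checking \ref{itm:AGF1'}--\ref{itm:AGF3'} in $z$ and invoking Lemma~\ref{lm:spec_AGF}, with the weight-6 eigenvalue identity replacing Eq.~\ref{eq:weight_4_diff_eqn_test}) matches the paper, and your differential-equation identity is correct. But there is a genuine gap at the step you dispatch in one clause: you claim the vanishing identities $\R\int_0^{i\infty}\frac{i[\alpha_2'(\zeta)]^2\varrho_3^{\mathfrak H/\overline\varGamma_0(2)}(\zeta,z)\zeta^n\,\D\zeta}{\alpha_2(\zeta)[1-\alpha_2(\zeta)]}=0$ for ``the relevant exponents'' (here $n\in\{1,2,3\}$, coming from the Fricke-involution check) all follow from reality along the imaginary axis plus the Dirichlet/tessellation argument, as in Eq.~\ref{eq:rho2_int0}. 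That parity argument only works for odd $n$: with $\zeta=it$ one has $\zeta^n\D\zeta=i^{\,n+1}t^n\D t$, so for $n=1,3$ the integrand is purely imaginary along the axis and the real part vanishes for free, but for $n=2$ the integrand is \emph{real} along the axis and nothing vanishes trivially. The paper flags exactly this: the $n=2$ case of Eq.~\ref{eq:rho3Hecke2_int0} ``requires some new techniques,'' and the bulk of the actual proof is devoted to it.

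Concretely, the paper proves the $n=2$ identity by evaluating the weight-6 period integral in Eq.~\ref{eq:alpha2_weight6_int0} in closed form: it is rewritten as a multiple elliptic integral $\int_0^1[\mathbf K(\sqrt t)\mathbf K(\sqrt{1-t})]^2\bigl(\alpha_2(z)-(\tfrac{t}{2-t})^2\bigr)^{-1}\tfrac{t\,\D t}{2-t}$, which is then computed by combining residue calculus over a contour $\int_{-\infty+i0^+}^{+\infty+i0^+}$ with the imaginary-modulus and inverse-modulus transformations (Eqs.~\ref{eq:im_mod}--\ref{eq:inv_mod}) applied on the pieces $t<0$ and $t>1$; the outcome is the explicit formula $-\tfrac{\I z+4(\I z)^3}{12}[2E_2(2z)-E_2(z)]^2$, which the operator $(\partial_y\tfrac1y)^2$ built into $\varrho_3^{\mathfrak H/\overline\varGamma_0(2)}$ annihilates (since $(\partial_y\tfrac1y)^2(y+4y^3)$ is constant in $y$ after one more $\partial_y$). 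None of this machinery appears in your proposal, and without it the Fricke-involution symmetry --- hence the vanishing at the cusp $0$ and ultimately criterion \ref{itm:AGF1'} --- is not established. Relatedly, the ``main obstacle'' you identify (second-order poles and the integration by parts near $i\infty$) is a routine elaboration of Proposition~\ref{prop:G2HeckeNunified}; the real obstruction, and the reason this weight-6 case does not generalize to higher levels or weights (cf.\ Remark~\ref{rmk:spec_high_w_auto_G_int}), is the even-exponent vanishing identity.
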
\begin{proof}One can go through  criteria~\ref{itm:AGF1'}--\ref{itm:AGF3'} in a similar fashion as what we did in Proposition~\ref{prop:G2HeckeNunified}. The only non-trivial step is to demonstrate that the integral representation in Eq.~\ref{eq:G3Hecke2_KZ_int} respects the Fricke involution (Eq.~\ref{eq:Fricke_inv_HeckeN_GZ}). While it is easy to verify $ \varrho^{\mathfrak H/\overline\varGamma_0(2)}_3(\zeta,z)=-\varrho^{\mathfrak H/\overline\varGamma_0(2)}_3(-1/(2\zeta),-1/(2z))$, we still need to show that (see Eq.~\ref{eq:I_N_Fricke_inv_check})\begin{align}
\R\int_{0}^{i\infty} \alpha_2(\zeta)[1-\alpha_2(\zeta)][2E_{2}(2\zeta)-E_2(\zeta)]^{3}\varrho^{\mathfrak H/\overline\varGamma_0(2)}_3(\zeta,z)\frac{\zeta^{n}\D \zeta}{i}=0,\quad n\in\{1,2,3\}.\label{eq:rho3Hecke2_int0}
\end{align} For $ n\in\{1,3\}$, the trick used in proving Eq.~\ref{eq:rho2_int0} is  applicable to  Eq.~\ref{eq:rho3Hecke2_int0}. It remains to verify Eq.~\ref{eq:rho3Hecke2_int0} for the scenario $n=2$. This requires some new techniques, as elaborated in the rest of the proof.

As in the verification of  Eq.~\ref{eq:rho2_int0}, now it would suffice to prove\begin{align}
\R\int_{0}^{i\infty} \alpha_2(\zeta)[1-\alpha_2(\zeta)][2E_{2}(2\zeta)-E_2(\zeta)]^{3}\varrho^{\mathfrak H/\overline\varGamma_0(2)}_3(\zeta,z)\frac{\zeta^{2}\D \zeta}{i}=0,\label{eq:G3Hecke2_mid_int0}
\end{align}for all the points $z\in\mathfrak H\cap \partial\mathfrak D_2$ (Fig.~\ref{fig:fundomain}\textit{c}) satisfying \[|\R z|=\frac{1}{2},\I z>\frac{1}{2}\quad \text{or}\quad -\frac{1}{2}<\R z<0,\left\vert z+\frac{1}{2} \right\vert=\frac{1}{2}\quad\text{or}\quad 0<\R z<\frac{1}{2},\left\vert z-\frac{1}{2} \right\vert=\frac{1}{2},\]and choosing the contour of integration along the $ \I\zeta$-axis. To fulfill this task, we shall produce an explicit formula for \begin{align}
\int_{0}^{i\infty} \frac{\alpha_2(\zeta)[1-\alpha_2(\zeta)][2E_{2}(2\zeta)-E_2(\zeta)]^{3}}{\alpha_{2}(\zeta)-\alpha_2(z)}\frac{\zeta^{2}\D \zeta}{i}\label{eq:alpha2_weight6_int0}
\end{align}by manipulating elliptic integrals.

 The following (degree-1) modular transformations for the complete elliptic integrals of the first kind are well known. \begin{enumerate}[label={(\arabic*)}, ref=(\arabic*), widest=a]\item Imaginary modulus transformation:\begin{align}
\mathbf K(\sqrt{\lambda})=\frac{1}{\sqrt{1-\lambda}}\mathbf K\left( \sqrt{\frac{\lambda}{\lambda-1}} \right),\quad \lambda\in\mathbb C\smallsetminus[1,+\infty).\label{eq:im_mod}
\end{align}\item Inverse modulus transformation:\begin{align}
\mathbf K\left( \frac{1}{\sqrt{\lambda}} \right)=\begin{cases}\sqrt{\lambda}[\mathbf K(\sqrt{\lambda})-i\mathbf K(\sqrt{1-\lambda})], & \I\lambda>0\text{ or }\lambda<1;\\[6pt]
\sqrt{\lambda}[\mathbf K(\sqrt{\lambda})+i\mathbf K(\sqrt{1-\lambda})], & \I\lambda<0\text{ or }\lambda>1. \\
\end{cases}\label{eq:inv_mod}
\end{align}
\end{enumerate}
The transformation laws of the complete elliptic integral $ \mathbf K$ (Eqs.~\ref{eq:im_mod}--\ref{eq:inv_mod}) and the $ N=4$ case of Eq.~\ref{eq:z_Pnu_ratios} together imply the following functional equations:\begin{align}\begin{array}{r@{\;=\;}lr@{\;=\;}lr@{\;=\;}l}
\lambda(z+1)&\dfrac{\lambda(z)}{\lambda(z)-1},\quad & \lambda\left(-\dfrac{1}{z}\right)&1-\lambda(z),\quad &\lambda(z+2)&\lambda(z),\\[8pt]\lambda\left( -\dfrac{1}{z-1} \right)&\dfrac{1}{1-\lambda(z)},& \lambda\left( \dfrac{z\mathstrut}{z+1} \right)&\dfrac{1}{\lambda(z)},& \lambda\left( 1-\dfrac{1}{z} \right)&1-\dfrac{1}{\lambda(z)}.
\end{array}\label{eq:lambda_transf}
\end{align}

From a special case of  Eq.~\ref{eq:P_nu_sqr_E2_diff}, we know that \begin{align}
{[}P_{-1/4}(1-2\alpha_2(w))]^2={}&2E_{2}(2w)-E_2(w),&&\I w>0,|\R w|<\frac{1}{2},\left\vert w+\frac{1}{2} \right\vert>\frac{1}{2},\left|w-\frac{1}{2} \right\vert>\frac{1}{2}.\label{eq:P_quarter_sqr_E2_diff}
\end{align}According to Ramanujan's base-4  theory, the Legendre function of degree $-1/4 $ satisfies~\cite[][Chap.~33, Eqs.~9.1 and 9.2]{RN5}: \begin{align}
P_{-1/4}(\cos\theta)={}&\frac{2}{\pi}\frac{1}{\sqrt{\smash[b]{1+\sin(\theta/2)}}}\mathbf K\left( \sqrt{\frac{2\sin(\theta/2)}{1+\sin(\theta/2)}} \right)=\frac{2\sqrt{2}}{\pi}\frac{1}{\sqrt{\smash[b]{1+\cos(\theta/2)}}}\mathbf K\left( \sqrt{\frac{1-\cos(\theta/2)}{1+\cos(\theta/2)}} \right),\quad 0\leq\theta<\pi.\label{eq:P_quarter_sin4}
\end{align}

Now, it is clear  that the integral in Eq.~\ref{eq:G3Hecke2_mid_int0} is equal to \begin{align}\frac1{4\pi}\int_{-1}^1\frac{[P_{-1/4}(\xi)P_{-1/4}(-\xi)]^2}{\xi-1+2\alpha_{2}(z)}\D \xi=\frac{8}{\pi^{5}}\int_0^1\frac{[\mathbf K(\sqrt{t})\mathbf K(\sqrt{1-t})]^{2}}{\alpha_{2}(z)-\left(\frac{t}{2-t}\right)^2}\frac{t\D t}{2-t}=\frac{8}{\pi^{5}}\int_0^1\frac{[\mathbf K(\sqrt{t})\mathbf K(\sqrt{1-t})]^{2}}{\alpha_{2}(z)-\left(\frac{1-t}{1+t}\right)^2}\frac{(1-t)\D t}{1+t}.
\label{eq:G3Hecke2_mid_int0'}\tag{\ref{eq:G3Hecke2_mid_int0}$'$}
\end{align}On one hand,  we have the following identity by residue calculus:\footnote{Hereafter, an integration ``$ \int_{a+i0^+}^{b+i0^+}$'' (resp.~``$ \int_{a-i0^+}^{b-i0^+}$'') for $ a,b\in\mathbb R\cup\{-\infty,+\infty\}$ is carried along a path parallel to the real axis, whose imaginary part is a positive (resp.~negative) infinitesimal.}\begin{align}&
\int_{-\infty+i0^+}^{+\infty+i0^+}\frac{[\mathbf K(\sqrt{t})\mathbf K(\sqrt{1-t})]^{2}}{\alpha_{2}(z)-\left(\frac{1-t}{1+t}\right)^2}\frac{(1-t)\D t}{1+t}\notag\\={}&\frac{\pi i}{2}\left[ \frac{1-2\lambda(2z+1)}{1-\lambda(2z+1)} \right]^2\left[ \mathbf K\left( \sqrt{\frac{1}{1-\lambda(2z+1)}+i0^{+}} \right) \mathbf K\left( \sqrt{\frac{\lambda(2z+1)}{\lambda(2z+1)-1}-i0^{+}} \right)\right]^2\notag\\={}
&\frac{\pi ^{5}i}{16}[P_{-1/4}( 1-2\alpha_{2}(z)+i0^{+})P_{-1/4}( 2\alpha_{2}(z)-1-i0^{+} )]^2.\label{eq:P_quarter_4_int_Re}\end{align}Here in the last step, we have used the identities\begin{subequations}\begin{align}P_{-1/4}\left( 1-\frac{2}{(1-2\lambda)^2} \right)={}&\frac{1}{\pi}\sqrt{\frac{2(2\lambda-1)}{\lambda-1}}\mathbf K\left( \sqrt{\frac{1}{1-\lambda}} \right)=\frac{1}{\pi}\sqrt{\frac{2(2\lambda-1)}{\lambda}}\mathbf K\left( \sqrt{\frac{1}{\lambda}} \right),\quad\text{a.e. }\lambda\in\mathbb C;\label{eq:P_quarter_K_a}\\P_{-1/4}\left( \frac{2}{(1-2\lambda)^2} -1\right)={}&\begin{cases}\dfrac{2\sqrt{1-2\lambda}}{\pi}\mathbf K(\sqrt{\lambda}), & \R\lambda<1/2; \\[8pt]
\dfrac{2\sqrt{2\lambda-1}}{\pi}\mathbf K(\sqrt{1-\lambda}), & \R\lambda>1/2,
\end{cases}\label{eq:P_quarter_K_b}\end{align}which follow from the transformation laws of the complete elliptic integrals (Eqs.~\ref{eq:im_mod}--\ref{eq:inv_mod}) and analytic continuations of Eq.~\ref{eq:P_quarter_sin4}. \end{subequations} On the other hand, by some transformations of the complete elliptic integrals of the first kind, we can establish an identity \begin{align}&
\R\int_{-\infty+i0^+}^{+\infty+i0^+}\frac{[\mathbf K(\sqrt{t})\mathbf K(\sqrt{1-t})]^{2}}{\alpha_{2}(z)-\left(\frac{1-t}{1+t}\right)^2}\frac{(1-t)\D t}{1+t}\notag\\={}&\int_0^1\frac{6[\mathbf K(\sqrt{t})\mathbf K(\sqrt{1-t})]^{2}}{\alpha_{2}(z)-\left(\frac{t}{2-t}\right)^2}\frac{t\D t}{2-t}-\R\left\{\frac{\pi ^{5}i}{32}[P_{-1/4}( 2\alpha_{2}(z)-1-i0^{+} )]^4\right\}\label{eq:P_quarter_4_int_Re'}\tag{\ref{eq:P_quarter_4_int_Re}$'$}
\end{align}for $ |\R z|=1/2,\I z>1/2$. Concretely speaking,
 we have \begin{align}
\int_{1+i0^+}^{+\infty+i0^+}\frac{[\mathbf K(\sqrt{t})\mathbf K(\sqrt{1-t})]^{2}}{\alpha_{2}(z)-\left(\frac{1-t}{1+t}\right)^2}\frac{(1-t)\D t}{1+t}\overset{t=\frac{1}{1-s}}{=\!\!=\!\!=\!\!=\!\!=\!\!=}{}&-\int_{0+i0^+}^{1+i0^+}\frac{\left[\mathbf K\left(\sqrt{\frac{1}{1-s}}\right)\mathbf K\left(\sqrt{\frac{s}{s-1}}\right)\right]^{2}}{\alpha_{2}(z)-\left(\frac{s}{2-s}\right)^2}\frac{s\D s}{(2-s)(1-s)^{2}}\notag\\\underset{\text{Eq.~\ref{eq:inv_mod}}}{\overset{\text{Eq.~\ref{eq:im_mod}}}{=\!\!=\!\!=\!\!=\!\!=\!\!=}}{}&-\int_0^1\frac{[\mathbf K(\sqrt{1-s})+i\mathbf K(\sqrt{s})]^{2}[\mathbf K(\sqrt{s})]^2}{\alpha_{2}(z)-\left(\frac{s}{2-s}\right)^2}\frac{s\D s}{2-s};\label{eq:K4_int_T1}\\\int_{-\infty+i0^+}^{0+i0^+}\frac{[\mathbf K(\sqrt{t})\mathbf K(\sqrt{1-t})]^{2}}{\alpha_{2}(z)-\left(\frac{1-t}{1+t}\right)^2}\frac{(1-t)\D t}{1+t}\overset{t=\frac{s}{s-1}}{=\!\!=\!\!=\!\!=\!\!=\!\!=}{}&\int_{0-i0^+}^{1-i0^+}\frac{\left[\mathbf K\left(\sqrt{\frac{1}{1-s}}\right)\mathbf K\left(\sqrt{\frac{s}{s-1}}\right)\right]^{2}}{\alpha_{2}(z)-\left(\frac{1}{1-2s}\right)^2}\frac{\D s}{(1-2s)(1-s)^{2}}\notag\\\underset{\text{Eq.~\ref{eq:inv_mod}}}{\overset{\text{Eq.~\ref{eq:im_mod}}}{=\!\!=\!\!=\!\!=\!\!=\!\!=}}{}&\int_0^1\frac{[\mathbf K(\sqrt{1-s})-i\mathbf K(\sqrt{s})]^{2}[\mathbf K(\sqrt{s})]^2}{\alpha_{2}(z)-\left(\frac{1}{1-2s}\right)^2}\frac{\D s}{1-2s},\label{eq:K4_int_T2}
\end{align}which add up to\begin{align}&
\R\int_{-\infty+i0^+}^{0+i0^+}\frac{[\mathbf K(\sqrt{t})\mathbf K(\sqrt{1-t})]^{2}}{\alpha_{2}(z)-\left(\frac{1-t}{1+t}\right)^2}\frac{(1-t)\D t}{1+t}+\R\int_{1+i0^+}^{+\infty+i0^+}\frac{[\mathbf K(\sqrt{t})\mathbf K(\sqrt{1-t})]^{2}}{\alpha_{2}(z)-\left(\frac{1-t}{1+t}\right)^2}\frac{(1-t)\D t}{1+t}\notag\\={}&-\int_0^1\frac{[\mathbf K(\sqrt{s})]^4}{\alpha_{2}(z)-\left(\frac{1}{1-2s}\right)^2}\frac{\D s}{1-2s}-\int_0^1\frac{\{[\mathbf K(\sqrt{1-t})]^{2}-[\mathbf K(\sqrt{t})]^{2}\}[\mathbf K(\sqrt{t})]^2}{\alpha_{2}(z)-\left(\frac{t}{2-t}\right)^2}\frac{t\D t}{2-t}\notag\\={}&-\frac{1}{2}\int_0^1\frac{[\mathbf K(\sqrt{s})]^4-[\mathbf K(\sqrt{1-s})]^4}{\alpha_{2}(z)-\left(\frac{1}{1-2s}\right)^2}\frac{\D s}{1-2s}-\int_0^1\frac{\{[\mathbf K(\sqrt{1-t})]^{2}-[\mathbf K(\sqrt{t})]^{2}\}[\mathbf K(\sqrt{t})]^2}{\alpha_{2}(z)-\left(\frac{t}{2-t}\right)^2}\frac{t\D t}{2-t}.\label{eq:K4_over_1_2s_int}
\end{align}We can treat the remaining integral over $s$ in the last line of  Eq.~\ref{eq:K4_over_1_2s_int} in two ways. By residue calculus, we have \begin{align}&
\int_{-\infty+i0^+}^{+\infty+i0^+}\frac{[\mathbf K(\sqrt{s})]^4-[\mathbf K(\sqrt{1-s})]^4}{\alpha_{2}(z)-\left(\frac{1}{1-2s}\right)^2}\frac{\D s}{1-2s}\notag\\={}&\frac{\pi i}{2}\left[ 1-2\lambda\left( -\frac{1}{z}+1 \right) \right]^2\left\{ \left[\mathbf K\left( \sqrt{1-\lambda\left( -\frac{1}{z}+1 \right)} \right) \right]^4-\left[\mathbf K\left( \sqrt{\lambda\left( -\frac{1}{z}+1 \right)} \right) \right]^4\right\}\notag\\={}&\R\left\{\frac{\pi ^{5}i}{16}[P_{-1/4}( 2\alpha_{2}(z)-1-i0^{+} )]^4\right\}
\end{align}for $ |\R z|=1/2,\I z>1/2$. Meanwhile,
transformations in the spirit of Eqs.~\ref{eq:K4_int_T1} and \ref{eq:K4_int_T2} would bring us\begin{align}&
\left(
\int_{-\infty+i0^+}^{0+i0^+}+\int_{1+i0^+}^{+\infty+i0^+}\right)\frac{[\mathbf K(\sqrt{s})]^4-[\mathbf K(\sqrt{1-s})]^4}{\alpha_{2}(z)-\left(\frac{1}{1-2s}\right)^2}\frac{\D s}{1-2s}\notag\\={}&\int_0^1\frac{\{2[\mathbf K(\sqrt{t})]^{4}-[\mathbf K(\sqrt{1-t})-i\mathbf K(\sqrt{t})]^{4}-[\mathbf K(\sqrt{1-t})+i\mathbf K(\sqrt{t})]^{4}\}}{\alpha_{2}(z)-\left(\frac{1-t}{1+t}\right)^2}\frac{(1-t)\D t}{1+t}\notag\\={}&\int_0^1\frac{\{12[\mathbf K(\sqrt{1-t})]^{2}-2[\mathbf K(\sqrt{t})]^{2}\}[\mathbf K(\sqrt{t})]^2}{\alpha_{2}(z)-\left(\frac{t}{2-t}\right)^2}\frac{t\D t}{2-t}.\label{eq:K4_diff_comb_int}
\end{align}Thus, we see that the identity claimed in Eq.~\ref{eq:P_quarter_4_int_Re'} is a result of   Eqs.~\ref{eq:K4_over_1_2s_int}--\ref{eq:K4_diff_comb_int}.

To wrap up, we combine Eqs.~\ref{eq:P_quarter_4_int_Re} and \ref{eq:P_quarter_4_int_Re'} into \begin{align}
\int_{0}^{i\infty} \frac{\alpha_2(\zeta)[1-\alpha_2(\zeta)][2E_{2}(2\zeta)-E_2(\zeta)]^{3}}{\alpha_{2}(\zeta)-\alpha_2(z)}\frac{\zeta^{2}\D \zeta}{i}=-\frac{ \I z+4 (\I z)^3}{12}[2E_2(2z)-E_2(z)]^2\label{eq:alpha2_weight6_int0'}\tag{\ref{eq:alpha2_weight6_int0}$'$}
\end{align}where the integration is carried out along the $ \I\zeta$-axis, and  $ |\R z|=1/2,\I z>1/2$.
By differentiation, we have justified Eq.~\ref{eq:G3Hecke2_mid_int0} for  $ |\R z|=1/2,\I z>1/2$. The rest of the verification for  Eq.~\ref{eq:G3Hecke2_mid_int0}  follows readily from the reflection formula  $ \varrho^{\mathfrak H/\overline\varGamma_0(2)}_3(\zeta,z)=-\varrho^{\mathfrak H/\overline\varGamma_0(2)}_3(-1/(2\zeta),-1/(2z))$.   \end{proof}\begin{remark}\label{rmk:spec_high_w_auto_G_int}Due to the subtle restriction in Eq.~\ref{eq:rho3Hecke2_int0}, the integral representation for $ G_3^{\mathfrak H/\overline{\varGamma}_0(2)}(z,z')$ does not immediately generalize to weight-6 automorphic Green's functions with levels $ N>2$, or to automorphic Green's functions with weights higher than 6. Nevertheless, when one of the arguments in the automorphic Green's functions is a special CM point, say $ \alpha_N(z)=\infty$, it is possible to establish certain  integral representations for automorphic Green's functions by going through criteria~\ref{itm:AGF1'}--\ref{itm:AGF3'} with respect to the other variable $ z'$. Some examples of weights 6 and 10 are given below:{\allowdisplaybreaks\begin{align}&
G_3^{\mathfrak H/PSL(2,\mathbb Z)}\left(\frac{1+i\sqrt{3}}{2},z'\right)\notag\\={}&\notag\frac{4\pi^2}{9\left[ \I \frac{iP_{-1/6}(-\sqrt{(j(z')-1728)/j(z')})}{P_{-1/6}(\sqrt{(j(z')-1728)/j(z')})} \right]^2}\R\int^1_{\sqrt{(j(z')-1728)/j(z')}}\xi[P_{-1/6}(\xi)]^4\times\notag\\&\times\left[\tfrac{iP_{-1/6}(-\xi)}{P_{-1/6}(\xi)}-\vphantom{\overline{\tfrac{\sqrt{j}}{}}}\tfrac{iP_{-1/6}(-\sqrt{(j(z')-1728)/j(z')})}{P_{-1/6}(\sqrt{(j(z')-1728)/j(z')})}\right]^2\left[\tfrac{iP_{-1/6}(-\xi)}{P_{-1/6}(\xi)}-\overline{\left(\tfrac{iP_{-1/6}(-\sqrt{(j(z')-1728)/j(z')})}{P_{-1/6}(\sqrt{(j(z')-1728)/j(z')})}\right)}\right]^2\D \xi\notag\\&+\frac{4\pi^2}{9\left[ \I \frac{iP_{-1/6}(-\sqrt{(j(z')-1728)/j(z')})}{P_{-1/6}(\sqrt{(j(z')-1728)/j(z')})} \right]^2}\int_{-1}^{1}\xi[P_{-1/6}(\xi)]^4\D \xi,\quad  \text{a.e. }z'\in\mathfrak H;\label{eq:KZ_G3_ell3_star}\\
&\frac{G_3^{\mathfrak H/\overline{\varGamma}_0(2)}\left(\frac{i-1}{2},z'\right)}{2}=G_3^{\mathfrak H/\overline{\varGamma}(2)}(i,2z'+1)\notag\\={}&\frac{\pi^{2}}{32\left[\I\frac{iP_{-1/4}(2\alpha_{2}(z')-1)}{P_{-1/4}(1-2\alpha_{2}(z'))}\right]^2}\R\int^1_{1-2\alpha_{2}(z')}\xi[P_{-1/4}(\xi)]^4\times \notag\\&\times\left[\frac{iP_{-1/4}(-\xi)}{P_{-1/4}(\xi)}-\vphantom{\overline{\tfrac{\sqrt{j}}{}}}\frac{iP_{-1/4}(2\alpha_{2}(z')-1)}{P_{-1/4}(1-2\alpha_{2}(z'))}\right]^2\left[\frac{iP_{-1/4}(-\xi)}{P_{-1/4}(\xi)}-\overline{\left(\frac{iP_{-1/4}(2\alpha_{2}(z')-1)}{P_{-1/4}(1-2\alpha_{2}(z'))}\right)}\right]^2\D \xi\notag\\{}&+\frac{\pi^{2}}{32\left[\I\frac{iP_{-1/4}(2\alpha_{2}(z')-1)}{P_{-1/4}(1-2\alpha_{2}(z'))}\right]^2}\int_{-1}^1\xi[P_{-1/4}(\xi)]^4\D \xi, \quad\text{a.e. }z'\in\mathfrak H;\label{eq:G3_Gamma2}\\&G_3^{\mathfrak H/\overline{\varGamma}_{0}(3)}\left( \frac{3+i\sqrt{3}}{6},z'\right)\notag\\={}&\frac{\pi^{2}}{54\left[\I\frac{iP_{-1/3}(2\alpha_{3}(z')-1)}{P_{-1/3}(1-2\alpha_{3}(z'))}\right]^2}\R\int^1_{1-2\alpha_{3}(z')}\xi[P_{-1/3}(\xi)]^4\times \notag\\&\times\left[\frac{iP_{-1/3}(-\xi)}{P_{-1/3}(\xi)}-\vphantom{\overline{\tfrac{\sqrt{j}}{}}}\frac{iP_{-1/3}(2\alpha_{3}(z')-1)}{P_{-1/3}(1-2\alpha_{3}(z'))}\right]^2\left[\frac{iP_{-1/3}(-\xi)}{P_{-1/3}(\xi)}-\overline{\left(\frac{iP_{-1/3}(2\alpha_{3}(z')-1)}{P_{-1/3}(1-2\alpha_{3}(z'))}\right)}\right]^2\D \xi\notag\\&+\frac{ \pi ^2}{54\left[\I\frac{iP_{-1/3}(2\alpha_{3}(z')-1)}{P_{-1/3}(1-2\alpha_{3}(z'))}\right]^2}\int_{-1}^1\xi[P_{-1/3}(\xi)]^4\D \xi,\quad\text{a.e. }z'\in\mathfrak H;\label{eq:G3Hecke3_spec}\\&\frac{G_5^{\mathfrak H/\overline{\varGamma}_0(2)}\left(\frac{i-1}{2},z'\right)}{2}=G_5^{\mathfrak H/\overline{\varGamma}(2)}(i,2z'+1)\notag\\={}&-\frac{\pi^{4}}{6144\left[\I\frac{iP_{-1/4}(2\alpha_{2}(z')-1)}{P_{-1/4}(1-2\alpha_{2}(z'))}\right]^4}\int_{1-2\alpha_{2}(z')}^1\xi(5-6\xi^2)[P_{-1/4}(\xi)]^8\times\notag\\&\times\left[\frac{iP_{-1/4}(-\xi)}{P_{-1/4}(\xi)}-\vphantom{\overline{\tfrac{\sqrt{j}}{}}}\frac{iP_{-1/4}(2\alpha_{2}(z')-1)}{P_{-1/4}(1-2\alpha_{2}(z'))}\right]^4\left[\frac{iP_{-1/4}(-\xi)}{P_{-1/4}(\xi)}-\overline{\left(\frac{iP_{-1/4}(2\alpha_{2}(z')-1)}{P_{-1/4}(1-2\alpha_{2}(z'))}\right)}\right]^4\D \xi\notag\\&-\frac{\pi^{4}}{6144\left[\I\frac{iP_{-1/4}(2\alpha_{2}(z')-1)}{P_{-1/4}(1-2\alpha_{2}(z'))}\right]^4}\int_{-1}^1\xi(5-6\xi^2)[P_{-1/4}(\xi)]^8\D \xi,\quad\text{a.e. }z'\in\mathfrak H.\label{eq:G5Gamma2_spec}
\end{align}}Here, Eq.~\ref{eq:G3_Gamma2} reflects the fact that\begin{align}\lim_{z\to(i-1)/2}
\varrho^{\mathfrak H/\overline\varGamma_0(2)}_3(\zeta,z)=\frac{\pi}{8}[1-2\alpha_2(\zeta)].
\end{align}The validity of Eq.~\ref{eq:G5Gamma2_spec} hinges on a non-trivial vanishing identity \begin{align}
\int _{-1}^1\xi(5-6\xi^2)[P_{-1/4}(-\xi)]^2[P_{-1/4}(\xi)]^6\D \xi=0,
\end{align} which can be proved in a similar manner as Eq.~\ref{eq:G3Hecke2_mid_int0}.\eor\end{remark}\begin{proposition}[Kontsevich--Zagier Integral Representations for   $ G_{k/2}^{\mathfrak H/PSL(2,\mathbb Z)}(z,z')$ where $k\in\{6,8,10,14\}$]\label{prop:G3457_PSL2Z}With the notations for the Eisenstein series as given in Eqs.~\ref{eq:E4_E6_defn} and \ref{eq:E8E10E14_defn}, we have the following integral representations for weights $k\in\{6,8,10,14\}$  and  $ j(z)\neq j(z')$:\begin{align}
G_{k/2}^{\mathfrak H/PSL(2,\mathbb Z)}(z,z')={}&\frac{1728\pi^{2}}{(y')^{(k-2)/2}}\R\int_{z'}^{i\infty} \frac{E_{k}(\zeta)}{j(\zeta)}\rho^{\mathfrak H/PSL(2,\mathbb Z)}_{k/2}(\zeta,z)\frac{(\zeta-z')^{(k-2)/2}(\zeta-\overline{z'})^{(k-2)/2}\D \zeta}{i}\notag\\&-\frac{1728\pi^{2}}{(y')^{(k-2)/2}}\R\int_{0}^{i\infty} \frac{E_{k}(\zeta)}{j(\zeta)}\rho^{\mathfrak H/PSL(2,\mathbb Z)}_{k/2}(\zeta,z)\frac{\zeta^{k-2}\D \zeta}{i}\label{eq:GkPSL2Z}
\end{align} where\begin{align}\label{eq:rho_k_PSL2Z}
\rho^{\mathfrak H/PSL(2,\mathbb Z)}_{k/2}(\zeta,z)={}&\frac{(-1)^{k/2}}{2^{(k-4)/2}}\frac{1}{\left( \frac{k-2}{2} \right)!}\frac{ y^{(k-2)/2}}{864 \pi}\left( \frac{\partial}{\partial y}\frac{1}{y} \right)^{(k-2)/2}\left[  \frac{j(\zeta)j(z)}{j(\zeta)-j(z)}\frac{{E_{6}(z)}}{E_{4}(z)E_{k}(z)}\right].
\end{align}Here in Eq.~\ref{eq:rho_k_PSL2Z},  it is understood that $ j(z)E_6(z)/[E_4(z)E_k(z)]=[E_4(z)]^2 E_6(z)/[\Delta(z)E_k(z)]$ defines a smooth function for all $ z\in\mathfrak H$, so that  $ \rho^{\mathfrak H/PSL(2,\mathbb Z)}_{k/2}(\zeta,z)$ is well-behaved when $ j(\zeta)\neq j(z)$.\end{proposition}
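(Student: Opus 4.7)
The plan is to verify that the right-hand side of Eq.~\ref{eq:GkPSL2Z} meets the three criteria \ref{itm:AGF1'}--\ref{itm:AGF3'} of Lemma~\ref{lm:spec_AGF}, applied in the variable $z$, so that Lemma~\ref{lm:spec_AGF} will identify this integral with $G_{k/2}^{\mathfrak H/PSL(2,\mathbb Z)}(z,z')$. The template is  Proposition~\ref{prop:G2PSL2Z} (the $k=4$ case); the new features for $k\in\{6,8,10,14\}$ are a higher-order derivative $(\partial_y\tfrac1y)^{(k-2)/2}$ in the definition of $\rho^{\mathfrak H/PSL(2,\mathbb Z)}_{k/2}$ and a non-trivial vanishing identity at the infinite cusp.

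For criterion \ref{itm:AGF1'}, I would first establish regularity of $\rho^{\mathfrak H/PSL(2,\mathbb Z)}_{k/2}(\zeta,z)$ on $\{j(\zeta)\ne j(z)\}$: the apparent singularities of $E_k(z)$ at elliptic points are absorbed by the zeros of $E_4(z)$ or $\Delta(z)$ via the normalization $j(z)E_6(z)/[E_4(z)E_k(z)]=[E_4(z)]^2E_6(z)/[\Delta(z)E_k(z)]$ explained in the statement. Modular invariance in $z$ is then automatic since only the $PSL(2,\mathbb Z)$-invariants $j(z)$ and ratios of  Eisenstein series appear under the operator $(\partial_y\tfrac1y)^{(k-2)/2}$. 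Path independence reduces to showing that the residues of the integrand at $\zeta=\hat\gamma z$, $\hat\gamma\in SL(2,\mathbb Z)$, produce only real contributions to $\R\int$. The argument of Eq.~\ref{eq:G2_three_fns'} generalizes: the higher-order derivative $(\partial_y\tfrac1y)^{(k-2)/2}$ of a modular-invariant-in-$z$ combination, paired with $\zeta^n$ $(0\le n\le k-2)$ and a weight-$k$ form in $\zeta$, yields residues of the form $(\partial_y\tfrac1y)^{(k-2)/2}[(\hat\gamma z)^n/\mathrm{weight\ factor}]$, which I expect to be real by the same mechanism.

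Criterion \ref{itm:AGF2'} follows from a Bol-type identity: for any holomorphic $f$ on $\mathfrak H$,
\begin{align*}
\left[y^2\!\left(\tfrac{\partial^{2}}{\partial x^{2}}+\tfrac{\partial^{2}}{\partial y^{2}}\right)-\tfrac{k}{2}\!\left(\tfrac{k}{2}-1\right)\right]\left[y^{(k-2)/2}\!\left(\tfrac{\partial}{\partial y}\tfrac{1}{y}\right)^{\!(k-2)/2}\!f(z)\right]=0,
\end{align*}
which generalizes Eq.~\ref{eq:weight_4_diff_eqn_test} and is proved by a direct induction on $k$ using $4\partial_z\partial_{\bar z}=\partial_x^2+\partial_y^2$. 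Once this is in place, applying the operator under the integral sign gives the required Maa{\ss} eigenvalue $\tfrac{k}{2}(\tfrac{k}{2}-1)$.

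The logarithmic asymptotics at the diagonal in  \ref{itm:AGF3'} are extracted from the leading pole $1/(j(\zeta)-j(z))^{(k-2)/2+1}$ inside $\rho^{\mathfrak H/PSL(2,\mathbb Z)}_{k/2}(\zeta,z)$ as $z\to z'$, in direct analogy with the weight-4 analysis of Proposition~\ref{prop:G2PSL2Z}. The main obstacle is the vanishing of the right-hand side of Eq.~\ref{eq:GkPSL2Z} as $z\to i\infty$. After  expanding $(\zeta-z')^{(k-2)/2}(\zeta-\overline{z'})^{(k-2)/2}$ as a polynomial in $\zeta$ with coefficients involving $x',y'$, subtracting the $\zeta^{k-2}$ contour integral from $0$ to $i\infty$ kills the top power; for the remaining  ``middle moments'' one needs
\begin{align*}
\R\int_0^{i\infty}\frac{E_k(\zeta)}{j(\zeta)}\,\rho^{\mathfrak H/PSL(2,\mathbb Z)}_{k/2}(\zeta,z)\,\zeta^{n}\,\D\zeta=0,\qquad n=0,1,\ldots,k-3.
\end{align*}
I would prove these along the imaginary axis by the PDE/boundary-value argument used for Eq.~\ref{eq:rho2_int0}: since each such integral is annihilated by $-\tfrac{k}{2}(\tfrac{k}{2}-1)+y^2(\partial_x^2+\partial_y^2)$ and vanishes on $\partial\mathfrak D$ (where $\alpha_1(z)$ is real), it must vanish throughout $\mathfrak D$ and hence throughout $\mathfrak H$ by modularity. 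The genuinely hard sub-step, paralleling the weight-6 manipulations in Proposition~\ref{prop:G3Hecke2}, is the boundary vanishing for the ``middle'' exponents $n\in\{1,\ldots,k-3\}\setminus\{k/2-1\}$; here I would recast the integrand, via Eq.~\ref{eq:P_sixth_eta} and the $P_{-1/6}$-representation of Proposition~\ref{prop:G2_HeckeN_Pnu}, into products of complete elliptic integrals with fractional degree Legendre functions, and then exploit the inverse-modulus and imaginary-modulus transformations (Eqs.~\ref{eq:im_mod}--\ref{eq:inv_mod}) together with Ramanujan's alternative-base identities to produce the required cancellations. With this vanishing in hand, the remaining integration-by-parts argument of Proposition~\ref{prop:G2HeckeNunified} transfers verbatim to give $\lim_{z\to i\infty}(\cdot)=0$, completing the verification of Lemma~\ref{lm:spec_AGF} and hence the proposition.
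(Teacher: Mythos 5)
Your overall architecture (Lemma~\ref{lm:spec_AGF} plus a reduction to vanishing moment identities proved by elliptic-integral manipulations) is recognizably the paper's strategy, but you have oriented it differently: the paper deliberately verifies \ref{itm:AGF1'}--\ref{itm:AGF3'} in the variable $z'$ rather than $z$, precisely so that the differential equation and the cusp behavior $\lim_{z'\to i\infty}G_{k/2}^{\mathfrak H/PSL(2,\mathbb Z)}(z,z')=0$ become trivial and the entire burden falls on the $z'\mapsto -1/z'$ invariance. Your choice of the variable $z$ obliges you to supply a Bol-type eigenvalue identity, the diagonal asymptotics, and a full cusp analysis at $z\to i\infty$; the last of these forces you to deal with the $n=0$ moment, which does \emph{not} vanish (it encodes the nonzero $L$-values of Proposition~\ref{prop:high_w_L_values}) and must instead be handled by the integration-by-parts argument of Eq.~\ref{eq:cusp_lim0c}. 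Your stated list of required vanishing exponents is therefore wrong in both directions: the paper needs only $n\in[2,k-4]\cap(2\mathbb Z)$, the odd $n$ being disposed of by the realness trick of Eq.~\ref{eq:rho2_int0}.

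The genuine gap is in your treatment of the even exponents. You assert that each moment integral ``vanishes on $\partial\mathfrak D$ (where $\alpha_1(z)$ is real)'' by the argument used for Eq.~\ref{eq:rho2_int0}, and that the elliptic-integral transformations will ``produce the required cancellations.'' Neither is correct as stated. Along the imaginary $\zeta$-axis with $|\R z|=\tfrac12$, the realness of all the modular data makes the integrand purely imaginary only when $n$ is odd; for even $n$ the integrand is real and its integral is generically \emph{nonzero}. The paper's proof shows that the inner integrals
\begin{align*}
\int_{0}^{i\infty}\frac{E_{k}(\zeta)j(z)}{j(\zeta)-j(z)}\frac{E_{6}(z)}{E_{4}(z)E_{k}(z)}\frac{\zeta^{n}\D\zeta}{i}=p_{k,n}(\I z)
\end{align*}
equal explicit nonzero odd polynomials, e.g.\ $p_{8,2}(y)=(9y+40y^{3}+16y^{5})/80$, computed by residue calculus in the variable $t=\lambda(\zeta)$ (collecting the three poles of Eq.~\ref{eq:3poles_t}) combined with the inverse- and imaginary-modulus transformations. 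The required vanishing of $\R\int_0^{i\infty}\frac{E_k(\zeta)}{j(\zeta)}\rho^{\mathfrak H/PSL(2,\mathbb Z)}_{k/2}(\zeta,z)\zeta^{n}\D\zeta$ then holds only because these polynomials, being spanned by $y,y^{3},\dots,y^{k-3}$, are annihilated by the operator $y^{(k-2)/2}(\partial_y\tfrac1y)^{(k-2)/2}$ hidden inside $\rho^{\mathfrak H/PSL(2,\mathbb Z)}_{k/2}$ (and, for $k=14$, only after solving $p_{14,2}$ and $p_{14,4}$ from simultaneous equations). Without identifying this mechanism --- explicit nonzero boundary values killed by the differential operator, rather than boundary values that are themselves zero --- your homogeneous Dirichlet argument has no valid boundary data to propagate, and the proof does not close.
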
\begin{proof}Contrary to the practice in Proposition~\ref{prop:G2PSL2Z}, we shall go over criteria~\ref{itm:AGF1'}--\ref{itm:AGF3'} with respect to the variable $z'$. The major advantage of this approach is the simplification of the proofs for~criteria~\ref{itm:AGF2'} and \ref{itm:AGF3'}. Especially, it is  easy to check the compatibility with the cusp behavior: $ \lim_{z'\to i\infty}G_{k/2}^{\mathfrak H/PSL(2,\mathbb Z)}(z,z')=0$.

The remaining challenge resides in the symmetry criterion~\ref{itm:AGF1'}. In particular, we need to expend some effort to justify that the proposed integral representations in Eq.~\ref{eq:GkPSL2Z} are invariant under the inversion $ z'\mapsto-1/z'$. This amounts to the verification of the following vanishing identities (see Eq.~\ref{eq:rho3Hecke2_int0}):\begin{align}
\R\int_{0}^{i\infty} \frac{E_{k}(\zeta)}{j(\zeta)}\rho^{\mathfrak H/PSL(2,\mathbb Z)}_{k/2}(\zeta,z)\frac{\zeta^{n}\D \zeta}{i}=0,\quad n\in[2,k-4]\cap(2\mathbb Z),|\R z|=\frac{1}{2},\I z>\frac{\sqrt{3}}{2},\label{eq:rho_k_PSL_int0}
\end{align} where the paths of integration are along
the $ \I\zeta$-axis.
Once  Eq.~\ref{eq:rho_k_PSL_int0} is confirmed, the vanishing identity remains valid on (see Eq.~\ref{eq:fun_domain_SL2Z})\begin{align}\mathfrak H\cap\partial(\mathfrak D\cup\hat S\mathfrak D)=\left\{ z\in\mathfrak H\left| |\R z|=\frac{1}{2},\I z>\frac{\sqrt{3}}{2} \right. \right\}\cup \left\{ z\in\mathfrak H\left| |\R z|\leq\frac{1}{2},|z+1|=1,|z-1|=1 \right. \right\}\label{eq:fun_domain_SL2Z_S_bd}\end{align}
by the inversion $ \hat S:z\mapsto-1/z$.
Consequently,   Eq.~\ref{eq:rho_k_PSL_int0} is applicable to the whole fundamental domain $ \mathfrak D$ (by a homogeneous Dirichlet boundary value problem) and to the entire upper half-plane $ \mathfrak H$ (by tessellation under the actions of $ SL(2,\mathbb Z)$).

As in Proposition~\ref{prop:G3Hecke2}, we can prove Eq.~\ref{eq:rho_k_PSL_int0} by supplying explicit integral formulae in the forms of\begin{align}
\int_{0}^{i\infty} \frac{E_{k}(\zeta)j(z)}{j(\zeta)-j(z)}\frac{{E_{6}(z)}}{E_{4}(z)E_{k}(z)}\frac{\zeta^{n}\D \zeta}{ i}=p_{k,n} (\I z),\quad n\in[2,k-4]\cap(2\mathbb Z),|\R z|=\frac{1}{2},\I z>\frac{\sqrt{3}}{2},\label{eq:p_k_n_defn}
\end{align} where the polynomials $p_{k,n} $ are given below:\begin{align}
\left\{\begin{array}{r@{\;=\;}l}
p_{6,2} (y)& 0, \\[2pt]
p_{8,2}(y)=-p_{8,4}(y) & \dfrac{9y+40 y^3+16 y^5}{80} ,\\[6pt]
p_{10,2}(y)=-p_{10,6}(y) & \dfrac{27 y+84 y^3-112 y^5-64 y^7}{448} ,\\[6pt] p_{10,4}(y) &0,\\[6pt]
p_{14,2}(y)=-p_{14,10}(y) & \dfrac{261 y-5500 y^3-32736 y^5-29568 y^7-14080 y^9-3072 y^{11}}{33792}, \\[6pt]p_{14,4}(y)=-p_{14,8}(y)&\dfrac{9 y + 112 y^3 + 480 y^5 + 768 y^7 + 256 y^9}{2304},\\[6pt] p_{14,6}(y)&0.
\end{array}\right.\label{eq:p_k_n_list}
\end{align}It is clear that Eqs.~\ref{eq:p_k_n_defn} and \ref{eq:p_k_n_list} lead to a verification of Eq.~\ref{eq:rho_k_PSL_int0}.

We now illustrate  Eq.~\ref{eq:p_k_n_list} with a detailed computation for $ p_{8,2}(y)$.

We start by pointing out that the integral representation for $ p_{8,2} y$ can be rewritten using elliptic integrals:\begin{align}
\int_{0}^{i\infty} \frac{E_{8}(\zeta)j(z)}{j(\zeta)-j(z)}\frac{{E_{6}(z)}}{E_{4}(z)E_{8}(z)}\frac{\zeta^{2}\D \zeta}{ i}=-\frac{64}{\pi^{7}}\frac{j(z)E_{6}(z)}{E_{4}(z)E_{8}(z)}\int_0^1\frac{[\mathbf K(\sqrt{t})]^4[\mathbf K(\sqrt{1-t})]^2}{\frac{256(1-t+t^2)^3}{t^{2}(1-t)^2}-j(z)}\frac{(1-t+t^2)^2\D t}{t(1-t)}\label{eq:p_8_2_ell_int_form}
\end{align}for $ |\R z|=1/2,\I z>{\sqrt{3}}/{2}$. To show Eq.~\ref{eq:p_8_2_ell_int_form}, we recall the expression of  the $ j$-invariant as a rational function of  the modular lambda function:\begin{align}&&
j(z)={}&\frac{256\{1-\lambda(z)+[\lambda(z)]^2\}^3}{[\lambda(z)]^2[1-\lambda(z)]^2},&&\forall z\in\mathfrak H,&&
\end{align} along with  Ramanujan's work~\cite{Ramanujan1916} on the relation between the Eisenstein series and the complete elliptic integrals of the first kind $ \mathbf K(\sqrt{\lambda(z)})$: \begin{align}&&E_4(z)={}&\left[ \frac{2\mathbf K(\sqrt{\lambda(z)})}{\pi} \right]^{4}\{1-\lambda(z)+[\lambda(z)]^{2}\},&&\I z>0,|\R z|<1,\left|z+\frac12\right|>\frac12,\left|z-\frac12\right|>\frac{1}{2},&&\label{eq:E4_Ell_Ramanujan}\\&&E_6(z)={}&\left[ \frac{2\mathbf K(\sqrt{\lambda(z)})}{\pi} \right]^{6}\frac{[\lambda(z)+1][\lambda(z)-2][2\lambda(z)-1]}{2},&&\I z>0,|\R z|<1,\left|z+\frac12\right|>\frac12,\left|z-\frac12\right|>\frac{1}{2}.&&\label{eq:E6_Ell_Ramanujan}\end{align}Furthermore, the variable transformation from $ \zeta$ to $ t=\lambda(\zeta)$ is mediated by the identities:\begin{align}z={}&\frac{i\mathbf K(\sqrt{1-\lambda(z)})}{\mathbf K(\sqrt{\lambda(z)})},&& \I z>0,|\R z|<1,\left|z+\frac12\right|>\frac12,\left|z-\frac12\right|>\frac{1}{2},\label{eq:lambda_K_ratio_z}\\
\frac{\D}{\D t}\frac{\mathbf K(\sqrt{1-t})}{\mathbf K(\sqrt{t})}={}&-\frac{\pi}{4t(1-t)[\mathbf K(\sqrt{t})]^2},&&t\in\mathbb C\smallsetminus((-\infty,0]\cup[1,+\infty)),
\label{eq:ratio_deriv}
\end{align}which are special cases of Eqs.~\ref{eq:z_Pnu_ratios} and \ref{eq:P_nu_ratio_deriv}, respectively.

Then, we consider the following integral related to the right-hand side of Eq.~\ref{eq:p_8_2_ell_int_form}:\begin{align}
-\frac{64}{\pi^{7}}\frac{j(z)E_{6}(z)}{E_{4}(z)E_{8}(z)}\int_{-\infty+i0^+}^{+\infty+i0^+}\frac{[\mathbf K(\sqrt{t})]^4[\mathbf K(\sqrt{1-t})]^2}{\frac{256(1-t+t^2)^3}{t^{2}(1-t)^2}-j(z)}\frac{(1-t+t^2)^2\D t}{t(1-t)}
\end{align}and compute it in two ways. In the first approach, we can close the contour  in the upper half $t$-plane and collect residues at the three simple poles therein. Without loss of generality, we may assume that $ \R z=1/2$, and these three poles are\begin{align}t=\lambda(z),\quad t=\lambda\left( \frac{z-1}{z} \right)=1-\frac{1}{\lambda(z)},\quad t=\lambda\left( -\frac{1}{z-1} \right)=\frac{1}{1-\lambda(z)}.\label{eq:3poles_t}\end{align}By the differentiation formula for the $ j$-invariant (Eq.~\ref{eq:j'z_E4E6}), the transformation laws of the Eisenstein series (Eq.~\ref{eq:E2E4E6_mod_transf}) and residue calculus, we have\begin{align}&
-\frac{64}{\pi^{7}}\frac{j(z)E_{6}(z)}{E_{4}(z)E_{8}(z)}\int_{-\infty+i0^+}^{+\infty+i0^+}\frac{[\mathbf K(\sqrt{t})]^4[\mathbf K(\sqrt{1-t})]^2}{\frac{256(1-t+t^2)^3}{t^{2}(1-t)^2}-j(z)}\frac{(1-t+t^2)^2\D t}{t(1-t)}\notag\\={}&\frac{1}{i}\left[ z^2+\left(\frac{z-1}{z}\right)^2 z^6+\left(-\frac{1}{z-1}\right)^2 (z-1)^6 \right]\notag\\={}&\frac{9 y + 40 y^3 + 16 y^5}{16}+\frac{21 - 156 y^2 + 48 y^4 - 64 y^6}{64i},\quad\text{for }\I z=\frac{z-\frac{1}{2}}{i}=y.\label{eq:p_8_2_comp1}
\end{align}In the second approach, we employ the transformation laws of $ \mathbf K$ (Eqs.~\ref{eq:im_mod} and \ref{eq:inv_mod}) to deduce  (see Eqs.~\ref{eq:K4_int_T1}, \ref{eq:K4_int_T2} and \ref{eq:K4_diff_comb_int})\begin{align}
&-\frac{64}{\pi^{7}}\frac{j(z)E_{6}(z)}{E_{4}(z)E_{8}(z)}\int_{-\infty+i0^+}^{+\infty+i0^+}\frac{[\mathbf K(\sqrt{t})]^4[\mathbf K(\sqrt{1-t})]^2}{\frac{256(1-t+t^2)^3}{t^{2}(1-t)^2}-j(z)}\frac{(1-t+t^2)^2\D t}{t(1-t)}\notag\\{}&+\frac{64}{\pi^{7}}\frac{j(z)E_{6}(z)}{E_{4}(z)E_{8}(z)}\int_0^1\frac{[\mathbf K(\sqrt{t})]^4[\mathbf K(\sqrt{1-t})]^2}{\frac{256(1-t+t^2)^3}{t^{2}(1-t)^2}-j(z)}\frac{(1-t+t^2)^2\D t}{t(1-t)}\notag\\={}&\frac{64}{\pi^{7}}\frac{j(z)E_{6}(z)}{E_{4}(z)E_{8}(z)}\int_{0}^{1}\frac{[\mathbf K(\sqrt{t})]^4[\mathbf K(\sqrt{1-t})-i\mathbf K(\sqrt{t})]^2+[\mathbf K(\sqrt{1-t})-i\mathbf K(\sqrt{t})]^4[\mathbf K(\sqrt{t})]^2}{\frac{256(1-t+t^2)^3}{t^{2}(1-t)^2}-j(z)}\frac{(1-t+t^2)^2\D t}{t(1-t)}\notag\\={}&\frac{64}{\pi^{7}}\frac{j(z)E_{6}(z)}{E_{4}(z)E_{8}(z)}\int_0^1\frac{4i[\mathbf K(\sqrt{1-t})\mathbf K(\sqrt{t})]^3-6i\mathbf K(\sqrt{1-t})[\mathbf K(\sqrt{t})]^{5}-4[\mathbf K(\sqrt{t})]^4[\mathbf K(\sqrt{1-t})]^2}{\frac{256(1-t+t^2)^3}{t^{2}(1-t)^2}-j(z)}\frac{(1-t+t^2)^2\D t}{t(1-t)}.\label{eq:p_8_2_comp2}
\end{align}Comparing Eqs.~\ref{eq:p_8_2_comp1} and \ref{eq:p_8_2_comp2}, we have completed the evaluation of $ p_{8,2}(y)$.

Except that one needs to solve  $ p_{14,2}(y)$ and $ p_{14,4}(y)$ from two simultaneous equations, the derivations of all the other polynomials in   Eq.~\ref{eq:p_k_n_list} will follow from similar procedures as the computation for  $ p_{8,2}(y)$.
\end{proof}

At this point, we have completed the verification of all the integral representations of automorphic Green's functions proposed in Theorem~\ref{thm:KZ_int_repns}. As direct applications of the results in Propositions~\ref{prop:G3Hecke2} and \ref{prop:G3457_PSL2Z}, we list some integral representations of special $ L$-values in the next proposition. \begin{proposition}[Some Definite Integrals over Products of Legendre Functions]\label{prop:high_w_L_values}We have the following integral formulae for special $ L$-values:{\allowdisplaybreaks\begin{align}
\zeta(3)={}&-\frac{\pi^{2}}{42}\lim_{y'\to+\infty}G_{3}^{\mathfrak H/PSL(2,\mathbb Z)}\left( \frac{1+i\sqrt{3}}{2} ,z'\right)(y')^{2}=-\frac{2\pi^4}{189}\int_{-1}^1\xi[P_{-1/6}(\xi)]^4\D \xi\notag\\={}&-\frac{2\pi^{2}}{21}\lim_{y'\to+\infty}G_3^{\mathfrak H/\overline{\varGamma}(2)}(i,z')(y')^2=-\frac{\pi^{4}}{168}\int_{-1}^1 \xi[P_{-1/4}(\xi)]^4\D \xi\notag\\={}&-\frac{2\pi^{2}}{3}\lim_{y'\to+\infty}G_{3}^{\mathfrak H/\overline{\varGamma}_0(3)}\left( \frac{3+i\sqrt{3}}{6},z'\right)(y')^{2}=-\frac{\pi^4}{243}\int_{-1}^1\xi[P_{-1/3}(\xi)]^4\D \xi\notag\\={}&-\frac{4\pi^{2}}{189}\lim_{y'\to+\infty}G_3^{\mathfrak H/PSL(2,\mathbb Z)}(i,z')(y')^2=\frac{4\pi^{4}}{189}\R\int_{-1}^1\left( 1-\frac{5\xi^2}{18} \right)\frac{[P_{-1/6}(\xi)]^{4}}{\xi^{3}}\D \xi\notag\\={}&-\frac{8\pi^{2}}{567}\lim_{y'\to+\infty}G_3^{\mathfrak H/PSL(2,\mathbb Z)}(i\sqrt{2},z')(y')^{2}=-\frac{2\pi^{2}}{63}\lim_{y'\to+\infty}G_3^{\mathfrak H/\overline\varGamma_{\vartheta}}(1+i\sqrt{2},z')(y')^{2}\notag\\={}&-\frac{2\pi^{2}}{63}\lim_{y'\to+\infty}G_3^{\mathfrak H/\overline\varGamma_{0}(2)}\left( \frac{i}{\sqrt{2}},\frac{z'-1}{2} \right)(y')^{2}=\frac{2\pi^{4}}{63}\R\int_{-1}^1\frac{(8-3\xi^2)[P_{-1/4}(\xi)]^{4}}{16\xi^{3}}\D \xi;\label{eq:zeta3_P_sixth_P_third}\\L(4,\chi_{-4})={}&-\frac{\pi^{3}}{120}\lim_{y'\to+\infty}G_4^{\mathfrak H/PSL(2,\mathbb Z)}(i,z')(y')^3=-\frac{\pi^6}{120}\R\int_{-1}^1\left( 1-\frac{19\xi^2}{27} \right)\frac{[P_{-1/6}(\xi)]^{6}}{2\xi^{4}}\D \xi;\\L(4,\chi_{-3})={}&-\frac{4\pi^{3}}{405}\lim_{y'\to+\infty}G_{4}^{\mathfrak H/PSL(2,\mathbb Z)}\left( \frac{1+i\sqrt{3}}{2},z'\right)(y')^{3}\notag\\={}&-\frac{\pi^6}{10935}\int_{-1}^1(7-16\xi^2)[P_{-1/6}(\xi)]^{6}\D \xi;\\\zeta(5)={}&-\frac{2\pi^{4}}{385}\lim_{y'\to+\infty}G_{5}^{\mathfrak H/PSL(2,\mathbb Z)}\left( \frac{1+i\sqrt{3}}{2},z'\right)(y')^{4}=\frac{\pi^{8}}{249480}\int_{-1}^1(51-64\xi^2)\xi[P_{-1/6}(\xi)]^{8}\D \xi\notag\\={}&-\frac{8\pi^{4}}{525}\lim_{y'\to+\infty}G_5^{\mathfrak H/\overline\varGamma(2)}(i,z')(y')^4=\frac{\pi^{8}}{100800}\int_{-1}^1\xi(5-6\xi^2)[P_{-1/4}(\xi)]^8\D \xi\notag\\={}&-\frac{64\pi^{4}}{17325}\lim_{y'\to+\infty}G_5^{\mathfrak H/PSL(2,\mathbb Z)}(i,z')(y')^4\notag\\={}&\frac{64\pi^{8}}{17325}\R\int_{-1}^1\left( 1-\frac{61\xi^2}{54}\, \, +\frac{5\xi^4}{24} \right)\frac{[P_{-1/6}(\xi)]^{8}}{4\xi^{5}}\D \xi;\label{eq:zeta5_int_P_sixth}\\\zeta(7)={}&-\frac{8\pi^{6}}{8085}\lim_{y'\to+\infty}G_{7}^{\mathfrak H/PSL(2,\mathbb Z)}\left( \frac{1+i\sqrt{3}}{2},z'\right)(y')^{6}\notag\\={}&-\frac{\pi^{12}}{58939650}\int_{-1}^1(168-485\xi^2+320\xi^4)\xi[P_{-1/6}(\xi)]^{12}\D \xi\notag\\={}&-\frac{1024\pi^{6}}{1902285}\lim_{y'\to+\infty}G_7^{\mathfrak H/PSL(2,\mathbb Z)}(i,z')(y')^6\notag\\={}&\frac{1024\pi^{12}}{1902285}\R\int_{-1}^1\left( 1-\frac{107 \xi^2}{54}+\frac{33871 \xi^4}{29160}-\frac{2077 \xi^6}{11664} \right)\frac{[P_{-1/6}(\xi)]^{12}}{16\xi^{7}}\D \xi.\label{eq:zeta7_int_P_sixth}
\end{align}}\end{proposition}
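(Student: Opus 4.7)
The strategy is a dual evaluation of the one-point limit $\lim_{y'\to+\infty}G_{k/2}^{\mathfrak H/\varGamma}(z,z')(y')^{(k-2)/2}$ at each of the distinguished CM points $z\in\{i,\,e^{\pi i/3},\,i\sqrt 2,\,(3+i\sqrt 3)/6,\dots\}$ appearing in the statement, carried out against the integral representations already secured in Propositions~\ref{prop:G3Hecke2} and~\ref{prop:G3457_PSL2Z} together with the specialized forms in Remark~\ref{rmk:spec_high_w_auto_G_int}. Equating the two evaluations yields the claimed identities.

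On the spectral side, the asymptotic formula~\ref{eq:Gs_asympt} at $s=k/2$ reduces each such limit to the Epstein zeta sum $\sum_{(m,n)\neq(0,0)}|m z+n|^{-k}$ attached to the CM point $z$. Using the binary quadratic forms $m^2+n^2$, $m^2+mn+n^2$, $2m^2+n^2$, together with the two-squares theorem~\ref{eq:two_squares} and the cubic and base-2 Ramanujan analogues already invoked in the proof of Proposition~\ref{prop:Pnu_sec_L_weight4}, every Epstein sum factorizes as $4\zeta(k/2)L(k/2,\chi_{-4})$, $6\zeta(k/2)L(k/2,\chi_{-3})$, or $4\zeta(k/2)L(k/2,\chi_{-8})$, etc. Evaluating $\zeta(k)$ and $L(k/2,\chi_D)$ at the relevant arguments by means of the Bernoulli number formulae and the functional equation produces the rational multiples of $\zeta(3)$, $\zeta(5)$, $\zeta(7)$, $L(4,\chi_{-3})$, and $L(4,\chi_{-4})$ displayed on the left-hand sides of (\ref{eq:zeta3_P_sixth_P_third})--(\ref{eq:zeta7_int_P_sixth}). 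The normalization constants in front (such as $\frac{2\pi^{2}}{21}$, $\frac{1024\pi^{6}}{1902285}$, etc.) are prescribed automatically by $1/\zeta(2s)$ and the factor $2\pi/(1-2s)$ from~\ref{eq:Gs_asympt}.

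On the integral side, I would take $y'\to+\infty$ inside each of~\ref{eq:G3Hecke2_KZ_int}, \ref{eq:KZ_G3_ell3_star}--\ref{eq:G5Gamma2_spec}, and \ref{eqn:GkPSL2Z}. The $\int_{z'}^{i\infty}$ contour contributes at most $O((y')^{(k-4)/2})$ after the polynomial $(\zeta-z')^{(k-2)/2}(\zeta-\overline{z'})^{(k-2)/2}$ is multiplied against an integrand that decays exponentially at $i\infty$, so only the $\int_0^{i\infty}(\cdot)\zeta^{k-2}\,\D\zeta$ piece survives. Changing variables by $\zeta\mapsto\xi=1-2\alpha_N(\zeta)$ for $N\in\{2,3,4\}$ (respectively $\xi=\sqrt{(j(\zeta)-1728)/j(\zeta)}$ for $\varGamma_0(1)$), using the Jacobian~\ref{eq:P_nu_ratio_deriv} together with the Ramanujan-theoretic identifications $[P_{-1/6}(\xi)]^{12}=[E_4(\zeta)]^3$ and $[P_\nu(1-2\alpha_N(\zeta))]^2=(NE_2(N\zeta)-E_2(\zeta))/(N-1)$ from~\ref{eq:P_sixth_eta}--\ref{eq:P_nu_sqr_E2_diff}, and exploiting the parameterization $\zeta=iP_\nu(-\xi)/(\sqrt N\,P_\nu(\xi))$ of~\ref{eq:z_Pnu_ratios}, I convert each Eisenstein integrand into a rational function of $\xi$ times an even power of $P_\nu(\xi)$. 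Folding the two halves of the $\xi$-contour via $\xi\mapsto-\xi$ produces the polynomial prefactors (such as $5-6\xi^2$, $8-3\xi^2$, $1-\tfrac{19\xi^2}{27}$, or $1-\tfrac{61\xi^2}{54}+\tfrac{5\xi^4}{24}$) and the $1/\xi^{k-2}$ denominators that appear. Internal consistency between the several expressions for $\zeta(3)$ (and likewise for $\zeta(5)$) is a consequence of the addition formulae in Proposition~\ref{prop:add_form_auto_Green} combined with the Fricke involution~\ref{eq:Fricke_inv_HeckeN_GZ}: for instance, the $k=6$ analogue of~\ref{eq:G2_Hecke3_add_G2_PSL} and of~\ref{eq:GsPSL2Zi_GsGamma2i_add_form} relates the PSL$(2,\mathbb Z)$ limit at $z'=e^{\pi i/3}$ to the $\overline\varGamma_0(3)$ limit at $z'=(3+i\sqrt 3)/6$, and the $\overline\varGamma(2)$ limit at $z'=i$ to the PSL$(2,\mathbb Z)$ limit at $z'=i$.

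The principal technical obstacle is controlling the rational $\xi$-prefactors in higher weight. These originate from expanding $(\zeta-z')^{(k-2)/2}(\zeta-\overline{z'})^{(k-2)/2}-\zeta^{k-2}$ as a polynomial in $\zeta$ of degree $\leq k-3$ and then suppressing the odd-degree monomials using exactly the vanishing identities~\ref{eq:rho_k_PSL_int0} that underpinned Proposition~\ref{prop:G3457_PSL2Z}; only the even-degree monomials at the extreme ends contribute nontrivially as $y'\to\infty$, and matching these against the Ramanujan-base expansion of $E_k(\zeta)/j(\zeta)$ in powers of $\xi$ is where the specific coefficients $61/54$, $5/24$, $33871/29160$, $2077/11664$, etc., are forced. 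Once the polynomial identities of~\ref{eq:p_k_n_list} are granted, the remainder of the computation is algebraically routine but combinatorially intricate; the $k=14$ case requires simultaneous tracking of several such vanishing relations and is where the bookkeeping is heaviest.
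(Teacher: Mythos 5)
Your proposal follows essentially the same route as the paper's (very terse) proof: the left-hand normalizations come from the asymptotic expansion \ref{eq:Gs_asympt} together with the Epstein-zeta factorizations $\sum|mz+n|^{-2s}=c\,\zeta(s)L(s,\chi_D)$ already used for Proposition \ref{prop:Pnu_sec_L_weight4}, the right-hand integrals come from letting the free variable tend to the cusp in the representations of Propositions \ref{prop:G3Hecke2}, \ref{prop:G3457_PSL2Z} and Remark \ref{rmk:spec_high_w_auto_G_int} after the change of variables \ref{eq:z_Pnu_ratios}, and the addition formulae supply the conversion ratios (cf.\ Eq.\ \ref{eq:PSL_Hecke3_asympt_conv'}). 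Two details in your write-up should be corrected, though neither breaks the argument. First, the bound $O((y')^{(k-4)/2})$ you quote for the $\int_{z'}^{i\infty}$ piece would \emph{not} suffice after multiplying by $(y')^{(k-2)/2}$ and cancelling the prefactor $(y')^{-(k-2)/2}$; what actually kills that term is the exponential decay $E_k(\zeta)/j(\zeta)=O(e^{2\pi i\zeta})$ along the contour, which you do mention and which makes the contribution $O(\mathrm{poly}(y')e^{-2\pi y'})$. Second, the polynomial prefactors $51-64\xi^2$, $1-\tfrac{61\xi^2}{54}+\tfrac{5\xi^4}{24}$, etc.\ do not arise from expanding $(\zeta-z')^{(k-2)/2}(\zeta-\overline{z'})^{(k-2)/2}-\zeta^{k-2}$ and invoking \ref{eq:rho_k_PSL_int0} (that mechanism belongs to the construction of the representation, and the $(\zeta-z')\cdots$ integral vanishes entirely in the limit); they come from the explicit evaluation of $\rho^{\mathfrak H/PSL(2,\mathbb Z)}_{k/2}(\zeta,z)$ via the iterated operator $(\partial_y y^{-1})^{(k-2)/2}$ at the fixed CM point $z\in\{i,e^{\pi i/3},\dots\}$, rewritten in the $\xi$-variable as in Eqs.\ \ref{eq:GT3_KZ}--\ref{eq:GT7_KZ}.
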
\begin{proof}These are routine computations based on the asymptotic behavior  (Eq.~\ref{eq:Gs_asympt}), the addition formulae (Proposition~\ref{prop:add_form_auto_Green}), and the integral representations (Proposition~\ref{prop:G3Hecke2}, Remark~\ref{rmk:spec_high_w_auto_G_int}, Proposition~\ref{prop:G3457_PSL2Z}) for higher weight automorphic Green's functions.

For example, via explicit computations of  $ \rho^{\mathfrak H/PSL(2,\mathbb Z)}_{k/2}(\zeta,e^{i\pi/3})$ (Eq.~\ref{eq:rho_k_PSL2Z}) for $ k\in\{6,10,14\}$, one may spell out {\allowdisplaybreaks\begin{align}&G_3^{\mathfrak H/PSL(2,\mathbb Z)}(e^{\pi i/3},z)-\frac{4\pi^2}{9 y^2}\int_{-1}^{1}\xi[P_{-1/6}(\xi)]^4\D \xi\notag\\={}&\frac{4\pi^2}{9 y^2}\R\int^1_{\sqrt{\frac{j(z)-1728}{j(z)}}}\xi[P_{-1/6}(\xi)]^4\left[\frac{iP_{-1/6}(-\xi)}{P_{-1/6}(\xi)}- z\right]^{2}\left[\frac{iP_{-1/6}(-\xi)}{P_{-1/6}(\xi)}- \overline{z}\right]^{2}\D \xi,\label{eq:GT3_KZ}\\
&G_{5}^{\mathfrak H/PSL(2,\mathbb Z)}(e^{\pi i/3},z)+\frac{\pi^{4}}{1296 y^{4}}\int_{-1}^1(51-64\xi^2)\xi[P_{-1/6}(\xi)]^{8}\D \xi\notag\\={}&-\frac{\pi^{4}}{1296 y^{4}}\R\int^1_{\sqrt{\frac{j(z)-1728}{j(z)}}}(51-64\xi^{2})\xi[P_{-1/6}(\xi)]^{8}\left[\frac{iP_{-1/6}(-\xi)}{P_{-1/6}(\xi)}- z\right]^{4}\left[\frac{iP_{-1/6}(-\xi)}{P_{-1/6}(\xi)}- \overline{z}\right]^{4}\D \xi;\label{eq:GT5_KZ}\\{}&G_{7}^{\mathfrak H/PSL(2,\mathbb Z)}(e^{\pi i/3},z)-\frac{\pi^{6}}{58320 y^{6}}\int_{-1}^1(168-485\xi^2+320\xi^4)\xi[P_{-1/6}(\xi)]^{12}\D \xi\notag\\={}&\frac{\pi^{6}}{58320 y^{6}}\R\int^1_{\sqrt{\frac{j(z)-1728}{j(z)}}}(168-485\xi^2+320\xi^4)\xi[P_{-1/6}(\xi)]^{12}\times\notag\\{}&\times\left[\frac{iP_{-1/6}(-\xi)}{P_{-1/6}(\xi)}- z\right]^{6}\left[\frac{iP_{-1/6}(-\xi)}{P_{-1/6}(\xi)}- \overline{z}\right]^{6}\D \xi,\label{eq:GT7_KZ}
\end{align}}for $ |z|\geq1,-\frac12<\R z\leq\frac{1}{2},z\neq \frac{1}{2}+i\frac{\sqrt{3}}{2}$, where all the integrals are taken over straight line segments.
Asymptotic analysis of Eqs.~\ref{eq:GT3_KZ}--\ref{eq:GT7_KZ} then reveals certain integral representations for $ \zeta(3)$, $ \zeta(5)$ and $ \zeta(7)$ in Eqs.~\ref{eq:zeta3_P_sixth_P_third}, \ref{eq:zeta5_int_P_sixth} and \ref{eq:zeta7_int_P_sixth}.

We note that those conversion ratios between the asymptotic behavior of certain automorphic Green's functions can be recovered from their addition formulae, such as \begin{align}&\lim_{y\to+\infty}G_3^{\mathfrak H/PSL(2,\mathbb Z)}\left(z,\frac{1+i\sqrt{3}}{2}\right) y^{2}\notag\\={}&(1+3^{2}+3^{2}+3^{2})\lim_{y\to+\infty}G_3^{\mathfrak H/\overline{\varGamma}_{0}(3)}\left( z ,\frac{3+i\sqrt{3}}{6}\right) y^{2},\label{eq:PSL_Hecke3_asympt_conv'}\tag{\ref{eq:PSL_Hecke3_asympt_conv}$'$}\end{align}which is a weight-6 analog of Eq.~\ref{eq:PSL_Hecke3_asympt_conv}. \end{proof}\begin{remark}It is worth noting that the deep connections between Eichler integrals and special $ L$-values have  been discovered and expounded by Shimura~\cite{Shimura1959} and Manin~\cite{Manin1973}. The recent work of Bringmann, Fricke and Kent~\cite{BringmannFrickeKent2014} discussed special $ L$-values associated with Eichler integral representations of harmonic Maa{\ss} forms.    \eor\end{remark}
\begin{table}[t]\scriptsize\caption{Some special values of  $j$-invariants and automorphic Green's functions}\label{tab:Zagier_Q_examples}

\begin{align*}\begin{array}{r@{\;\in\;}l|l|l|l|l}\hline\hline  \vphantom{\dfrac{\frac12}{\frac12}}z&\mathfrak Z_{D}& j(z)&e^{-\frac{2 y^2}{3}G_3^{\mathfrak H/PSL(2,\mathbb Z)}(e^{\pi i/3},z)}\!\!\!\!\!&e^{-24 y^4G_5^{\mathfrak H/PSL(2,\mathbb Z)}(e^{\pi i/3},z)}\!\!\!\!\!&e^{-96 y^6G_7^{\mathfrak H/PSL(2,\mathbb Z)}(e^{\pi i/3},z)}\!\!\!\!\! \\\hline  \vphantom{\dfrac{\frac\int2}{\frac12}}i &\mathfrak Z_{-4}&2^63^3&(3)^{2}&\left(\dfrac{2^{56}}{3^{27}}\right)^2&\left(\dfrac{3^{45}}{2^{64}}\right)^4 \\[9pt] \dfrac{1+i\sqrt{7}}{2}&\mathfrak Z_{-7}&-3^{3}5^3&\dfrac{5^{3}}{3}&\dfrac{3^{189}}{5^{91}}&\dfrac{5^{133}}{3^{63}}\\[6pt]i\sqrt{2}&\mathfrak Z_{-8}&2^65^3&(5)^{2}&\left( \dfrac{5^{37}}{2^{56}} \right)^2&\left( \dfrac{2^{832}}{5^{347}} \right)^4\\[8pt] \dfrac{1+i\sqrt{11}}{2}&\mathfrak Z_{-11}&-2^{15}&2^{5}&2^{75}&2^{165}\\[5pt]i\sqrt{3}&\mathfrak Z_{-12}&2^43^35^3&\left(\dfrac{3^{3}}{5}\right)^2&\left( \dfrac{5^{277}}{2^{224}3^{243}} \right)^2&\left( \dfrac{3^{1215}5^{67}}{2^{2048}} \right)^4\\[8pt]i\sqrt{4}&\mathfrak Z_{-16}&2^33^311^3&\left(\dfrac{11^{3}}{3^{5}}\right)^2&\left( \dfrac{2^{448}}{3^{27}11^{107}} \right)^2&\left( \dfrac{3^{3447}}{2^{2048}11^{977}} \right)^4\\[8pt]\dfrac{1+i\sqrt{19}}{2}&\mathfrak Z_{-19}&-2^{15}3^3&\dfrac{3^{5}}{2^{3}}&\dfrac{3^{351}}{2^{485}}&\dfrac{2^{28493}}{3^{17883}}\\[8pt]\dfrac{1+i3\sqrt{3}}{2}&\mathfrak Z_{-27}&-2^{15}3^15^3&\dfrac{5^{13}}{2^{11}3^9}&\dfrac{2^{8555}}{3^{2187}5^{2161}}&\dfrac{5^{41533}}{2^{2699}3^{59049}}\\[6pt]i\sqrt{7}& \mathfrak Z_{-28}&3^{3}5^{3}17^3&\left( \dfrac{3^{7}17^{3}}{5^{9}} \right)^2&\left( \dfrac{17^{637}}{3^{1323}5^{203}} \right)^2&\left( \dfrac{3^{15435}5^{17563}}{17^{15953}} \right)^4\\[9pt]\dfrac{1+i\sqrt{43}}{2}&\mathfrak Z_{-43}&-2^{18}3^35^{3}&\dfrac{2^{42}}{3^{19}5^3}&\dfrac{3^{4239}5^{6959}}{2^{22806}}&\dfrac{3^{271773}}{2^{213078}5^{93683}}\\[8pt]\dfrac{1+i\sqrt{67}}{2}&\mathfrak Z_{-67}&-2^{15}3^35^{3}11^3&\dfrac{3^{29}11^{21}}{2^{51}5^{27}}&\dfrac{5^{11999}11^{959}}{2^{17285}3^{8721}}&\dfrac{3^{153837}5^{939013}}{2^{298819}11^{614315}}\\[8pt]\dfrac{1+i\sqrt{163}}{2}&\mathfrak Z_{-163}&-2^{18}3^35^{3}23^329^3&\dfrac{23^{21}29^{69}}{2^{198}3^{139}5^3}&\dfrac{5^{95279}23^{68159}}{2^{94326}3^{131841}29^{46561}}&\dfrac{2^{81641562}29^{1033765}}{3^{7144587}5^{7794803}23^{12653867}}\\[8pt]\hline\hline\end{array}
\end{align*}\end{table}
\begin{remark}\label{rmk:GKZ_class_number_one}For some special points  $z,z'\in\mathfrak H$ and $ \varGamma=SL(2,\mathbb Z)\equiv\varGamma_0(1)$, the Gross--Kohnen--Zagier algebraicity conjecture (boxed equation in \S\ref{subsec:background}) can be  directly verified by the methods developed in the joint works of Gross--Zagier~\cite{GrossZagierI} and Gross--Kohnen--Zagier~\cite{GrossZagierII}. For example, if one defines the totality of CM points whose minimal polynomial has discriminant $D$ by $ \mathfrak Z_D:=\{z\in\mathfrak H|\exists a,b,c \in\mathbb Z,a>0,\gcd(a,b,c)=1,b^2-4ac=D,az^2+bz+c=0\}$, then one can show that~\cite[][p.~50, Theorem~II.2]{ZagierKyushuJ1}
\begin{align}\exp\left[\frac{(D_1D_2)^{(k-2)/4}}{2}G_{k/2}^{\mathfrak H/PSL(2,\mathbb Z)}(z_1,z_2)\right]\in\mathbb Q,\quad k=6,10,14\label{eq:Zagier_Q}\end{align}holds for $ z_1\in\mathfrak Z_{D_1},z_2\in\mathfrak Z_{D_2}$, where the discriminants $ D_1$ and $D_2$ are two distinct members of the finite set $ \{-3$, $-$4, $-$7, $-$8, $-$11, $-$12, $-$16, $-$19, $-$27, $-$28, $-$43, $-$67, $-163\}$. Fixing the point $ z_1=e^{\pi i/3}\in\mathfrak Z_{-3} $, one can tabulate the special values of $G_3^{\mathfrak H/PSL(2,\mathbb Z)}(e^{\pi i/3},z) $, $G_5^{\mathfrak H/PSL(2,\mathbb Z)}(e^{\pi i/3},z) $ and $G_7^{\mathfrak H/PSL(2,\mathbb Z)}(e^{\pi i/3},z) $ that support the statement in Eq.~\ref{eq:Zagier_Q} (Table~\ref{tab:Zagier_Q_examples}). The exact factorizations of all the rational numbers in Table~\ref{tab:Zagier_Q_examples} can be justified by the Gross--Kohnen--Zagier theory (\textit{e.g.}~\cite[][p.~76, Example~II.6]{ZagierKyushuJ1}). One may also wish to check these tabulated values numerically\footnote{We have implemented in \textit{Mathematica} all the Kontsevich--Zagier integral representations of automorphic Green's functions declared in Theorem~\ref{thm:KZ_int_repns}. The source code (NumSuppAGF1.nb) for the numerical implementation can be downloaded from \url{http://arxiv.org/format/1312.6352}.} against the integral formulae in Eqs.~\ref{eq:GT3_KZ}--\ref{eq:GT7_KZ}.
\eor\end{remark}\section{Some Analytic Tools for Kontsevich--Zagier Integrals and Gross--Zagier Renormalization\label{sec:analysis_KZ_int_Part1}}
\setcounter{equation}{0}
In \S\S\ref{subsec:KZ_integrals} and \ref{subsec:high_weight_KZ}, we have constructed Kontsevich--Zagier integral representations for automorphic Green's functions $ G_{k/2}^{\mathfrak H/\overline\varGamma_0(N)}(z,z')$ with even weights $ k\geq4$ satisfying the cusp-form-free condition  $ \dim\mathcal S_k(\varGamma_0(N))=0$. We have also computed a few special values of weight-4 automorphic Green's functions
from their respective Kontsevich--Zagier integrals (see Remark~\ref{rmk:spec_val_G2}).

In this section, we prepare a few analytic results that enable us to compute various Kontsevich--Zagier integrals for automorphic Green's functions, and work out closed-form results for some special scenarios, as announced in Theorem~\ref{thm:GZ_rn}. The generic cases will be treated systematically in subsequent work.

In \S\ref{subsec:hypergeo_Pnumu}, we present some hypergeometric techniques for evaluating certain integrals involving  products of Legendre functions. These integrals over Legendre functions will lead to special values of  weight-4 ``Gross--Zagier renormalized'' automorphic Green's functions in \S\ref{subsec:int_repn_G2_GZ_rn}. Here, the Gross--Zagier renormalization~\cite[][Chap.~II, \S5]{GrossZagierI} is a procedure to subtract logarithmic divergence of automorphic Green's functions on the ``diagonal points''.
 In \S\ref{subsec:Ramanujan_Jacobi}, we fulfill two tasks. We first
recall some geometric transformations inspired by Ramanujan's Notebooks, which facilitate the reduction of certain multiple elliptic integrals;
 we then
combine Ramanujan's transformations with the  Jacobi elliptic functions to evaluate some non-trivial multiple elliptic integrals.
The integral identities derived in  \S\ref{subsec:Ramanujan_Jacobi} will assist in the quantitative analysis of weight-4 ``Gross--Zagier renormalized'' automorphic Green's functions on $ \overline\varGamma_0(4)$  in  \S\ref{subsec:G2_Hecke4_GZ_rn}.

In \S\S\ref{subsec:Ramanujan_Jacobi}--\ref{subsec:G2_Hecke4_GZ_rn}, we will draw extensively on  modular transformations.
In addition to the degree-1 modular transformations (Eqs.~\ref{eq:im_mod}--\ref{eq:lambda_transf}), we  need Landen's transformations (sometimes   named after  Landen and Gau{\ss} \cite[][items~163.02 and 164.02]{ByrdFriedman}, depending on context) for the complete elliptic integrals of the first kind:\begin{align}\mathbf K(\sqrt{1-\lambda})={}&\frac{2}{1+\sqrt{\lambda}}\mathbf K\left( \frac{1-\sqrt{\lambda}}{1+\sqrt{\lambda}} \right),\quad \lambda\in\mathbb C\smallsetminus(-\infty,0];\label{eq:Landen_1}\\\mathbf K(\sqrt{\lambda})={}&\frac{1}{1+\sqrt{\lambda}}\mathbf K\left( \frac{2\sqrt[4]{\lambda}}{1+\sqrt{\lambda}} \right),\quad |\lambda|<1.\label{eq:Landen_2}\end{align}
With the ``$ \lambda$-$ \mathbf K$ relation'' (Eq.~\ref{eq:lambda_K_ratio_z}), one can derive from  Landen's transformations  the  ``duplication formula'' and the ``dimidiation formula'' \cite[][\S135]{WeberVol3} for the modular lambda function $ \lambda(z)=16[\eta (z/2)]^8 [\eta (2 z)]^{16}/[\eta (z)]^{24}$:\begin{align}\label{eq:double_half_lambda}\lambda(2z)=\left[ \frac{1-\sqrt{1-\lambda(z)}}{1+\sqrt{1-\lambda(z)}} \right]^2,\quad \lambda\left( \frac{\vphantom{1}z}{2} \right)=\frac{4\sqrt{\lambda(z)}}{[1+\sqrt{\lambda(z)}]^2},\quad -1<\R z<1,\left|z+\frac12\right|>\frac12,\left|z-\frac12\right|>\frac{1}{2}.\end{align}
We shall refer to Eqs.~\ref{eq:Landen_1}--\ref{eq:double_half_lambda} (and descendants thereof) collectively as degree-2 modular transformations.

Apart from hypergeometric, modular and geometric transformations, the calculations in this section are essentially  applications of residue calculus to elliptic integrals, the prototype of which is presented in the next paragraph.

Let $ f(\lambda),\lambda\in\mathbb C\smallsetminus[1,+\infty)$ be a complex analytic function in the slit plane with well-defined one-sided limits $ f(\lambda\pm i0^+)$ for $ \lambda>1$, satisfying the bounds $ |f(\lambda)|=O(\log^\nu|1-\lambda|)$  for $\nu\in\mathbb R\smallsetminus\{0\},\lambda\to1$ and $ |f(\lambda)|=O(|\lambda|^{-\nu'})$ for $0<\nu'<1,|\lambda|\to+\infty$. One might  use Cauchy's integral formula to verify that \begin{align}
f(\lambda)=\frac{1}{2\pi i}\int_0^1\frac{f\big(\frac1\mu-i0^+\big)-f\big(\frac{1}{\mu}+i0^+\big)}{1-\lambda\mu}\frac{\D\mu}{\mu},\quad \lambda\in\mathbb C\smallsetminus[1,+\infty), \label{eq:f_slit_plane_Cauchy_int}
\end{align}where the integration is carried along the open unit interval $ (0,1)$ in the complex $ \mu$-plane. Setting $ f(\lambda)=[\mathbf K(\sqrt{\lambda})]^n,n\in\mathbb Z_{>0}$ in Eq.~\ref{eq:f_slit_plane_Cauchy_int} and  referring to the inverse modulus transformation (Eq.~\ref{eq:inv_mod}), one obtains the integral identity
\begin{align}
{[}\mathbf K(\sqrt{\lambda})]^n={}&\frac{1}{\pi}\I\int_0^1\frac{\mathbf [\mathbf K(\sqrt{\smash[b]{\vphantom{1}\mu}})+i\mathbf K(\sqrt{\smash[b]{1-\mu}})]^n}{1-\lambda\mu}\mu^{(n-2)/2}\D\mu,\quad 0<\lambda<1,n\in\mathbb Z_{>0}.\label{eq:Kn}\tag{\ref{eq:f_slit_plane_Cauchy_int}-K$^{n}$}
\end{align}
In recent literature~\cite{Wan2012,Zhou2013Pnu}, one may find discussions on the particular cases where $ n=1$ and $2$:\begin{align}&&\mathbf K(\sqrt{\lambda})={}&\frac{1}{\pi}\int_0^1\frac{\mathbf K(\sqrt{\smash[b]{1-\mu}})}{1-\lambda\mu}\frac{\D\mu}{\sqrt{\smash[b]{\mu}}},&& \lambda\in\mathbb C\smallsetminus[1,+\infty),&&
\label{eq:K1}\tag{\ref{eq:f_slit_plane_Cauchy_int}-K$^{1}$}\\&&{[}\mathbf K(\sqrt{\lambda})]^2={}&\frac{2}{\pi}\int_0^1\frac{\mathbf K(\sqrt{\smash[b]{\vphantom{1}\mu}})\mathbf K(\sqrt{\smash[b]{1-\mu}})}{1-\lambda\mu}\D\mu,&& \lambda\in\mathbb C\smallsetminus[1,+\infty),&&\label{eq:K2}\tag{\ref{eq:f_slit_plane_Cauchy_int}-K$^{2}$}
\end{align}which were proved by either combinatorial algorithms~\cite[][Eqs.~26--27]{Wan2012} or geometric transformations~\cite[][Eqs.~31 and 40$^*$]{Zhou2013Pnu}.

\subsection{Hypergeometric Evaluations of Some Definite Integrals Involving Associated Legendre Functions\label{subsec:hypergeo_Pnumu}}For $ -1<\nu<0$, we have already defined  two real-valued functions $ P_\nu(t),t>1$ and $ Q_\nu(t),t>1$ using Eqs.~\ref{eq:Euler_int} and \ref{eq:Q_nu_Laplace_int}, respectively.
In this subsection, we also need the associated Legendre functions $ P^\mu_\nu(t),t>1$ and $ Q^\mu_\nu(t),t>1$ where $ \mu>-1/2$. A convenient way to introduce these associated Legendre functions is to use Hobson's integral representations~\cite[][p.~270, Eq.~139 and p.~276, Eq.~150]{Hobson1931}:
\begin{align}
P^{\mu}_\nu(\cosh\psi):={}&\frac{2^{\mu+1}(\sinh\psi)^\mu}{\sqrt{\pi}\Gamma(\frac{1}{2}-\mu)}\int_0^\psi\frac{\cosh\frac{(2\nu+1)u}{2}\D u}{(2\cosh\psi-2\cosh u)^{\mu+\frac{1}{2}}},&& \mu>-\frac{1}{2},\psi>0;\label{eq:Pmunu_defn}\\Q^{\mu}_\nu(\cosh\psi):={}& e^{i\mu\pi}2^\mu\frac{\sqrt{\pi}(\sinh\psi)^\mu}{\Gamma(\frac{1}{2}-\mu)}\int_\psi^\infty\frac{e^{-(\nu+\frac{1}{2})u}\D u}{(2\cosh u-2\cosh \psi)^{\mu+\frac{1}{2}}},&&\mu>-\frac{1}{2},\mu+\nu+1>0,\psi>0,\label{eq:Qmunu_defn}
\end{align}where the integrals are taken along the real axis, so that one always raises a real-valued base (with zero phase) to a positive power of $ \mu+\frac{1}{2}$. The definitions of associated Legendre functions in
Eqs.~\ref{eq:Pmunu_defn}--\ref{eq:Qmunu_defn} are compatible with the previously defined Legendre functions, in the sense that  $ P_\nu^{\vphantom0}(t)=P_\nu^{0}(t),t>1$ and $ Q_\nu^{\vphantom0}(t)=Q_{\nu}^{0}(t),t>1$. Furthermore, these associated Legendre functions satisfy Whipple's relation~\cite[][p.~245, Eq.~92]{Hobson1931}:\begin{align}
Q^{\mu}_\nu(\cosh\psi)={}&e^{i\mu\pi}\sqrt{\frac{\pi}{2}}\frac{\Gamma(\mu+\nu+1)}{\sqrt{\smash[b]{\sinh\psi}}}P^{-\nu-1/2}_{-\mu-1/2}(\coth\psi),\label{eq:Whipple1}
\end{align}whenever $ \psi>0$ and the associated Legendre functions on both sides are defined in  the domains specified by
Eqs.~\ref{eq:Pmunu_defn}--\ref{eq:Qmunu_defn}.
Using Whipple's relation (Eq.~\ref{eq:Whipple1}), one can readily verify the following formula: \begin{align}
\int_1^\infty\frac{[P_\nu^{\mu}(\xi)]^2}{\xi^2}\D \xi={}&-\frac{2e^{2i\nu\pi}}{\pi[\Gamma(-\mu-\nu)]^2}\int_1^\infty\frac{\big[Q^{-\nu-1/2}_{-\mu-1/2}(\xi)\big]^2}{\xi^{2}}\D\xi,\label{eq:PnumuPnumu_int_Whipple}\end{align}provided that both integrals are well-defined. We refer to such an integral identity as ``Legendre--Whipple duality''.

In the following proposition, we will present some integral formulae involving associated Legendre functions  $ P^\mu_\nu$ and $ Q^\mu_\nu$, with the (usually) tacit assumptions that the order $ \mu$ and the degree $\nu$ are appropriately chosen to meet various requirements.
\begin{proposition}[Some Hypergeometric Reduction Formulae Involving $ P^\mu_\nu$ and $ Q^\mu_\nu$]\begin{enumerate}[label=\emph{(\alph*)}, ref=(\alph*), widest=a] \item We have the following integral identities:\begin{align}
\int_1^\infty\frac{[Q_\nu^{\mu}(\xi)]^2}{\xi^2}\D \xi={}&\frac{e^{2i\mu\pi}}{2(1+\nu ) (1+\mu +\nu )}\frac{\mu\pi}{\sin(\mu\pi)} \frac{\Gamma (1+\mu +\nu )}{\Gamma (1-\mu +\nu )}{_3}F_2\left(\left.\begin{array}{c}1,1+\mu,1+\mu+\nu\\2+\nu  ,2+\mu+\nu \end{array}\right|1\right)\notag\\={}&\frac{e^{2i\mu\pi}}{2(1+\nu ) (1-\mu +\nu )}\frac{\mu\pi}{\sin(\mu\pi)} \frac{\Gamma (1+\mu +\nu )}{\Gamma (1-\mu +\nu )}{_3}F_2\left(\left.\begin{array}{c}1,1-\mu,1-\mu+\nu\\2+\nu  ,2-\mu+\nu \end{array}\right|1\right)\notag\\={}&\frac{e^{2i\mu\pi}}{2}\frac{\mu\pi}{\sin(\mu\pi)} \left[\frac{\Gamma (1+\mu +\nu )}{\Gamma (2+\nu )}\right]^2{_3}F_2\left(\left.\begin{array}{c}1+\mu,1-\mu,1+\nu\\2+\nu  ,2+\nu \end{array}\right|1\right).\label{eq:QnumuQnumu_xx_int_3F2}
\end{align}In particular, we have\begin{align}
\int_1^\infty\frac{[Q_\nu(\xi)]^2}{\xi^2}\D\xi={}&\frac{1}{2(\nu+1)^{2}}{_3}F_2\left(\left.\begin{array}{c}1,1,\nu+1\\\nu +2 ,\nu +2\end{array}\right|1\right)\label{eq:QnuQnu_xx_int_3F2}
\end{align}and \begin{align}
\int_1^\infty\frac{[P_\nu(\xi)]^2}{\xi^2}\D\xi={}&\frac{(2\nu+1)\tan(\nu\pi)}{\nu\pi}{_3}F_2\left(\left.\begin{array}{c}1,\frac{1}{2}-\nu,-\nu\\[4pt]\frac{3}{2},1-\nu \end{array}\right|1\right)\notag\\={}&-\frac{(2\nu+1)\tan(\nu\pi)}{(\nu+1)\pi}{_3}F_2\left(\left.\begin{array}{c}1,\frac{3}{2}+\nu,1+\nu\\[4pt]\frac{3}{2},2+\nu \end{array}\right|1\right)\notag\\={}&\frac{2(2\nu+1)}{\pi\cos(\nu\pi)}{_3}F_2\left(\left.\begin{array}{c}\frac{1}{2},\frac{3}{2}+\nu,\frac{1}{2}-\nu\\[4pt]\frac{3}{2},\frac{3}{2} \end{array}\right|1\right).\label{eq:PnuPnu_xx_int_3F2}
\end{align}Furthermore, we have the following closed-form evaluation for $\nu\in(-1,-1/2)\cup(-1/2,0) $:\begin{align}
\int_1^\infty\frac{[P_\nu(\xi)]^2}{\xi^2}\D \xi={}&\frac{2}{\pi}\int_0^{\pi/2}\frac{\sin(2\nu+1)\theta}{\sin\theta\cos(\nu\pi)}\D\theta=\frac{1}{\cos(\nu\pi)}+\frac{\tan(\nu\pi)}{\pi}\left[ \psi ^{(0)}\left(\frac{\nu+2 }{2}\right)-\psi ^{(0)}\left(\frac{\nu+1 }{2}\right )\right].
\label{eq:PnuPnu_xx_int_closed_form}\end{align} \item  The following integral formula holds:\begin{align}&
\int_1^\infty\frac{[P_\nu^{\mu}(\xi)]^2}{\xi^2}\D \xi\notag\\={}&\frac{\psi ^{(0)}(\frac{1}{2}-\nu )-\psi ^{(0)}(-\mu -\nu )-\psi ^{(0)}(1-\mu +\nu )-\gamma_{0}+\dfrac{2(\mu ^{2}-\nu^{2} )}{1-2\nu}{_4} F_3\left(\left.\begin{array}{c}1,1,1-\mu -\nu ,1+\mu -\nu \\[4pt]2,2,\frac{3}{2}-\nu \end{array}\right|1\right)}{\cos(\mu\pi)\Gamma (1-\mu +\nu ) \Gamma (-\mu -\nu )},\label{eq:PnumuPnumu_xx_int_4F3}
\end{align} where $ \gamma_0=-\psi^{(0)}(1)$ is the Euler--Mascheroni constant. In particular, one has\begin{align}&
\int_1^\infty\frac{[P_\nu(\xi)]^2}{\xi^2}\D \xi\notag\\={}&\frac{\sin(\nu\pi)}{\pi}\left[ \frac{2}{\nu }+\pi  \cot (\nu\pi)-\psi ^{(0)}\left(\frac{1}{2}-\nu \right)+2 \psi ^{(0)}(\nu )+\gamma _{0}+\frac{2\nu ^2}{1-2\nu}{ _4}F_3\left(\left.\begin{array}{c}1,1,1-\nu ,1-\nu \\[4pt]2,2,\frac{3}{2}-\nu \end{array}\right|1\right)\right]\label{eq:PnuPnu_xx_int_4F3}
\end{align} and\begin{align}
\int_1^\infty\frac{[Q_\nu(\xi)]^2}{\xi^2}\D \xi={}&\frac{\pi}{\sin(\nu\pi)}\left[ \psi ^{(0)}(\nu +1)+\gamma_{0}-\frac{\nu(\nu+1)}{2} {_4}F_3\left(\left.\begin{array}{c}1,1,1-\nu ,2+\nu \\[4pt]2,2,2 \end{array}\right|1\right)\right].\label{eq:QnuQnu_xx_int_4F3}
\end{align}Furthermore, the following formula is true for $ \nu\in(-1,0)$:\begin{align}
\int_1^\infty\frac{[Q_\nu(\xi)]^2}{\xi^2}\D \xi={}&\frac{\pi[\psi ^{(0)}(\nu +1)-\log 2]}{\sin(\nu\pi)}+\frac{\left[\psi ^{(0)}\left(\frac{\nu+2 }{2}\right)+\psi ^{(0)}\left(\frac{1-\nu}{2}\right)\right]^2-\left[\psi ^{(0)}\left(\frac{\nu +1}{2}\right)+\psi ^{(0)}\left(-\frac{\nu }{2}\right)\right]^2}{8}\notag\\&+\frac{\psi ^{(1)}\left(\frac{\nu +1}{2}\right)+\psi ^{(1)}\left(-\frac{\nu }{2}\right)-\psi ^{(1)}\left(\frac{\nu+2 }{2}\right)-\psi ^{(1)}\left(\frac{1-\nu}{2}\right)}{8}.\label{eq:QnuQnu_xx_int_closed_form}
\end{align} \item The following identity holds for $ -1<\nu<0$:\begin{align}
\int_1^\infty\frac{P_\nu(\xi)Q_{\nu}(\xi)}{\xi^2}\D\xi={}&\frac{1}{2}\left[ \frac{\Gamma (\nu +1)}{\Gamma (\nu +\frac{3}{2})} \right]^2{_3}F_2\left(\left.\begin{array}{c}\frac{1}{2},\frac{1}{2},\nu+\frac{1}{2}\\[4pt]\nu +\frac{3}{2} ,\nu +\frac{3}{2}\end{array}\right|1\right)=\frac{1}{2}\left[ \psi ^{(0)}\left(\frac{\nu+2 }{2}\right)-\psi ^{(0)}\left(\frac{\nu+1 }{2}\right )\right],\label{eq:PnuQnu_xx_int_closed_form}
\end{align}and the integral formula \begin{align}\int_{-1}^1\frac{P_\nu(\xi)P_\nu(-\xi)-[P_\nu(0)]^{2}}{\xi^2}\D \xi=2\{[P_\nu(0)]^{2}-1\},\quad \forall\nu\in\mathbb C\label{eq:minus2_id'}\end{align}is true so long as the contour of integration lies in the double-slit plane $ \xi\in\mathbb C\smallsetminus((-\infty,-1]\cup[1,+\infty))$.

Additionally, we have an integral formula for $ -1<\nu<0$: {\allowdisplaybreaks\begin{align}
\R\int_{-1}^1\frac{[P_\nu(\xi)]^2}{\xi^2}\D \xi={}&-2\cos^2(\nu\pi)\int_1^\infty\frac{[P_\nu(\xi)]^2}{\xi^2}\D \xi+\frac{4\sin(\nu\pi)\cos(\nu\pi)}{\pi}\int_1^\infty\frac{P_\nu(\xi)Q_{\nu}(\xi)}{\xi^2}\D \xi\notag\\{}&-\frac{4\sin^2(\nu\pi)}{\pi^2}\int_1^\infty\frac{[Q_\nu(\xi)]^2}{\xi^2}\D \xi\notag\\={}&-2\cos(\nu\pi)-\frac{4 \sin (  \nu\pi ) [ \psi ^{(0)}(\nu +1)-\log2]}{\pi }\notag\\&-\frac{\sin ^2(  \nu\pi ) }{2 \pi ^2}\left\{\left[\psi ^{(0)}\left(\frac{\nu+2 }{2}\right)+\psi ^{(0)}\left(\frac{1-\nu}{2}\right)\right]^2-\left[\psi ^{(0)}\left(\frac{\nu +1}{2}\right)+\psi ^{(0)}\left(-\frac{\nu }{2}\right)\right]^2\right\}\notag\\&-\frac{\sin ^2(  \nu\pi )}{2 \pi ^2} \left[\psi ^{(1)}\left(\frac{\nu +1}{2}\right)+\psi ^{(1)}\left(-\frac{\nu }{2}\right)-\psi ^{(1)}\left(\frac{\nu+2 }{2}\right)-\psi ^{(1)}\left(\frac{1-\nu}{2}\right)\right],\label{eq:PnuPnu_xx_int_unit_interval_polygamma}
\end{align}}which generalizes the evaluations in Proposition~\ref{prop:Pnu_sec_L_weight4}. \item For $ -1<\nu<0$, we have\begin{align}&
\int_{0}^1\left\{\frac{[P_{\nu}(\xi)]^2-[P_{\nu}(-\xi)]^2}{\xi^{2}}-\frac{4\sin(\nu\pi)}{\pi \xi}\right\}\D \xi+\R\int_{-1}^1\frac{[P_\nu(\xi)]^2}{\xi^2}\D \xi\notag\\={}&-2\cos^2(\nu\pi)\int_1^\infty\frac{[P_\nu(\xi)]^2}{\xi^2}\D \xi-\frac{4}{\pi}\int_{0}^1\frac{P_{\nu}(\xi)P_{\nu}(-\xi)-[P_\nu(0)]^{2}}{\xi^{2}}\log \xi\D \xi+\frac{4[P_\nu(0)]^2\sin(\nu\pi)}{\pi}\notag\\={}&\frac{4\sin(\nu\pi) [1-\gamma_{0} - \psi ^{(0)}(\nu +1)-\log 2]}{\pi }-2\cos(\nu\pi).
\label{eq:Pnu_sqr_diff_int_log_over_sqr}\end{align}For $ -1/2<\nu<0$, we have\begin{align}&
\int_1^\infty\left[ P_\nu(\xi)Q_\nu(\xi)-\frac{1}{(2\nu+1)\xi} \right]\D \xi\notag\\={}&\frac{\tan(\nu\pi)}{\pi}\int_1^\infty[Q_\nu(\xi)]^2\D \xi-\frac{\pi}{4}\int_0^1\frac{[P_\nu(\xi)]^2-[P_\nu(-\xi)]^2}{\sin(\nu\pi)\cos(\nu\pi)}\D \xi-\int_{0}^1\frac{P_{\nu}(\xi)P_{\nu}(-\xi)}{\cos(\nu\pi)}\log \xi\D \xi\notag\\={}&\frac{\pi  \tan (  \nu\pi )+\psi ^{(0)}\left(\frac{1}{2}-\nu \right)-2 \psi ^{(0)}(\nu )+\psi ^{(0)}\left(\nu +\frac{3}{2}\right)+2 \log 2}{2 (2 \nu +1)}-\frac{1}{\nu  (2 \nu +1)}.\label{eq:PQ-1_over_x_int}
\end{align}\end{enumerate}\end{proposition}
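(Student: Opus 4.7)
The plan is to derive all four parts by converting products of associated Legendre functions into iterated integrals (via Hobson's representations, Eqs.~\ref{eq:Pmunu_defn}--\ref{eq:Qmunu_defn}), swapping the order of integration, and identifying the resulting Mellin--Barnes/Euler integrals with hypergeometric series at unit argument.

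\textbf{Part (a) and (b): Hypergeometric reduction.} First, I would substitute $\xi=\cosh\psi$ in $\int_1^\infty[Q_\nu^\mu(\xi)]^2\xi^{-2}\D\xi$ and insert Hobson's integral (Eq.~\ref{eq:Qmunu_defn}) twice to produce a triple integral over $\psi$ and two dummy variables $u_1,u_2\in(\psi,\infty)$. Interchanging the $\psi$-integration with the $(u_1,u_2)$-integration (the innermost $\psi$-integral runs over $(0,\min(u_1,u_2))$ after Fubini) and substituting $\cosh\psi=\cosh(\min(u_1,u_2))\,s$ reduces the $\psi$-integral to a beta integral. Expanding the exponential weights $e^{-(\nu+1/2)u_j}$ as a geometric series and recognizing the coefficients as Pochhammer symbols yields the first $_3F_2$ in Eq.~\ref{eq:QnumuQnumu_xx_int_3F2}. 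The three equivalent $_3F_2$ forms are then related by Thomae's transformations for Saalsch\"utzian $_3F_2(1)$ series (the second follows from $\mu\mapsto-\mu$ symmetry inherent to Eq.~\ref{eq:Qmunu_defn}, and the third from the standard Whipple transformation). The $_4F_3$ representations in Part (b) arise by applying the same scheme to $\int_1^\infty[P_\nu^\mu(\xi)]^2\xi^{-2}\D\xi$ via Hobson's integral for $P_\nu^\mu$ (Eq.~\ref{eq:Pmunu_defn}); alternatively, one invokes the Legendre--Whipple duality (Eq.~\ref{eq:PnumuPnumu_int_Whipple}) to reduce it to a $Q$-integral with shifted parameters, and the $_4F_3$ then comes from the derivative $\partial_\mu$ at $\mu=0$ (capturing the Euler--Mascheroni and digamma contributions).

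\textbf{Closed forms.} For Eq.~\ref{eq:PnuPnu_xx_int_closed_form}, I would apply the Mehler--Dirichlet formula (Eq.~\ref{eq:Pnu_defn_MD}) after the substitution $\xi=\sec\theta$ (or its hyperbolic analog after rotation): the identity $P_\nu(\cos\theta)=(2/\pi)\int_0^\theta\cos((2\nu+1)\beta/2)/\sqrt{2(\cos\beta-\cos\theta)}\,\D\beta$ when squared and multiplied by a suitable Jacobian reproduces the trigonometric integrand $\sin((2\nu+1)\theta)/(\sin\theta\cos(\nu\pi))$ after a Fubini swap and an elementary $\theta$-integral. The resulting $\int_0^{\pi/2}\sin((2\nu+1)\theta)\csc\theta\,\D\theta$ is the classical Fej\'er-type digamma integral, giving the polygamma form. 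For Eq.~\ref{eq:QnuQnu_xx_int_closed_form}, a direct evaluation of the $_4F_3$ in Eq.~\ref{eq:QnuQnu_xx_int_4F3} via a Watson-type reduction, combined with the duplication/reflection identities Eqs.~\ref{eq:psi1_id1}--\ref{eq:psi1_id2} of the trigamma function, yields the closed form.

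\textbf{Part (c) and (d): Contour deformation on the double-slit plane.} The key identity Eq.~\ref{eq:PnuPnu_xx_int_unit_interval_polygamma} is not a direct hypergeometric reduction; instead, I would deform the real segment $(-1,1)$ in the $\xi$-plane into the union of paths along the two branch cuts $(-\infty,-1]\cup[1,\infty)$ closed at infinity. On $[1,\infty)$, the jump $P_\nu(\xi+i0^+)-P_\nu(\xi-i0^+)=(2i\sin(\nu\pi)/\pi)Q_\nu(\xi)$ (and the analogous jump at $-\infty<\xi\le-1$, plus the $P_\nu(-\xi)$ symmetry absorbed via $-\xi$ reflection) decomposes $\R\int_{-1}^1[P_\nu(\xi)]^2\xi^{-2}\D\xi$ into a linear combination of $\int_1^\infty[P_\nu]^2/\xi^2$, $\int_1^\infty P_\nu Q_\nu/\xi^2$, and $\int_1^\infty[Q_\nu]^2/\xi^2$, which are precisely the three integrals evaluated in parts (a)--(c). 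Eq.~\ref{eq:minus2_id'} follows from a power-series expansion of $P_\nu(\xi)P_\nu(-\xi)$ at $\xi=0$ (the coefficient of $\xi^2$ is fixed by the Legendre ODE satisfied by both factors), and Eq.~\ref{eq:PnuQnu_xx_int_closed_form} follows from the same Hobson--Fubini scheme applied to $P_\nu Q_\nu$, whose $_3F_2(1)$ is Saalsch\"utzian and evaluates by a $\psi^{(0)}$-difference formula derived via Legendre duplication. The formulas in Part (d) (Eqs.~\ref{eq:Pnu_sqr_diff_int_log_over_sqr} and \ref{eq:PQ-1_over_x_int}) carry the same strategy one step further: an integration by parts absorbs the $\log\xi$ weight and the $1/\xi$ counterterm, reducing everything to the integrals already computed.

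\textbf{Main obstacle.} The principal difficulty is bookkeeping: matching the several equivalent $_3F_2(1)$/$_4F_3(1)$ forms requires invoking the right Thomae/Whipple transformation at each step, and the contour-deformation identities involve delicate branch-cut analysis near $\xi=\pm1$ and $\xi=0$ (the logarithmic and $1/\xi$ subtractions in Part (d) are precisely the counterterms needed to make the boundary contributions finite). The payoff is that, despite the generic intransigence of $_4F_3(1)$ evaluations, every hypergeometric series arising here is either Saalsch\"utz-balanced or reducible by a single contiguous relation to the digamma/trigamma combinations exhibited on the right-hand sides.
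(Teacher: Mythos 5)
Your overall architecture is partly aligned with the paper's: the contour-deformation step in part (c) --- closing $\int_{-1}^{1}[P_\nu(\xi)]^2\xi^{-2}\D\xi$ around the cuts and using the connection formula $P_\nu(\xi\pm i0^+)=e^{\pm i\nu\pi}P_\nu(-\xi)-\tfrac{2\sin(\nu\pi)}{\pi}Q_\nu(-\xi)$ to express the result through the three half-line integrals --- is exactly what the paper does (real and imaginary parts of a vanishing integral over $\mathbb R+i0^+$). But there are three concrete gaps. First, your claimed mechanism for Eq.~\ref{eq:PnuQnu_xx_int_closed_form} fails: the series ${_3}F_2\bigl(\tfrac12,\tfrac12,\nu+\tfrac12;\nu+\tfrac32,\nu+\tfrac32;1\bigr)$ is \emph{not} Saalsch\"utzian (the balance condition $1+\sum a_i=\sum b_j$ reads $\nu+\tfrac52=2\nu+3$, which holds only at $\nu=-\tfrac12$), so no Saalsch\"utz/Watson evaluation is available. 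The paper obtains this closed form only \emph{indirectly}: the imaginary part of the vanishing contour integral yields the linear relation (Eq.~\ref{eq:Pnu_Qnu_x_2_0sum}) between $\int_1^\infty P_\nu Q_\nu\,\xi^{-2}\D\xi$ and $\int_1^\infty[P_\nu]^2\xi^{-2}\D\xi$, and the latter is evaluated first. Your proposal treats the $P_\nu Q_\nu$ closed form as an independent input, which breaks the logical order.

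Second, the ``Hobson squared $+$ Fubini $+$ beta integral'' reduction in part (a) does not go through as stated: after swapping, the inner $\psi$-integral carries \emph{two distinct} branch factors $(2\cosh u_1-2\cosh\psi)^{-\mu-1/2}(2\cosh u_2-2\cosh\psi)^{-\mu-1/2}$ together with the weight $(\cosh\psi)^{-2}$, and no single substitution reduces this to a beta integral; you would be left with an Appell-type double series rather than the specific ${_3}F_2$'s of Eq.~\ref{eq:QnumuQnumu_xx_int_3F2}. The step that actually pins down the hypergeometric parameters in the paper is the tabulated Mellin transform of $[Q_\nu^\mu(\sqrt{1+t})]^2$ (and of $[P_\nu^\mu]^2$ and $P_\nu^{-\mu}Q_\nu^\mu$) combined with Barnes's second lemma; the $\xi^{-2}$ weight contributes the factor $\Gamma(1-s)\Gamma(s+\tfrac12)$ that makes the Barnes integral closed-form reducible. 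Similarly, Eq.~\ref{eq:PnuPnu_xx_int_closed_form} is reached in the paper by an Euler integral of the ${_3}F_2$ over a ${_2}F_1$ that happens to be elementary, not by squaring the Mehler--Dirichlet kernel; your triple integral would leave a nontrivial two-variable kernel to resolve. Third, part (d) is not ``integration by parts'': the $\log\xi$-weighted and $1/\xi$-subtracted integrals are obtained by differentiating a one-parameter family of vanishing contour identities in the exponent $s$ (at $s=2$ and $s=0$) and by computing the moments $\int_0^1\xi^t P_\nu(\xi)P_\nu(-\xi)\D\xi$ as a ${_3}F_2$ in $t$ (via the Hobson coupling formula and Barnes's lemma) and differentiating in $t$ at $t=-2$ and $t=0$. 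That parametric-differentiation machinery is where most of the work in part (d) resides, and it is absent from your outline.
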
\begin{proof}\begin{enumerate}[label=(\alph*),widest=a]\item  We may compute \begin{align}&\int_1^\infty\frac{[Q_\nu^{\mu}(\xi)]^2}{\xi^2}\D \xi=\int_0^\infty\frac{[Q_\nu^{\mu}(\sqrt{1+t})]^2}{2(1+t)^{3/2}}\D t\notag\\={}&\frac{e^{2i\mu\pi}}{2\pi i}\frac{\Gamma(1+\mu+\nu)}{\Gamma(1-\mu+\nu)}\int_{c-i\infty}^{c+i\infty}\frac{\Gamma (s)\Gamma (s+\mu)\Gamma(s-\mu)\  \Gamma (\nu +1-s)\Gamma(1-s) }{ 2\Gamma (\nu +1+s)}\D s\label{eq:QmunuQmunu_xx_int_comp}\end{align}from the  following Mellin inversion formula~\cite[][\S10.11, Eq. 54(1)]{Marichev1983}  \begin{align}
{[}Q_\nu^{\mu}(\sqrt{1+t})]^2={}&\frac{\sqrt{\pi}}{2}\frac{e^{2i\mu\pi}}{2\pi i}\frac{\Gamma(1+\mu+\nu)}{\Gamma(1-\mu+\nu)}\int_{c-i\infty}^{c+i\infty}\frac{\Gamma (s)\Gamma (s+\mu)\Gamma(s-\mu)\  \Gamma (\nu +1-s) }{ \Gamma (\nu +1+s)\Gamma (s+\frac{1}{2})}\frac{\D s}{t^s},&&t>0,|\mu|<c<\nu+1
\end{align}and an elementary identity  $ \int_0^\infty t^{-s}(1+t)^{-3/2}\D t=2\Gamma(1-s)\Gamma(s+\frac{1}{2})/\sqrt{\pi}$ for $ -1/2<\R s<1$. Here, by convention, the  integrations $\int_{c-i\infty}^{c+i\infty}(\cdots)\D s:= \lim_{T\to +\infty}\int_{c-iT}^{c+iT}(\cdots)\D s
$ are  carried out along a
vertical line $\R s = c$.

With the help of Barnes's second lemma~\cite[][Eq.~4.2.2.1]{Slater}, one can reduce the contour integral  in Eq.~\ref{eq:QmunuQmunu_xx_int_comp} into particular values of generalized hypergeometric series $ _3F_2$, as claimed in Eq.~\ref{eq:QnumuQnumu_xx_int_3F2}.  Specializing  Eq.~\ref{eq:QnumuQnumu_xx_int_3F2} to $ Q_\nu^{\vphantom0}=Q_\nu^0$, one obtains Eq.~\ref{eq:QnuQnu_xx_int_3F2}. Applying  the Legendre--Whipple duality in Eq.~\ref{eq:PnumuPnumu_int_Whipple} to  Eq.~\ref{eq:QnumuQnumu_xx_int_3F2}, one can verify Eq.~\ref{eq:PnuPnu_xx_int_3F2}.

In the last line of  Eq.~\ref{eq:PnuPnu_xx_int_3F2}, one can represent the generalized hypergeometric series  $ _3F_2$ as a definite integral involving the hypergeometric series  $ _2F_1$ \cite[][item~7.512.5]{GradshteynRyzhik}:\begin{align}&
{_3}F_2\left(\left.\begin{array}{c}\frac{1}{2},\frac{3}{2}+\nu,\frac{1}{2}-\nu\\[4pt]\frac{3}{2},\frac{3}{2} \end{array}\right|1\right)=\frac{1}{2}\int_0^1{_2}F_1\left(\left.\begin{array}{c}\frac{3}{2}+\nu,\frac{1}{2}-\nu\\[4pt]\frac{3}{2} \end{array}\right|u\right)\frac{\D u}{\sqrt{u}}=\frac{1}{2}\int_0^1\frac{\sin \left((2 \nu+1 )\arcsin\sqrt{u} \right)}{(2\nu+1)\sqrt{u(1-u)}}\frac{\D u}{\sqrt{u}},
\end{align}where we have spelt out the hypergeometric series $_2F_1 $ in the integrand using elementary functions \cite[][item~9.121.30]{GradshteynRyzhik}. After a trigonometric substitution $ u=\sin^2\theta$, we arrive at the first identity in   Eq.~\ref{eq:PnuPnu_xx_int_closed_form}.

We can compute the integral over $ \theta$ via integration by parts and a familiar Fourier expansion~\cite[][item~1.422.2]{GradshteynRyzhik}:
\begin{align}
\int_0^{\pi/2}\frac{\sin(2\nu+1)\theta}{\sin\theta}\D\theta={}&(2\nu+1)\int_0^{\pi/2}\cos(2\nu+1)\theta\log\cot\frac{\theta}{2}\D\theta\notag\\={}&2(2\nu+1)\int_0^{\pi/2}\cos(2\nu+1)\theta\sum_{\ell=0}^\infty\frac{\cos(2\ell+1)\theta}{2\ell+1}\D\theta\notag\\={}&\frac{\sin ( \nu\pi)}{2}\sum_{\ell=0}^{\infty}(-1)^\ell\left(\frac{1 }{ \ell+\nu +1}-\frac{1 }{\ell-\nu}\right),\label{eq:angle_int_psi}\end{align}before invoking the standard partial fraction expansion for the digamma function $ \psi^{(0)}(z)$.
Clearly, the $ \nu=-1/2$ scenario can be recovered in  a limit procedure:\begin{align}\int_1^\infty\frac{[P_{-1/2}(\xi)]^2}{\xi^2}\D \xi=\lim_{\nu\to-1/2}\int_1^\infty\frac{[P_\nu(\xi)]^2}{\xi^2}\D \xi={}&\frac{4}{\pi^{2}}\int_0^{\pi/2}\frac{\theta\D\theta}{\sin\theta}=\frac{8G}{\pi^{2}}.\end{align}
\item With the following Mellin inversion formula~\cite[][\S10.11, Eq.~28(1)]{Marichev1983}
\begin{align}
&
[P_{\nu}^{\mu}(\sqrt{1+t})]^2\notag\\= {}&\frac{1}{\sqrt{\pi}\Gamma(1-\mu+\nu)\Gamma(-\mu-\nu)}\frac{1}{2\pi i}\int_{c-i\infty}^{c+i\infty}\frac{\Gamma (s- \mu) \Gamma (\frac{1}{2}-s) \Gamma (\nu +1-s) \Gamma (-\nu-s )}{\Gamma (1-s)\Gamma(1-\mu-s)}\frac{\D s}{t^s},&&\mu<c<\min\{-\nu,\nu+1,1/2\},\label{eq:PnuPnu_Mellin}
\end{align}one can compute \begin{align}&
\int_1^\infty\frac{[P_\nu^{\mu}(\xi)]^2}{\xi^2}\D \xi=\int_0^\infty\frac{[P_\nu^{\mu}(\sqrt{1+t})]^2}{2(1+t)^{3/2}}\D t\notag\\={}&\frac{1}{{\pi}\Gamma(1-\mu+\nu)\Gamma(-\mu-\nu)}\frac{1}{2\pi i}\int_{c-i\infty}^{c+i\infty}\frac{ \Gamma (\frac{1}{2}-s) \Gamma (\nu +1-s) \Gamma (-\nu-s )\Gamma (s- \mu)\Gamma (\frac{1}{2}+s)}{\Gamma(1-\mu-s)}\D s\notag\\={}&\frac{1}{{\pi}\Gamma(1-\mu+\nu)\Gamma(-\mu-\nu)}G_{3,3}^{2,3}\left(1\left|
\begin{array}{c}
 \frac{1}{2},-\nu ,\nu +1 \\
 -\mu ,\frac{1}{2},\mu  \\
\end{array}
\right.\hspace{-.5em}\right).
\end{align}By taking the limit $ \xi\to1$ in the following reduction of the Meijer $G$-function $ G^{2,3}_{3,3}$:\begin{align}
G_{3,3}^{2,3}\left(\xi\left|
\begin{array}{c}
 \frac{1}{2},-\nu ,\nu +1 \\
 -\mu ,\frac{1}{2},\mu  \\
\end{array}
\right.\hspace{-.5em}\right)={}&\frac{2 \pi  (2 \nu +1) \sqrt{\xi}  }{(2 \mu -1) (2 \mu +1)\cos(\nu\pi)}{_3}F_2\left(\left.\begin{array}{c}1,\frac{1}{2}-\nu ,\nu +\frac{3}{2}\\\frac{3}{2}-\mu ,\mu +\frac{3}{2}\end{array}\right|\xi\right)\notag\\&+\frac{\pi   \Gamma (1-\mu +\nu ) \Gamma (-\mu -\nu ) }{\Gamma (1-2 \mu )\xi^{\mu}\cos(\mu\pi)} {_2}F_1\left(\left.\begin{array}{c}-\mu -\nu ,1-\mu +\nu \\1-2 \mu\end{array}\right|\xi\right),
\end{align}one arrives at Eq.~\ref{eq:PnumuPnumu_xx_int_4F3}, after somewhat laborious computations. One can deduce Eq.~\ref{eq:PnuPnu_xx_int_4F3} as a special case of  Eq.~\ref{eq:PnumuPnumu_xx_int_4F3} with $ P_\nu^{\vphantom0}=P^0_\nu$. By the Legendre--Whipple duality (Eq.~\ref{eq:PnumuPnumu_int_Whipple}), we obtain  Eq.~\ref{eq:QnuQnu_xx_int_4F3} as a particular situation of  Eq.~\ref{eq:PnumuPnumu_xx_int_4F3} for $ P_{-1/2}^{-\nu-1/2}$.

One may also derive   Eq.~\ref{eq:QnuQnu_xx_int_4F3} directly from   Eq.~\ref{eq:QnuQnu_xx_int_3F2}, without going through the Meijer $ G$-function and Legendre--Whipple duality. To show this, we start by appealing  to Kummer's $ {_3}F_2$ transformation~\cite[][Corollary 3.3.5]{AAR} \begin{align}\label{eq:Kummer3F2}
 {_3F_2}\left( \left.\begin{array}{c}
a_{1},a_{2},a_{3} \\[4pt]
b_{1} ,b_{2}\\
\end{array}\right| 1\right)=\frac{\Gamma(b_2)\Gamma(b_1+b_2-a_1-a_2-a_3)}{\Gamma(b_{2}-a_{1})\Gamma(b_{1}+b_{2}-a_{2}-a_{3})}{_3F_2}\left( \left.\begin{array}{c}
a_{1},b_{1}-a_{2},b_{1}-a_{3} \\[4pt]
b_{1} ,b_{1}+b_{2}-a_2-a_3\\
\end{array}\right| 1\right)
\end{align} with $ a_1=1,a_2=\nu+1,a_3=1,b_1=\nu+2,b_2=\nu+2$. Such a procedure furnishes us with an identity\begin{align}
{_3}F_2\left(\left.\begin{array}{c}1,1,\nu+1\\\nu +2 ,\nu +2\end{array}\right|1\right)=\frac{\Gamma (\nu +1) \Gamma (\nu +2)}{\Gamma (2 \nu +2)}{_3}F_2\left(\left.\begin{array}{c}\nu+1,\nu+1,\nu+1\\\nu +2 ,2\nu +2\end{array}\right|1\right).
\end{align}For a suitably chosen non-vanishing $ \varepsilon$, we may use Thomae's second fundamental relation (see~\cite[][Eq.~45]{Thomae1879} or~\cite[][Eq.~4.3.4]{Slater}) to verify the following identity:\begin{align}
&{_3}F_2\left(\left.\begin{array}{c}\nu+1,\nu+1,\nu+1+\varepsilon\\\nu +2 ,2\nu +2\end{array}\right|1\right)\notag\\={}&\frac{\Gamma (-\nu ) \Gamma (2 \nu +3) \Gamma (\varepsilon )}{2 \Gamma (\varepsilon+\nu +1)}+\frac{(\nu +1)^2 \Gamma (-\nu -1) \Gamma (2 \nu +2)}{\varepsilon ^2 \Gamma (\varepsilon ) \Gamma (-\varepsilon+\nu +1)}{_3}F_2\left(\left.\begin{array}{c}\varepsilon ,\varepsilon -\nu ,\nu+1 +\varepsilon\ \\1+\varepsilon ,1+\varepsilon\end{array}\right|1\right)\notag\\={}&\left[\frac{\Gamma (-\nu ) \Gamma (2 \nu +3) \Gamma (\varepsilon )}{2 \Gamma (\varepsilon+\nu +1)}+\frac{(\nu +1)^2 \Gamma (-\nu -1) \Gamma (2 \nu +2)}{\varepsilon ^2 \Gamma (\varepsilon ) \Gamma (-\varepsilon+\nu +1)}\right]\notag\\{}&+\frac{(\nu +1)^2 \Gamma (-\nu -1) \Gamma (2 \nu +2)}{\varepsilon ^2 \Gamma (\varepsilon ) \Gamma (-\varepsilon+\nu +1)}\sum_{n=1}^\infty\frac{(\varepsilon)_{n}(\varepsilon-\nu)_{n} (\nu+1 +\varepsilon)_{n}}{(1+\varepsilon)_{n}(1+\varepsilon)_{n}n!}.\label{eq:Thomae2nd_epsilon}
\end{align}In passage to the limit of  $ \varepsilon\to0$, we may use Eq.~\ref{eq:Thomae2nd_epsilon} to compute\begin{align}&{_3}F_2\left(\left.\begin{array}{c}\nu+1,\nu+1,\nu+1\\\nu +2 ,2\nu +2\end{array}\right|1\right)\notag\\={}&-\frac{4^{\nu +1} (\nu +1) \Gamma (-\nu ) \Gamma (\nu +\frac{3}{2}) [\psi ^{(0)}(\nu +1)+\gamma_{0} ]}{\sqrt{\pi }}+\frac{(\nu +1)^2 \Gamma (-\nu -1) \Gamma (2 \nu +2)}{\Gamma (\nu +1)}\sum_{n=1}^\infty\frac{(-\nu)_{n} (\nu+1 )_{n}}{(n!)^2n}\notag\\={}&-\frac{4^{\nu +1} (\nu +1) \Gamma (-\nu ) \Gamma (\nu +\frac{3}{2}) [\psi ^{(0)}(\nu +1)+\gamma_{0} ]}{\sqrt{\pi }}+\frac{(\nu +1)^2 \Gamma (-\nu -1) \Gamma (2 \nu +2)}{\Gamma (\nu +1)}\int_{-1}^1\frac{P_\nu(\xi)-1}{1-\xi}\D \xi.
\label{eq:Thomae2nd_app}
\end{align} Here, to deduce the integral representation in the last step of Eq.~\ref{eq:Thomae2nd_app}, we have invoked the hypergeometric form of Legendre functions   $ P_{\nu}(\xi)={_2}F_1\left( \left.\begin{smallmatrix}-\nu,\nu+1\\1\end{smallmatrix}\right| \smash{\frac{1-\xi}{2}}\right),$ $\xi\in\mathbb C\smallsetminus(-\infty,-1] $.  It can be shown that the infinite series in  Eq.~\ref{eq:Thomae2nd_app} has termwise agreement with the expansion  for $ -\nu(\nu+1){_4}F_3\left(\left.\begin{smallmatrix}1,1,1-\nu,\nu+2\\2 ,2,2\end{smallmatrix}\right|1\right)$, thus
   Eq.~\ref{eq:QnuQnu_xx_int_4F3} is recovered.

We shall evaluate the remaining integral in Eq.~\ref{eq:Thomae2nd_app} as\begin{align}
\int_{-1}^1\frac{P_\nu(\xi)-1}{1-\xi}\D \xi=\lim_{\mu\to0^{+}}\int_{-1}^{1-\mu}\frac{P_\nu^{\mu}(\xi)-1}{1-\xi}\D \xi,
\end{align}while adopting Hobson's definition for $ P^\mu_\nu(\xi),-1<\xi<1$ as follows~\cite[][p.~227, Eq.~55]{Hobson1931}:\begin{align}
P^{\mu}_\nu(\xi)=\frac{1}{\Gamma(1-\mu)}\left( \frac{1+\xi}{1-\xi} \right)^{\mu/2}{_2}F_1\left( \left.\begin{array}{c}
-\nu,\nu+1\ \\
1-\mu\ \\
\end{array}\right| \frac{1-\xi}{2}\right),\quad -1<\xi<1.
\end{align} Bearing in mind the associated Legendre differential equation~\cite[][item~8.700.1]{GradshteynRyzhik} \begin{align}\frac{\D}{\D \xi}\left[ (1-\xi^2) \frac{\D P^\mu_\nu(\xi)}{\D \xi}\right]+\left[ \nu(\nu+1)-\frac{\mu^2}{1-\xi^2} \right]P^\mu_\nu(\xi)=0,\label{eq:Pmunu_diff_eqn}\end{align}and the recursion relation $ (2\nu+1) \xi P^\mu_\nu(\xi)=(\nu-\mu+1)P^\mu_{\nu+1}(\xi)+(\nu+\mu)P^{\mu}_{\nu-1}(\xi)$~\cite[][item~8.733.2]{GradshteynRyzhik}, one may put down{\allowdisplaybreaks\begin{align}&\int_{-1}^{1-\mu}\frac{P_\nu^{\mu}(\xi)-1}{1-\xi}\D \xi=\log\frac{\mu}{2}+\int_{-1}^{1-\mu}\left[ P_\nu^{\mu}(\xi)+\frac{(\nu-\mu+1)P^\mu_{\nu+1}(\xi)+(\nu+\mu)P^{\mu}_{\nu-1}(\xi)}{2\nu+1} \right]\frac{\D \xi}{1-\xi^{2}}\notag\\={}& \log\frac{\mu}{2}+\left.\frac{(1-\xi^2)}{\mu^{2}} \frac{\D }{\D \xi}\left[ P_\nu^{\mu}(\xi)+\frac{(\nu-\mu+1)P^\mu_{\nu+1}(\xi)+(\nu+\mu)P^{\mu}_{\nu-1}(\xi)}{2\nu+1} \right]\right|_{x=1-\mu}\notag\\&+\frac{1}{\mu^{2}}\int_{-1}^{1-\mu}\left[ \nu(\nu+1)P_\nu^{\mu}(\xi)+\frac{(\nu+1)(\nu+2)(\nu-\mu+1)P^\mu_{\nu+1}(\xi)+(\nu-1)\nu(\nu+\mu)P^{\mu}_{\nu-1}(\xi)}{2\nu+1} \right]\D \xi\notag\\={}&\log\frac{\mu}{2}+\left.\frac{(1-\xi^2)}{\mu^{2}} \frac{\D }{\D \xi}\left[ (1+\xi)P_\nu^{\mu}(\xi) \right]\right|_{\xi=1-\mu}+\frac{2^{\mu }\sin(\nu\pi)}{\mu\sin\frac{\mu\pi}{2}}  \left[\frac{\Gamma (\frac{1-\nu}{2}) \Gamma (\frac{\nu }{2}+1)}{\Gamma (\frac{-\mu -\nu +1}{2} ) \Gamma (\frac{-\mu +\nu +2}{2})}-\frac{\Gamma (-\frac{\nu }{2}) \Gamma (\frac{\nu +1}{2})}{\Gamma (-\frac{\mu+\nu }{2}) \Gamma (\frac{-\mu +\nu +1}{2} )}\right]\notag\\&-\frac{1}{\mu^{2}}\int_{1-\mu}^{1}\left[ \nu(\nu+1)P_\nu^{\mu}(\xi)+\frac{(\nu+1)(\nu+2)(\nu-\mu+1)P^\mu_{\nu+1}(\xi)+(\nu-1)\nu(\nu+\mu)P^{\mu}_{\nu-1}(\xi)}{2\nu+1} \right]\D \xi\notag\\={}&\frac{2}{\mu}-2\gamma_{0}+\frac{2^{\mu }\sin(\nu\pi)}{\mu\sin\frac{\mu\pi}{2}}  \left[\frac{\Gamma (\frac{1-\nu}{2}) \Gamma (\frac{\nu }{2}+1)}{\Gamma (\frac{-\mu -\nu +1}{2} ) \Gamma (\frac{-\mu +\nu +2}{2})}-\frac{\Gamma (-\frac{\nu }{2}) \Gamma (\frac{\nu +1}{2})}{\Gamma (-\frac{\mu+\nu }{2}) \Gamma (\frac{-\mu +\nu +1}{2} )}\right]+O(\mu\log^2\mu).\label{eq:small_mu}\end{align}}Here, we have relied on the standard integral formula for $ \int_{-1}^1 P^\mu_\nu(\xi)\D \xi$ applicable to $ |\R\mu|<2$~\cite[][item~7.132.1]{GradshteynRyzhik} and have used the expansion\begin{align*}P_{\nu}^\mu(\xi)={}& \frac{2^{\mu /2}}{\Gamma (1-\mu )}(1-\xi)^{-\frac{\mu }{2}}-\frac{2^{\frac{\mu }{2}-2}  [\mu (1-\mu)+2 \nu  (\nu +1)]}{\Gamma (2-\mu )}(1-\xi)^{1-\frac{\mu }{2}}\notag\\&+\frac{2^{\frac{\mu }{2}-5}  \left[4 \mu  \nu  (\nu +1)-\frac{4 (\nu -1) \nu  (\nu +1) (\nu +2)}{\mu -2}-(\mu -2) (\mu -1) \mu \right]}{\Gamma (2-\mu )}(1-\xi)^{2-\frac{\mu }{2}}+O((1-\xi)^{3-\frac{\mu }{2}})\end{align*}for the computation of the derivative at $ \xi=1-\mu$ as well as the integration over $ \xi\in[1-\mu,1]$. As we take the limit $ \mu\to 0^+$ in Eq.~\ref{eq:small_mu}, we can evaluate Eq.~\ref{eq:Thomae2nd_app} in closed form, and hence Eq.~\ref{eq:QnuQnu_xx_int_closed_form}. \item Using the following Mellin inversion formula~\cite[][\S10.11, Eq. 57(1)]{Marichev1983}: \begin{align}&
P_\nu^{-\mu}(\sqrt{1+t})Q_\nu^{\mu}(\sqrt{1+t})\notag\\={}&\frac{e^{i\mu\pi}}{2\sqrt{\pi}}\frac{1}{2\pi i}\int_{c-i\infty}^{c+i\infty}\frac{\Gamma (s)\Gamma(s+\mu)\ \Gamma (\frac{1}{2}-s) \Gamma (\nu +1-s) }{ \Gamma (\nu +1+s)\Gamma (1+\mu-s)}\frac{\D s}{t^s},&&\max\{0,-\mu\}<c<\min\{\nu+1,1/2\}
\end{align}one can verify that \begin{align*}\int_1^\infty\frac{P_\nu(\xi)Q_{\nu}(\xi)}{\xi^2}\D \xi={}&\int_0^\infty\frac{P_\nu(\sqrt{1+t})Q_{\nu}(\sqrt{1+t})}{2(1+t)^{3/2}}\D t=\frac{1}{2\pi i}\int_{c-i\infty}^{c+i\infty}\frac{[\Gamma (s)]^{2}\Gamma(s+\frac{1}{2}) \Gamma (\frac{1}{2}-s) \Gamma (\nu +1-s) }{ \Gamma (\nu +1+s)}\frac{\D s}{2\pi}.\end{align*}Again, one can treat such  a contour integral with Barnes's second lemma~\cite[][Eq.~4.2.2.1]{Slater}, which results in the first equality in Eq.~\ref{eq:PnuQnu_xx_int_closed_form}.

Instead of directly manipulating the hypergeometric series in  Eq.~\ref{eq:PnuQnu_xx_int_closed_form}, we shall evaluate it by a comparison to Eq.~\ref{eq:PnuPnu_xx_int_closed_form}. We may employ the residue theorem to establish a vanishing integral:\begin{align}
0={}&\int_{-\infty+i0^+}^{+\infty+i0^+}\frac{[P_\nu(\xi)]^2-[P_\nu(0)]^2}{\xi^2}\D \xi\notag\\={}&\int_{-\infty+i0^+}^{-1+i0^+}\frac{[P_\nu(\xi)]^2-[P_\nu(0)]^2}{\xi^2}\D \xi+\mathcal P\int_{-1}^{1}\frac{[P_\nu(\xi)]^2-[P_\nu(0)]^2}{\xi^2}\D \xi\notag\\{}&-2i\sin(\nu\pi)+\int_1^\infty\frac{[P_\nu(\xi)]^2-[P_\nu(0)]^2}{\xi^2}\D \xi.\label{eq:PnuPnu_xx_int_3parts}
\end{align} For $ \xi<-1$, we  use the identity\begin{align}
P_\nu(\xi\pm i0^{+})=[\cos (  \nu\pi )\pm i \sin (\nu\pi  )] P_{\nu }(-\xi)-\frac{2 \sin (  \nu\pi ) Q_{\nu }(-\xi)}{\pi }\label{eq:Pnu_Qnu_rln}
\end{align} to rewrite the first integral in the last line of Eq.~\ref{eq:PnuPnu_xx_int_3parts} as \begin{align}
\int_{-\infty+i0^+}^{-1+i0^+}\frac{[P_\nu(\xi)]^2-[P_\nu(0)]^2}{\xi^2}\D \xi=\int_{1}^\infty\left\{ [\cos (  \nu\pi )+ i \sin (\nu\pi  )]P_{\nu }(\xi)-\frac{2 \sin (\nu\pi) Q_{\nu }(\xi)}{\pi } \right\}^2\frac{\D \xi}{\xi^2}-[P_\nu(0)]^2.
\end{align}As we read off the imaginary part of  Eq.~\ref{eq:PnuPnu_xx_int_3parts}, we obtain \begin{align}
2\sin(\nu\pi)\cos(\nu\pi)\int_1^\infty\frac{[P_\nu(\xi)]^2}{\xi^2}\D \xi-\frac{4\sin^2(\nu\pi)}{\pi}\int_1^\infty\frac{P_\nu(\xi)Q_{\nu}(\xi)}{\xi^2}\D \xi-2\sin(\nu\pi)=0,\label{eq:Pnu_Qnu_x_2_0sum}
\end{align}  which allows us to deduce the final evaluation in   Eq.~\ref{eq:PnuQnu_xx_int_closed_form} from Eq.~\ref{eq:PnuPnu_xx_int_closed_form}.

Using the residue theorem as well as  the relation between $ P_\nu$ and $Q_\nu$ (Eq.~\ref{eq:Pnu_Qnu_rln}), we can verify\begin{align}
\R\int_{-1}^1\frac{P_\nu(\xi)P_\nu(-\xi)}{\xi^2}\D \xi={}&-2\cos(\nu\pi)\int_1^\infty\frac{[P_\nu(\xi)]^2}{\xi^2}\D \xi+\frac{4\sin(\nu\pi)}{\pi}\int_{1}^\infty\frac{P_\nu(\xi)Q_\nu(\xi)}{\xi^2}\D \xi=-2\label{eq:minus2_id}
\end{align}for $ -1<\nu<0$, by virtue of the evaluations given in Eqs.~\ref{eq:PnuPnu_xx_int_closed_form} and \ref{eq:PnuQnu_xx_int_closed_form}. It is not hard to show that   Eq.~\ref{eq:minus2_id} entails  Eq.~\ref{eq:minus2_id'}.

As we take the real part of  Eq.~\ref{eq:PnuPnu_xx_int_3parts},  we arrive at the first equality in Eq.~\ref{eq:PnuPnu_xx_int_unit_interval_polygamma}. The remaining steps of Eq.~\ref{eq:PnuPnu_xx_int_unit_interval_polygamma} are straightforward computations. \item Adhering to the convention that $ \xi^s=e^{s\log \xi}$ with $ \log \xi=\log|\xi|+i\arg \xi,-\pi<\arg \xi\leq \pi$,  we may extract the imaginary part of a vanishing identity \begin{align*}0=\int_{-\infty+i0^+}^{+\infty+i0^+}\frac{[P_\nu(\xi)]^2-[P_\nu(-\xi)]^2-2i\sin(\nu\pi)P_\nu(\xi)P_\nu(-\xi)}{(-\xi)^s}\D \xi,\quad 1<s<2,-1<\nu<0
\end{align*}as follows:\begin{align}0={}&\frac{4}{\pi}\sin(\nu\pi)\sin(s\pi)\left[\cos(\nu\pi)\int_1^\infty\frac{P_\nu(\xi)Q_{\nu}(\xi)}{\xi^s}\D \xi-\frac{\sin(\nu\pi)}{\pi}\int_1^\infty\frac{[Q_\nu(\xi)]^2}{\xi^s}\D \xi\right]\notag\\&+\sin(s\pi)\int_0^1\left\{\frac{[P_\nu(\xi)]^2-[P_\nu(-\xi)]^2}{\xi^{s}}-\frac{4\sin(\nu\pi)}{\pi \xi^{s-1}}\right\}\D \xi+\frac{4\sin(s\pi)\sin(\nu\pi)}{\pi(2-s)}\notag\\&-2\sin(\nu\pi)[1+\cos(s\pi)]\left\{\int_{0}^1\frac{P_{\nu}(\xi)P_{\nu}(-\xi)-[P_\nu(0)]^{2}}{\xi^{s}}\D \xi-\frac{[P_\nu(0)]^{2}}{s-1}\right\}.\label{eq:Pnu_Qnu_int_x_s_0sum}\end{align} As we differentiate Eq.~\ref{eq:Pnu_Qnu_int_x_s_0sum} in $s$ and send it to the limit of $ s\to 2$, we obtain the first equality in Eq.~\ref{eq:Pnu_sqr_diff_int_log_over_sqr}.

We may use the Hobson coupling formula for $ P_\nu(\xi)P_{\nu}(-\xi)$  \cite[][Eq.~11]{Zhou2013Pnu} and the Mellin inversion formula for $ P_\nu(\xi)$~\cite[][\S10.11, Eq. 3(1)]{Marichev1983} to deduce\begin{align}&
P_{\nu}(\xi)P_{\nu}(-\xi)=\int_0^{2\pi}P_\nu(-\xi^2+(1-\xi^2)\cos\phi)\frac{\D\phi}{2\pi}\notag\\={}&\frac{\sin^2(\nu\pi)}{\pi^2}\int_0^{2\pi}\left\{\frac{1}{2\pi i}\int_{c-i\infty}^{c+i\infty}\frac{[\Gamma(s)]^{2}\Gamma(\nu+1-s)\Gamma(-\nu-s)2^s\D s}{(1-\xi^2)^s(1+\cos\phi)^{s}}\right\}\frac{\D\phi}{2\pi}\notag\\={}&\frac{\sin^2(\nu\pi)}{\pi^2}\frac{1}{2\pi i}\int_{c-i\infty}^{c+i\infty}\frac{[\Gamma(s)]^{2}\Gamma(\nu+1-s)\Gamma(-\nu-s)\Gamma(\frac{1}{2}-s)}{\sqrt{\pi}\Gamma(1-s)}\frac{\D s}{(1-\xi^2)^s},\quad 0<c<\min\{\nu+1,-\nu\},-1<\xi<1.
\end{align} Consequently, we have \begin{align}
\int _{0}^{1}\xi^{t}P_{\nu}(\xi)P_{\nu}(-\xi)\D \xi={}&\frac{\sin^2(\nu\pi)}{\pi^2}\frac{\Gamma(\frac{1+t}{2})}{2\pi i}\int_{-c-i\infty}^{-c+i\infty}\frac{[\Gamma(-s)]^{2}\Gamma(\nu+1+s)\Gamma(-\nu+s)\Gamma(\frac{1}{2}+s)}{2\sqrt{\pi}\Gamma(\frac{3+t}{2}+s)}\D s\notag\\={}&\frac{1}{1+t}{_3}F_2\left(\left.\begin{array}{c}\nu+1,-\nu,\frac{2+t}{2}\\1 ,\frac{3+t}{2}\end{array}\right|1\right),\quad t>-1,\label{eq:teeth_moment_3F2_original}
\end{align}according to  Barnes's second lemma~\cite[][Eq.~4.2.2.1]{Slater}. One can then show that  the identity\begin{align}
\int _{0}^{1}\xi^{t}\{P_{\nu}(\xi)P_{\nu}(-\xi)-[P_{\nu}(0)]^{2}\}\D \xi=\frac{1}{1+t}\left\{{_3}F_2\left(\left.\begin{array}{c}\nu+1,-\nu,\frac{2+t}{2}\\1 ,\frac{3+t}{2}\end{array}\right|1\right)-[P_{\nu}(0)]^{2}\right\}\label{eq:teeth_moment_3F2}
\end{align}admits an analytic continuation to $\R t>-3$, and incorporates Eq.~\ref{eq:minus2_id'} as a special case. Exploiting the fact that \begin{align}
\left.\frac{\partial}{\partial t}\right|_{t=-2}\frac{\left(\frac{2+t}{2}\right)_n}{\left(\frac{3+t}{2}\right)_n}=\frac{\sqrt{\pi}\Gamma(n)}{2\Gamma(n+\frac{1}{2})},\quad n\in\mathbb Z_{>0},
\end{align}we may differentiate Eq.~\ref{eq:teeth_moment_3F2} in $t$ at $ t=-2$ and obtain\begin{align}&
\int_{0}^1\frac{P_{\nu}(\xi)P_{\nu}(-\xi)-[P_\nu(0)]^{2}}{\xi^{2}}\log \xi\D \xi=\int_{0}^1\frac{P_{\nu}(\xi)P_{\nu}(-\xi)-[P_\nu(0)]^{2}}{\xi^{2}}\D\xi-\frac{1}{2}\sum_{n=1}^\infty\frac{(\nu+1)_{n}(-\nu)_n}{n!(\frac{1}{2})_{n}}\frac{1}{n}\notag\\={}&[P_\nu(0)]^{2}-1-\frac{1}{2}\int_{0}^1\left[ {_2}F_1\left(\left.\begin{array}{c}\nu+1,-\nu\\\frac{1}{2}\end{array}\right|\xi\right)-1 \right]\frac{\D \xi}{\xi}=[P_\nu(0)]^{2}-1+\frac{1}{2}\int_0^{\pi/2}\frac{\cos\theta-\cos((2\nu+1)\theta)}{\sin\theta}\D\theta\notag\\={}&[P_\nu(0)]^{2}-1+\frac{1-\cos (  \nu\pi )}{\nu }+\psi ^{(0)}(\nu )+\gamma_{0} +\log 2\notag\\{}&+\frac{\pi\cos (  \nu\pi ) \tan\frac{  \nu\pi }{2}}{4}   +\frac{\cos (  \nu\pi )\left[\psi ^{(0)}\left(\frac{\nu +1}{2}\right)+\psi ^{(0)}\left(\frac{1-\nu}{2}\right)-2 \psi ^{(0)}\left(\frac{\nu }{2}\right)\right]}{4}  ,\label{eq:Pnu_refl_over_sqr_log_int}
\end{align} as one can compute the integral over $ \theta$ following the procedures in Eq.~\ref{eq:angle_int_psi}. Thus, substituting Eq.~\ref{eq:Pnu_refl_over_sqr_log_int} back into the first equality of Eq.~\ref{eq:Pnu_sqr_diff_int_log_over_sqr}, we obtain
the evaluation declared in the last step of  Eq.~\ref{eq:Pnu_sqr_diff_int_log_over_sqr}.

We can modify Eq.~\ref{eq:Pnu_Qnu_int_x_s_0sum} into a form that analytically continues to a neighborhood of $ s=0$:\begin{align}0={}&\frac{4}{\pi}\sin(\nu\pi)\sin(s\pi)\left\{\cos(\nu\pi)\int_1^\infty\left[\frac{P_\nu(\xi)Q_{\nu}(\xi)}{\xi^s}-\frac{1}{(2\nu+1)\xi^{s+1}}\right]\D\xi-\frac{\sin(\nu\pi)}{\pi}\int_1^\infty\frac{[Q_\nu(\xi)]^2}{\xi^s}\D \xi\right\}\notag\\&+\frac{4\sin(\nu\pi)\cos(\nu\pi)}{(2\nu+1)\pi}\frac{\sin(s\pi)}{s}+\sin(s\pi)\int_0^1\frac{[P_\nu(\xi)]^2-[P_\nu(-\xi)]^2}{\xi^{s}}\D \xi\notag\\{}&-2\sin(\nu\pi)[1+\cos(s\pi)]\int_{0}^1\frac{P_{\nu}(\xi)P_{\nu}(-\xi)}{\xi^{s}}\D\xi.\label{eq:Pnu_Qnu_int_x_s_0sum'}\tag{\ref{eq:Pnu_Qnu_int_x_s_0sum}$'$}\end{align}We can then differentiate Eq.~\ref{eq:Pnu_Qnu_int_x_s_0sum'} in $s$ at $ s=0$ to verify the first equality of Eq.~\ref{eq:PQ-1_over_x_int}. For $ \nu>-1/2$, the following integral formula \cite[][item~7.114.3]{GradshteynRyzhik}\begin{align}
\int_1^\infty[Q_{\nu}(\xi)]^2\D \xi=\frac{\psi^{(1)}(\nu+1)}{2\nu+1}\label{eq:Qnu_sqr_1toinf_int}
\end{align}is a standard result, and can be derived from  simple applications of Legendre differential equations. The evaluation \begin{align}
\int_0^1[P_\nu(\xi)]^2\D \xi-\int_{0}^{1}[P_\nu(-\xi)]^2\D \xi={}&2\int_0^1[P_\nu(\xi)]^2\D \xi-\int_{-1}^1[P_\nu(\xi)]^2\D \xi\notag\\={}&\frac{2\sin(\nu\pi)}{(2\nu+1)\pi}\left[ \psi^{(0)}\left( \frac{\nu+2}{2} \right) - \psi^{(0)}\left( \frac{\nu+1}{2} \right) +\frac{2\sin(\nu\pi)}{\pi}\psi^{(1)}(\nu+1)\right].\label{eq:Pnu_sqr_antisymm_0to1_int}
\end{align}follows immediately from Eqs.~\ref{eq:Pnu_sqr_0to1} and \ref{eq:P_nu_sqr}. By Kummer's  $ _3F_2$ transformation (Eq.~\ref{eq:Kummer3F2}) with  $ a_1=-\nu,a_2=\nu+1,a_3=(2+t)/2,b_1=1,b_2=(3+t)/2$, we may turn  Eq.~\ref{eq:teeth_moment_3F2_original} into\begin{align}
\int _{0}^{1}\xi^{t}P_{\nu}(\xi)P_{\nu}(-\xi)\D \xi={}&\frac{1}{1+t}{_3}F_2\left(\left.\begin{array}{c}-\nu,\nu+1,\frac{2+t}{2}\\1 ,\frac{3+t}{2}\end{array}\right|1\right)\notag\\={}&\frac{\sqrt{\pi } \Gamma (\frac{1+t}{2})}{2 \Gamma ( \frac{1}{2}-\nu) \Gamma (\frac{3+t}{2} +\nu)}{_3}F_2\left(\left.\begin{array}{c}-\nu,-\nu,-\frac{t}{2}\\1 ,\frac{1}{2}-\nu\end{array}\right|1\right),\quad t>-1,\tag{\ref{eq:teeth_moment_3F2_original}$'$}
\end{align}whose derivative at $ t=0$ yields the evaluation\begin{align}&
\int _{0}^{1}P_{\nu}(\xi)P_{\nu}(-\xi)\log \xi\D \xi=-\frac{\cos (  \nu\pi ) \left[\psi ^{(0)}\left(\frac{3}{2}+\nu \right)+\gamma _{0}+2\log 2\right]}{2(2\nu+1)}-\frac{\cos(\nu\pi)}{2(2\nu+1)}\sum_{n=1}^\infty\frac{(-\nu)_n(-\nu)_n}{n!(\frac{1}{2}-\nu)_{n}}\frac{1}{n}\notag\\={}&-\frac{\cos (  \nu\pi ) }{2(2\nu+1)}\left[\psi ^{(0)}\left(\frac{3}{2}+\nu \right)+\gamma _{0}+2\log 2+\frac{2\nu^{2}}{1-2\nu}{_4}F_3\left(\left.\begin{array}{c}1,1,1-\nu ,1-\nu\ \\[4pt]2,2,\frac{3}{2}-\nu\end{array}\right|1\right)\right]\notag\\={}&\frac{\cos (\nu\pi )}{\nu  (2 \nu +1)}-\frac{\cos (  \nu\pi ) \left[\psi ^{(0)}\left(\frac{1}{2}-\nu \right)-2 \psi ^{(0)}(\nu )+\psi ^{(0)}\left(\nu +\frac{3}{2}\right)+2 \log 2\right]}{2 (2 \nu +1)}+\frac{\psi ^{(0)}\left(\frac{\nu+1 }{2}{}\right)-\psi ^{(0)}\left(\frac{\nu+2 }{2}\right)-\pi  \sin (  \nu\pi )}{2 (2 \nu +1)}.\label{eq:PnuPnu_logx_int}
\end{align} Here, in Eq.~\ref{eq:PnuPnu_logx_int},  the infinite series in $n$ matches the expansion of a special $_4F_3 $, which in turn, can be computed from a comparison of Eqs.~\ref{eq:PnuPnu_xx_int_closed_form} and \ref{eq:PnuPnu_xx_int_4F3}. Combining the results in Eqs.~\ref{eq:Qnu_sqr_1toinf_int}--\ref{eq:PnuPnu_logx_int}, one can confirm the last step of~Eq.~\ref{eq:PQ-1_over_x_int}. \qedhere       \end{enumerate}\end{proof}\subsection{Integral Representations  of Weight-4 Renormalized Automorphic Green's Functions\label{subsec:int_repn_G2_GZ_rn}}The   automorphic Green's function $ G_s^{\mathfrak H/\overline {\varGamma}}(z,z')$, $\R s>1$ tends to infinity, as $\hat\gamma z'\to z $ for a certain $ \hat\gamma\in\varGamma$. It is possible to appropriately subtract the logarithmic divergence of the automorphic Green's function on the diagonal of $(\varGamma\backslash\mathfrak H)\times(\varGamma\backslash\mathfrak H) $, and define the remaining finite part as an arithmetically meaningful ``self-energy'' $ G_s^{\mathfrak H/\overline{\varGamma}}(z)$. When $ \varGamma=\varGamma_0(N)$ is the Hecke congruence group, such a subtraction scheme is the Gross--Zagier renormalization~\cite{GrossZagier1985,GrossZagierI}.  In~\cite[][Chap.~II, \S5]{GrossZagierI}, the ``automorphic self-energy'' (or ``renormalized automorphic Green's function'') $ G_s^{\mathfrak H/\overline{\varGamma}_0(N)}(z)$ was prescribed as follows: \begin{align}
G_s^{\mathfrak H/\overline{\varGamma}_0(N)}(z)={}&-2\sum_{\hat  \gamma\in\overline {\varGamma}_0(N),\hat \gamma z\neq z}Q_{s-1}
\left( 1+\frac{\vert z -\hat  \gamma z\vert ^{2}}{2\I z\I(\hat\gamma z)} \right)\notag\\{}&-2\left\{ \log\left\vert 4\pi y[\eta(z)]^4 \right\vert+\psi^{(0)}(1)-\psi^{(0)}(s) \right\}m^{\overline {\varGamma}_0(N)}_{z},\label{eq:GZ_rn_defn}
\end{align}where $ \eta(\cdot)$ is the Dedekind eta function, $ \psi^{(0)}(\cdot)$ is the digamma function, and   $ m^{\overline {\varGamma}}_{z}:=\#\{\hat \gamma\in\overline{\varGamma}|\hat \gamma z=z\}$ is  the number of inequivalent transformations in the projective  congruence subgroup $ \overline{\varGamma}$ that fix the point $ z$. The Gross--Zagier algebraicity conjecture for $ G_s^{\mathfrak H/\overline{\varGamma}_0(N)}(z)$ can be stated as \cite[][p.~317, Conjecture~4.4]{GrossZagierI}\begin{align}
\exp \left[ (\I z)^{k-2}G^{\mathfrak H/\overline {\varGamma}_0(N)}_{k/2}(z)\right]\in\overline{\mathbb Q},\quad \text{if }\dim\mathcal S_{k}(\varGamma_0(N))=0\text{ and }[\mathbb Q(z):\mathbb Q]=2.\label{eq:GZ_conj_rn}
\end{align}

As we have $ m_z^{\overline{\varGamma}}=1$ (\textit{i.e.}~$z$ is a non-elliptic point) for almost every $ z\in\mathfrak H$, we may reformulate the Gross--Zagier renormalization (Eq.~\ref{eq:GZ_rn_defn}) for weight-4 automorphic Green's functions as follows:\begin{align}
G_2^{\mathfrak H/\overline{\varGamma}_0(N)}(z)={}&\lim_{z'\to z}\left[G_2^{\mathfrak H/\overline{\varGamma}_0(N)}(z,z')+2Q_{1}
\left( 1+\frac{\vert z -z'\vert ^{2}}{2yy'} \right)\right]-2\left\{ \log\left\vert 4\pi y[\eta(z)]^4 \right|-1 \right\}\notag\\={}&\lim_{z'\to z}\left[G_2^{\mathfrak H/\overline{\varGamma}_0(N)}(z,z')-2\log|z-z'|\right]-2\log\left\vert 2\pi[\eta(z)]^4 \right\vert,\quad \text{a.e. }z\in\mathfrak H.\label{eq:G2_rn_non_ell_defn}
\end{align}

In the next proposition, we derive integral representations of weight-4 Gross--Zagier renormalized Green's functions (Eq.~\ref{eq:G2_rn_non_ell_defn}) for  non-elliptic points.\begin{proposition}[Integral Representations for $ G_2^{\mathfrak H/\overline{\varGamma}_0(N)}(z)$, $ N\in\{1,2,3,4\}$ where $ m_z^{\overline{\varGamma}_0(N)}=1$]\label{prop:G2_rn_non_ell_pts}With the notations $ \rho_{2,-1/6}$, $ \varrho_{2,-1/4}$, $ \varrho_{2,-1/3}$ and $ \varrho_{2,-1/2}$ defined  in Proposition \ref{prop:G2_HeckeN_Pnu}, we have {\allowdisplaybreaks\begin{align}&
G_2^{\mathfrak H/PSL(2,\mathbb Z)}(z)\notag\\={}&{}\log\frac{2^{10}3^6}{|j(z)|^{5/3}\left\vert 1-\sqrt{\frac{j(z)-1728}{j(z)}} \right\vert^2}+\R\int_{\sqrt{(j(z)-1728)/{j(z)}}}^1\left\{ \vphantom{\left[\frac{iP_{-1/6}(-\xi)}{P_{-1/6}(\xi)}-\vphantom{\overline{\frac{\sqrt{j}}{}}}\frac{iP_{-1/6}(-\sqrt{(j(z)-1728)/j(z)})}{P_{-1/6}(\sqrt{(j(z)-1728)/j(z)})}\right]} \right.\frac{2}{\smash[b]{\xi-\sqrt{\frac{j(z)-1728}{j(z)}}}}+\frac{\pi[P_{-1/6}(\xi)]^2\rho_{2,-1/6}(\xi|z)}{\smash[b]\I \frac{iP_{-1/6}(-\sqrt{(j(z)-1728)/j(z)})}{P_{-1/6}(\sqrt{(j(z)-1728)/j(z)})}}\times\notag\\&\times\left.\left[\frac{iP_{-1/6}(-\xi)}{P_{-1/6}(\xi)}-\vphantom{\overline{\frac{\sqrt{j}}{}}}\frac{iP_{-1/6}(-\sqrt{(j(z)-1728)/j(z)})}{P_{-1/6}(\sqrt{(j(z)-1728)/j(z)})}\right]\left[\frac{iP_{-1/6}(-\xi)}{P_{-1/6}(\xi)}-\overline{\left(\frac{iP_{-1/6}(-\sqrt{(j(z)-1728)/j(z)})}{P_{-1/6}(\sqrt{(j(z)-1728)/j(z)})}\right)}\right]\right\}\D \xi\notag\\&+\frac{\pi}{\I \frac{iP_{-1/6}(-\sqrt{(j(z)-1728)/j(z)})}{P_{-1/6}(\sqrt{(j(z)-1728)/j(z)})}
}\R\int_{-1}^1[P_{-1/6}(-\xi)]^2\rho_{2,-1/6}(\xi|z)\D \xi+2,\quad \emph{a.e. }z\in\mathfrak H;\label{eq:G2_PSL2Z_z_rn}\\&G_2^{\mathfrak H/\overline{\varGamma}_0(2)}(z)\notag\\={}&\frac13\log\frac{2^{4}}{|1-2\lambda(2z+1)|^{6}|1-\lambda(2z+1)||\lambda(2z+1)|}+\R\int_{1-2\alpha_{2}(z)}^1\left\{ \vphantom{\left[\frac{iP_{-1/6}(-\xi)}{P_{-1/6}(\xi)}-\vphantom{\overline{\frac{iP_{-1/3}(2\alpha(z)-1)}{P_{-1/3}(1-2\alpha(z))}}}\right]} \right.\frac{2}{\smash[b]{\xi-1+2\alpha_2(z)}}+\frac{\pi[P_{-1/4}(\xi)]^2\varrho_{2,-1/4}(\xi|z)}{\sqrt{2}\I\frac{iP_{-1/4}(2\alpha_2(z)-1)}{P_{-1/4}(1-2\alpha_2(z))}}\times\notag\\&\times\left.\left[\frac{iP_{-1/4}(-\xi)}{P_{-1/4}(\xi)}-\vphantom{\overline{\frac{1}{}}}\frac{iP_{-1/4}(2\alpha_2(z)-1)}{P_{-1/4}(1-2\alpha_2(z))}\right]\left[\frac{iP_{-1/4}(-\xi)}{P_{-1/4}(\xi)}-\overline{\left(\frac{iP_{-1/4}(2\alpha_2(z)-1)}{P_{-1/4}(1-2\alpha_2(z))}\right)}\right]\right\}\D \xi\notag\\&+\frac{\pi}{\sqrt{2}\I \frac{iP_{-1/4}(2\alpha_2(z)-1)}{P_{-1/4}(1-2\alpha_2(z))}
}\R\int_{-1}^1[P_{-1/4}(-\xi)]^2\varrho_{2,-1/4}(\xi|z)\D \xi+2,\quad \emph{a.e. }z\in\mathfrak H;\label{eq:G2_Hecke2_z_rn}\\&G_2^{\mathfrak H/\overline{\varGamma}_0(3)}(z)\notag\\={}&\log\frac{3|1-\alpha_{3}(z)|}{|\alpha_{3}(z)|^{1/3}}+\R\int_{1-2\alpha_{3}(z)}^1\left\{ \vphantom{\left[\frac{iP_{-1/6}(-\xi)}{P_{-1/6}(\xi)}-\vphantom{\overline{\frac{iP_{-1/3}(2\alpha(z)-1)}{P_{-1/3}(1-2\alpha(z))}}}\right]} \right.\frac{2}{\smash[b]{\xi-1+2\alpha_{3}(z)}}+\frac{\pi[P_{-1/3}(\xi)]^2\varrho_{2,-1/3}(\xi|z)}{\sqrt{3}\I\frac{iP_{-1/3}(2\alpha_{3}(z)-1)}{P_{-1/3}(1-2\alpha_{3}(z))}}\times\notag\\&\times\left.\left[\frac{iP_{-1/3}(-\xi)}{P_{-1/3}(\xi)}-\vphantom{\overline{\frac{1}{}}}\frac{iP_{-1/3}(2\alpha_{3}(z)-1)}{P_{-1/3}(1-2\alpha_{3}(z))}\right]\left[\frac{iP_{-1/3}(-\xi)}{P_{-1/3}(\xi)}-\overline{\left(\frac{iP_{-1/3}(2\alpha_{3}(z)-1)}{P_{-1/3}(1-2\alpha_{3}(z))}\right)}\right]\right\}\D \xi\notag\\&+\frac{\pi}{\sqrt{3}\I \frac{iP_{-1/3}(2\alpha_{3}(z)-1)}{P_{-1/3}(1-2\alpha_{3}(z))}
}\R\int_{-1}^1[P_{-1/3}(-\xi)]^2\varrho_{2,-1/3}(\xi|z)\D \xi+2,\quad \emph{a.e. }z\in\mathfrak H;\label{eq:G2_Hecke3_z_rn}\\&G_2^{\mathfrak H/\overline{\varGamma}_0(4)}(z)\notag\\={}&\frac13\log\frac{2^{4}|1-\lambda(2z)|^2}{|\lambda(2z)|}+\R\int_{1-2\lambda(2z)}^1\left\{ \vphantom{\left[\frac{iP_{-1/6}(-\xi)}{P_{-1/6}(\xi)}-\vphantom{\overline{\frac{iP_{-1/3}(2\alpha(z)-1)}{P_{-1/3}(1-2\alpha(z))}}}\right]} \right.\frac{2}{\smash[b]{\xi-1+2\lambda(2z)}}+\frac{\pi[P_{-1/2}(\xi)]^2\varrho_{2,-1/2}(\xi|z)}{2\I\frac{iP_{-1/2}(2\lambda(2z)-1)}{P_{-1/2}(1-2\lambda(2z))}}\times\notag\\&\times\left.\left[\frac{iP_{-1/2}(-\xi)}{P_{-1/2}(\xi)}-\vphantom{\overline{\frac{1}{}}}\frac{iP_{-1/2}(2\lambda(2z)-1)}{P_{-1/2}(1-2\lambda(2z))}\right]\left[\frac{iP_{-1/2}(-\xi)}{P_{-1/2}(\xi)}-\overline{\left(\frac{iP_{-1/2}(2\lambda(2z)-1)}{P_{-1/2}(1-2\lambda(2z))}\right)}\right]\right\}\D \xi\notag\\&+\frac{\pi}{2\I \frac{iP_{-1/2}(2\lambda(2z)-1)}{P_{-1/2}(1-2\lambda(2z))}
}\R\int_{-1}^1[P_{-1/2}(-\xi)]^2\varrho_{2,-1/2}(\xi|z)\D \xi+2,\quad \emph{a.e. }z\in\mathfrak H.\label{eq:G2_Hecke4_z_rn}
\end{align}}\end{proposition}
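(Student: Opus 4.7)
\medskip
\noindent\textbf{Proof plan.} The strategy is to take the diagonal limit $z'\to z$ in the Legendre-function integral representations already established in Proposition~\ref{prop:G2_HeckeN_Pnu} (Eqs.~\ref{eq:G2_z_z'_arb}--\ref{eq:G2Hecke234_Pnu}), isolate the logarithmic divergence analytically, and identify the remaining finite part with the right-hand sides of Eqs.~\ref{eq:G2_PSL2Z_z_rn}--\ref{eq:G2_Hecke4_z_rn}. Throughout, I work with the formulation of Gross--Zagier renormalization recorded in Eq.~\ref{eq:G2_rn_non_ell_defn}, so that what has to be computed is
\[
\lim_{z'\to z}\Bigl[G_2^{\mathfrak H/\overline{\varGamma}_0(N)}(z,z')-2\log|z-z'|\Bigr]\;-\;2\log\!\bigl|2\pi[\eta(z)]^4\bigr|
\]
for almost every $z\in\mathfrak H$, i.e.\ at non-elliptic points where $m^{\overline\varGamma_0(N)}_z=1$.

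\smallskip
\noindent The key observation is that in each of Eqs.~\ref{eq:G2_z_z'_arb}--\ref{eq:G2Hecke234_Pnu}, the only source of non-smooth behaviour as $z'\to z$ is the double pole of $\varrho_{2,\nu}(\xi|z')$ at $\xi=1-2\alpha_N(z')$ (respectively of $\rho_{2,-1/6}(\xi|z')$ at $\xi=\sqrt{\smash[b]{(j(z')-1728)/j(z')}}$) coming from the first term in Eq.~\ref{eq:rho_2_nu_xi_z_defn}. First, I would substitute $z\leftrightarrow z'$ in the formulas of Proposition~\ref{prop:G2_HeckeN_Pnu} (harmless by Green's reciprocity, cf.\ Remark~\ref{rmk:Green_recip}), so that the singular point is at $\xi=1-2\alpha_N(z)$, a \emph{fixed} point in the $\xi$-plane. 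Next, writing
\[
\varrho_{2,\nu}(\xi|z)=\underbrace{\frac{4\alpha_N(z)[1-\alpha_N(z)]}{[\xi-1+2\alpha_N(z)]^2}}_{\text{divergent part}}\;-\;\frac{R_\nu(1-2\alpha_N(z))}{\xi-1+2\alpha_N(z)},
\]
I integrate the divergent part against the squared bracket $\bigl[\tfrac{iP_\nu(-\xi)}{P_\nu(\xi)}-z'\bigr]\bigl[\tfrac{iP_\nu(-\xi)}{P_\nu(\xi)}-\overline{z'}\bigr]$ near $\xi=1-2\alpha_N(z)$. Using the $N$-th case of Eq.~\ref{eq:z_Pnu_ratios} together with the differentiation formula~\ref{eq:P_nu_ratio_deriv}, one sees that $\tfrac{iP_\nu(-\xi)}{\sqrt{N}P_\nu(\xi)}-z$ vanishes linearly at that point, and a short residue computation shows the contribution splits as $-2\log|z-z'|+C_N(z)+o(1)$ for a constant $C_N(z)$ that can be read off explicitly. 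This motivates introducing the counter-term $\dfrac{2}{\xi-(1-2\alpha_N(z))}$ inside the integrand, which makes the integrand regular along the contour while removing the singular part of $C_N(z)$.

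\smallskip
\noindent The third step is to identify the finite constant $C_N(z)-2\log|2\pi[\eta(z)]^4|$. Here I would invoke the alternative-base identities of Eq.~\ref{eq:alpha_N_ratio_ids}, which express the combination $\alpha_N(z)[1-\alpha_N(z)]/\alpha'_N(z)$ (and hence $(1-\alpha_N(z))$, $\alpha_N(z)$, and $[P_\nu(1-2\alpha_N(z))]^2 = (NE_2(Nz)-E_2(z))/(N-1)$ via Eq.~\ref{eq:P_nu_sqr_E2_diff}) in terms of $\eta(z)^{24}$. Taking $\log|\cdot|$ and combining with the $-2\log|2\pi[\eta(z)]^4|$ coming from the renormalization prescription produces precisely the prefactors displayed in Eqs.~\ref{eq:G2_PSL2Z_z_rn}--\ref{eq:G2_Hecke4_z_rn}, namely
\[
\log\frac{2^{10}3^{6}}{|j(z)|^{5/3}\,|1-\sqrt{(j(z)-1728)/j(z)}|^{2}},\quad \tfrac{1}{3}\log\frac{2^4}{|1-2\lambda(2z+1)|^6\,|1-\lambda(2z+1)||\lambda(2z+1)|},
\]
and their analogues for $N=3,4$. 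The additive constant $+2$ at the end of each formula tracks the finite piece of $-2Q_1\!\left(1+\tfrac{|z-z'|^2}{2yy'}\right)+2\log|z-z'|$ as $z'\to z$, together with the $\psi^{(0)}(2)-\psi^{(0)}(1)=1$ appearing through the factor of $2$ in Eq.~\ref{eq:GZ_rn_defn}.

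\smallskip
\noindent The principal technical obstacle is making the residue extraction uniform as $\xi$ approaches $1-2\alpha_N(z)$ along the integration contour, while still preserving the path-independence (the real-part prescription) built into Proposition~\ref{prop:G2_HeckeN_Pnu}. One has to show that after inserting the counter-term, the integrand is integrable even across the branch cuts of $P_\nu$, and that the limit commutes with the $\R$ operator. This is handled by deforming the $\xi$-contour into a small loop around $1-2\alpha_N(z)$ plus the straight segment appearing in the final statement, then evaluating the loop contribution by residues using Eq.~\ref{eq:P_nu_ratio_deriv} and the Wronskian identity $P_\nu(\xi)\tfrac{d P_\nu(-\xi)}{d\xi}-P_\nu(-\xi)\tfrac{d P_\nu(\xi)}{d\xi}=-\tfrac{2\sin(\nu\pi)}{\pi(1-\xi^2)}$. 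The $N=1$ case requires an additional bookkeeping step, since $\rho_{2,-1/6}(\xi|z)=\varrho(\xi|z)+\varrho(\xi|-1/z)$ must be split so that the singular piece is cleanly attributed to $z$; the extra factor of $|j(z)|^{5/3}$ versus $|j(z)|^{2/3}$ in the prefactor of Eq.~\ref{eq:G2_PSL2Z_z_rn} reflects this doubling, combined with the identity $[E_4(z)]^{3}=\Delta(z)j(z)$.
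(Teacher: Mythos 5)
Your route is essentially the one the paper takes: pass to the diagonal limit $z'\to z$ in the Legendre-function representations of Proposition~\ref{prop:G2_HeckeN_Pnu}, observe that the divergence is produced by the double pole of $\varrho_{2,\nu}(\xi|z)$ (first term of Eq.~\ref{eq:rho_2_nu_xi_z_defn}) meeting the linearly vanishing bracket factor at the moving lower endpoint, insert the counterterm $2/(\xi-1+2\alpha_N(z))$ to regularize, and convert the leftover $\xi$-plane logarithm into the $-2\log|z-z'|$ subtraction plus the displayed prefactors via Ramanujan's differentiation formula~\ref{eq:P_nu_ratio_deriv} and the eta-quotient identities (Eqs.~\ref{eq:alpha_N_ratio_ids}, \ref{eq:P_sixth_eta}, \ref{eq:P_nu_sqr_E2_diff}); this is exactly what the paper's Eq.~\ref{eq:log_deriv_P_sixth_inv} and its $\lambda$, $\alpha_3$, $\alpha_2(\tfrac{z-1}{2})$ analogues accomplish, and your treatment of the $N=1$ splitting $\rho_{2,-1/6}=\varrho(\cdot|z)+\varrho(\cdot|-1/z)$ is likewise consistent with the source of the $|j(z)|^{5/3}$ power. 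One small confusion: in Eqs.~\ref{eq:G2_z_z'_arb}--\ref{eq:G2Hecke234_Pnu} the pole of the integrand already sits at the \emph{fixed} point $\xi=1-2\alpha_N(z)$ and it is the endpoint $1-2\alpha_N(z')$ that moves, so the $z\leftrightarrow z'$ swap you propose is unnecessary (and, as you describe it, would do the opposite of what you intend); this does not damage the argument.

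There is, however, a concrete error in your accounting of the trailing constant $+2$. You attribute it to the finite piece of the $Q_1$ expansion together with $\psi^{(0)}(2)-\psi^{(0)}(1)=1$ from Eq.~\ref{eq:GZ_rn_defn}. But those contributions are already absorbed, with nothing left over, into the normalization $-2\log|2\pi[\eta(z)]^4|$ of Eq.~\ref{eq:G2_rn_non_ell_defn} — the very formulation you declare as your starting point — so by your bookkeeping the constant would come out to $0$. The actual source is the first-order cross term between the displacement of the endpoint and the double pole: writing $\xi_0(w)$ for the singular abscissa, the expansion of the bracket factors produces, besides $-2/(\xi-\xi_0(z))$, a term $-\bigl(\xi_0(z)-\xi_0(z')\bigr)\cdot 2\rho_{2,\nu}(\xi|z)/(1-\xi^2)$ whose integral from $\xi_0(z')$ to $1$ tends to
\begin{align*}
-\lim_{z'\to z}\bigl(\xi_0(z)-\xi_0(z')\bigr)\int_{\xi_0(z')}^{1}\frac{2\,\D\xi}{[\xi-\xi_0(z)]^2}=2,
\end{align*}
which is precisely Eqs.~\ref{eq:P_sixth_rn_asym1}--\ref{eq:P_sixth_rn_asym2} of the paper. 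Without this step you would miss the $+2$ entirely (or, following your stated attribution, double-count quantities that have already cancelled), so the limit you compute would disagree with the right-hand sides of Eqs.~\ref{eq:G2_PSL2Z_z_rn}--\ref{eq:G2_Hecke4_z_rn} by an additive constant.
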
\begin{proof}We first give a detailed derivation for Eq.~\ref{eq:G2_PSL2Z_z_rn}. As $ \xi\to\sqrt{(j(z')-1728)/{j(z')}}$ and $ z'\to z$, one has{\allowdisplaybreaks\begin{align}&\frac{\pi[P_{-1/6}(\xi)]^2\rho_{2,-1/6}(\xi|z)}{\I \frac{iP_{-1/6}(-\sqrt{(j(z')-1728)/j(z')})}{P_{-1/6}(\sqrt{(j(z')-1728)/j(z')})}}\left[\frac{iP_{-1/6}(-\xi)}{P_{-1/6}(\xi)}-
\vphantom{\overline{\frac{\sqrt{j}}{}}}\frac{iP_{-1/6}(-\sqrt{(j(z')-1728)/j(z')})}{P_{-1/6}(\sqrt{(j(z')-1728)/j(z')})}\right]\times\notag\\&\times
\left[\frac{iP_{-1/6}(-\xi)}{P_{-1/6}(\xi)}-\overline{\left(\frac{iP_{-1/6}(-\sqrt{(j(z')-1728)/j(z')})}{P_{-1/6}(\sqrt{(j(z')-1728)/j(z')})}\right)}
\right]\notag\\\sim&-2\pi[P_{-1/6}(\xi)]^2\rho_{2,-1/6}(\xi|z)\left( \xi-\sqrt{\frac{j(z)-1728}{j(z)}}+ \sqrt{\frac{j(z)-1728}{j(z)}}-\sqrt{\frac{j(z')-1728}{j(z')}}\right)\frac{\D}{\D\xi}\frac{P_{-1/6}(-\xi)}{P_{-1/6}(\xi)}\notag\\\sim&-\frac{2}{\xi-\sqrt{\frac{j(z)-1728}{j(z)}}}-\left(\sqrt{\frac{j(z)-1728}{j(z)}}-\sqrt{\frac{j(z')-1728}{j(z')}}\right)\frac{2\rho_{2,-1/6}(\xi|z)}{1-\xi^{2}}+O(1),\label{eq:P_sixth_rn_asym1}\end{align}}and
\begin{align}&-\lim_{z'\to z}\left(\sqrt{\frac{j(z)-1728}{j(z)}}-\sqrt{\frac{j(z')-1728}{j(z')}}\right)\int_{\sqrt{(j(z')-1728)/{j(z')}}}^1\frac{2\rho_{2,-1/6}(\xi|z)}{1-\xi^{2}}\D \xi\notag\\={}&-\lim_{z'\to z}\left(\sqrt{\frac{j(z)-1728}{j(z)}}-\sqrt{\frac{j(z')-1728}{j(z')}}\right)\int_{\sqrt{(j(z')-1728)/{j(z')}}}^1\frac{2\D \xi}{\left[ \xi-\sqrt{\frac{j(z)-1728}{j(z)}} \right]^2}=2.\label{eq:P_sixth_rn_asym2}\end{align}With the asymptotic behavior displayed above in Eqs.~\ref{eq:P_sixth_rn_asym1} and \ref{eq:P_sixth_rn_asym2}, we may  insert the integral representation for $G_2^{\mathfrak H/PSL(2,\mathbb Z)}(z,z') $ (Eq.~\ref{eq:G2_z_z'_arb}) into the following expression\begin{align*}&
G_2^{\mathfrak H/PSL(2,\mathbb Z)}(z,z')-2\log\left\vert \frac{\sqrt{\frac{j(z')-1728}{j(z')}}-\sqrt{\frac{j(z)-1728}{j(z)}}}{1-\sqrt{\frac{j(z)-1728}{j(z)}}} \right\vert\notag\\={}&G_2^{\mathfrak H/PSL(2,\mathbb Z)}(z,z')+\R\int_{\sqrt{(j(z')-1728)/{j(z')}}}^1\frac{2\D\xi}{\xi-\sqrt{\frac{j(z)-1728}{j(z)}}}
,\end{align*}combine the integrals with respect to $\xi\in(\sqrt{(j(z')-1728)/j(z')},1)$, and take the limit $ z'\to z$.  So far, we have accounted for the two integrals on the right-hand side of Eq.~\ref{eq:G2_PSL2Z_z_rn}, and  the trailing constant 2. As we note that\begin{align}&6\lim_{z'\to z}\log\left\vert \frac{\sqrt{\frac{j(z')-1728}{j(z')}}-\sqrt{\frac{j(z)-1728}{j(z)}}}{z'-z} \right\vert\notag\\={}&36\log2+18\log3+6\log\pi+24\log|\eta(z)|-5\log|j(z)|,\quad \text{a.e. }z\in\mathfrak H,\label{eq:log_deriv_P_sixth_inv}\end{align}the  leading logarithmic term on the right-hand side of Eq.~\ref{eq:G2_PSL2Z_z_rn} is also explained.

The proofs of Eqs.~\ref{eq:G2_Hecke2_z_rn}--\ref{eq:G2_Hecke4_z_rn} are essentially similar to that of  Eq.~\ref{eq:G2_PSL2Z_z_rn}. It would suffice to establish analogs of Eq.~\ref{eq:log_deriv_P_sixth_inv} for  the modular lambda function $ \lambda(z)$, the Ramanujan cubic invariant  $ \alpha_{3}(z)$, and the expression $ \alpha_{2}(\frac{z-1}{2})=[1-2\lambda(z)]^{-2}$, applicable to $\text{a.e. }z\in\mathfrak H $:{\allowdisplaybreaks
\begin{align}\lim_{z'\to z}\log\left\vert \frac{\lambda(z')-\lambda(z)}{z'-z} \right\vert={}&\frac{4\log 2}{3}+\log\pi+4\log|\eta(z)|+\frac{2\log|1-\lambda(z)|}{3}+\frac{2\log|\lambda(z)|}{3}\notag\\={}&\frac{2\log 2}{3}+\log\pi+4\log\left|\eta\left(\frac{z}{2}\right)\right|+\frac{\log|1-\lambda(z)|}{3}+\frac{5\log|\lambda(z)|}{6};\\
\lim_{z'\to z}\log\left\vert \frac{\alpha_{3}(z')-\alpha_{3}(z)}{z'-z} \right\vert={}&\log 2+\frac{\log3}{2}+\log\pi+4\log|\eta(z)|+\frac{5\log|\alpha_{3}(z)|}{6}+\frac{\log|1-\alpha_{3}(z)|}{2};\\\lim_{z'\to z}\log\left\vert \frac{\alpha_{2}(\frac{z'-1}{2})-\alpha_{2}(\frac{z-1}{2})}{z'-z} \right\vert={}&\frac{10\log 2}{3}+\log\pi+4\log|\eta(z)|+\frac{2\log|1-\lambda(z)|}{3}\notag\\{}&+\frac{2\log|\lambda(z)|}{3}-3\log|1-2\lambda(z)|\notag\\={}&\frac{8\log 2}{3}+\log\pi+4\log\left|\eta\left(\frac{z-1}{2}\right)\right|+\frac{5\log|1-\lambda(z)|}{6}\notag\\{}&+\frac{5\log|\lambda(z)|}{6}-3\log|1-2\lambda(z)|.
\end{align}}These formulae, of course, can be verified using Ramanujan's differentiation formula (Eq.~\ref{eq:P_nu_ratio_deriv}) along with  the relations between the Weierstra{\ss} discriminant $ \Delta(z)=[\eta(z)]^{24}$  and fractional degree Legendre functions (Eqs.~\ref{eq:alpha_N_ratio_ids}, \ref{eq:P_sixth_eta} and \ref{eq:P_nu_sqr_E2_diff}).    \end{proof}
\begin{remark}\label{rmk:spec_G2_rn}In the proposition above, the constraint ``a.e.~$z\in\mathfrak H $'' has two connotations: (i)~the point $z$ has to be non-elliptic such that $m_z^{\overline \varGamma}=1 $; (ii)~the paths of integration (being straight lines in the complex $ \xi$-plane  connecting the ends of designated intervals) do not intersect the branch cut of $ P_\nu(\xi)$ or $ P_\nu(-\xi)$. Therefore, the integral representations in Eqs.~\ref{eq:G2_Hecke2_z_rn}--\ref{eq:G2_Hecke4_z_rn} are applicable to $ G_2^{\mathfrak H/\overline\varGamma_0(N)}(i/\sqrt{N})$ for $ N\in\{2,3,4\}$. One can also exploit Eq.~\ref{eq:Pnu_sqr_diff_int_log_over_sqr} to evaluate these three integrals exactly: {\allowdisplaybreaks\begin{align}&G_2^{\mathfrak H/\overline{\varGamma}_0(2)}\left( \frac{i}{\sqrt{2}} \right)\notag\\={}&\log2+\int_{0}^1\left\{\frac{2}{\smash[b]{\xi}}+\frac{\pi[P_{-1/4}(\xi)]^2}{\sqrt{2}\xi^{2}}-\frac{\pi[P_{-1/4}(-\xi)]^2}{\sqrt{2}\xi^{2}}\right\}\D \xi+\frac{\pi}{\sqrt{2}
}\R\int_{-1}^1\frac{[P_{-1/4}(\xi)]^2}{\xi^{2}}\D \xi+2\notag=-3\log2;\\&
G_2^{\mathfrak H/\overline{\varGamma}_0(3)}\left( \frac{i}{\sqrt{3}} \right)\notag\\={}&\log 3-\frac23\log2+\int_{0}^1\left\{\frac{2}{\smash[b]{\xi}}+\frac{\pi[P_{-1/3}(\xi)]^2}{\sqrt{3}\xi^{2}}-\frac{\pi[P_{-1/3}(-\xi)]^2}{\sqrt{3}\xi^{2}}\right\}\D \xi+\frac{\pi}{\sqrt{3}
}\R\int_{-1}^1\frac{[P_{-1/3}(\xi)]^2}{\xi^{2}}\D \xi+2\notag\\={}&\frac{4}{3}\log2-2\log3;\\&G_2^{\mathfrak H/\overline{\varGamma}_0(4)}\left( \frac{i}{\sqrt{4}} \right)\notag\\={}&\log2+\int_{0}^1\left\{\frac{2}{\smash[b]{\xi}}+\frac{\pi[P_{-1/2}(\xi)]^2}{\sqrt{4}\xi^{2}}-\frac{\pi[P_{-1/2}(-\xi)]^2}{\sqrt{4}\xi^{2}}\right\}\D \xi+\frac{\pi}{\sqrt{4}
}\R\int_{-1}^1\frac{[P_{-1/2}(\xi)]^2}{\xi^{2}}\D \xi+2=-\log2.\label{eq:G2Hecke4_rn_Fricke_pt}
\end{align}}One may wish to compare the last integral identity with the generic formula for $ G_2^{\mathfrak H/\overline \varGamma_0(4)}(z)$ (to be derived in Proposition~\ref{prop:G2_Hecke4_z_KZ_rn}). \eor\end{remark}

  The fundamental domains of the  congruence subgroups $ \varGamma_0(1)=SL(2,\mathbb Z)$, $ \varGamma_0(2)$ and $ \varGamma_0(3)$ contain elliptic points. We need to single out these cases for the evaluations of renormalized Green's functions $ G_2^{\mathfrak H/\overline{\varGamma}}(z)$.
(It is worth noting that the renormalized Green's function $ G_2^{\mathfrak H/\overline{\varGamma}}(z)$ is \textit{not} continuous at the elliptic points: it diverges logarithmically as $z$ approaches an elliptic point; it nonetheless assumes a well-defined finite value at any elliptic point.) In the following proposition, we shall derive integral representations for   $ G_2^{\mathfrak H/\overline{\varGamma}}(z)$ ($ \varGamma=\varGamma_0(1)$, $ \varGamma_0(2)$ and $ \varGamma_0(3)$) at elliptic points $z$, and evaluate these integrals in closed form. \begin{proposition}[Evaluations of  $ G_2^{\mathfrak H/\overline{\varGamma}_0(N)}(z)$, $ N\in\{1,2,3\}$ where $ m_z^{\overline{\varGamma}_0(N)}>1$]We have the following identities:{\allowdisplaybreaks\begin{align}
G_2^{\mathfrak H/PSL(2,\mathbb Z)}(i)={}&2\log3+\int_{0}^1\left\{\frac{4}{\xi}+\frac{2\pi[P_{-1/6}(\xi)]^2}{\xi^{2}}-\frac{2\pi[P_{-1/6}(-\xi)]^2}{\xi^{2}}\right\}\D \xi-\frac{40 G}{\pi}+4\notag\\={}&-4(\log 2+\log3);\label{eq:G2_PSL2Z_i_rn}\\
G_2^{\mathfrak H/PSL(2,\mathbb Z)}\left( \frac{1+i\sqrt{3}}{2} \right)={}&3\log\frac{2^2}{3}+\int_{1}^\infty\left\{\frac{4}{\xi}-\frac{8 P_{-1/6}(\xi)Q_{-1/6}(\xi)}{3}-\frac{8[ Q_{-1/6}(\xi)]^{2}}{3\sqrt{3}\pi}\right\}\D \xi-\frac{30\sqrt{3} L(2,\chi_{-3})}{\pi}+6\notag\\={}&-3(2\log2+\log3);\label{eq:G2_PSL2Z_ell3_rn}\\G_2^{\mathfrak H/\overline{\varGamma}_0(2)}\left( \frac{i-1}{2} \right)={}&-2\log2+\int_{1}^\infty\left\{\frac{2}{\xi}- P_{-1/4}(\xi)Q_{-1/4}(\xi)-\frac{[ Q_{-1/4}(\xi)]^{2}}{\pi}\right\}\D \xi-\frac{16G}{\pi}+4\notag\\={}&-4\log2;\label{eq:G2_Hecke2_ell2_rn}\\G_2^{\mathfrak H/\overline{\varGamma}_0(3)}\left( \frac{3+i\sqrt{3}}{6} \right)={}&\log\frac{1}{2^{2}3^3}+\int_{1}^\infty\left\{\frac{2}{\xi}- \frac{2P_{-1/3}(\xi)Q_{-1/3}(\xi)}{3}-\frac{2[ Q_{-1/3}(\xi)]^{2}}{\sqrt{3}\pi}\right\}\D \xi-\frac{9\sqrt{3} L(2,\chi_{-3})}{\pi}+6\notag\\={}&-3\log3,\label{eq:G2_Hecke3_ell3_rn}
\end{align}}where \[G\equiv L(2,\chi_{-4}):=\sum_{n=0}^\infty\frac{(-1)^n}{(2n+1)^2},\quad L(2,\chi_{-3}):=\sum_{n=0}^\infty\left[\frac{1}{(3n+1)^2}-\frac{1}{(3n+2)^2}\right].\]\end{proposition}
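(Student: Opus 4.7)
The plan is to derive fresh integral representations for $G_2^{\mathfrak H/\overline{\varGamma}_0(N)}(z)$ at each of the four elliptic points by adapting the argument of Proposition \ref{prop:G2_rn_non_ell_pts}, and then to perform the resulting closed-form integral evaluations using the hypergeometric machinery of \S\ref{subsec:hypergeo_Pnumu}. The central observation is that for an elliptic point $z$ with $m_z^{\overline{\varGamma}_0(N)}=m>1$, the Gross--Zagier renormalization formula (Eq.~\ref{eq:GZ_rn_defn}) subtracts $2m\log|z-z'|+2m\log|2\pi[\eta(z)]^4|$ (up to constant), not just the $2\log|z-z'|$ counterterm used in Eq.~\ref{eq:G2_rn_non_ell_defn}. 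Consequently, when I repeat the limit argument of Proposition \ref{prop:G2_rn_non_ell_pts}, the local counterterm $2/(\xi-\xi_0)$ in the first integrand is replaced by $2m/(\xi-\xi_0)$, producing the $4/\xi$ coefficients (for $m=2$: the points $z=i$ and $z=(i-1)/2$) and their analogs for $m=3$ (the points $z=e^{\pi i/3}$ and $z=(3+i\sqrt 3)/6$) seen in Eqs.~\ref{eq:G2_PSL2Z_i_rn}--\ref{eq:G2_Hecke3_ell3_rn}.

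At $z=i$, which is an ordinary endpoint of the $P_{-1/6}$-integration because $j(i)=1728$ forces $\sqrt{(j(i)-1728)/j(i)}=0$, I simply specialize: one finds $\alpha_1(i)=1/2$, $R_{-1/6}(0)=0$, and hence $\rho_{2,-1/6}(\xi|i)=2/\xi^2$, while $iP_{-1/6}(-\sqrt{(j(z)-1728)/j(z)})/P_{-1/6}(\sqrt{\cdot})|_{z=i}=i$ gives unit imaginary part. All factors collapse to the form stated in Eq.~\ref{eq:G2_PSL2Z_i_rn}, and the $-40G/\pi$ tail comes from $\R\int_{-1}^1[P_{-1/6}(-\xi)]^2\rho_{2,-1/6}(\xi|i)\D\xi$ after identification with the asymptotic evaluation $\lim_{y'\to\infty}y'G_2^{\mathfrak H/PSL(2,\mathbb Z)}(z',i)=-40G/\pi$ from  Proposition~\ref{prop:Pnu_sec_L_weight4} (Eq.~\ref{eq:L2chi_4_Pnu}). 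For the three remaining points, the parameter $\alpha_N(z)$ (or equivalently $\sqrt{(j(z)-1728)/j(z)}$ at $z=e^{\pi i/3}$, where $j=0$) diverges, so the original integration interval $(1-2\alpha_N(z),1)$ degenerates. I push the contour past $\xi=-1$ using the connection formula Eq.~\ref{eq:Pnu_Qnu_rln}, converting $P_\nu(\xi+i0^+)$ for $\xi<-1$ into a linear combination of $P_\nu(-\xi)$ and $Q_\nu(-\xi)$. Combined with the $\alpha_N(z)\to\infty$ limits recorded in Eqs.~\ref{eq:alpha_inf_rho_lim} and \ref{eq:alpha_inf_rho_lim_PSL2Z}, this reroutes everything onto $(1,\infty)$ and introduces precisely the $P_\nu Q_\nu$ and $[Q_\nu]^2$ integrands appearing in Eqs.~\ref{eq:G2_PSL2Z_ell3_rn}, \ref{eq:G2_Hecke2_ell2_rn}, \ref{eq:G2_Hecke3_ell3_rn}. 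The residual $L$-value constants are then fixed by matching against the cusp asymptotics of Proposition~\ref{prop:Pnu_sec_L_weight4} (using the addition formulae of Proposition~\ref{prop:add_form_auto_Green} and the relations Eqs.~\ref{eq:PSL_Hecke3_asympt_conv} and \ref{eq:Fricke_inv_HeckeN_GZ} to pass, e.g., from $G_2^{\mathfrak H/PSL(2,\mathbb Z)}(e^{\pi i/3},\cdot)$ to the $\overline{\varGamma}_0(3)$ setting).

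Once the integral representations are in place, closed-form evaluation is essentially mechanical. The $z=i$ integral is handled by Eq.~\ref{eq:Pnu_sqr_diff_int_log_over_sqr}, which gives
\[
\R\int_{-1}^1[P_{-1/6}(\xi)]^2/\xi^2\D\xi+\int_0^1\bigl\{([P_{-1/6}(\xi)]^2-[P_{-1/6}(-\xi)]^2)/\xi^2-4\sin(-\pi/6)/(\pi\xi)\bigr\}\D\xi
\]
in closed form with $\nu=-1/6$; adding the prescribed log-counterterm and the $-40G/\pi+4$ tail produces $-4\log2-4\log3$ after simplification through Gauss's digamma theorem. For the three $(1,\infty)$ integrals, I apply Eq.~\ref{eq:PQ-1_over_x_int} together with the closed form of $\int_1^\infty[Q_\nu(\xi)]^2\D\xi=\psi^{(1)}(\nu+1)/(2\nu+1)$ (Eq.~\ref{eq:Qnu_sqr_1toinf_int}) at the degrees $\nu=-1/6,-1/4,-1/3$, and then invoke the duplication/reflection identities Eqs.~\ref{eq:psi1_id1}--\ref{eq:psi1_id2} to eliminate polygamma terms. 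Combined with the explicit values of $L(2,\chi_{-3})$ in terms of $\psi^{(1)}(1/3)-\psi^{(1)}(2/3)$ (Eq.~\ref{eq:L2chi_3_defn}) and of $G$ in terms of $\psi^{(1)}(1/4)-\psi^{(1)}(3/4)$ (Eq.~\ref{eq:L2chi_4_defn_G}), the non-polynomial transcendentals cancel and one reads off the stated values $-3(2\log2+\log3)$, $-4\log2$, and $-3\log3$.

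The principal obstacle is Stage~1: correctly deriving the integral representations at elliptic points. Three things must be verified delicately: (a)~the $2m_z$ coefficient of the local log-counterterm must be tracked through the asymptotic expansion of the Kontsevich--Zagier integrand near $\xi=\xi_0$ at the elliptic point; (b)~the contour-manipulation argument, which reroutes a degenerate $(1-2\alpha_N(z),1)$ interval to $(1,\infty)$ via Eq.~\ref{eq:Pnu_Qnu_rln}, must produce the precise numerical coefficients of the $P_\nu Q_\nu$ and $[Q_\nu]^2$ integrands (with real and imaginary parts of the analytically-continued Legendre function both contributing); (c)~the cusp-asymptotic $L$-value terms must match the computation of $\sum_{(m,n)\neq(0,0)}|mz+n|^{-4}$ at the elliptic $z$ via Ramanujan's generalizations of the two-squares identity, as exploited in the proof of Proposition~\ref{prop:Pnu_sec_L_weight4}. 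Once (a)--(c) are in place, the hypergeometric computations in Stage~2 are straightforward applications of the integral formulas compiled in \S\ref{subsec:hypergeo_Pnumu}.
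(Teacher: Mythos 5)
Your overall strategy coincides with the paper's: specialize the Legendre--Ramanujan integral representations (Eqs.~\ref{eq:G2_z_z'_arb} and \ref{eq:G2Hecke234_Pnu}) at the elliptic points, use the connection formula (Eq.~\ref{eq:Pnu_Qnu_rln}) to reroute the degenerate integration range onto $(1,\infty)$ in terms of the real-valued $P_\nu(t)$, $Q_\nu(t)$, identify the $L$-value tails via Proposition~\ref{prop:Pnu_sec_L_weight4}, and then close the integrals with Eqs.~\ref{eq:Pnu_sqr_diff_int_log_over_sqr}, \ref{eq:PQ-1_over_x_int}, \ref{eq:Qnu_sqr_1toinf_int} and the polygamma identities \ref{eq:psi1_id1}--\ref{eq:psi1_id2}. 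Your Stage~2 is exactly the paper's computation, and your treatment of $z=i$ (where $\rho_{2,-1/6}(\xi|i)=2/\xi^2$ and $\I w=1$) is correct.

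However, your ``central observation'' in Stage~1 --- that the local counterterm becomes $2m_z/(\xi-\xi_0)$ --- is wrong, and it contradicts the very formulas you are trying to prove: your rule predicts coefficients $4,6,4,6$ for the four cases, whereas Eqs.~\ref{eq:G2_PSL2Z_i_rn}--\ref{eq:G2_Hecke3_ell3_rn} carry $4,4,2,2$. The reason is that the Gross--Zagier subtraction $2m_z\log|z-z'|$ lives in the $z$-coordinate, while the counterterm $\kappa/\xi$ lives in the Hauptmodul coordinate, and at an elliptic point the map $z\mapsto\xi$ is ramified: one must divide $2m_z$ by the local degree $e$ of $\xi-\xi_0$ as a function of $z-z_0$. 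Concretely, $e=1$ at $z=i$ (the square root in $\alpha_1$ desingularizes the double zero of $j-1728$), $e=3/2$ at $e^{\pi i/3}$ (where $\xi^2\sim -1728/j$ and $j$ has a triple zero), $e=2$ at $(i-1)/2$ (where $\alpha_2$ has a double pole), and $e=3$ at $(3+i\sqrt 3)/6$ (where $1/\alpha_3$ has a third-order zero), giving $\kappa=2m_z/e=4,4,2,2$. Equivalently, $\kappa$ is forced by cancellation against the large-$\xi$ asymptotics $P_\nu(\xi)Q_\nu(\xi)\sim 1/[(2\nu+1)\xi]$ of the rerouted integrand, which is precisely why Eq.~\ref{eq:PQ-1_over_x_int} is stated with the regulator $1/[(2\nu+1)\xi]$. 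If you run your renormalization with $2m_z/\xi$ literally, the subtraction fails to cancel the actual endpoint singularity in three of the four cases and the limit does not exist. Once this coefficient is corrected (it is determined automatically if you track the asymptotics of the bracket factors and of $\xi_0(z)$ near the elliptic point, as the paper does in Eqs.~\ref{eq:G2PSL2Z_ell3_alt} and following), the rest of your argument goes through and reproduces the paper's proof.
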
\begin{proof}Recalling   that $ m_i^{PSL(2,\mathbb Z)}=2$, we may compute\begin{align*}G_2^{\mathfrak H/PSL(2,\mathbb Z)}(i)={}&\lim_{z\to i}\left[G_2^{\mathfrak H/PSL(2,\mathbb Z)}(z,i)-4\log|z-i|\right]-4\log\left\vert 2\pi[\eta(i)]^4 \right|\end{align*} using the Legendre--Ramanujan form (Eq.~\ref{eq:G2_z_z'_arb}) of the integral representation for $ G_2^{\mathfrak H/PSL(2,\mathbb Z)}(z,i)$ (see Eq.~\ref{eq:G2_PSL2Z_z_i}), which results in the integral formula in Eq.~\ref{eq:G2_PSL2Z_i_rn}. We can subsequently evaluate the integral by considering Eq.~\ref{eq:Pnu_sqr_diff_int_log_over_sqr} in the special case where $ \nu=-1/6$, in parallel to what we did in Remark~\ref{rmk:spec_G2_rn}.

We can modify Eq.~\ref{eq:G2_z_z'_arb}  as follows:\begin{align}&G_{2}^{\mathfrak H/PSL(2,\mathbb Z)}(z,e^{\pi i/3})\notag\\={}&2\log\left\vert \frac{j(z)}{j(z)-1728} \right\vert-4\R\int_{\sqrt{\frac{j(z)-1728}{j(z)}}}^1\left\{\frac{1}{\xi}+\frac{\pi [P_{-1/6}(\xi)]^2}{3y}\left[ \frac{iP_{-1/6}(-\xi)}{P_{-1/6}(\xi)} -z\right]\left[ \frac{iP_{-1/6}(-\xi)}{P_{-1/6}(\xi)} -\overline{z}\right]\right\}\D \xi\notag\\&-\frac{4\pi}{3y}\R\int_{-1}^1[P_{-1/6}(\xi)]^2\D \xi,\quad |z|\geq1,-\frac12<\R z\leq\frac{1}{2},z\neq \frac{1}{2}+i\frac{\sqrt{3}}{2}.\label{eq:G2PSL2Z_ell3_alt}\end{align}One may recall an integral formula for $ L(2,\chi_{-3})$ from Eq.~\ref{eq:L2chi3}, subtract $ 6\log|z-e^{\pi i/3}|+6\log|2\pi[\eta(e^{i \pi/3})]^4|$ from Eq.~\ref{eq:G2PSL2Z_ell3_alt}, and take the limit $ z\to e^{\pi i/3}$. This leads to the integral representation in Eq.~\ref{eq:G2_PSL2Z_ell3_rn}, where we have already used Eq.~\ref{eq:Pnu_Qnu_rln} to recast the integrand in terms of two real-valued functions $ P_\nu(t),Q_\nu(t),t>1$.  One can then evaluate the integral representation in   Eq.~\ref{eq:G2_PSL2Z_ell3_rn} with the help of the integral formulae in Eqs.~\ref{eq:PQ-1_over_x_int} and \ref{eq:Qnu_sqr_1toinf_int}, as well as   the polygamma identities in Eqs.~\ref{eq:psi1_id1} and \ref{eq:psi1_id2}.

By  Eq.~\ref{eq:G2Hecke234_Pnu}, we can verify  the first equality in Eq.~\ref{eq:G2_Hecke2_ell2_rn} after some routine computations similar to what was described in the last paragraph. The integral formulae in Eqs.~\ref{eq:PQ-1_over_x_int} and \ref{eq:Qnu_sqr_1toinf_int} will then result in the closed-form expression for $ G_2^{\mathfrak H/\overline{\varGamma}_0(2)}((i-1)/2)$. Alternatively,  we can exploit the evaluation in Eq.~\ref{eq:G2Hecke4_rn_Fricke_pt} as follows:\begin{align*}G_2^{\mathfrak H/\overline{\varGamma}_0(2)}\left(\frac{z-1}{2},\frac{i-1}{2} \right)={}&2G_2^{\mathfrak H/\overline{\varGamma}(2)}(z,i)=2G_2^{\mathfrak H/\overline{\varGamma}_0(4)}\left(\frac{z}{2},\frac{i}{2} \right)\notag\\={}&2G_2^{\mathfrak H/\overline{\varGamma}_0(4)}\left(\frac{i}{\sqrt{4}} \right)+4\log\left\vert \frac{z-i}{2} \right\vert+4\log\left\vert 2\pi\left[ \eta\left( \frac{i}{2} \right) \right]^4 \right\vert+o(1)\notag\\\Longrightarrow\qquad G_2^{\mathfrak H/\overline{\varGamma}_0(2)}\left(\frac{i-1}{2} \right)={}&2G_2^{\mathfrak H/\overline{\varGamma}_0(4)}\left(\frac{i}{\sqrt{4}} \right)+16\log\left\vert \frac{\eta(\frac{i}{2})}{\eta(\frac{i-1}{2})} \right\vert=-2\log2+16\log\frac{1}{\sqrt[8]{2}}=-4\log2.\end{align*}

The proof of Eq.~\ref{eq:G2_Hecke3_ell3_rn} is similar to that of Eq.~\ref{eq:G2_PSL2Z_ell3_rn}.\end{proof}

 \subsection{A Few Applications of Ramanujan Rotations and Jacobi Elliptic Functions\label{subsec:Ramanujan_Jacobi}}In the following lemma, we present a geometric transformation that will facilitate the evaluation of certain multiple elliptic integrals by ``separating the variables''.\begin{lemma}[Ramanujan Rotations]\label{lm:R_rot}Let $ \mathcal R(s,t),s\in[0,1],t\in[0,+\infty)$ be a bivariate function with suitable regularity, then we have the following integral identity for $ \lambda\in(0,1)$:\begin{align}&
\int_0^{\pi/2}\left[ \int_0^{\pi/2}\frac{\mathcal R(\cos\theta,\tan\phi)\D\phi}{\sqrt{\smash[b]{1-\lambda\cos^2\phi}}} \right]\frac{\D\theta}{\sqrt{\smash[b]{1-\lambda\cos^2\theta}}}\notag\\={}&\int_0^{\pi/2}\left[ \int_0^{\pi/2}\mathcal R\left( \frac{\cos\theta}{\sqrt{\smash[b]{1-\lambda\sin^2\theta\sin^2\phi}}}, \sqrt{\frac{1-\lambda(1-\sin^2\theta\cos^2\phi)}{\smash[b]{1-\lambda\sin^2\theta\sin^2\phi}}}\tan\phi\right)\frac{\D\phi}{\sqrt{\smash[b]{1-\lambda+\lambda^{2}\sin^2\theta\sin^2\phi\cos^2\phi}}} \right]\D\theta\notag\\={}&\int_{S^2_{+++}}\mathcal R\left(\frac{Z}{\sqrt{1-\lambda Y^{2}}},\sqrt{\frac{1-\lambda(1-X^{2})}{1-\lambda Y^{2}}} \frac{Y}{X}\right)\frac{\D\sigma}{\sqrt{(1-\lambda)(1-Z^2)+\lambda^{2}Y^2X^2}}.\label{eq:Ramanujan_rotation}\end{align}Here in the last step, the integral is carried out on   the $ (+,+,+)$-octant of the unit sphere $ S^2_{+++}:=\{(X,Y,Z)\in\mathbb R^3|X^2+Y^2+Z^2=1,X>0,Y>0,Z>0\}$,  equipped with the standard  spherical coordinates $ (X,Y,Z)=(\sin\theta\cos\phi,\sin\theta\sin\phi,\cos\theta)$ and surface element $ \D\sigma=\sin\theta\D\theta\D\phi$.   \end{lemma}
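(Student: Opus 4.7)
The plan is to establish the three equalities in Eq.~\ref{eq:Ramanujan_rotation} by recasting each iterated integral as a weighted surface integral on the octant $S^2_{+++}$ and then producing an explicit measure-matched diffeomorphism that carries one such representation into another.

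First I would verify the equality between the second iterated integral and the spherical integral (the third line) by the direct substitution $(X,Y,Z)=(\sin\theta\cos\phi,\sin\theta\sin\phi,\cos\theta)$, under which $\D\sigma=\sin\theta\,\D\theta\,\D\phi$, $1-Z^{2}=\sin^{2}\theta$, and $Y^{2}X^{2}=\sin^{4}\theta\sin^{2}\phi\cos^{2}\phi$. Factoring $\sin\theta$ out of $\sqrt{(1-\lambda)(1-Z^2)+\lambda^{2}Y^{2}X^{2}}$ cancels the $\sin\theta$ in $\D\sigma$, recovering precisely the integrand of the second line. This step is routine.

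Next I would recast the first iterated integral as a spherical integral using the same parametrisation: since $\cos\theta=Z$, $\cos^{2}\phi=X^{2}/(X^{2}+Y^{2})$, and $\D\theta\,\D\phi=\D\sigma/\sqrt{1-Z^{2}}$, it transforms into
\[\int_{S^{2}_{+++}}\mathcal{R}(Z,\,Y/X)\,\frac{\D\sigma}{\sqrt{(1-\lambda Z^{2})\bigl(Y^{2}+(1-\lambda)X^{2}\bigr)}}.\]
To connect this with the third expression I propose the nonlinear change of variables
\[\Psi\colon(X,Y,Z)\longmapsto(X',Y',Z')=\left(X,\,\frac{Y}{\sqrt{1-\lambda Z^{2}}},\,Z\sqrt{\frac{1-\lambda(1-X^{2})}{1-\lambda Z^{2}}}\,\right),\]
which a short algebraic check confirms to preserve the unit sphere and to satisfy $Z'/\sqrt{1-\lambda Y'^{2}}=Z$ together with $(Y'/X')\sqrt{(1-\lambda(1-X'^{2}))/(1-\lambda Y'^{2})}=Y/X$, so that the arguments of $\mathcal{R}$ in the third line pull back exactly to $(Z,Y/X)$.

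The principal computational step is the Jacobian of $\Psi$. Parametrising each circle of constant $X$ by $(Y,Z)=\sqrt{1-X^{2}}(\cos\alpha,\sin\alpha)$, a direct calculation shows that $\D\sigma=\D X\,\D\alpha$ and that $\Psi$ reduces to the Landen-type transformation
\[\sin\alpha'=\sin\alpha\sqrt{\frac{1-\mu}{1-\mu\sin^{2}\alpha}},\qquad\cos\alpha'=\frac{\cos\alpha}{\sqrt{1-\mu\sin^{2}\alpha}},\qquad\mu:=\lambda(1-X^{2}),\]
whose Jacobian $\D\alpha'/\D\alpha=\sqrt{1-\mu}/(1-\mu\sin^{2}\alpha)=\sqrt{1-\lambda(1-X^{2})}/(1-\lambda Z^{2})$ furnishes the needed scaling of $\D\sigma$. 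After clearing denominators, the entire identity then reduces to verifying the polynomial identity
\[(1-\lambda(1-X^{2}))\bigl(Y^{2}+(1-\lambda)X^{2}\bigr)=(1-\lambda Z^{2})\bigl[(1-\lambda)(1-Z'^{2})+\lambda^{2}Y'^{2}X'^{2}\bigr],\]
which (using $X^{2}+Y^{2}+Z^{2}=1$) expands on both sides to $(1-\lambda)^{2}X^{2}+(1-\lambda)Y^{2}+\lambda(1-\lambda)X^{4}+\lambda X^{2}Y^{2}$. This final algebraic reconciliation is the main obstacle: conceptually straightforward, but requiring careful bookkeeping to track the interplay between $\Psi$, its Jacobian, and the weight functions in the two square roots.
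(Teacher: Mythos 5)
Your proof is correct, and every computation you flag does check out: $\Psi$ preserves the unit sphere, pulls the arguments of $\mathcal R$ in the third line back to $(Z,Y/X)$, has fiberwise Jacobian $\D\alpha'/\D\alpha=\sqrt{1-\mu}/(1-\mu\sin^{2}\alpha)$ with $\mu=\lambda(1-X^{2})$, and the closing polynomial identity holds --- both sides reduce to $(1-\lambda)^{2}X^{2}+(1-\lambda)Y^{2}+\lambda(1-\lambda)X^{4}+\lambda X^{2}Y^{2}$ once $Z^{2}=1-X^{2}-Y^{2}$ is used. The underlying mechanism is the same as the paper's, namely a Landen-type angular substitution of modulus $\lambda(1-X^{2})$ performed fiberwise on circles of the sphere, but your packaging is genuinely different. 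The paper converts the first iterated integral to a spherical one, relabels $X\leftrightarrow Z$ by the symmetry of $S^{2}_{+++}$ (this relabeling is the ``rotation'' of the lemma's title), returns to an iterated integral, and only then performs the one-dimensional substitution $\phi=\arctan(\sqrt{1-\lambda\sin^{2}\theta}\tan\psi)$ in the inner integral before relabeling back. You dispense with both relabelings by fibering along circles of constant $X$ from the outset, assemble the substitution into a single global diffeomorphism of the octant, and compute its Jacobian via the area-preserving cylindrical projection $\D\sigma=\D X\,\D\alpha$ rather than by tracking a substitution inside an iterated integral. Your route buys a tighter logical structure (one bijection, one measure computation, one algebraic identity, with the second-equals-third equality handled separately as a routine reparametrization); the paper's route keeps the intermediate ``rotated'' spherical form $\int_{S^{2}_{+++}}\mathcal R(X,Y/Z)\,\D\sigma/\bigl[\sqrt{1-\lambda X^{2}}\sqrt{1-X^{2}-\lambda Z^{2}}\bigr]$ in view, and that habit of relabeling coordinates on $S^{2}_{+++}$ is exactly what gets reused in the applications later in \S\ref{subsec:Ramanujan_Jacobi}, so the two presentations are complementary rather than redundant.
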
\begin{proof}The following proof is a modest extension of our previous work  \cite[][Proposition~4.3]{Zhou2013Pnu}, which  handled a special case (where the function $ \mathcal R(s,t)\equiv 1$ was held a constant) in an effort to recover the geometric motivations behind Entry~7(x) in Chapter 17 of Ramanujan's second notebook \cite[][pp.~110--111]{RN3}.

With the spherical coordinates and a rotation of the coordinate axes, it is straightforward to verify that \begin{align}&
\int_0^{\pi/2}\left[ \int_0^{\pi/2}\frac{ \mathcal R(\cos\theta,\tan\phi)\D\phi}{\sqrt{\smash[b]{1-\lambda\cos^2\theta}}\sqrt{\smash[b]{1-\lambda\cos^2\phi}}} \right]\D\theta\notag\\={}&\int_{S^{2}_{+++}}\frac{ \mathcal R(Z,Y/X)}{\sqrt{1-\lambda Z^{2}}\sqrt{1-Z^2-\lambda X^2}}\D\sigma=\int_{S^{2}_{+++}}\frac{ \mathcal R(X,Y/Z)}{\sqrt{1-\lambda X^{2}}\sqrt{1-X^2-\lambda Z^2}}\D\sigma\notag\\={}&\int_0^{\pi/2}\left[ \int_0^{\pi/2}\frac{ \mathcal R(\sin\theta\cos\phi,\tan\theta\sin\phi)\D\phi}{\sqrt{\smash[b]{1-\lambda\sin^{2}\theta\cos^{2}\phi}}\sqrt{\smash[b]{1-\sin^{2}\theta\cos^{2}\phi-\lambda\cos^2\theta}}} \right]\sin\theta\D\theta.\label{eq:Ramanujan_rotation1}
\end{align}Now, substituting $ \phi=\arctan(\sqrt{1-\lambda\sin^2\theta}\tan\psi)$, we can verify that\begin{align}&
\int_0^{\pi/2}\frac{ \mathcal R(\sin\theta\cos\phi,\tan\theta\sin\phi)\D\phi}{\sqrt{\smash[b]{1-\lambda\sin^{2}\theta\cos^{2}\phi}}\sqrt{\smash[b]{1-\sin^{2}\theta\cos^{2}\phi-\lambda\cos^2\theta}}}\notag\\={}&\int_0^{\pi/2}\frac{ \mathcal R\left(\frac{\sin\theta\cos\psi}{\sqrt{\smash[b]{1-\lambda\sin^{2}\theta\sin^2\psi}}},\sqrt{\frac{1-\lambda\sin^2\theta}{\smash[b]{1-\lambda\sin^{2}\theta\sin^2\psi}}} \frac{\sin\theta\sin\psi}{\cos\theta} \right)\D\psi}{\sqrt{\smash[b]{(1-\lambda)(1-\sin^{2}\theta\cos^{2}\psi)+\lambda^2\cos^2\theta\sin^2\theta\sin^2\psi}}}.\label{eq:Ramanujan_rotation2}
\end{align} Therefore, we may use Eq.~\ref{eq:Ramanujan_rotation2} to convert Eq.~\ref{eq:Ramanujan_rotation1} into\begin{align*}\int_0^{\pi/2}\left[ \int_0^{\pi/2}\frac{ \mathcal R(\cos\theta,\tan\phi)\D\phi}{\sqrt{\smash[b]{1-\lambda\cos^2\theta}}\sqrt{\smash[b]{1-\lambda\cos^2\phi}}} \right]\D\theta={}&
\int_{S^2_{+++}}\frac{\mathcal R\left(\frac{X}{\sqrt{1-\lambda Y^{2}}}, \sqrt{\frac{1-\lambda(1-Z^{2})}{\smash[b]{1-\lambda Y^{2}}}} \frac{Y{}}{Z}\right)}{\sqrt{(1-\lambda)(1-X^2)+\lambda^{2}Y^2Z^2}}\D\sigma\notag\\={}&\int_{S^2_{+++}}\frac{\mathcal R \left(\frac{Z}{\sqrt{1-\lambda Y^{2}}}, \sqrt{\frac{1-\lambda(1-X^{2})}{\smash[b]{1-\lambda Y^{2}}}} \frac{Y}{X}\right)}{\sqrt{(1-\lambda)(1-Z^2)+\lambda^{2}Y^2X^2}}\D\sigma,
\end{align*}which confirms Eq.~\ref{eq:Ramanujan_rotation}.\end{proof}We recall the definition of the Jacobi $ \Theta$-function:\begin{align}&
\Theta(u|\lambda)\notag\\:={}&\prod_{n=1}^\infty\left\{\left[ 1-e^{-2n{\pi\mathbf K(\sqrt{1-\lambda})}/{\mathbf K(\sqrt{\lambda})}} \right]\left[1-2e^{-(2n-1){\pi\mathbf K(\sqrt{1-\lambda})}/{\mathbf K(\sqrt{\lambda})}}\cos\frac{\pi u}{{\mathbf K(\sqrt{\lambda})}}+e^{-2(2n-1){\pi\mathbf K(\sqrt{1-\lambda})}/{\mathbf K(\sqrt{\lambda})}}\right]\right\}\notag\\\equiv{}&\sum_{n\in\mathbb Z}(-1)^n e^{-n^{2}\pi\mathbf K(\sqrt{1-\lambda})/\mathbf K(\sqrt{\lambda})}\cos\frac{n\pi u}{\mathbf K(\sqrt{\lambda})},\quad u\in\mathbb C,\lambda\in(\mathbb C\smallsetminus\mathbb R)\cup(0,1).
\end{align} Its logarithmic derivative with respect to $u$ is the Jacobi $ \JZ$-function: \begin{align} \JZ(u|\lambda):=\frac{\partial}{\partial u}\log\Theta(u|\lambda).\end{align}
The Jacobi $ \Theta$-function can be used to introduce the Jacobi elliptic functions $ \sn(u|\lambda)$, $\cn(u|\lambda)$, $\dn(u|\lambda)$ as follows:\begin{align}
\sn(u|\lambda):={}&-\frac{i}{\sqrt[4]{\lambda}}\frac{\Theta(u+i\mathbf K(\sqrt{1-\lambda})|\lambda)}{\Theta(u|\lambda)}\exp\left\{{\frac{\pi i}{2\mathbf K(\sqrt{\lambda})}}\left[ u+\frac{i\mathbf K(\sqrt{1-\lambda})}{2} \right]\right\},\\\cn(u|\lambda):={}&-i\sqrt[^4\!\!]{\frac{1-\lambda}{\lambda}}\frac{\Theta(u+\mathbf K(\sqrt{\lambda})+i\mathbf K(\sqrt{1-\lambda})|\lambda)}{\Theta(u|\lambda)}\exp\left\{{\frac{\pi i}{2\mathbf K(\sqrt{\lambda})}}\left[ u+\mathbf K(\sqrt{\lambda})+\frac{i\mathbf K(\sqrt{1-\lambda})}{2} \right]\right\},\\\dn(u|\lambda):={}&\sqrt[4]{1-\lambda}\frac{\Theta(u+\mathbf K(\sqrt{\lambda})|\lambda)}{\Theta(u|\lambda)}.
\end{align}All the aforementioned functions named after Jacobi can be also defined for $ \lambda\in(-\infty,0)\cup(1,+\infty)$ by analytic continuation. The Jacobi elliptic functions satisfy the following properties:\begin{align}
u=\int_0^{\sn(u|\lambda)}\frac{\D t}{\sqrt{1-t^2}\sqrt{1-\lambda t ^2}},\quad \sn^2(u|\lambda)+\cn^2(u|\lambda)=1,\quad \lambda\sn^2(u|\lambda)+\dn^2(u|\lambda)=1
\end{align}for $ 0<\lambda<1,0\leq u\leq\mathbf K(\sqrt{\lambda})$. \begin{lemma}[Some Integral Formulae Related to Jacobi Elliptic Functions]We have the following integral identities for $ 0<\lambda<1$:\begin{align}
\int_0^{\pi/2}\frac{\log\cos\phi\D\phi}{\sqrt{\smash[b]{1-\lambda\cos^2\phi}}}={}&-\frac{\pi}{4}\mathbf K(\sqrt{1-\lambda})-\frac{\mathbf K(\sqrt{\lambda})}{4}\log\lambda\label{eq:log_sn_int};\\\int_0^{\pi/2}\frac{\log(1-\lambda\cos^{2}\phi)\D\phi}{\sqrt{\smash[b]{1-\lambda\cos^2\phi}}}={}&\frac{\mathbf K(\sqrt{\lambda})}{2}\log(1-\lambda);\label{eq:log_dn_int}\\\int_0^{\pi/2}\left[\int_0^{\pi/2}\frac{\log(1-\lambda\cos^2\theta\cos^{2}\phi)\D\phi}{\sqrt{\smash[b]{1-\lambda\cos^2\phi}}}\right]\frac{\D\theta}{\sqrt{\smash[b]{1-\lambda\cos^2\theta}}}={}&-\frac{\pi}{6}\mathbf K(\sqrt{1-\lambda})\mathbf K(\sqrt{\lambda})+\frac{[\mathbf K(\sqrt{\lambda})]^{2}}{3}\log\frac{4(1-\lambda)}{\sqrt{\lambda}}.\label{eq:log_sn_sn_int_int}
\end{align}\end{lemma}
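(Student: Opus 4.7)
The plan is to reduce all three identities to integrals of Jacobi elliptic functions via the substitution $\sin\phi=\sn(u|\lambda)$ (preceded by $\phi\mapsto\pi/2-\phi$). Since $d\phi/\sqrt{1-\lambda\sin^2\phi}=du$ maps $(0,\pi/2)$ bijectively onto $(0,K)$ with $K:=\mathbf K(\sqrt\lambda)$ and $K':=\mathbf K(\sqrt{1-\lambda})$, and since $1-\lambda\sin^2\phi=\dn^2 u$ along the map, setting $\tau:=iK'/K$ and $q:=e^{\pi i\tau}=e^{-\pi K'/K}$ converts the three claims into $\int_0^K\log\sn(u|\lambda)\,du=-\pi K'/4-(K/4)\log\lambda$, $\int_0^K\log\dn(u|\lambda)\,du=(K/4)\log(1-\lambda)$, and $\int_0^K\!\int_0^K\log(1-\lambda\sn^2 u\,\sn^2 v)\,du\,dv=-\pi KK'/6+(K^2/3)\log[4(1-\lambda)/\sqrt\lambda]$.

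For the first two, I would invoke the standard theta-quotient representations $\sn(u|\lambda)=\theta_1(z)/[\sqrt[4]{\lambda}\,\theta_4(z)]$ and $\dn(u|\lambda)=\sqrt[4]{1-\lambda}\,\theta_3(z)/\theta_4(z)$ with $z=\pi u/(2K)$, and expand $\log\sn$ and $\log\dn$ into Fourier series through the Jacobi triple products for $\theta_j$ together with $\log(1\mp 2q^m\cos 2z+q^{2m})=\mp 2\sum_{k\geq1}(\pm 1)^{k-1}q^{mk}\cos(2kz)/k$. Since $\int_0^{\pi/2}\cos(2kz)\,dz=\sin(k\pi)/(2k)=0$ for every integer $k\geq 1$, termwise integration annihilates all non-constant Fourier modes. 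From $\log\sn$ only the $-(1/4)\log\lambda$ prefactor and the ``$2q^{1/4}\sin z$'' piece of $\theta_1$ survive (the latter combining with $\int_0^{\pi/2}\log\sin z\,dz=-(\pi/2)\log 2$ to contribute $-\pi K'/4$); for $\log\dn$ only $(K/4)\log(1-\lambda)$ remains.

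The double integral hinges on the Jacobi addition identity $\theta_4^2(0)\,\theta_4(x+y)\,\theta_4(x-y)=\theta_4^2(x)\theta_4^2(y)-\theta_1^2(x)\theta_1^2(y)$ (which follows from the classical $\theta_4^2(0)\,\theta_1(x+y)\theta_1(x-y)=\theta_1^2(x)\theta_4^2(y)-\theta_4^2(x)\theta_1^2(y)$ paired with $\sn(u+v)\sn(u-v)=(\sn^2 u-\sn^2 v)/(1-\lambda\sn^2 u\,\sn^2 v)$), which recasts $1-\lambda\sn^2 u\,\sn^2 v$ as $\theta_4^2(0)\,\theta_4(x+y)\,\theta_4(x-y)/[\theta_4^2(x)\theta_4^2(y)]$ with $x=\pi u/(2K)$, $y=\pi v/(2K)$. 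After taking logarithms, the product-to-sum identity $\cos 2k(x+y)+\cos 2k(x-y)=2\cos 2kx\,\cos 2ky$ shows that the iterated integration $\int_0^{\pi/2}\!\int_0^{\pi/2}dx\,dy$ again annihilates every non-constant Fourier mode, collapsing the answer to $4K^2\sum_{n\geq 1}\log(1-q^{2n-1})=4K^2\log\bigl[q^{1/24}\eta(\tau/2)/\eta(\tau)\bigr]$. Finally, the theta--eta identifications $\theta_4(0|\tau)=\eta(\tau/2)^2/\eta(\tau)$, $\theta_2(0|\tau)=2\eta(2\tau)^2/\eta(\tau)$, together with Jacobi's $\theta_2\theta_3\theta_4=2\eta^3$, produce the modular identity $[\eta(\tau/2)/\eta(\tau)]^{12}=4(1-\lambda)/\sqrt\lambda$ (consistent with the paper's defining formula $\lambda(z)=16\eta(z/2)^8\eta(2z)^{16}/\eta(z)^{24}$); substituting this together with $\log q=-\pi K'/K$ delivers the asserted closed form. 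The main obstacle is just bookkeeping the $q$-exponents and picking out the correct theta addition identity; the Fourier-annihilation mechanism does the real work, so I anticipate no serious conceptual difficulty.
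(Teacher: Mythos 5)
Your proposal is correct. For the first two identities your argument is essentially the paper's: the paper quotes Jacobi's Fourier expansion of $\log[\sn(u|\lambda)/\sin\tfrac{\pi u}{2\mathbf K(\sqrt\lambda)}]$ and integrates term by term, which is exactly your triple-product/Fourier-annihilation computation. For the double integral, however, you take a genuinely different route. The paper first reduces the inner integral to $-2\mathbf K(\sqrt\lambda)\int_0^v\JZ(u|\lambda)\,\D u$ (verified by differentiating in $v$ against the standard integral representation of the Jacobi $\JZ$-function), then integrates $\log[\Theta(v|\lambda)/\Theta(0|\lambda)]$ over a quarter period using $\int_0^\pi\log(1-2a\cos\theta+a^2)\,\D\theta=0$ to kill the oscillatory factors of the $\Theta$-product, and finally converts the surviving $q$-product to an eta quotient via the Ramanujan--Eisenstein relations and Landen's transformation. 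You instead symmetrize the integrand at the outset with the theta addition theorem $\theta_4^2(0)\theta_4(x+y)\theta_4(x-y)=\theta_4^2(x)\theta_4^2(y)-\theta_1^2(x)\theta_1^2(y)$ and annihilate the nonconstant modes by the product-to-sum identity --- a step that is genuinely needed, since $\iint\cos 2k(x-y)\,\D x\,\D y$ does not vanish on its own over the quarter-period square, only the combination $\cos2k(x+y)+\cos2k(x-y)=2\cos2kx\cos2ky$ does, and you correctly invoke exactly this. Both arguments land on the same constant term $4[\mathbf K(\sqrt\lambda)]^2\log\prod_{n\geq1}(1-q^{2n-1})$, and your closing conversion $[\eta(\tau/2)/\eta(\tau)]^{12}=4(1-\lambda)/\sqrt\lambda$ (via $\theta_2\theta_3\theta_4=2\eta^3$) checks out against the paper's pair of eta-product identities. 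What your version buys is the avoidance of the auxiliary $\JZ$-function identity and the interchange of a $v$-derivative with an integral; what the paper's version buys is a reusable intermediate formula (its Eq. for $\int_0^{\pi/2}\log(1-\lambda\sn^2(v|\lambda)\sin^2\varphi)(1-\lambda\sin^2\varphi)^{-1/2}\D\varphi$) that isolates the single-variable structure. The remaining work in your write-up is, as you say, bookkeeping of $q$-exponents, and your bookkeeping is consistent throughout.
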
\begin{proof}With a special case of Jacobi's Fourier expansions  \cite[][\S22.5, Example~3]{WhittakerWatson1927}:\begin{align}
\log\frac{\sn(u|\lambda)}{\sin\frac{\pi u}{2\mathbf K(\sqrt{\lambda})}}={}&\frac{1}{4}\log\frac{16e^{-\pi\mathbf K(\sqrt{1-\lambda})/\mathbf K(\sqrt{\lambda})}}{\lambda}+\sum_{n=1}^\infty\frac{2}{n}\frac{e^{-n\pi\mathbf K(\sqrt{1-\lambda})/\mathbf K(\sqrt{\lambda})}}{1+e^{-n\pi\mathbf K(\sqrt{1-\lambda})/\mathbf K(\sqrt{\lambda})}} \cos\frac{n\pi u}{\mathbf K(\sqrt{\lambda})},
\end{align}we may compute \begin{align}
\int_0^{\pi/2}\frac{\log\cos\phi\D\phi}{\sqrt{\smash[b]{1-\lambda\cos^2\phi}}}={}&\int_0^{\mathbf K(\sqrt{\lambda})}\log\sn(u|\lambda)\D u\notag\\={}&\int_0^{\mathbf K(\sqrt{\lambda})}\log\sin\frac{\pi v}{2\mathbf K(\sqrt{\lambda})}\D v+\frac{\mathbf K(\sqrt{\lambda})}{4}\log\frac{16e^{-\pi\mathbf K(\sqrt{1-\lambda})/\mathbf K(\sqrt{\lambda})}}{\lambda}\notag\\={}&-\mathbf K(\sqrt{\lambda})\log2+\frac{\mathbf K(\sqrt{\lambda})}{4}\log\frac{16e^{-\pi\mathbf K(\sqrt{1-\lambda})/\mathbf K(\sqrt{\lambda})}}{\lambda},
\end{align}where the integration over $v$ is elementary. This verifies Eq.~\ref{eq:log_sn_int}, which was also formerly mentioned in    \cite[][\S22.5, Example~4]{WhittakerWatson1927}, \cite[][item~4.386.3]{GradshteynRyzhik} and \cite[][item~800.01]{ByrdFriedman}.

Likewise, one can verify Eq.~\ref{eq:log_dn_int} \cite[][item~800.03]{ByrdFriedman} with Jacobi's Fourier series for $ \log\dn(u|\lambda)$.

To demonstrate Eq.~\ref{eq:log_sn_sn_int_int}, we  need an identity\begin{align}
\int_0^{\pi/2}\frac{\log(1-\lambda\sn^2(v|\lambda)\sin^2\varphi)}{\sqrt{\smash[b]{1-\lambda\sin^2\varphi}}}\D \varphi=-2\mathbf K(\sqrt{\lambda})\int_0^v\JZ(u|\lambda)\D u.\label{eq:log_1_minus_sn_sqr_int}
\end{align}The proof of Eq.~\ref{eq:log_1_minus_sn_sqr_int} is straightforward: one simply differentiates both sides with respect to $v$, and compares the result with a standard integral representation for the Jacobi $ \JZ$-function $ \JZ(v|\lambda)$ \cite[][item~140.03]{ByrdFriedman}. Thus, we can compute\begin{align}
&\int_0^{\pi/2}\left[\int_0^{\pi/2}\frac{\log(1-\lambda\cos^2\theta\cos^{2}\phi)\D\phi}{\sqrt{\smash[b]{1-\lambda\cos^2\phi}}}\right]\frac{\D\theta}{\sqrt{\smash[b]{1-\lambda\cos^2\theta}}}=-2\mathbf K(\sqrt{\lambda})\int_0^{\mathbf K(\sqrt{\lambda})}\left[\int_0^v\JZ(u|\lambda)\D u\right]\D v\notag\\={}&-2\mathbf K(\sqrt{\lambda})\int_0^{\mathbf K(\sqrt{\lambda})}\log\frac{\Theta(v|\lambda)}{\Theta(0|\lambda)}\D v=-2[\mathbf K(\sqrt{\lambda})]^2\log\prod_{n=1}^\infty \frac{1}{[1-e^{-(2n-1){\pi\mathbf K(\sqrt{1-\lambda})}/{\mathbf K(\sqrt{\lambda})}}]^2}\notag\\={}& -4[\mathbf K(\sqrt{\lambda})]^2\log\prod_{n=1}^\infty \frac{1-e^{-2n{\pi\mathbf K(\sqrt{1-\lambda})}/{\mathbf K(\sqrt{\lambda})}}}{1-e^{-n{\pi\mathbf K(\sqrt{1-\lambda})}/{\mathbf K(\sqrt{\lambda})}}}.\label{eq:log_Theta_int}
\end{align}   Here in Eq.~\ref{eq:log_Theta_int}, we have made use of the fact that\begin{align}
\int_0^{\mathbf K(\sqrt{\lambda})}\log\left[1-2e^{-(2n-1){\pi\mathbf K(\sqrt{1-\lambda})}/{\mathbf K(\sqrt{\lambda})}}\cos\frac{\pi v}{{\mathbf K(\sqrt{\lambda})}}+e^{-2(2n-1){\pi\mathbf K(\sqrt{1-\lambda})}/{\mathbf K(\sqrt{\lambda})}}\right]\D v=0,\quad 0<\lambda<1,n\in\mathbb Z_{>0},
\end{align}which originates from an elementary application of Cauchy's integral formula:\begin{align}
\int_0^{ \pi}\log(1-2a\cos \theta+a^2)\D\theta=\R\counterint_{|w|=1}\log(1-aw)\frac{\D w}{iw}=0,\quad 0<a<1.
\end{align}Thus, as we compare  Eq.~\ref{eq:log_Theta_int} with the following consequence of Eqs.~\ref{eq:E4_Ell_Ramanujan}--\ref{eq:E6_Ell_Ramanujan} and Eqs.~\ref{eq:Landen_2}--\ref{eq:double_half_lambda}:\begin{subequations}\begin{align}
e^{-2{\pi\mathbf K(\sqrt{1-\lambda})}/{\mathbf K(\sqrt{\lambda})}}\prod_{n=1}^\infty\left[ 1-e^{-2n{\pi\mathbf K(\sqrt{1-\lambda})}/{\mathbf K(\sqrt{\lambda})}} \right]^{24}={}&\left[ \frac{2\mathbf K(\sqrt{\lambda})}{\pi} \right]^{12}\frac{\lambda^{2}(1-\lambda)^2}{256},\\e^{-{\pi\mathbf K(\sqrt{1-\lambda})}/{\mathbf K(\sqrt{\lambda})}}\prod_{n=1}^\infty\left[ 1-e^{-n{\pi\mathbf K(\sqrt{1-\lambda})}/{\mathbf K(\sqrt{\lambda})}} \right]^{24}={}&\left[ \frac{2(1+\sqrt{\lambda})\mathbf K(\sqrt{\lambda})}{\pi} \right]^{12}\frac{\frac{16\lambda}{(1+\sqrt{\lambda})^{4}}\left[1-\frac{4\sqrt{\lambda}}{(1+\sqrt{\lambda})^{2}}\right]^2}{256},
\end{align}\end{subequations}where $  0<\lambda<1$, we see that the statement in Eq.~\ref{eq:log_sn_sn_int_int} is true.
\end{proof}\begin{proposition}[Some Integral Identities in the Spirit of Jacobi and Ramanujan]For $0<\lambda<1 $, we have \begin{align}
\int_0^\pi\frac{\mathbf K(\sin\theta)\sin\theta\D \theta}{\sqrt{(2-\lambda)^2-\lambda^2\sin^2\theta}}={}&[\mathbf K(\sqrt{\lambda})]^2,\label{eq:R_rot1}\\\int_0^\pi\frac{\mathbf K(\sin\theta)\sin\theta}{\sqrt{(2-\lambda)^2-\lambda^2\sin^2\theta}}\log\frac{\sqrt{(2-\lambda)^2-\lambda^2\sin^2\theta}}{2\sqrt{\sin\theta}}\D \theta={}&\frac{\pi}{3}\mathbf K(\sqrt{\lambda})\mathbf K(\sqrt{1-\lambda})+\frac{[\mathbf K(\sqrt{\lambda})]^2}{3}\log\frac{\lambda(1-\lambda)}{2}\notag\\{}&-\frac{\pi}{2}\int_0^1\frac{\mathbf K(k)\D k}{\sqrt{4(1-\lambda)+\lambda^2k^2}},\label{eq:R_rot2}\\\int_0^1\frac{\mathbf K(k)\D k}{\sqrt{4(1-\lambda)+\lambda^2k^2}}+\int_0^1\frac{\mathbf K(k)\D k}{\sqrt{\lambda^2+4(1-\lambda)k^2}}={}&\mathbf K(\sqrt{\lambda})\mathbf K(\sqrt{1-\lambda}).\label{eq:R_rot3}
\end{align}

\end{proposition}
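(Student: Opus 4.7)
The plan is to derive all three identities from Lemma~\ref{lm:R_rot} (the Ramanujan rotation) combined with the Landen--Gauss transformations (Eqs.~\ref{eq:Landen_1}--\ref{eq:Landen_2}), the imaginary modulus transformation (Eq.~\ref{eq:im_mod}), and the Jacobi $\Theta$-function evaluations (Eqs.~\ref{eq:log_sn_int}--\ref{eq:log_sn_sn_int_int}).

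For Eq.~\ref{eq:R_rot1}, I would start from the symmetric double integral
$[\mathbf K(\sqrt\lambda)]^2 = \int_0^{\pi/2}\!\int_0^{\pi/2}d\theta\,d\phi/\sqrt{(1-\lambda\cos^2\theta)(1-\lambda\cos^2\phi)}$
and apply Lemma~\ref{lm:R_rot} with $\mathcal R\equiv 1$. Using the double-angle identity $\sin^2\phi\cos^2\phi=\sin^2(2\phi)/4$ in the transformed integrand, evaluating the $\theta$-integration by $\int_0^{\pi/2}d\theta/\sqrt{A^2+B^2\sin^2\theta} = \mathbf K(B/\sqrt{A^2+B^2})/\sqrt{A^2+B^2}$ (a corollary of Eq.~\ref{eq:im_mod}), and substituting $\psi=2\phi$, I arrive at
\[
[\mathbf K(\sqrt\lambda)]^2 = \int_0^\pi \frac{\mathbf K\bigl(\lambda\sin\psi/D(\psi)\bigr)}{D(\psi)}\,d\psi, \qquad D(\psi)=\sqrt{(2-\lambda)^2-\lambda^2\cos^2\psi}.
\]
A further change of variable $k=\lambda\sin\psi/D(\psi)$ on each half-period followed by the rescaling $k\mapsto\lambda k/(2-\lambda)$ reduces this to $[\mathbf K(\sqrt\lambda)]^2 = \frac{2}{2-\lambda}\int_0^1\mathbf K(\ell k)\,dk/\sqrt{(1-k^2)(1-\ell^2 k^2)}$ with $\ell=\lambda/(2-\lambda)$, while the target Eq.~\ref{eq:R_rot1}, after $k=\sin\psi$, becomes $\frac{2}{2-\lambda}\int_0^1 k\mathbf K(k)\,dk/\sqrt{(1-k^2)(1-\ell^2 k^2)}$. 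Thus Eq.~\ref{eq:R_rot1} reduces to the auxiliary identity
\[
\int_0^1\frac{\mathbf K(\ell k)\,dk}{\sqrt{(1-k^2)(1-\ell^2 k^2)}} \;=\; \int_0^1\frac{k\,\mathbf K(k)\,dk}{\sqrt{(1-k^2)(1-\ell^2 k^2)}},
\]
which I plan to prove by expanding $\mathbf K(k)=\int_0^{\pi/2}d\chi/\sqrt{1-k^2\sin^2\chi}$, representing both sides as double integrals over $(\theta,\chi)\in[0,\pi/2]^2$, and applying the spherical-coordinate substitution $\sin\varphi=\sin\theta\sin\chi$ in the spirit of Lemma~\ref{lm:R_rot}; carrying out the inner integration in the complementary order on each side yields two single-variable integrals that coincide under the substitution $\xi=\cos\varphi/\sqrt{1-\ell^2\sin^2\varphi}$. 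The main technical obstacle is the careful Jacobian tracking through this spherical substitution.

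For Eq.~\ref{eq:R_rot3}, the same approach applies to $\mathbf K(\sqrt\lambda)\mathbf K(\sqrt{1-\lambda})$ represented as $\int_0^{\pi/2}\!\int_0^{\pi/2}d\alpha\,d\beta/\sqrt{(1-\lambda\sin^2\alpha)(\lambda+(1-\lambda)\sin^2\beta)}$. A Ramanujan-style rearrangement of this asymmetric factorization, followed by decomposition of the resulting single-variable integral along the great circle $\lambda\sin^2\alpha=(1-\lambda)\cos^2\beta$ on the spherical octant, produces the two terms on the left-hand side under the substitutions $k=\sin\theta$ and $k=\cos\theta$, with the $\lambda\leftrightarrow 1-\lambda$ symmetry of the product mirrored in the sum.

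Finally, for Eq.~\ref{eq:R_rot2}, I would apply the same $u=\cos\theta$ reduction as in the proof of Eq.~\ref{eq:R_rot1}, but now with the logarithmic weight $\log(D/(2\sqrt{\sin\theta})) = \tfrac{1}{2}\log(4(1-\lambda)+\lambda^2 u^2) - \log 2 - \tfrac{1}{4}\log(1-u^2)$ pulled into the integrand. The $-\log 2$ piece contributes $-[\mathbf K(\sqrt\lambda)]^2\log 2$ directly via Eq.~\ref{eq:R_rot1}. For the remaining two logarithmic pieces, after re-expressing $\mathbf K(\sqrt{1-u^2})$ as a $\phi$-integral and applying Lemma~\ref{lm:R_rot} in reverse, the separable logarithmic factors reduce via Eqs.~\ref{eq:log_sn_int} and \ref{eq:log_dn_int}, while the crucial non-separable double-logarithm contribution is handled by Eq.~\ref{eq:log_sn_sn_int_int}; the coefficients $\tfrac{\pi}{6}$ and $\tfrac{1}{3}\log[4(1-\lambda)/\sqrt\lambda]$ appearing there combine with the other logarithmic pieces to yield the stated $\tfrac{\pi}{3}\mathbf K(\sqrt\lambda)\mathbf K(\sqrt{1-\lambda})$ and $\tfrac{1}{3}[\mathbf K(\sqrt\lambda)]^2\log[\lambda(1-\lambda)/2]$, while the residual single-integral term $-\tfrac{\pi}{2}\int_0^1\mathbf K(k)\,dk/\sqrt{4(1-\lambda)+\lambda^2 k^2}$ absorbs the boundary and cross-term contributions unaccounted for by the $\Theta$-integrals and is precisely one of the two integrals appearing in Eq.~\ref{eq:R_rot3}. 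The main obstacle in this third step is the careful coefficient bookkeeping, which must reconcile several logarithmic evaluations so that the precise $\tfrac{1}{3}$ factor emerges on the right-hand side.
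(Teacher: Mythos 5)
Your plan for Eq.~\ref{eq:R_rot2} is essentially the paper's: split $\log\bigl(\sqrt{(2-\lambda)^2-\lambda^2\sin^2\theta}/(2\sqrt{\sin\theta})\bigr)$ into a piece depending on $\cos\theta$ and a piece $-\tfrac12\log\sin\theta$, push each through the spherical rotation of Lemma~\ref{lm:R_rot}, and close with Eqs.~\ref{eq:log_sn_int}--\ref{eq:log_sn_sn_int_int}; the residual $-\tfrac{\pi}{2}\int_0^1\mathbf K(k)\,\D k/\sqrt{4(1-\lambda)+\lambda^2k^2}$ does indeed arise from the $\tfrac{\pi}{4}\mathbf K(\cos\theta)$ term in Eq.~\ref{eq:log_sn_int}, so that part is sound modulo bookkeeping. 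For Eq.~\ref{eq:R_rot1} your route is correct in spirit but longer than necessary: after the rotation you land on $\int_0^\pi\mathbf K(\lambda\sin\psi/D(\psi))\,\D\psi/D(\psi)$ rather than the target, and you patch the mismatch with an auxiliary identity $\int_0^1\mathbf K(\ell k)\,\D k/\sqrt{(1-k^2)(1-\ell^2k^2)}=\int_0^1 k\,\mathbf K(k)\,\D k/\sqrt{(1-k^2)(1-\ell^2k^2)}$ that is itself only gestured at; a Landen transformation of the modulus $\lambda\sin\psi/D(\psi)$ closes the loop directly and is what the cited prior work does.

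The genuine gap is Eq.~\ref{eq:R_rot3}. Lemma~\ref{lm:R_rot} requires \emph{both} denominators to carry the same modulus $\lambda$, whereas your starting double integral for $\mathbf K(\sqrt\lambda)\mathbf K(\sqrt{1-\lambda})$ has factors $\sqrt{1-\lambda\sin^2\alpha}$ and $\sqrt{\lambda+(1-\lambda)\sin^2\beta}$ with two different quadratic forms, so no "Ramanujan-style rearrangement" is available as stated, and the proposed split along $\lambda\sin^2\alpha=(1-\lambda)\cos^2\beta$ is never shown to produce the two specific integrals $\int_0^1\mathbf K(k)\,\D k/\sqrt{4(1-\lambda)+\lambda^2k^2}$ and $\int_0^1\mathbf K(k)\,\D k/\sqrt{\lambda^2+4(1-\lambda)k^2}$. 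Your supporting remark that the two terms reflect the $\lambda\leftrightarrow1-\lambda$ symmetry of the product is also false: under $\lambda\mapsto1-\lambda$ the first integral becomes $\int_0^1\mathbf K(k)\,\D k/\sqrt{4\lambda+(1-\lambda)^2k^2}$, which is not the second. The paper's actual mechanism is entirely different and is the real content of the identity: each of the two integrals is rewritten (via the angular substitution $\theta=\arctan\bigl(\tfrac{r^2\mp1}{r^2\pm1}\tan\phi\bigr)$ and Landen's transformation, Eq.~\ref{eq:Landen_1}) as an area $\iint\D\R z\,\D\I z/|(1-z^2)(1-(1-\lambda)z^2)|$ over the annular region $1<|z|<(1-\lambda)^{-1/4}$, $\I z>0$, respectively the half-disc $|z|<1$, $\I z>0$; pulling back under the conformal bijection $w\mapsto\sn(w|1-\lambda)$ and using $\bigl|\sn\bigl(u+\tfrac{i}{2}\mathbf K(\sqrt{\lambda})\big|1-\lambda\bigr)\bigr|=(1-\lambda)^{-1/4}$ identifies the union with a rectangle of area $\mathbf K(\sqrt\lambda)\mathbf K(\sqrt{1-\lambda})$. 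Without this (or the alternative sphere-decomposition proof the paper cites), your argument for Eq.~\ref{eq:R_rot3} does not go through, and since Eq.~\ref{eq:R_rot2} quotes that very integral, the proposition is not fully established by your proposal.
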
\begin{proof}The identity in Eq.~\ref{eq:R_rot1} appeared as \cite[][Eq.~40$'$]{Zhou2013Pnu}, and was proved by an application of Eq.~\ref{eq:Ramanujan_rotation} in the simplest case $ \mathcal R(s,t)\equiv1$.

Now, we work out Eq.~\ref{eq:R_rot2} in detail, starting with the transformations{\allowdisplaybreaks\begin{align}
&\int_0^\pi\frac{\mathbf K(\sin\theta)\sin\theta}{\sqrt{(2-\lambda)^2-\lambda^2\sin^2\theta}}\log\frac{\sqrt{(2-\lambda)^2-\lambda^2\sin^2\theta}}{2}\D \theta=\int_0^{\pi/2}\frac{\mathbf K(\sin\theta)\sin\theta}{\sqrt{1-\lambda +\frac{\lambda^2}{4}\cos^2\theta}}\log\sqrt{1-\lambda+\frac{\lambda^2}{4}\cos^2\theta}\D\theta\notag\\={}&\int_0^{\pi/2}\sin\theta\D \theta\int_0^{\pi/2}\D\phi\frac{\log\sqrt{1-\lambda+\frac{\lambda^2}{4}\cos^2\theta}}{\sqrt{1-\sin^2\theta\sin^2\phi}\sqrt{1-\lambda+\frac{\lambda^2}{4}\cos^2\theta}}=\int_{S^2_{+++}}\frac{\log\sqrt{1-\lambda+\frac{\lambda^2}{4}Z^{2}}\D\sigma}{\sqrt{(1-Y^{2})(1-\lambda+\frac{\lambda^{2}}{4}Z^{2})}}\notag\\={}&\int_{S^2_{+++}}\frac{\log\sqrt{1-\lambda+\frac{\lambda^2}{4}Y^{2}}\D\sigma}{\sqrt{(1-Z^{2})(1-\lambda+\frac{\lambda^{2}}{4}Y^{2})}}=\frac{1}{2}\int_0^{\pi/2}\D\theta\int_0^\pi\D\phi\frac{\log\sqrt{1-\lambda+\frac{\lambda^2}{4}\sin^2\theta\sin^2\phi}}{\sqrt{1-\lambda+\frac{\lambda^2}{4}\sin^2\theta\sin^2\phi}}\notag\\={}&\int_0^{\pi/2}\D\theta\int_0^{\pi/2}\D\phi\frac{\log\sqrt{\smash[b]{1-\lambda+\lambda^{2}\sin^2\theta\sin^2\phi\cos^2\phi}}}{\sqrt{\smash[b]{1-\lambda+\lambda^{2}\sin^2\theta\sin^2\phi\cos^2\phi}}}=\int_{S^2_{+++}}\frac{\log\sqrt{\frac{(1-\lambda)(1-Z^{2})+\lambda^{2}Y^2X^2}{1-Z^{2}}}\D\sigma}{\sqrt{(1-\lambda)(1-Z^{2})+\lambda^{2}Y^2X^2}}\notag\\={}&\int_0^{\pi/2}\left[ \int_0^{\pi/2}\frac{1}{\sqrt{\smash[b]{1-\lambda\cos^2\phi}}}\log\sqrt{\frac{\cos^2\phi(1-\lambda\cos^2\phi)[1-\lambda\cos^2\theta+(1-\lambda)\tan^2\phi]}{1-\lambda\cos^2\theta\cos^2\phi}}\D\phi \right]\frac{\D\theta}{\sqrt{\smash[b]{1-\lambda\cos^2\theta}}},\label{eq:K_over_sqrt_log_sqrt_int}
\end{align}where we have employed  Eq.~\ref{eq:Ramanujan_rotation} in the last step. Meanwhile, we have the following computations:\begin{align}
&\int_0^\pi\frac{\mathbf K(\sin\theta)\sin\theta}{\sqrt{(2-\lambda)^2-\lambda^2\sin^2\theta}}\log\sin\theta\D \theta=\int_0^{\pi/2}\sin\theta\D \theta\int_0^{\pi/2}\D\phi\frac{\log\frac{\sin\theta\sin\phi}{\sin\phi}}{\sqrt{1-\sin^2\theta\sin^2\phi}\sqrt{1-\lambda+\frac{\lambda^2}{4}\cos^2\theta}}\notag\\={}&\int_{S^2_{+++}}\frac{\log Z\D\sigma}{\sqrt{(1-\lambda)(1-Z^{2})+\lambda^{2}Y^2X^2}}-\int_0^{\pi/2}\sin\theta\D \theta\int_0^{\pi/2}\D\phi\frac{\log\sin\phi}{\sqrt{1-\sin^2\theta\sin^2\phi}\sqrt{1-\lambda+\frac{\lambda^2}{4}\cos^2\theta}}\notag\\={}&\int_0^{\pi/2}\left[ \int_0^{\pi/2}\frac{1}{\sqrt{\smash[b]{1-\lambda\cos^2\phi}}}\log\sqrt{\frac{\cos^2\theta\cos^2\phi[1-\lambda\cos^2\theta+(1-\lambda)\tan^2\phi]}{1-\lambda\cos^2\theta}}\D\phi \right]\frac{\D\theta}{\sqrt{\smash[b]{1-\lambda\cos^2\theta}}}\notag\\{}&+\int_0^{\pi/2}\frac{\frac{\pi}{4}\mathbf K(\cos\theta)+\frac{\mathbf K(\sin\theta)}{2}\log\sin\theta}{\sqrt{1-\lambda+\frac{\lambda^2}{4}\cos^2\theta}}\sin\theta\D \theta,\label{eq:K_over_sqrt_log_sin_int}
\end{align}}where the last step involves an application of Ramanujan's rotations (Eq.~\ref{eq:Ramanujan_rotation}) to the integral over $S^2_{+++}$, and a reference to Eq.~\ref{eq:log_sn_int} for the integration over $\phi\in(0,\pi/2)$. It is then clear that Eqs.~\ref{eq:K_over_sqrt_log_sqrt_int} and \ref{eq:K_over_sqrt_log_sin_int} combine into\begin{align}
&\int_0^\pi\frac{\mathbf K(\sin\theta)\sin\theta}{\sqrt{(2-\lambda)^2-\lambda^2\sin^2\theta}}\log\frac{\sqrt{(2-\lambda)^2-\lambda^2\sin^2\theta}}{2\sqrt{\sin\theta}}\D \theta\notag\\={}&\int_0^{\pi/2}\left[ \int_0^{\pi/2}\frac{1}{\sqrt{\smash[b]{1-\lambda\cos^2\phi}}}\log\sqrt{\frac{(1-\lambda\cos^2\theta)(1-\lambda\cos^2\phi)}{\cos^2\theta(1-\lambda\cos^2\theta\cos^2\phi)}}\D\phi \right]\frac{\D\theta}{\sqrt{\smash[b]{1-\lambda\cos^2\theta}}}\notag\\{}&-\frac{\pi}{2}\int_0^{\pi/2}\frac{\mathbf K(\cos\theta)\sin\theta\D \theta}{\sqrt{(2-\lambda)^2-\lambda^2\sin^2\theta}}.\label{eq:K_over_sqr_log_quotient_int}
\end{align}As we simplify the last line of Eq.~\ref{eq:K_over_sqr_log_quotient_int} with the integral formulae in Eqs.~\ref{eq:log_sn_int}--\ref{eq:log_sn_sn_int_int}, we can confirm Eq.~\ref{eq:R_rot2}.

We shall prove  Eq.~\ref{eq:R_rot3} by verifying the following chain of identities:
\begin{align}
\int_0^1\frac{\mathbf K(k)\D k}{\sqrt{4(1-\lambda)+\lambda^2k^2}}={}&\iint_{1<|z|<{1/\sqrt[4]{1-\lambda}},\I z>0}\frac{\D \R z\D \I z}{|(1-z^{2})[1-(1-\lambda)\ z^2]|}\notag\\={}&\iint_{-\mathbf K(\sqrt{1-\lambda})<u<\mathbf K(\sqrt{1-\lambda}),0<v<\frac{1}{2}\mathbf K(\sqrt{\lambda}),\left\vert \sn(u+iv| 1-\lambda)\right|>1}\D u\D v\notag\\={}&\mathbf K(\sqrt{\lambda})\mathbf K(\sqrt{1-\lambda})-\iint_{|z|<1,\I z>0}\frac{\D \R z\D \I z}{|(1-z^{2})[1-(1-\lambda)z^2]|}\notag\\={}&\mathbf K(\sqrt{\lambda})\mathbf K(\sqrt{1-\lambda})-\int_0^1\frac{\mathbf K(k)\D k}{\sqrt{\lambda^2+4(1-\lambda)k^2}}.\label{eq:Jacobi_area_comp1}
\end{align}
For convenience, we write $ \lambda_*=1-\lambda$. To prove the first equality in  Eq.~\ref{eq:Jacobi_area_comp1}, we employ the polar coordinates $ z=re^{i\theta}$  to compute{\allowdisplaybreaks\begin{align}&
\iint_{1<|z|<{1/\sqrt[4]{\lambda_*}},\I z>0}\frac{\D \R z\D \I z}{|(1-z^{2})(1-\lambda_* z^2)|}=\int_1^{1/\sqrt[4]{\lambda_*}}r\left[ \int_0^\pi\frac{\D\theta}{|(1-r^2e^{2i\theta})(1-\lambda_* r^2e^{2i\theta})|}\right]\D r\notag\\={}&\int_1^{1/\sqrt[4]{\lambda_*}}r\left[ \int_0^\pi\frac{\D\phi}{\sqrt{\smash[b]{(1+r^{2})^{2}(1-\lambda_* r^{2})^{2}-4(1-\lambda_*) r^2(1-\lambda_* r^4)\sin^2\phi}}}\right]\D r\notag\\={}&2\int_1^{1/\sqrt[4]{\lambda_*}}\frac{r}{(1+r^{2})(1-\lambda_* r^{2})}\mathbf K\left( \sqrt{1-\left(\frac{r^2-1}{r^2+1}\right)^2 \left(\frac{1+\lambda_* r^2}{1-\lambda_* r^2}\right)^2} \right)\D r\notag\\={}&2\int_1^{1/\sqrt[4]{\lambda_*}}\frac{1}{(1-\lambda_*) r}\mathbf K\left( \frac{1-\lambda_* r^{4}}{(1-\lambda_*)r^{2}}\right)\D r=\int_0^1\frac{\mathbf K(k)\D k}{\sqrt{4\lambda_*+(1-\lambda_*)^2k^2}}=\int_0^1\frac{\mathbf K(k)\D k}{\sqrt{4(1-\lambda)+\lambda^2k^2}}.\label{eq:Jacobi_repn_a}
\end{align}}Here in Eq.~\ref{eq:Jacobi_repn_a}, we have used  an angular transformation (which is reminiscent of the derivation for Ramanujan rotations in Lemma~\ref{lm:R_rot})\begin{align*}\theta=\arctan\left( \frac{r^2-1}{r^2+1}\tan\phi \right)\end{align*}to complete the integral in $\phi$ with the standard integral representation of $ \mathbf K$, before invoking Landen's transformation (Eq.~\ref{eq:Landen_1}) and a variable substitution $ k=(1-\lambda_* r^4)/[(1-\lambda_*)r^2]$. To deduce the second equality in  Eq.~\ref{eq:Jacobi_area_comp1} from the first one, we recall that the conformal mapping $ u+iv\mapsto \sn(u+iv|\lambda_*)$ establishes a bijection between  the open rectangle $ -\mathbf K(\sqrt{\lambda_*})<u<\mathbf K(\sqrt{\lambda_*}),0<v<\mathbf K(\sqrt{1-\lambda_*})$ and the upper half-plane \cite[][p.~234]{SteinII}, and that \begin{align}\left\vert \sn\left(\left.u+i\frac{\mathbf K(\sqrt{1-\lambda_*})}{2}\right| \lambda_*\right)\right|=\frac{1}{\sqrt[4]{\lambda_*}},\quad \forall u\in\mathbb R,\lambda_*\in(0,1)\label{eq:sn_abs_fourth_root}\end{align} follows from the addition formula for $ \sn(u+w|\lambda_*)$ \cite[][item~123.01]{ByrdFriedman} and special values of  $ \sn(w|\lambda_*)$,  $ \cn(w|\lambda_*)$, $ \dn(w|\lambda_*)$ at $ w=i\mathbf K(\sqrt{1-\lambda_*})/2$ \cite[][item~122.11]{ByrdFriedman}. We may rewrite the second line in  Eq.~\ref{eq:Jacobi_area_comp1}  as\begin{align*}&\iint_{-\mathbf K( \sqrt{\lambda_*} )<u<\mathbf K( \sqrt{\lambda_*} ),0<v<\frac{1}{2}\mathbf K( \sqrt{1-\lambda_*})}\D u\D v-\iint_{-\mathbf K( \sqrt{\lambda_*} )<u<\mathbf K( \sqrt{\lambda_*} ),0<v<\frac{1}{2}\mathbf K( \sqrt{1-\lambda_*}),\left\vert \sn\left(u+iv| \lambda_*\right)\right|<1}\D u\D v\notag\\={}&\mathbf K(\sqrt{\lambda})\mathbf K(\sqrt{1-\lambda})-\iint_{-\mathbf K( \sqrt{\lambda_*} )<u<\mathbf K( \sqrt{\lambda_*} ),0<v<\frac{1}{2}\mathbf K( \sqrt{1-\lambda_*}),\left\vert \sn\left(u+iv| \lambda_*\right)\right|<1}\D u\D v,\end{align*} which is conformally equivalent to  the third line. To complete the verification of   Eq.~\ref{eq:Jacobi_area_comp1}, we compute \begin{align}&\iint_{|z|<1,\I z>0}\frac{\D \R z\D \I z}{|(1-z^{2})(1-\lambda_* z^2)|}=\int_0^{1}r\left[ \int_0^\pi\frac{\D\theta}{|(1-r^2e^{2i\theta})(1-\lambda_* r^2e^{2i\theta})|}\right]\D r\notag\\={}&\int_0^{1}r\left[ \int_0^\pi\frac{\D\phi}{\sqrt{\smash[b]{(1+r^{2})^{2}(1-\lambda_* r^{2})^{2}-4(1-\lambda_*) r^2(1-\lambda_* r^4)\sin^2\phi}}}\right]\D r\notag\\={}&2\int_0^{1}\frac{r}{(1+r^{2})(1-\lambda_* r^{2})}\mathbf K\left( \sqrt{1-\left(\frac{1-r^2}{1+r^2}\right)^2 \left(\frac{1+\lambda_* r^2}{1-\lambda_* r^2}\right)^2} \right)\D r=2\int_0^1\frac{r}{1-\lambda_* r^4}\mathbf K\left(\frac{(1-\lambda_*) r^{2}}{1-\lambda_* r^4} \right)\D r \notag\\={}&\int_{0}^{1}\frac{\mathbf K(k)\D k}{\sqrt{(1-\lambda_*)^2+4\lambda_* k^{2}}}=\int_0^1\frac{\mathbf K(k)\D k}{\sqrt{\lambda^2+4(1-\lambda)k^2}}.\label{eq:Jacobi_repn_d}\end{align} (We also note that  Eq.~\ref{eq:R_rot3} can be proved without invoking Jacobi elliptic functions  \cite[][Eq.~91]{Zhou2013Spheres}.)\end{proof}
\subsection{\label{subsec:G2_Hecke4_GZ_rn}Elementary Evaluation of $ G_2^{\mathfrak H/\overline{\varGamma}_0(4)}(z)$ with Gross--Zagier Renormalization}

 The Kontsevich--Zagier integral representation for $ G_2^{\mathfrak H/\overline{\varGamma}_0(4)}(z)$  has appeared in Eq.~\ref{eq:G2_Hecke4_z_rn}. To prepare for the main goal of this subsection (evaluation of   $ G_2^{\mathfrak H/\overline{\varGamma}_0(4)}(z),\forall z\in\mathfrak H$  in Proposition~\ref{prop:G2_Hecke4_z_KZ_rn}), we explore  certain integrals whose integrands involve complete elliptic integrals and logarithms, in the next lemma.\begin{lemma}[Some Special Integral Formulae]\label{lm:GZ_lim}\begin{enumerate}[label=\emph{(\alph*)}, ref=(\alph*), widest=a] \item For any $\lambda\in\mathbb C\smallsetminus\mathbb R$ and $ \mu\in\mathbb (1,+\infty)$, we have the following identity:\begin{align}&
\frac{2}{\pi}\int_0^1\frac{\mathbf K(\sqrt{t})\mathbf K(\sqrt{1-t})}{t-\lambda}\log(\mu -t)\D t\notag\\={}&\int_{0}^1\frac{[\mathbf K(\sqrt{t})]^2}{t-\lambda}\D t-\int_{1/\mu}^{1}\frac{[\mathbf K(\sqrt{t})]^2}{\lambda t-1}\D t- \frac{1}{\lambda}\left[ \mathbf K\left( \sqrt{\frac{1}{\lambda}} \right) \right]^2\log(\lambda-\mu)+\frac{\pi}{\lambda} \mathbf K\left( \sqrt{\frac{1}{\lambda}} \right)\mathbf K\left( \sqrt{1-\frac{1}{\lambda}} \right);\label{eq:int_K'K_log_etc}\end{align}while the following formula holds for  $\lambda\in\mathbb C\smallsetminus\mathbb R$ and $ \mu\in\mathbb (0,1)$:\begin{align}
\frac{2}{\pi}\int_0^1\frac{\mathbf K(\sqrt{t})\mathbf K(\sqrt{1-t})}{1-\lambda t}\log(1-\mu t)\D t=-\int_0^\mu\frac{[\mathbf K(\sqrt{t})]^2\D t}{t-\lambda}+[\mathbf K(\sqrt{\lambda})]^2\log\frac{\mu-\lambda}{\lambda}+\pi i\frac{\I \lambda}{|\I\lambda|}[\mathbf K(\sqrt{\lambda})]^2.\label{eq:int_K'K_log_etc_alt_form}
\end{align}Here, all the integrations run along subsets of the open unit interval $ t\in(0,1)$.
\item
For   $ \I\lambda\neq0$,   we have the following limit formulae:{\allowdisplaybreaks\begin{align}&
\lim_{\varepsilon\to0^{+}}\left\{ \int^{\lambda+\varepsilon}_0\frac{[\mathbf K(\sqrt{t})]^2}{ t-\lambda}\D t-[\mathbf K(\sqrt{\lambda})]^2\log\varepsilon \right\}\notag\\={}&\pi i\frac{\I \lambda}{|\I \lambda|}[ \mathbf K( \sqrt{\lambda} ) ]^2+\frac{\pi}{3}\mathbf K(\sqrt{\lambda})\mathbf K(\sqrt{1-\lambda})-\frac{2[\mathbf K(\sqrt{\lambda})]^2}{3}\log[4\lambda(1-\lambda)];\label{eq:GZ_lim1}\\
&\lim_{\varepsilon\to0^{+}}\left[ \int^{\lambda+\varepsilon}_0\frac{2\mathbf K(\sqrt{t})\mathbf K(\sqrt{1-t})}{ t-\lambda}\D t-2\mathbf K(\sqrt{\lambda})\mathbf K(\sqrt{1-\lambda})\log\varepsilon \right]\notag\\
={}&-\frac{\pi}{\lambda}\left[ \mathbf K\left( \sqrt{\frac{1}{\lambda}} \right) \right]^2+\frac{2\pi[\mathbf K(\sqrt{\lambda})]^2}{3}-\frac{\pi[\mathbf K(\sqrt{1-\lambda})]^2}{3}-\frac{4}{3}\mathbf K(\sqrt{\lambda})\mathbf K(\sqrt{1-\lambda})\log[4\lambda(1-\lambda)];\label{eq:GZ_lim2}\\&\lim_{\varepsilon\to0^{+}}\left\{ \int_{\lambda+\varepsilon}^1\frac{[\mathbf K(\sqrt{1-t})]^2}{ t-\lambda}\D t+[\mathbf K(\sqrt{1-\lambda})]^2\log\varepsilon \right\}\notag\\={}&-\frac{\pi}{3}\mathbf K(\sqrt{\lambda})\mathbf K(\sqrt{1-\lambda})+\frac{2[\mathbf K(\sqrt{1-\lambda})]^2}{3}\log[4\lambda(1-\lambda)],\label{eq:GZ_lim3}
\end{align}}where all the integrals are computed along straight line segments joining the end points.\end{enumerate}\end{lemma}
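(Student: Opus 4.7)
The plan is to derive part (a) by differentiation in $\mu$ together with the Clausen-type identity (Eq.~\ref{eq:K2}), and then to obtain part (b) by analytically continuing $\mu$ in (a) to $\mu = \lambda + \varepsilon$ and isolating the $\log\varepsilon$ singularity.

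For part (a), I would prove Eq.~\ref{eq:int_K'K_log_etc_alt_form} by differentiating both sides in $\mu$. The LHS derivative becomes $-(2/\pi)\int_0^1 \mathbf K(\sqrt t)\mathbf K(\sqrt{1-t})\,t/[(1-\lambda t)(1-\mu t)]\,\D t$; the partial fraction $t/[(1-\lambda t)(1-\mu t)]=(\lambda-\mu)^{-1}[(1-\lambda t)^{-1}-(1-\mu t)^{-1}]$ together with two applications of Eq.~\ref{eq:K2} collapses this to $\{[\mathbf K(\sqrt\mu)]^2-[\mathbf K(\sqrt\lambda)]^2\}/(\lambda-\mu)$, matching the elementary $\mu$-derivative of the RHS. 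The remaining $\mu$-independent constant is pinned down by letting $\mu\to 0^+$: the LHS vanishes, while the RHS simplifies to $[\mathbf K(\sqrt\lambda)]^2\log(-1)$, which evaluates to $\pm\pi i[\mathbf K(\sqrt\lambda)]^2$ according as $\lambda$ sits above or below the real axis, thereby producing the $\pi i\,\sgn(\I\lambda)$ term. Eq.~\ref{eq:int_K'K_log_etc} follows by an entirely parallel argument using the partial fraction $[(t-\lambda)(\mu-t)]^{-1}=(\mu-\lambda)^{-1}[(t-\lambda)^{-1}+(\mu-t)^{-1}]$ and the rescaled consequences $(2/\pi)\int_0^1\mathbf K(\sqrt t)\mathbf K(\sqrt{1-t})/(t-\lambda)\,\D t=-\lambda^{-1}[\mathbf K(\sqrt{1/\lambda})]^2$ and $(2/\pi)\int_0^1\mathbf K(\sqrt t)\mathbf K(\sqrt{1-t})/(\mu-t)\,\D t=\mu^{-1}[\mathbf K(\sqrt{1/\mu})]^2$ of Eq.~\ref{eq:K2}; the constant is then fixed by the $\mu\to+\infty$ asymptotics of both sides.

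For part (b), I view Eq.~\ref{eq:int_K'K_log_etc_alt_form} as an identity of analytic functions of $\mu$ and continue $\mu$ from $(0,1)$ to $\mu=\lambda+\varepsilon$ along a path compatible with the single-valued branches of all logarithms involved. The term $[\mathbf K(\sqrt\lambda)]^2\log((\mu-\lambda)/\lambda)$ splits as $[\mathbf K(\sqrt\lambda)]^2(\log\varepsilon-\log\lambda)$, so subtracting $[\mathbf K(\sqrt\lambda)]^2\log\varepsilon$ from both sides and letting $\varepsilon\to 0^+$ yields
\[
\lim_{\varepsilon\to 0^+}\left\{\int_0^{\lambda+\varepsilon}\!\frac{[\mathbf K(\sqrt t)]^2}{t-\lambda}\D t-[\mathbf K(\sqrt\lambda)]^2\log\varepsilon\right\}=-[\mathbf K(\sqrt\lambda)]^2\log\lambda+\pi i\,\sgn(\I\lambda)[\mathbf K(\sqrt\lambda)]^2-\mathcal J(\lambda),
\]
where $\mathcal J(\lambda):=(2/\pi)\int_0^1\mathbf K(\sqrt t)\mathbf K(\sqrt{1-t})\log(1-\lambda t)/(1-\lambda t)\,\D t$. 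Matching against the RHS of Eq.~\ref{eq:GZ_lim1} then reduces the claim to the auxiliary identity
\[
\mathcal J(\lambda)=\tfrac{1}{3}[\mathbf K(\sqrt\lambda)]^2\log\bigl[16(1-\lambda)^2/\lambda\bigr]-\tfrac{\pi}{3}\mathbf K(\sqrt\lambda)\mathbf K(\sqrt{1-\lambda}),
\]
which coincides with exactly twice the closed-form evaluation provided by Eq.~\ref{eq:log_sn_sn_int_int}; I plan to establish this identity by substituting $t=\sin^2\psi$ in $\mathcal J(\lambda)$, representing $\mathbf K(\sin\psi)$ and $\mathbf K(\cos\psi)$ as trigonometric integrals, and applying a spherical symmetrisation of the resulting triple integral parallel to the $S^2_{+++}$-argument underlying Eq.~\ref{eq:log_sn_sn_int_int} (in the spirit of the Ramanujan rotation of Lemma~\ref{lm:R_rot}). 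Eq.~\ref{eq:GZ_lim2} is obtained by the same analytic-continuation strategy applied to Eq.~\ref{eq:int_K'K_log_etc} at $\mu=\lambda+\varepsilon$, the inverse-modulus transformation (Eq.~\ref{eq:inv_mod}) converting $-\lambda^{-1}[\mathbf K(\sqrt{1/\lambda})]^2$ into the combination $[\mathbf K(\sqrt{1-\lambda})]^2-[\mathbf K(\sqrt\lambda)]^2+2i\,\sgn(\I\lambda)\mathbf K(\sqrt\lambda)\mathbf K(\sqrt{1-\lambda})$ appearing on the RHS. Finally, Eq.~\ref{eq:GZ_lim3} follows from Eq.~\ref{eq:GZ_lim1} via the substitution $t\mapsto 1-t$, which renames $\lambda$ as $1-\lambda$ and converts the upper-limit regularisation at $\lambda+\varepsilon$ into the lower-limit form stated in Eq.~\ref{eq:GZ_lim3}.

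The main obstacle is the explicit identification of $\mathcal J(\lambda)$ with the closed-form combination above: the $1/3$-coefficient (which matches both the $1/3$ appearing in Eq.~\ref{eq:log_sn_sn_int_int} and the $\log|\Delta(z)|/3$ normalisation of the weight-4 Gross--Zagier renormalization) must emerge from a three-fold symmetry of a coordinate system on $S^3_{++++}$, or equivalently from a suitable Ramanujan-rotation deformation of the triple trigonometric integral underlying $\mathcal J(\lambda)$ — identifying the correct deformation is where the bulk of the work will lie. A secondary but tedious difficulty is the consistent bookkeeping of logarithmic branches: the sign $\sgn(\I\lambda)$ receives contributions from the $\mu\to 0^+$ boundary evaluation in part (a), from the continuation $\mu\mapsto\lambda+\varepsilon$ in part (b), and from the various $\log(-\lambda)$, $\log(\lambda-\mu)$, $\log[4\lambda(1-\lambda)]$ factors, all of which must be rendered compatible with the straight-line contour from $0$ to $\lambda+\varepsilon$ implicit in the statement of part (b).
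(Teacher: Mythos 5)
Your skeleton is right in several places: the reduction of Eq.~\ref{eq:GZ_lim1} to the closed-form evaluation of $\mathcal J(\lambda)$ is exactly the paper's route (your auxiliary identity is the paper's Eq.~\ref{eq:K'K_xlogx_int_id}); your differentiation-in-$\mu$ proof of Eq.~\ref{eq:int_K'K_log_etc_alt_form} is a clean alternative to the paper's Plemelj computation, and the constant at $\mu\to0^+$ really is elementary there; and deriving Eq.~\ref{eq:GZ_lim3} from Eq.~\ref{eq:GZ_lim1} by $t\mapsto1-t$ plus a branch correction is a legitimate shortcut (the paper instead extracts Eqs.~\ref{eq:GZ_lim2} and \ref{eq:GZ_lim3} simultaneously from the real and imaginary parts of one complex identity). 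The first genuine gap is in Eq.~\ref{eq:int_K'K_log_etc}: the constant of integration is \emph{not} fixed for free by the $\mu\to+\infty$ asymptotics. After the $\log\mu$ terms cancel, you are left needing
\[
\int_0^1\frac{[\mathbf K(\sqrt t)]^2}{t-\lambda}\D t-\int_0^1\frac{[\mathbf K(\sqrt t)]^2}{\lambda t-1}\D t=\pm\frac{\pi i}{\lambda}\left[\mathbf K\left(\sqrt{\tfrac1\lambda}\right)\right]^2-\frac{\pi}{\lambda}\,\mathbf K\left(\sqrt{\tfrac1\lambda}\right)\mathbf K\left(\sqrt{1-\tfrac1\lambda}\right)
\]
(sign according to $\sgn(\I\lambda)$), an identity of the same depth as the target; supplying it amounts to redoing the paper's Cauchy-integral argument with $f_\mu(t)=[\mathbf K(\sqrt{t/\mu})]^2\log(\mu/t-\mu)-\pi\mathbf K(\sqrt{t/\mu})\mathbf K(\sqrt{1-t/\mu})$, which yields Eq.~\ref{eq:int_K'K_log_etc} directly.

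The second and more serious gap is the evaluation of $\mathcal J(\lambda)$, which you correctly identify as the crux but do not prove. The natural kernel correspondence between $\tfrac2\pi\int_0^1\mathbf K(\sqrt t)\mathbf K(\sqrt{1-t})(\cdot)\,\D t/(1-\lambda t)$ and angular integrals is the Hobson coupling $t\leftrightarrow\tfrac{1+\sin\theta\cos\phi}{2}$ (the paper's Eq.~\ref{eq:int_K'K_log_etc_alt}), under which the weight $\log(1-\lambda t)$ becomes $\log\tfrac{2-\lambda-\lambda\sin\theta\cos\phi}{2}$, \emph{not} $\log(1-\lambda\cos^2\theta\cos^2\phi)$; bridging that gap is precisely the content of the paper's chain --- residue calculus on the $\phi$-integral (Eq.~\ref{eq:log_angle_int}), the inversion $\lambda\mapsto1/\lambda$ and specialization $\mu=\lambda$, the Ramanujan-rotation identity Eq.~\ref{eq:R_rot2}, the kernel Eq.~\ref{eq:K1}, and the area identity Eq.~\ref{eq:R_rot3}. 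A single symmetrisation producing the factor $2$ and the argument $\cos^2\theta\cos^2\phi$ in one stroke is not in evidence, so Eq.~\ref{eq:GZ_lim1} remains unproven as written. Finally, your one-line derivation of Eq.~\ref{eq:GZ_lim2} omits the step that actually introduces $\int_0^{\lambda+\varepsilon}\mathbf K(\sqrt t)\mathbf K(\sqrt{1-t})\,\D t/(t-\lambda)$ into the computation, namely the inverse-modulus conversion of $\int_{1/\mu}^1[\mathbf K(\sqrt t)]^2\D t/(\lambda t-1)$ into $\int_\mu^1\{[\mathbf K(\sqrt t)]^2-2i\mathbf K(\sqrt t)\mathbf K(\sqrt{1-t})-[\mathbf K(\sqrt{1-t})]^2\}\D t/(t-\lambda)$ (the paper's Eq.~\ref{eq:K_sqr_1to3}); quoting the pointwise identity for $[\mathbf K(\sqrt{1/\lambda})]^2$ alone does not produce the integral whose singular behaviour Eq.~\ref{eq:GZ_lim2} describes.
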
\begin{proof}\begin{enumerate}[label=(\alph*),widest=a]\item
Using the  imaginary modulus   and  inverse modulus transformations (Eqs.~\ref{eq:im_mod}--\ref{eq:inv_mod}), one may verify that the expression \begin{align*}f_{\mu}(t):=[\mathbf K(\sqrt{t/\mu})]^2\log\left(\frac{\mu}{t}-\mu\right)-\pi \mathbf K(\sqrt{t/\mu})\mathbf K(\sqrt{1-(t/\mu)}),\quad t\in\mathbb C\smallsetminus((-\infty,0]\cup[1,+\infty))\end{align*}can be extended as an analytic function for $ t\in\mathbb C\smallsetminus[1,+\infty)$, so long as $ \mu\in(1,+\infty)$. Hence, Eq.~\ref{eq:f_slit_plane_Cauchy_int} is applicable, and we have\begin{align}&
[\mathbf K(\sqrt{s})]^2\log\left(\frac{1}{s}-\mu\right)-\pi \mathbf K(\sqrt{s})\mathbf K(\sqrt{1-s})=f_{\mu}(\mu s)=\frac{1}{2\pi i}\int_{0}^1\frac{f_{\mu}\big(\frac{1}{t}+i0^+\big)-f_{\mu}\big(\frac{1}{t}-i0^+\big)}{1-\mu st}\frac{\D t}{t}\notag\\={}& -\int_{0}^1\frac{[\mathbf K(\sqrt{t})]^2}{1-st}\D t-\int_{1/\mu}^{1}\frac{[\mathbf K(\sqrt{t})]^2}{t-s}\D t+\frac{2}{\pi}\int_{0}^1\frac{\mathbf K(\sqrt{t})\mathbf K(\sqrt{1-t})}{1-st}\log(\mu-t)\D t,\quad s\in\mathbb C\smallsetminus\mathbb R.\label{eq:K_sqr_log_Cauchy}
\end{align}Setting $ s=1/\lambda$ in Eq.~\ref{eq:K_sqr_log_Cauchy}, we can verify the identity claimed in Eq.~\ref{eq:int_K'K_log_etc}.

Before establishing Eq.~\ref{eq:int_K'K_log_etc_alt_form} in its stated form, we may first consider the scenarios where $ 0<\mu<\lambda<1$, and read off the imaginary part of the following vanishing integral:\begin{align}
0=\int_{-\infty+i0^{+}}^{+\infty+i0^+}\frac{[\mathbf K(\sqrt{1-t})]^{2}+2i\mathbf K(\sqrt{1-t})\mathbf K(\sqrt{t})-[\mathbf K(\sqrt{t})]^2}{1-\lambda t}\log(1-\mu t)\D t.\label{eq:int0_res_sum_1}
\end{align} Here, the right-hand side of Eq.~\ref{eq:int0_res_sum_1} can be decomposed into three parts. With the transformation $ t=s/(s-1)$ and Eqs.~\ref{eq:im_mod}--\ref{eq:inv_mod}, we see that  the portion $ \R t<0$ has vanishing contribution to the imaginary part of Eq.~\ref{eq:int0_res_sum_1}:\addtocounter{equation}{-1}\begin{subequations}\begin{align}
\I\int_{-\infty+i0^{+}}^{0+i0^{+}}(\cdots)\D t={}&\I\int_{0-i0^+}^{1-i0^+}\frac{[\mathbf K(\sqrt{1-s})]^2}{1-(1-\lambda)s}\log\left( 1-\frac{\mu s}{s-1} \right)\D s=0.\label{eq:int0_res_sum_1a}
\intertext{The second part is simply}
\I\int_{0+i0^{+}}^{1+i0^{+}}(\cdots)\D t={}&2\int_0^1\frac{\mathbf K(\sqrt{t})\mathbf K(\sqrt{1-t})}{1-\lambda t}\log(1-\mu t)\D t.\label{eq:int0_res_sum_1b}\intertext{With the transformation $ t=1/s$ and Eqs.~\ref{eq:im_mod}--\ref{eq:inv_mod}, we obtain}\I\int_{1+i0^{+}}^{+\infty+i0^{+}}(\cdots)\D t={}&\I\int_{0-i0^{+}}^{1-i0^{+}}\frac{[\mathbf K(\sqrt{s})]^2}{\lambda-s}\log\left( 1-\frac{\mu }{s} \right)\D s\notag\\={}&\pi\int_0^\mu\frac{[\mathbf K(\sqrt{t})]^2\D t}{t-\lambda}-\pi[\mathbf K(\sqrt{\lambda})]^2\log\frac{\lambda-\mu}{\lambda},\label{eq:int0_res_sum_1c}
\end{align}\end{subequations}upon application of the Plemelj jump formula.
  Adding through Eqs.~\ref{eq:int0_res_sum_1a}--\ref{eq:int0_res_sum_1c}, we see that\begin{align*}
{}&\frac{2}{\pi}\int_0^1\frac{\mathbf K(\sqrt{t})\mathbf K(\sqrt{1-t})}{1-\lambda t}\log(1-\mu t)\D t=-\int_0^\mu\frac{[\mathbf K(\sqrt{t})]^2\D t}{t-\lambda}+[\mathbf K(\sqrt{\lambda})]^2\log\frac{\lambda-\mu}{\lambda},\quad 0<\mu<\lambda<1
\end{align*} is true. Thus, after a judicious analytic continuation of the equation above in  the variable $\lambda$, one can verify Eq.~\ref{eq:int_K'K_log_etc_alt_form}.

\item
We may also examine the left-hand side of  Eq.~\ref{eq:int_K'K_log_etc} from a different  perspective. Without loss of generality, we momentarily assume that $ \lambda>1$ and $ \mu>1$, and use  \cite[][Eq.~43]{Zhou2013Pnu} to deduce\begin{align}
\frac{2}{\pi}\int_0^1\frac{\mathbf K(\sqrt{t})\mathbf K(\sqrt{1-t})}{t-\lambda}\log(\mu -t)\D t={}&-\frac{1}{2\pi}\int_{0}^{\pi}\mathbf K(\sin\theta)\sin\theta\left[ \int_0^{2\pi}\frac{\log\frac{2\mu-1-\sin\theta\cos\phi}{2}}{2\lambda-1-\sin\theta\cos\phi} \D\phi\right]\D\theta.\label{eq:int_K'K_log_etc_alt}
\end{align} For $ \lambda,\mu\in(1,+\infty)$ and $ \theta\in(0,\pi)$, the integration over $\phi$ can now be completed with residue calculus: \begin{align}&
\int_0^{2\pi}\frac{\log\frac{2\mu-1-\sin\theta\cos\phi}{2}}{2\lambda-1-\sin\theta\cos\phi} \D\phi\notag\\={}&\R\counterint_{|w|=1}\frac{\log\frac{\sin^2\theta}{4[2\mu-1+\sqrt{(2\mu-1)^2-\sin^2\theta}]}+2\log\left( \frac{2\mu-1+\sqrt{(2\mu-1)^2-\sin^2\theta}}{\sin\theta}-w\right)}{2\lambda-1-\frac{\sin\theta}{2}(w+\frac{1}{w})}\frac{\D w}{iw}\notag\\={}&\frac{2\pi}{\sqrt{(2\lambda-1)^{2}-\sin^2\theta}}\log\frac{\big[\sqrt{\smash[b]{(2\lambda-1)^2-\sin^2\theta}}+\sqrt{\smash[b]{(2\mu-1)^2-\sin^2\theta}}-2(\lambda-\mu)\big]^{2}}{4\big[2\mu-1+\sqrt{\smash[b]{(2\mu-1)^2-\sin^2\theta}}\big]},\label{eq:log_angle_int}
\end{align} where we have collected the residue at the simple pole $z=(2\lambda-1-\sqrt{(2\lambda-1)^2-\sin^2\theta})/\sin\theta\in(-1,1) $. Combining Eqs.~\ref{eq:int_K'K_log_etc_alt} with \ref{eq:log_angle_int}, we obtain an identity\begin{align}&
\frac{2}{\pi}\int_0^1\frac{\mathbf K(\sqrt{t})\mathbf K(\sqrt{1-t})}{t-\lambda}\log(\mu -t)\D t\notag\\={}&\int_{0}^{\pi}\frac{\mathbf K(\sin\theta)\sin\theta}{\sqrt{(2\lambda-1)^{2}-\sin^2\theta}}\log\frac{4\big[2\mu-1+\sqrt{\smash[b]{(2\mu-1)^2-\sin^2\theta}}\big]}{\big[\sqrt{\smash[b]{(2\lambda-1)^2-\sin^2\theta}}+\sqrt{\smash[b]{(2\mu-1)^2-\sin^2\theta}}-2(\lambda-\mu)\big]^{2}}\D\theta,\quad \lambda>1,\mu>1.\label{eq:K'K_log_to_K_log_id}
\end{align}

In Eq.~\ref{eq:K'K_log_to_K_log_id}, one may trade $ \lambda$ and $ \mu$ for their respective inverses $1/\lambda $ and $ 1/\mu$, to produce a formula \begin{align}&
\frac{2}{\pi}\int_0^1\frac{\mathbf K(\sqrt{t})\mathbf K(\sqrt{1-t})}{1-\lambda t}\log(1 -\mu t)\D t\notag\\={}&-\int_0^\pi\frac{\mathbf K(\sin\theta)\sin\theta}{\sqrt{(2-\lambda)^{2}-\lambda^{2}\sin^2\theta}}\log\frac{4\big[2-\mu+\sqrt{\smash[b]{(2-\mu)^2-\mu^{2}\sin^2\theta}}\big]}{\big[\frac{\mu}{\lambda}\sqrt{\smash[b]{(2-\lambda)^2-\lambda^{2}\sin^2\theta}}+\sqrt{\smash[b]{(2-\mu)^2-\mu^{2}\sin^2\theta}}-2(\frac{\mu}{\lambda}-1)\big]^{2}}\D\theta\label{eq:K'K_log_mu_lambda_new}
\end{align}for $ 0<\lambda<1$ and $0<\mu<1 $, upon exploiting   the following identity (see Eqs.~\ref{eq:K2} and \ref{eq:R_rot1}):\begin{align}\frac{2}{\pi}\int_0^1\frac{\mathbf K(\sqrt{\smash[b]{\vphantom{1}t}})\mathbf K(\sqrt{\smash[b]{1-t}})}{1-\lambda t}\D t=
\int_0^\pi\frac{\mathbf K(\sin\theta)\sin\theta\D \theta}{\sqrt{(2-\lambda)^2-\lambda^2\sin^2\theta}},\quad 0<\lambda<1.
\end{align}Especially, setting $ \mu=\lambda$ in Eq.~\ref{eq:K'K_log_mu_lambda_new}, we obtain\begin{align}&
\frac{2}{\pi}\int_0^1\frac{\mathbf K(\sqrt{t})\mathbf K(\sqrt{1-t})}{1-\lambda t}\log(1 -\lambda t)\D t=-\int_0^\pi\frac{\mathbf K(\sin\theta)\sin\theta}{\sqrt{(2-\lambda)^{2}-\lambda^{2}\sin^2\theta}}\log\frac{2-\lambda+\sqrt{\smash[b]{(2-\lambda)^2-\lambda^{2}\sin^2\theta}}}{(2-\lambda)^2-\lambda^{2}\sin^2\theta}\D\theta\notag\\={}&-\int_0^\pi\frac{\mathbf K(\sin\theta)\sin\theta}{\sqrt{(2-\lambda)^{2}-\lambda^{2}\sin^2\theta}}\log\frac{2-\lambda+\sqrt{\smash[b]{(2-\lambda)^2-\lambda^{2}\sin^2\theta}}}{4\sin\theta}\D\theta+\frac{2\pi}{3}\mathbf K(\sqrt{\lambda})\mathbf K(\sqrt{1-\lambda})\notag\\{}&+\frac{2[\mathbf K(\sqrt{\lambda})]^2}{3}\log\frac{\lambda(1-\lambda)}{2}-\pi\int_0^1\frac{\mathbf K(k)\D k}{\sqrt{4(1-\lambda)+\lambda^2k^2}},\quad 0<\lambda<1,\label{eq:K'K_log_lambda_lambda_new}
\end{align} after referring to   Eq.~\ref{eq:R_rot2}. By an elementary identity\begin{align*}
\log\frac{2-\lambda+\sqrt{\smash[b]{(2-\lambda)^2-\lambda^{2}\sin^2\theta}}}{4\sin\theta}=\log\frac{\lambda}{4}+\tanh^{-1}\frac{\sqrt{\smash[b]{(2-\lambda)^2-\lambda^{2}\sin^2\theta}}}{2-\lambda},
\end{align*}we can further simplify the last line of Eq.~\ref{eq:K'K_log_lambda_lambda_new}:\begin{align}
&-\int_0^\pi\frac{\mathbf K(\sin\theta)\sin\theta}{\sqrt{(2-\lambda)^{2}-\lambda^{2}\sin^2\theta}}\log\frac{2-\lambda+\sqrt{\smash[b]{(2-\lambda)^2-\lambda^{2}\sin^2\theta}}}{4\sin\theta}\D\theta\notag\\={}&-[\mathbf K(\sqrt{\lambda})]^2\log\frac{\lambda}{4}-2\int_0^1\frac{\mathbf K(\sqrt{1-\kappa^2})}{\sqrt{4(1-\lambda)+\lambda^{2}\kappa^2}}\tanh^{-1}\sqrt{\frac{4(1-\lambda)+\lambda^{2}\kappa^2}{4(1-\lambda)+\lambda^{2}}}\D \kappa\notag\\={}&-[\mathbf K(\sqrt{\lambda})]^2\log\frac{\lambda}{4}-2\int_{0}^{1}\left[\int_0^1\frac{\mathbf K(\sqrt{1-\kappa^2})}{1-k^2\kappa^2}\D \kappa\right]\frac{\D k}{\sqrt{\lambda^{2}+4(1-\lambda)k^2}}\notag\\={}&-[\mathbf K(\sqrt{\lambda})]^2\log\frac{\lambda}{4}-\pi\int_0^1\frac{\mathbf K(k)\D k}{\sqrt{\lambda^2+4(1-\lambda)k^2}},\label{eq:K'K_artanh_redn}
\end{align}   where we have resorted to Eq.~\ref{eq:K1} for the reduction of the integral in $ \kappa\in(0,1)$. Recalling the integral identity in Eq.~\ref{eq:R_rot3}, we can merge Eqs.~\ref{eq:K'K_log_lambda_lambda_new} and \ref{eq:K'K_artanh_redn} into \begin{align}&
\frac{2}{\pi}\int_0^1\frac{\mathbf K(\sqrt{t})\mathbf K(\sqrt{1-t})}{1-\lambda t}\log(1 -\lambda t)\D t\notag\\={}&-\frac{\pi}{3}\mathbf K(\sqrt{\lambda})\mathbf K(\sqrt{1-\lambda})+\frac{2[\mathbf K(\sqrt{\lambda})]^2}{3}\log\frac{4(1-\lambda)}{\sqrt{\lambda}},\quad \forall\lambda\in(\mathbb C\smallsetminus\mathbb R)\cup(0,1),\label{eq:K'K_xlogx_int_id}
\end{align} after analytic continuation.

By an analytic continuation of Eq.~\ref{eq:int_K'K_log_etc_alt_form}, the limit  expressed on the left-hand side of  Eq.~\ref{eq:GZ_lim1} amounts to \begin{align*}&\pi i\frac{\I \lambda}{|\I \lambda|}[ \mathbf K( \sqrt{\lambda} ) ]^2-[\mathbf K(\sqrt{\lambda})]^2\log\lambda-\frac{2}{\pi}\int_0^1\frac{\mathbf K(\sqrt{t})\mathbf K(\sqrt{1-t})}{1-\lambda t}\log(1 -\lambda t)\D t.
\end{align*} As we simplify the last integral with  Eq.~\ref{eq:K'K_xlogx_int_id}, we see that
both sides  of  Eq.~\ref{eq:GZ_lim1} are indeed equal.

For $\R \mu\in(0,1),\I\mu>0$, we may employ the inverse modulus transformation (Eq.~\ref{eq:inv_mod}) to verify that \begin{align}
\int_{1/\mu}^{1}\frac{[\mathbf K(\sqrt{t})]^2}{\lambda t-1}\D t=\int_\mu^1\frac{[\mathbf K(\sqrt{t})]^2-2i\mathbf K(\sqrt{t})\mathbf K(\sqrt{1-t})-[\mathbf K(\sqrt{1-t})]^2}{t-\lambda}\D t.\label{eq:K_sqr_1to3}
\end{align}Therefore, for $\R \lambda\in(0,1),\I\lambda>0$, we have the following  integral identity, by courtesy of  Eqs.~\ref{eq:int_K'K_log_etc}, \ref{eq:GZ_lim1}, \ref{eq:K'K_xlogx_int_id} and \ref{eq:K_sqr_1to3}:{\allowdisplaybreaks\begin{align}
&\frac{\pi}{3\lambda} \mathbf K\left( \sqrt{\frac{1}{\lambda}} \right)\mathbf K\left( \sqrt{1-\frac{1}{\lambda}} \right)-\frac{2}{3\lambda}\left[ \mathbf K\left( \sqrt{\frac{1}{\lambda}} \right) \right]^2\log[4\lambda(\lambda-1)]\notag\\={}&\int_{0}^1\frac{[\mathbf K(\sqrt{t})]^2}{t-\lambda}\D t+\frac{\pi}{\lambda} \mathbf K\left( \sqrt{\frac{1}{\lambda}} \right)\mathbf K\left( \sqrt{1-\frac{1}{\lambda}} \right)\notag\\{}&-\lim_{\mu\to\lambda+0^{+}}\left\{\int_\mu^1\frac{[\mathbf K(\sqrt{t})]^2-2i\mathbf K(\sqrt{t})\mathbf K(\sqrt{1-t})-[\mathbf K(\sqrt{1-t})]^2}{t-\lambda}\D t+ \frac{1}{\lambda}\left[ \mathbf K\left( \sqrt{\frac{1}{\lambda}} \right) \right]^2\log(\lambda-\mu)\right\}\notag\\={}&-\frac{2\pi}{3}\mathbf K(\sqrt{\lambda})\mathbf K(\sqrt{1-\lambda})-\frac{2[\mathbf K(\sqrt{\lambda})]^2}{3}\log[4\lambda(1-\lambda)]\notag\\&+\lim_{\mu\to\lambda+0^{+}}\left\{\int_\mu^1\frac{2i\mathbf K(\sqrt{t})\mathbf K(\sqrt{1-t})}{t-\lambda}\D t+2i\mathbf K(\sqrt{\lambda})\mathbf K(\sqrt{1-\lambda})\log(\mu-\lambda)\right\}\notag\\&+\lim_{\mu\to\lambda+0^{+}}\left\{\int_\mu^1\frac{[\mathbf K(\sqrt{1-t})]^2}{t-\lambda}\D t+[\mathbf K(\sqrt{1-\lambda})]^2\log(\mu-\lambda)\right\},\label{eq:GZ_lim1'}
\end{align}}where $ \log(-1)=\pi i$.

Now, we examine Eq.~\ref{eq:GZ_lim1'} in the limit scenario where $ \I\lambda\to0^+$, and $ \mu$ tends to $\lambda+0^{+}$ along the real axis. Reading off the  imaginary part, we arrive at the following identity:\begin{align}&
\lim_{\mu\to\lambda+0^+}\left[\int_\mu^1\frac{2\mathbf K(\sqrt{t})\mathbf K(\sqrt{1-t})}{t-\lambda}\D t+2\mathbf K(\sqrt{\lambda})\mathbf K(\sqrt{1-\lambda})\log(\mu-\lambda)\right]\notag\\={}&\frac{\pi [\mathbf K(\sqrt{1-\lambda}) ]^2}{3}-\frac{2\pi [\mathbf K(\sqrt{\lambda}) ]^2}{3}+\frac{4\mathbf K(\sqrt{\lambda})\mathbf K(\sqrt{1-\lambda})}{3}\log[4\lambda(1-\lambda)],\quad  0<\lambda<1.\label{eq:GZ_lim2'}
\end{align}Clearly, analytic continuations of Eq.~\ref{eq:GZ_lim2'}  lead to Eq.~\ref{eq:GZ_lim2}. Subtracting  Eq.~\ref{eq:GZ_lim2'} from Eq.~\ref{eq:GZ_lim1'}, we arrive at Eq.~\ref{eq:GZ_lim3}.
\qedhere\end{enumerate}  \end{proof}\begin{proposition}[Evaluation of $G_2^{\mathfrak H/\overline{\varGamma}_0(4)}(z) $]\label{prop:G2_Hecke4_z_KZ_rn}For all $ z\in\mathfrak H$, we have \begin{align} G_2^{\mathfrak H/\overline{\varGamma}_0(4)}(z)=-\frac{1}{3}\log\left\vert\frac{\Delta(z)}{\Delta(2z)}\right\vert=-8\log\left\vert \frac{\eta(z)}{\eta(2z)} \right\vert=-\frac{1}{3}\log\frac{2^4|1-\lambda(2z)|^{2}}{|\lambda(2z)|},\label{eq:G2_Hecke4_z_rn_eval}\end{align}which verifies the corresponding Gross--Zagier algebraicity conjecture (Eq.~\ref{eq:GZ_conj_rn}).\end{proposition}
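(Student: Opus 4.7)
My plan is to reduce the closed-form evaluation of $G_2^{\mathfrak H/\overline{\varGamma}_0(4)}(z)$ to the analysis of a single multiple elliptic integral, by specializing the Legendre--Ramanujan integral representation in Eq.~\ref{eq:G2_Hecke4_z_rn} to degree $\nu=-1/2$. Since $\alpha_4(z)=\lambda(2z)$ and $P_{-1/2}(1-2t)=\frac{2}{\pi}\mathbf K(\sqrt{t})$, the substitution $\xi=1-2t$ converts the integral over $\xi$ into an integral over $t$ along the open unit interval, whose integrands are rational combinations of $[\mathbf K(\sqrt{t})]^{2}$, $[\mathbf K(\sqrt{1-t})]^{2}$, and $\mathbf K(\sqrt{t})\mathbf K(\sqrt{1-t})$, divided by factors linear in $t-\lambda(2z)$. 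Writing $\lambda=\lambda(2z)$ for brevity, I expect the integrand to decompose, after some algebraic manipulation using Eq.~\ref{eq:rho_2_nu_xi_z_defn} and Ramanujan's relation Eq.~\ref{eq:lambda_K_ratio_z}, into a linear combination of the three canonical kernels $[\mathbf K(\sqrt{t})]^2/(t-\lambda)$, $[\mathbf K(\sqrt{1-t})]^2/(t-\lambda)$, and $\mathbf K(\sqrt{t})\mathbf K(\sqrt{1-t})/(t-\lambda)$.

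Next, I would isolate the logarithmic singularity at $t=\lambda$ and apply Lemma~\ref{lm:GZ_lim}, which supplies the three crucial evaluations Eqs.~\ref{eq:GZ_lim1}--\ref{eq:GZ_lim3} of the form
\begin{align*}
\lim_{\varepsilon\to0^+}\Big\{\text{(elliptic integral from $0$ or $1$ to $\lambda\pm\varepsilon$)}-(\text{log divergence})\Big\}=\text{elementary expression in }\lambda,
\end{align*}
whose right-hand sides all take values in the $\mathbb Q$-span of $\mathbf K(\sqrt{\lambda})\mathbf K(\sqrt{1-\lambda})$, $[\mathbf K(\sqrt{\lambda})]^2\log[4\lambda(1-\lambda)]$, $[\mathbf K(\sqrt{1-\lambda})]^2\log[4\lambda(1-\lambda)]$, together with the self-dual combinations appearing via the inverse-modulus transformation. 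The prescribed prefactor $\frac13\log\tfrac{2^4|1-\lambda|^{2}}{|\lambda|}$ on the right-hand side of Eq.~\ref{eq:G2_Hecke4_z_rn} and the explicit $+2$ offset are precisely tailored so that the ``pure $\mathbf K\cdot\log$'' terms produced by Lemma~\ref{lm:GZ_lim} cancel against them, leaving only combinations proportional to $[\mathbf K(\sqrt{\lambda})]^2+[\mathbf K(\sqrt{1-\lambda})]^2$ and $\mathbf K(\sqrt{\lambda})\mathbf K(\sqrt{1-\lambda})$; I expect these to further collapse by use of Eq.~\ref{eq:lambda_K_ratio_z} (so that $\mathbf K(\sqrt{1-\lambda})/\mathbf K(\sqrt{\lambda})$ encodes $2z/i$) into a purely logarithmic function of $\lambda$, which must agree with $-\tfrac{1}{3}\log\tfrac{2^4|1-\lambda|^{2}}{|\lambda|}$ by the identity $\lambda(2z)[1-\lambda(2z)]^{2}=16[\eta(2z)/\eta(z)]^{24}$ descending from Eqs.~\ref{eq:alpha_N_ratio_ids} and \ref{eq:alpha2_alpha4}.

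The main obstacle will be bookkeeping: properly tracking the branch choices and the $\pm i0^+$ prescriptions when $z$ (hence $\lambda$) wanders to the boundary of the fundamental domain $\mathfrak D_4$, and combining the real parts of the three principal-value integrals correctly so that the imaginary contributions (which carry the $\pi i\,(\I\lambda/|\I\lambda|)$ terms from Eq.~\ref{eq:int_K'K_log_etc_alt_form}) cancel in the final answer. Once this reduction is complete for generic non-elliptic $z$, the resulting identity $G_2^{\mathfrak H/\overline{\varGamma}_0(4)}(z)=-\tfrac13\log|\Delta(z)/\Delta(2z)|$ extends to all of $\mathfrak H$ by continuity of both sides (noting that $\overline\varGamma_0(4)\backslash\mathfrak H^*$ has no elliptic points, so no further boundary case intervenes). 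The algebraicity conjecture Eq.~\ref{eq:GZ_conj_rn} for $(k,N)=(4,4)$ then follows at CM points from the algebraicity of $\Delta(z)/\Delta(2z)$ recorded in Eq.~\ref{eq:fns_alg_val_at_CM_pts}.
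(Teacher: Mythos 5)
Your proposal is correct and follows essentially the same route as the paper's proof: both reduce the problem to the three singular elliptic-integral kernels $[\mathbf K(\sqrt{t})]^2/(t-\lambda)$, $\mathbf K(\sqrt{t})\mathbf K(\sqrt{1-t})/(t-\lambda)$, $[\mathbf K(\sqrt{1-t})]^2/(t-\lambda)$, evaluate them via Lemma~\ref{lm:GZ_lim}, use the $\lambda$--$\mathbf K$ relation (Eq.~\ref{eq:lambda_K_ratio_z}) to collapse the result into a logarithm of $\lambda(2z)(1-\lambda(2z))^2$, identify that with the eta quotient, and finish by continuity and $\varGamma_0(4)$-invariance. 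The only (cosmetic) difference is your starting point: you work from the already-renormalized one-point representation Eq.~\ref{eq:G2_Hecke4_z_rn}, whereas the paper returns to the two-point representation Eq.~\ref{eq:G2Hecke234_Pnu} with $z'=z+\delta$ and carries out the Gross--Zagier limit explicitly via Eq.~\ref{eq:lambda_deriv_abs}.
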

\begin{proof}Without loss of generality, we first suppose that the point $ z\in\mathfrak H$ satisfies\begin{align}
0<|\R z|<\frac{1}{2},\left\vert z+\frac{1}{4} \right\vert>\frac{1}{4},\left\vert z-\frac{1}{4} \right\vert>\frac{1}{4},\label{eq:z_narrow_range}
\end{align}so that $ \I\lambda(2z)\R z>0$, and pick an infinitesimal $ \delta\to0$ such that $ \varepsilon:=\lambda(2(z+\delta))-\lambda(2z)\to0^+$. The infinitesimal parameter $\varepsilon $ is related to the Gross--Zagier renormalization procedure (Eq.~\ref{eq:G2_rn_non_ell_defn}) via the following limit:\begin{align}
\lim_{\delta\to0}\log\left\vert \frac{\lambda(2(z+\delta))-\lambda(2z)}{2\delta} \right\vert={}&\frac{4\log 2}{3}+\log\pi+4\log|\eta(2z)|+\frac{2\log|1-\lambda(2z)|}{3}+\frac{2\log|\lambda(2z)|}{3}\notag\\={}&\frac{2\log 2}{3}+\log\pi+4\log|\eta(z)|+\frac{\log|1-\lambda(2z)|}{3}+\frac{5\log|\lambda(2z)|}{6}.\label{eq:lambda_deriv_abs}
\end{align}

Naturally, the Kontsevich--Zagier integral representation in  Eq.~\ref{eq:G2Hecke234_Pnu} and the limit formulae in  Eqs.~\ref{eq:GZ_lim1}--\ref{eq:GZ_lim3} allow us to compute {\allowdisplaybreaks\begin{align}&
G_2^{\mathfrak H/\overline{\varGamma}_0(4)}(z,z+\delta)\notag\\={}&\frac{2}{\pi \I(z+\delta)}\R\int_0^{\lambda(2(z+\delta))}[\mathbf K(\sqrt{t})]^2\varrho_{2,-1/2}(1-2t|z)\left[\frac{i\mathbf K(\sqrt{1-t})}{\mathbf K(\sqrt{t})}-\vphantom{\overline{\frac{1}{}}}2(z+\delta)\right]\left[\frac{i\mathbf K(\sqrt{1-t})}{\mathbf K(\sqrt{t})}-2\overline{(z+\delta)}\right]\D t\notag\\&+\frac{2}{\pi  \I(z+\delta)}\R\int_{0}^1[\mathbf K(\sqrt{1-t})]^2\varrho_{2,-1/2}(1-2t|z)\D t\notag\\={}&-\frac{1}{4\I(z+\delta)}\R\int_0^{\lambda(2(z+\delta))}[\mathbf K(\sqrt{t})]^2\left[\frac{i\mathbf K(\sqrt{1-t})}{\mathbf K(\sqrt{t})}-\vphantom{\overline{\frac{1}{}}}2(z+\delta)\right]\left[\frac{i\mathbf K(\sqrt{1-t})}{\mathbf K(\sqrt{t})}-2\overline{(z+\delta)}\right]\times\notag\\&\times y\frac{\partial}{\partial y}\left\{\frac{1}{t-\lambda(2z)}\frac{1}{[\mathbf K(\sqrt{\lambda(2z)})]^2y}\right\}\D t\notag\\&-\frac{1}{4\I(z+\delta)}y\frac{\partial}{\partial y}\R\left\{\int_{0}^1\frac{[\mathbf K(\sqrt{1-t})]^2}{t-\lambda(2z)}\frac{\D t}{[\mathbf K(\sqrt{\lambda(2z)})]^2y}\right\}\notag\\={}&-\frac{|z+\delta|^{2}}{  \I(z+\delta)}y\frac{\partial}{\partial y}\R\left\{\int^{\lambda(2(z+\delta))}_0\frac{[\mathbf K(\sqrt{t})]^2}{t-\lambda(2z)}\frac{\D t}{[\mathbf K(\sqrt{\lambda(2z)})]^2y}\right\}\notag\\&+\frac{\R(z+\delta)}{ \I(z+\delta)}y\frac{\partial}{\partial y}\R\left\{\int_0^{\lambda(2(z+\delta))}\frac{i\mathbf K(\sqrt{1-t})\mathbf K(\sqrt{t})}{t-\lambda(2z)}\frac{\D t}{[\mathbf K(\sqrt{\lambda(2z)})]^2y}\right\}\notag\\&-\frac{1}{4  \I(z+\delta)}y\frac{\partial}{\partial y}\R\left\{\int_{\lambda(2(z+\delta))}^1\frac{[\mathbf K(\sqrt{1-t})]^2}{t-\lambda(2z)}\frac{\D t}{[\mathbf K(\sqrt{\lambda(2z)})]^2y}\right\}\notag\\={}&-\frac{|z+\delta|^{2}}{  \I(z+\delta)}y\frac{\partial}{\partial y}\R\left\{\frac{\log\varepsilon+o(1)}{y}+\pi i\frac{x}{|x|}\frac{1}{y}-\frac{2\pi i z}{3y}-\frac{2\log[4\lambda(2z)(1-\lambda(2z))]}{3y}\right\}\notag\\{}&+\frac{\R(z+\delta)}{ \I(z+\delta)}y\frac{\partial}{\partial y}\R\left\{\frac{2z\log\varepsilon+o(1)}{y}-\pi i\frac{(1-2z\frac{x}{|x|})^2}{2y}+\frac{\pi i(1+2z^{2})}{3y}-\frac{4z\log[4\lambda(2z)(1-\lambda(2z))]}{3y}\right\}\notag\\&-\frac{1}{4  \I(z+\delta)}y\frac{\partial}{\partial y}\R\left\{\frac{4z^{2}\log\varepsilon+o(1)}{y}+\frac{2\pi iz}{3y}-\frac{8z^{2}\log[4\lambda(2z)(1-\lambda(2z))]}{3y}\right\}\notag\\={}&2\log\varepsilon-\frac{4}{3}\log|4\lambda(2z)(1-\lambda(2z))|+o(1),\quad\text{as }\delta\to0.\label{eq:G2_Hecke4_close_pts}
\end{align}}Here, to arrive at  the last step of Eq.~\ref{eq:G2_Hecke4_close_pts}, we have replaced all the occurrences of $ z+\delta$ in the penultimate step with $ z$ (which introduces at most $ O(|\delta|\log|\delta|)$ error).

For points $ z\in\mathfrak H$ in the range specified by Eq.~\ref{eq:z_narrow_range}, one can directly check that Eq.~\ref{eq:G2_Hecke4_z_rn_eval} is a consequence of Eq.~\ref{eq:lambda_deriv_abs} and \ref{eq:G2_Hecke4_close_pts}:\begin{align*}&
G_2^{\mathfrak H/\overline{\varGamma}_0(4)}(z,z+\delta)-2\log|\delta|-2\log\left\vert 2\pi[\eta(z)]^4 \right|\notag\\={}&-\frac{4}{3}\log|4\lambda(2z)(1-\lambda(2z))|+\frac{4\log 2}{3}+\frac{2\log|1-\lambda(2z)|}{3}+\frac{5\log|\lambda(2z)|}{3}+o(1).
\end{align*} The rest of our claim follows from continuity and  $ \varGamma_{0}(4)$-invariance with respect to the variable $z$, which are manifested by the integral representation in Eq.~\ref{eq:G2_Hecke4_z_rn}.    \end{proof}
\begin{remark}
As a side note, we mention that many explicit formulae for the function $ \Delta(z)/\Delta(2z)$ at CM points $z$ have been produced  in Weber's treatise on elliptic functions \cite{WeberVol3,WeberErrata}, as well as Ramanujan's notebooks \cite[][Chap.~34]{RN5}. To put their work into the context of automorphic Green's functions, we give a couple of examples involving $ G_2^{\mathfrak H/\overline{\varGamma}_0(4)}(z)$:
\allowdisplaybreaks{\begin{align}&
G_2^{\mathfrak H/\overline{\varGamma}_0(4)}\left( \frac{1+i\sqrt{47}}{2} \right)\notag\\={}&-4\log2-8\log\xi,\quad\text{where }\xi>0,\xi ^5-\xi ^3-2 \xi ^2-2 \xi -1=0;\label{eq:ex_47}\\& G_2^{\mathfrak H/\overline{\varGamma}_0(4)}\left( \frac{i\sqrt{1848}}{2} \right)\notag\\={}&7 \log 2-4 \log (1+\sqrt{3})-4 \log (\sqrt{2}+\sqrt{3})-4 \log (3+\sqrt{7})-2 \log(3+\sqrt{11})-2 \log (\sqrt{3}+\sqrt{7})\notag\\{}&-2 \log (\sqrt{6}+\sqrt{7})-2 \log (\sqrt{7}+\sqrt{11})-2 \log (2 \sqrt{2}+\sqrt{7})-2 \log (7 \sqrt{2}+3 \sqrt{11})\notag\\{}&-2 \log (5 \sqrt{3}+\sqrt{77})-\log (\sqrt{21}+\sqrt{22})-\frac{2}{3} \log (22 \sqrt{22}+39 \sqrt{7}).\label{eq:ex_1848}
\end{align}}Here in Eq.~\ref{eq:ex_47}, the quintic polynomial equation satisfied by $\xi$ has  a unique positive root and a solvable Galois group $D_5$ (the pentagon symmetry group). An explicit radical form for $ \xi$ is known \cite{WatsonQuintic}:
\begin{align*}
\xi=\sqrt{\frac{10}{\sqrt[5]{A}-\sqrt[5]{B}-\sqrt[5]{C}-\sqrt[5]{D}}}
\end{align*}where\begin{align*}A={}&+39000+18200\sqrt{5}+(1720+920\sqrt{5})\sqrt{\smash[b]{\vphantom{\tfrac12}\smash[t]{235+94\sqrt{5}}}};\notag\\B={}&-39000-18200\sqrt{5}+(1720+920\sqrt{5})\sqrt{\smash[b]{\vphantom{\tfrac12}\smash[t]{235+94\sqrt{5}}}};\notag\\C={}&-39000+18200\sqrt{5}-(1720-920\sqrt{5})\sqrt{\smash[b]{\vphantom{\tfrac12}\smash[t]{235-94\sqrt{5}}}};\notag\\D={}&-39000+18200\sqrt{5}+(1720-920\sqrt{5})\sqrt{\smash[b]{\vphantom{\tfrac12}\smash[t]{235-94\sqrt{5}}}},\end{align*} and all the radicals represent positive roots.
The evaluation in Eq.~\ref{eq:ex_1848} paraphrases the last entry in Weber's list  \cite[][Tabelle~VI]{WeberVol3}.\eor\end{remark}\begin{remark}An interesting consequence of the explicit evaluation for $ G_2^{\mathfrak H/\overline{\varGamma}_0(4)}(z)$ (Eq.~\ref{eq:G2_Hecke4_z_rn_eval}) is the following logarithmic addition formula\begin{align}
G_2^{\mathfrak H/\overline{\varGamma}_0(4)}\left( \frac{z+1}{2} ,\frac{z}{2}\right)=G_2^{\mathfrak H/\overline{\varGamma}_0(4)}\left( -\frac{1}{2(2z+1)} ,z+\frac{1}{2}\right)-\frac{1}{3}\log\frac{|1-\lambda(2z)|^{2}}{4|\lambda(2z)|},\quad \alpha_2(z)\in\mathbb C\label{eq:G2_Hecke4_log_add_1}
\end{align} connecting two (un-renormalized) weight-4 automorphic Green's functions on $ \overline\varGamma_0(4)$. To prove Eq.~\ref{eq:G2_Hecke4_log_add_1}, we first reformulate the addition formulae in Eqs.~\ref{eq:Hecke2_Gamma2_add} and \ref{eq:G_s_Theta_add_form} as\begin{align}
G_2^{\mathfrak H/\overline{\varGamma}_0(2)}(z,z')={}&G_2^{\mathfrak H/\overline{\varGamma}_0(4)}\left( \frac{z}{2} ,\frac{z'}{2}\right)+G_2^{\mathfrak H/\overline{\varGamma}_0(4)}\left( \frac{z+1}{2} ,\frac{z'}{2}\right),\tag{\ref{eq:Hecke2_Gamma2_add}$'$}\\G_2^{\mathfrak H/\overline{\varGamma}_0(2)}(z,z')={}&G_2^{\mathfrak H/\overline{\varGamma}_0(4)}\left( z+\frac{1}{2} ,z'+\frac{1}{2}\right)+G_2^{\mathfrak H/\overline{\varGamma}_0(4)}\left( -\frac{1}{2(2z+1)} ,z'+\frac{1}{2}\right);\tag{\ref{eq:G_s_Theta_add_form}$'$}\label{eq:G_s_Theta_add_form'}
\end{align}  then, the $ z'\to z$ limit brings us to  \begin{align}
G_2^{\mathfrak H/\overline{\varGamma}_0(2)}(z)={}&G_2^{\mathfrak H/\overline{\varGamma}_0(4)}\left( \frac{z}{2} \right)+G_2^{\mathfrak H/\overline{\varGamma}_0(4)}\left( \frac{z+1}{2} ,\frac{z}{2}\right)-2\log2+\frac{1}{3}\log\frac{2^4|1-\lambda(z)|^{2}}{|\lambda(z)|}\notag\\={}&G_2^{\mathfrak H/\overline{\varGamma}_0(4)}\left( \frac{z+1}{2} ,\frac{z}{2}\right)-2\log2,\label{eq:G2_Hecke2_rn_add_1}\\G_2^{\mathfrak H/\overline{\varGamma}_0(2)}(z)={}&G_2^{\mathfrak H/\overline{\varGamma}_0(4)}\left( z+\frac{1}{2} \right)+G_2^{\mathfrak H/\overline{\varGamma}_0(4)}\left( -\frac{1}{2(2z+1)} ,z+\frac{1}{2}\right)-\log|1-\lambda(2z)|\notag\\={}&G_2^{\mathfrak H/\overline{\varGamma}_0(4)}\left( -\frac{1}{2(2z+1)} ,z+\frac{1}{2}\right)-\frac{1}{3}\log\frac{2^4|1-\lambda(2z)|^{2}}{|\lambda(2z)|},\label{eq:G2_Hecke2_rn_add_2}
\end{align}so long as $  \alpha_2(z)\in\mathbb C$. Eliminating  $ G_2^{\mathfrak H/\overline{\varGamma}_0(2)}(z)$ from Eqs.~\ref{eq:G2_Hecke2_rn_add_1} and  \ref{eq:G2_Hecke2_rn_add_2}, we arrive at Eq.~\ref{eq:G2_Hecke4_log_add_1}.\eor\end{remark}\appendix\section{\label{app:algebraicity}Algebraicity of Some Classical Modular Functions}\setcounter{equation}{0}\numberwithin{equation}{section}\numberwithin{theorem}{section}In Eq.~\ref{eq:fns_alg_val_at_CM_pts}, we supplied a brief list of special functions that assume solvable algebraic values at CM points. We also mentioned that these algebraic numbers could generate abelian extensions of imaginary quadratic fields. These facts are well known to experts. For the sake of completeness, we outline the arithmetic theory that supports all these classical  statements.

When $ z$ is a CM point,  the algebraicity of the $ j$-invariant (\textit{i.e.~}$ j(z)\in\overline{\mathbb Q}$)  follows  from isomorphism classes of an elliptic curve $ E_z(\mathbb C)\cong \mathbb C/(2\pi i\mathbb Z+2\pi i z\mathbb Z)$~\cite[][Theorem~4.14]{Shimura1994}. Two CM points $ z_1$ and $ z_2$ generate the same field extension $ \mathbb Q(z_1,j(z_1))=\mathbb Q(z_2,j(z_2))$ if the discriminants of their respective minimal polynomials coincide: $ z_1\in\mathfrak Z_{D_1}=\mathfrak Z_{D_2}\ni z_2$. (Here,  $ \mathfrak Z_D:=\{z\in\mathfrak H|\exists a,b,c \in\mathbb Z,a>0,\gcd(a,b,c)=1,b^2-4ac=D,az^2+bz+c=0\}$, as in Remark~\ref{rmk:GKZ_class_number_one}.) Via isomorphism to a (necessarily commutative) class group of the ring $ \mathbb Z[(D+i\sqrt{|D|})/2]$, one can show that the (finite) Galois group $ \Gal(\mathbb Q(z,j(z))/\mathbb Q(z)),z\in\mathfrak Z_D$ is abelian~\cite[][Theorem~5.7(i)]{Shimura1994}, and consequently solvable.

For any given  integers $ a,b,c\in\mathbb Z$ satisfying $ ac>0$, we have two claims: (i)~The expression\begin{align}
\frac{\Delta\left( \frac{az+b}{c} \right)}{\Delta(z)}\label{eq:Delta_abc_ratio}
\end{align}can be locally identified with an algebraic function  $A_{a,b,c} (j(z))$; (ii)~For all  CM points $z$, both the function  $A_{a,b,c} (j(z))$ and its logarithmic derivative \begin{align}
\frac{A_{a,b,c} '(j(z))}{A\vphantom{'}_{a,b,c} (j(z))},\quad\text{where }A'_{a,b,c}(j):=\frac{\partial A\vphantom{'}_{a,b,c}(j)}{\partial j}\label{eq:Aabc_deriv}
\end{align}assume algebraic values solvable in radical form.
Based on a factorization argument~\cite[][p.~167]{LangGTM112}, it would suffice to demonstrate the truthfulness of these two claims for the cases where  $ (a,b,c)=(p,0,1)$ and $(a,b,c)=(1,m,p),m\in\mathbb Z\cap[0,p-1] $, with $p$ being a prime number. Concretely speaking, the expression in Eq.~\ref{eq:Delta_abc_ratio} factorizes into a finite product of functions in the forms of \begin{align}
\frac{\Delta(p z')}{\Delta(z')}\in\mathbb Q(j(z'),j(pz')),\quad \text{and}\quad \frac{\Delta\left( \frac{z'+m}{p} \right)}{\Delta(z')}=\frac{\Delta\left( \frac{z'+m}{p} \right)}{\Delta\left(p \frac{z'+m}{p} \right)}\in\mathbb Q\left( j\left( \frac{z'+m}{p} \right) ,j(z')\right),\;m\in\mathbb Z\cap[0,p-1],\label{eq:appl_Shimura}
\end{align}
with $ z'=(a'z+b')/(c'z+d')$ for some integers $ a',b',c',d'\in\mathbb Z$ (dependent on $a$, $b$, $c$, $p$ and $m$, but not on $z$) such that $ a'd'-b'c'>0$. Here, all the members in the field extension $ \mathbb Q(X,Y)$ are rational functions of variables $X$ and $Y$ with rational coefficients (dependent on $p$ but not on $z'$ or $m$); the two $ j$-invariants $ j(z)$ and $ j(pz)$ are tied to each other via an algebraic equation $ \Phi_p(j(z),j(pz))=0$ --- the modular  equation of $ p$-th degree (which is a bivariate polynomial equation with integer coefficients that depend on $p$ but not on $z$). The field extension relations in Eq.~\ref{eq:appl_Shimura} are well-known results in the theory of automorphic functions (see~\cite[][Proposition~6.9(2)]{Shimura1994} and~\cite[][Chap.~11, \S2, Corollary~2]{LangGTM112}). Thus far, we have constructed the algebraic function $ A_{a,b,c}(j)$  along with its logarithmic derivative $ A'_{a,b,c}(j)/A\vphantom{'}_{a,b,c}(j)$, and have demonstrated the algebraicity of their CM values. This  confirms the two claims at the beginning of the current paragraph.

In the light of the foregoing arguments,  we know that for any integer $N$, \begin{align}
\frac{\Delta(Nz)}{\Delta(z)}\quad  \text{and}\quad\left[\frac{1}{\partial j(z)/\partial z}\frac{\partial}{\partial z}\log\frac{\Delta(Nz)}{\Delta(z)}\right]^2=\frac{[NE_{2}(Nz)-E_2(z)]^{2}}{j(z)[j(z)-1728]E_{4}(z)}
=\frac{\Delta(z)[NE_{2}(Nz)-E_2(z)]^{2}}{j(z)E_{4}(z)[E_{6}(z)]^2}\label{eq:Delta_ratios_deriv}\end{align} assume solvable algebraic values (possibly infinity, as one has $ j(e^{\pi i/3})=E_4(e^{\pi i/3})=E_6(i)=0$) at CM points.
These algebraic numbers (if finite) are expressible as multivariate rational functions (of rational coefficients) for certain CM values of the $j$-invariant: they belong to a field extension $ \mathbb Q(j(z_1),\dots,j(z_n))$ where all the $n$ numbers $z_1,\dots,z_n$ reside in the imaginary quadratic field $ \mathbb Q(z)$. In fact, any such algebraic number $ \alpha_z\in \mathbb Q(j(z_1),\dots,j(z_n))$ generates an abelian extension of the field $K=\mathbb Q(z)$, whose ring of algebraic integers is  $ \mathfrak o_{\raisebox{-0em}{$_K$}}=\mathbb Z[z_{\raisebox{-0em}{$_K$}}]$ for a certain $ z_{\raisebox{-0em}{$_K$}}\in K$. To show this, we may suppose that $ z_\ell\in\mathfrak Z_{D_\ell},\ell\in\mathbb{Z}\cap[1,n]$ and pick   positive integers $ f_\ell\in\mathbb Z_{>0}$ such that $\mathfrak o_\ell=\mathbb Z[ z_{\raisebox{-0em}{$_K$}}f_\ell]=\mathbb Z[(D_\ell+i\sqrt{\smash[b]{|D_\ell|}})/2] $,  with $ f_\ell$ being the conductor of $ \mathfrak o_\ell$, $ \ell\in\mathbb{Z}\cap[1,n]$ \cite[][p.~91]{LangGTM112}. Take a ring $\mathfrak o =\mathbb Z[z_{\raisebox{-0em}{$_K$}}f]$ where  $ f=\lcm(f_1,\dots,f_n)$, then we have $ \mathfrak o\subseteq\bigcap_{\ell=1}^n \mathfrak o_\ell$, which entails the field inclusion relations $ \mathbb Q(z_{\raisebox{-0em}{$_K$}},j(z_{\raisebox{-0em}{$_K$}}f))\supseteq\mathbb Q(z_\ell,j(z_\ell))$, $ \ell\in\mathbb{Z}\cap[1,n]$~\cite[][p.~134]{LangGTM112}. Thus, the tower of fields $K\subseteq K(\alpha_z)\subseteq K(j(z_{\raisebox{-0em}{$_K$}}f)) $ reveals the commutativity of the Galois group $ \Gal(K(\alpha_z)/K)\cong\Gal(K(j(z_{\raisebox{-0em}{$_K$}}f))/K)/\Gal(K(j(z_{\raisebox{-0em}{$_K$}}f))/K(\alpha_z))$.

Then, we show that the same algebraic property holds if one replaces $ NE_{2}(Nz)-E_2(z)$ by $ E_2(z)$ in the last expression of Eq.~\ref{eq:Delta_ratios_deriv}. Suppose that a CM  point $ z_*\in\mathfrak H$  solves the quadratic equation $ az_*^2+bz_*+c=0$ with integer coefficients $ (a,b,c)$. Then it is straightforward to compute that \begin{align}&
\left.\frac{\partial}{\partial z}\right|_{z=z_*}\log\frac{\Delta\left( \frac{az+b}{c} \right)}{\Delta(z)}+\left.\frac{1}{z_*^2}\frac{\partial}{\partial z}\right|_{z=-1/z_*}\log\frac{\Delta\left( \frac{cz-b}{a} \right)}{\Delta(z)}\notag\\={}&\frac{b^2-4ac}{ac}\left[\frac{\Delta'(z_{*})}{\Delta(z_{*})}-\frac{6i}{\I z_*}\right]=2\pi i\frac{b^2-4ac}{ac}E_2(z_*),
\end{align}so the assertion follows from the properties of the logarithmic derivative that appeared in  Eq.~\ref{eq:Aabc_deriv}.

Thus, we have confirmed that the following arithmetic functions\begin{align*}j(z),\quad &\frac{E_2(Nz)}{E_2(z)},\quad \left[ \frac{E_4(Nz)}{E_4(z)} \right]^3=\frac{\Delta(Nz)j(Nz)}{\Delta(z)j(z)},\notag\\ \left[ \frac{E_6(Nz)}{E_6(z)} \right]^2={}&\frac{\Delta(Nz)[j(Nz)-1728]}{\Delta(z)[j(z)-1728]},\quad \frac{[E_2(z)]^2}{E_4(z)},\quad \frac{[E_2(z)]^3}{E_6(z)}\end{align*}all possess the desired algebraic attributes: when $ [\mathbb Q(z):\mathbb Q]=2$, a number from the list above  is either  infinity or an algebraic number that generates an abelian extension of $ \mathbb Q(z)$.

\renewcommand{\thetheorem}{\Alph{section}.\arabic{theorem}}\renewcommand{\thelemma}{\Alph{section}.\arabic{lemma}}\numberwithin{equation}{section}\section{Solutions to  $ \dim\mathcal S_k(\varGamma)=0$ on Congruence Subgroups $ \varGamma$\label{app:dimSk_vanish}}

For completeness, we present in this appendix an expository note on the solutions to the cusp-form-free condition $ \dim\mathcal S_k(\varGamma)=0$, where $ \varGamma$ is a congruence subgroup of $ SL(2,\mathbb Z)$ that has arithmetic interest, and $ k$ is an even number greater than or equal to $4$.

For  $ \varGamma=SL(2,\mathbb Z)$, the dimension formulae for  even weights $ k\geq4$ are familiar (see~\cite[][Proposition 2.26]{Shimura1994} or~\cite[][Theorem 3.5.2]{DiamondShurman}):\begin{align}
\dim\mathcal S_k(SL(2,\mathbb Z))=\begin{cases}\left\lfloor\dfrac{k}{12}\right\rfloor-1, & k\equiv2\pmod {12}; \\[8pt]
\left\lfloor\dfrac{k}{12}\right\rfloor, & k\not\equiv2\pmod {12}. \\
\end{cases}\label{eq:dimSk_SL2Z}
\end{align} Here, $ \lfloor x\rfloor$ stands for the greatest integer less than or equal to $ x\in\mathbb R$.  It is  clear from Eq.~\ref{eq:dimSk_SL2Z} that for even weights $ k\geq4$, the only solutions to $ \dim\mathcal S_k(SL(2,\mathbb Z))=0$ are $ k=4,6,8,10,14$.

If we shift our attention to $\varGamma(N)$, the principal congruence group of  level $N\geq2$, which consists of transformations in $ SL(2,\mathbb Z)$ that are congruent to the identity matrix  $ \left(\begin{smallmatrix}1&0\\0&1\end{smallmatrix}\right)$ modulo an integer $ N\geq2$, then we have the following dimension formula for even weights $ k\geq4$ (as one may compute from~\cite[][Eqs.~1.6.2--1.6.3, Proposition 1.40, Theorem 2.24]{Shimura1994} or~\cite[][Theorems 3.1.1, 3.5.1, Figure~3.4]{DiamondShurman}):\begin{align}\dim\mathcal S_k(\varGamma(N))=\begin{cases}\dfrac{k-4}{2}, & N=2 \\[8pt]
\dfrac{[(k-1)N-6]N^{2}}{24}\nprod\limits_{p\mid N}\left( 1-\dfrac{1}{p^{2}} \right), & N>2 \\
\end{cases}\end{align} and $ \dim\mathcal S_k(\varGamma(N))=0$ admits just one solution: $ k=4,N=2$.\footnote{The product ``$ \prod_{p\mid N}$'' runs through distinct prime numbers $ p$ that divide $ N$, so the result never vanishes. The equation $ (k-1)N-6=0$ has no integer solutions such that $N>2,k\geq4$. }

The compact Riemann surfaces $ X_0(N)(\mathbb C)=\varGamma_0(N)\backslash\mathfrak H^*$  and $ X_1(N)(\mathbb C)=\varGamma_1(N)\backslash\mathfrak H^*$ are important to the arithmetic studies of elliptic curves. Here, the Hecke congruence groups of level $N$ are  defined by \begin{align*}\varGamma_0(N):=\left\{ \left.\begin{pmatrix}a & b \\
Nc & d \\
\end{pmatrix}\right|a,b,c,d\in\mathbb Z;ad-Nbc=1 \right\},\end{align*}and their projective counterparts are $\overline {\varGamma}_0(N)=\varGamma_0(N)/\{I,-I\} $; for the congruence groups \begin{align*}\varGamma_1(N):=\left\{ \left.\begin{pmatrix}1+Na & b \\
Nc & 1+Nd \\
\end{pmatrix}\right|a,b,c,d\in\mathbb Z;(1+Na)(1+Nd)-Nbc=1 \right\},\end{align*}the respective projective versions are $ \overline {\varGamma}_1(2)={\varGamma}_1(2)/\{I,-I\}$ and $ \overline {\varGamma}_1(N)={\varGamma}_1(N)$ for $ N>2$. For  $ N=2,$ $3$ or $4$, it is well known that the projective congruence group $ \overline{\varGamma}_1(N)$ coincides with the  projective  Hecke congruence group $\overline{ \varGamma}_0(N)$ of the same level $ N$~\cite[][Exercise 3.9.3]{DiamondShurman}.

In this work, we are concerned with weight-4 automorphic Green's functions $ G_2^{\mathfrak H/\overline{\varGamma}_0(N)}(z,z')=G_2^{\mathfrak H/\overline{\varGamma}_1(N)}(z,z')$ for $ N\in\{2,3,4\}$, as well as a weight-6 automorphic Green's function  $ G_3^{\mathfrak H/\overline{\varGamma}_0(2)}(z,z')=G_3^{\mathfrak H/\overline{\varGamma}_1(2)}(z,z')$. These cases account for all the ``cusp-form-free'' scenarios relevant to  the Gross--Kohnen--Zagier algebraicity conjecture on congruence subgroups of levels $N\geq2$, as explained in the next lemma.\begin{lemma}[Solutions to $ \dim\mathcal S_k(\varGamma_0(N))=0$ and $ \dim\mathcal S_k(\varGamma_1(N))=0$]\begin{enumerate}[label=\emph{(\alph*)}, ref=(\alph*), widest=a] \item For even weights $ k\geq4$, there are only four isolated cases free from  cusp forms on $ \varGamma_0(N)$ (see \cite[][Table~A]{Miyake1989J1}):\begin{align}
\dim\mathcal S_4(\varGamma_0(2))=0,\quad \dim\mathcal S_4(\varGamma_0(3))=0,\quad \dim\mathcal S_4(\varGamma_0(4))=0,\quad \dim\mathcal S_6(\varGamma_0(2))=0.\label{eq:Gamma_0_cusp_dim}
\end{align}\item The solutions to the equation $ \dim{\mathcal S_k(\varGamma_1(N))}=0$ for even weights $k\geq4 $ are exhausted by\begin{align}
\dim\mathcal S_4(\varGamma_1(2))=0,\quad \dim\mathcal S_4(\varGamma_1(3))=0,\quad \dim\mathcal S_4(\varGamma_1(4))=0, \quad\dim\mathcal S_6(\varGamma_1(2))=0.\label{eq:Gamma_1_cusp_dim}
\end{align}\end{enumerate}\end{lemma}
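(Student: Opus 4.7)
The plan is to reduce both parts to the classical dimension formula for $\varGamma_0(N)$, $N\geq 2$, with even weight $k\geq 4$:
\[
\dim\mathcal S_k(\varGamma_0(N)) = (k-1)(g-1) + \lfloor k/4\rfloor\,\nu_2 + \lfloor k/3\rfloor\,\nu_3 + (k/2-1)\,\nu_\infty,
\]
where $g=g(X_0(N))$ and $\nu_2,\nu_3,\nu_\infty$ count the elliptic points of orders $2,3$ and the cusps of $X_0(N)$, respectively. The invariants $(g,\nu_2,\nu_3,\nu_\infty)(N)$ are tabulated in standard references such as \cite{DiamondShurman,Miyake1989J1}.

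For part (a), the argument splits according to $g(X_0(N))$. When $g\geq 1$, every summand in the formula is non-negative, and since $\nu_\infty\geq 1$ always, the cusp term alone yields $(k/2-1)\nu_\infty\geq 1$ for $k\geq 4$; hence $\dim\mathcal S_k(\varGamma_0(N))\geq 1$ and no solution arises. The only remaining candidates are the finitely many genus-zero levels with $N\geq 2$, namely the truncated Mazur list $N\in\{2,3,4,5,6,7,8,9,10,12,13,16,18,25\}$. For $N\in\{2,3,4\}$, substituting the tabulated invariants yields the closed forms
\[
\dim\mathcal S_k(\varGamma_0(2))=\lfloor k/4\rfloor-1,\quad \dim\mathcal S_k(\varGamma_0(3))=\lfloor k/3\rfloor-1,\quad \dim\mathcal S_k(\varGamma_0(4))=k/2-2,
\]
whose only zeroes for even $k\geq 4$ are precisely the four stated pairs $(k,N)\in\{(4,2),(6,2),(4,3),(4,4)\}$. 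For the remaining eleven levels $N\in\{5,6,7,8,9,10,12,13,16,18,25\}$, strict positivity of $\dim\mathcal S_k$ for all even $k\geq 4$ follows from direct substitution: the eight levels with $\nu_\infty\geq 4$ give $\dim\mathcal S_k\geq k-3\geq 1$, while the three exceptional levels $\{5,7,13\}$ with $\nu_\infty=2$ compensate via $\nu_2\geq 2$ or $\nu_3\geq 2$ to produce bounds of the form $2\lfloor k/4\rfloor-1$ or $2\lfloor k/3\rfloor-1$, each $\geq 1$ for $k\geq 4$. This exhausts all cases.

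I plan to deduce part (b) from (a) via the containment $\varGamma_1(N)\subseteq\varGamma_0(N)$: every cusp form on $\varGamma_0(N)$ is automatically one on $\varGamma_1(N)$ at the same weight, so $\dim\mathcal S_k(\varGamma_1(N))\geq\dim\mathcal S_k(\varGamma_0(N))$, and any vanishing on $\varGamma_1(N)$ forces the corresponding vanishing on $\varGamma_0(N)$. The possible $(k,N)$ are therefore confined to the four pairs from (a). To confirm the converse, I will invoke the Dirichlet-character decomposition
\[
\mathcal S_k(\varGamma_1(N)) = \bigoplus_{\chi\bmod N,\ \chi(-1)=(-1)^k} \mathcal S_k(\varGamma_0(N),\chi);
\]
for $N\in\{2,3,4\}$ the group $(\mathbb Z/N\mathbb Z)^\times$ has order at most two and every non-trivial character is odd, so for even $k$ only the trivial-character summand survives. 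Consequently $\dim\mathcal S_k(\varGamma_1(N))=\dim\mathcal S_k(\varGamma_0(N))$ for $N\in\{2,3,4\}$ and even $k$, and the four vanishings transfer verbatim.

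The only non-routine ingredient is the finiteness and explicit enumeration of the Mazur genus-zero list; the remainder reduces to polynomial arithmetic in $k$. Should one wish to bypass the Mazur list entirely, an alternative is to invoke Martin's explicit lower bounds \cite{Martin2005} on $\dim\mathcal S_k(\varGamma_0(N))$ in terms of the index $[\varGamma_0(1):\varGamma_0(N)]=N\prod_{p\mid N}(1+1/p)$: these inequalities force $\dim\mathcal S_k$ to be positive once $N$ exceeds a small explicit threshold, reducing the remaining verification to a finite range of levels handled directly by the dimension formula.
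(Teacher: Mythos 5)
Your proposal is correct, but it reaches the conclusion by a genuinely different route from the paper's. For part (a) the paper never invokes the genus-zero classification of $X_0(N)$: after writing out the closed forms for $N\in\{2,3,4\}$, it establishes strict positivity of $\dim\mathcal S_k(\varGamma_0(N))$ for all $N\geq 5$ by combining Martin's explicit upper bounds on $\nu_2$, $\nu_3$ and $\nu_\infty$ with the index $N\prod_{p\mid N}(1+1/p)$, working down through the ranges $N\geq 77$, $30\leq N\leq 76$, $12\leq N\leq 29$, and finally checking $N\in\{5,\dots,11\}$ by hand. Your genus dichotomy --- the observation that $g\geq 1$ already forces $(k-1)(g-1)+(k/2-1)\nu_\infty\geq 1$, so only the fourteen genus-zero levels $N\in\{2,\dots,10,12,13,16,18,25\}$ need inspection --- is cleaner and shorter, at the cost of importing the classical enumeration of genus-zero modular curves $X_0(N)$ as an external fact (incidentally, calling it the ``Mazur list'' is a misnomer; Mazur's theorem concerns something else, though the list itself is correct and classical). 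Your treatment of the eleven residual levels checks out: the eight with $\nu_\infty\geq 4$ give $\dim\geq k-3$, and $N\in\{5,7,13\}$ give $2\lfloor k/4\rfloor-1$, $2\lfloor k/3\rfloor-1$ and $2\lfloor k/4\rfloor+2\lfloor k/3\rfloor-1$ respectively. The divergence is sharpest in part (b): the paper essentially repeats the whole estimation for $\varGamma_1(N)$, bounding $\sum_{d\mid N}\varphi(d)\varphi(N/d)\leq 2N^{3/2}$ and descending through ranges of $N$ again, whereas you use the inclusion $\mathcal S_k(\varGamma_0(N))\subseteq\mathcal S_k(\varGamma_1(N))$ to reduce the forward direction to part (a) in one line, and the Dirichlet-character decomposition (with every nontrivial character mod $2,3,4$ being odd) for the converse; the paper instead notes that $\varGamma_0(N)=\{\pm I\}\varGamma_1(N)$ for $N\in\{2,3,4\}$ so the geometric invariants coincide. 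Both arguments are sound; yours is more economical, the paper's is more self-contained in that it needs no genus-zero classification and no newform theory.
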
\begin{proof}\begin{enumerate}[label=(\alph*),widest=a]\item For even weights $ k\geq4$, we may compute $ \dim\mathcal S_k(\varGamma_0(N))$ explicitly, drawing on some classical results.

Let $ \mathfrak H^*:=\mathfrak H\cup\mathbb Q\cup\{i\infty\}$ be the extended upper half-plane, then the genus of the compact Riemann surface $ X_{0}(N)(\mathbb C):=\varGamma_0(N)\backslash\mathfrak H^*$ is given by (see~\cite[][Proposition~1.40]{Shimura1994} or~\cite[][Theorem~3.1.1]{DiamondShurman})\begin{align*}g(\varGamma_0(N)\backslash\mathfrak H^*)=1+\frac{N}{12}\prod_{p\mid N}\left( 1+\frac{1}{p} \right)-\frac{\nu_2(\varGamma_0(N))}{4}-\frac{\nu_3(\varGamma_0(N))}{3}-\frac{\nu_\infty(\varGamma_0(N))}{2}.\end{align*}Here, $ \nu_2(\varGamma_0(N))$  (resp.~$ \nu_3(\varGamma_0(N))$) counts the number of inequivalent elliptic fixed points of period $2$ (resp.~$3$), explicitly quantified by the formulae (see~\cite[][Proposition 1.43]{Shimura1994} or~\cite[][Figure~3.3]{DiamondShurman}):\begin{align}\nu_{2}(\varGamma_0(N))=\begin{cases}\nprod\limits_{p\mid N}\left( 1+\left( \dfrac{-1}{p} \right) \right), & 4\nmid N, \\
0, & 4\mid N, \\
\end{cases}\qquad \qquad\nu_3(\varGamma_0(N))=\begin{cases}\nprod\limits_{p\mid N}\left( 1+\left( \dfrac{-3}{p} \right) \right), & 9\nmid N, \\
0, & 9\mid N, \\
\end{cases}\label{eq:nu2_nu3}\end{align} where the Kronecker--Legendre symbols are evaluated as~\cite[][Proposition 1.43]{Shimura1994}\begin{align*}\left( \frac{-1}{p} \right)=\begin{cases}0, & p=2, \\
1, & p\equiv1\bmod4, \\
-1, & p\equiv3\bmod4, \\
\end{cases}\qquad\qquad\left( \frac{-3}{p} \right)=\begin{cases}0, & p=3, \\
1, & p\equiv1\bmod3, \\
-1, & p\equiv2\bmod3. \\
\end{cases}\end{align*}In the meantime, \begin{align} \nu_\infty(\varGamma_0(N))=\sum_{d\mid N}\varphi(\gcd(d,N/d))\label{eq:nu_inf}\end{align} counts the number of inequivalent cusps in the fundamental domain of $ \varGamma_0(N)$, where the sum ``$ \sum_{d\mid N}$'' runs over every positive divisor $d$ of $N$, ``$ \gcd(n,m)$'' denotes the greatest common divisor for a pair of   integers $ (n,m)$, and $ \varphi(n):=n\prod_{p|n}(1-\frac1p)$ is Euler's totient function.

For an even weight $ k\geq4$, we may quote the dimension formula (see~\cite[][Theorem~2.24]{Shimura1994} or~\cite[][Theorem~3.5.1]{DiamondShurman})\begin{align*}\dim\mathcal S_k(\varGamma_0(N))=(k-1)(g(\varGamma_0(N)\backslash\mathfrak H^*)-1)+\left\lfloor\frac{k}{4}\right\rfloor\nu_2(\varGamma_0(N))+\left\lfloor\frac{k}{3}\right\rfloor\nu_3(\varGamma_0(N))+\left( \frac{k}{2} -1\right)\nu_\infty(\varGamma_0(N)).\end{align*}For $ N\in\{2,3,4\}$, this specializes to the formulae\begin{align*}\dim\mathcal S_k(\varGamma_0(2))=\left\lfloor\frac{k}{4}\right\rfloor-1,\quad\dim\mathcal S_k(\varGamma_0(3))=\left\lfloor\frac{k}{3}\right\rfloor-1, \quad \dim\mathcal S_k(\varGamma_0(4))=\frac{k-4}{2},\end{align*} which confirm the equalities displayed in Eq.~\ref{eq:Gamma_0_cusp_dim}.

To rule out solutions to the equation $\dim\mathcal S_k(\varGamma_0(N))=0 $ with an integer $N>4$ for even weights $ k\geq4$, we   need to assess the lower bound of\begin{align*}\dim\mathcal S_k(\varGamma_0(N))={}&\frac{(k-1)N}{12}\prod_{p\mid N}\left( 1+\frac{1}{p} \right)+\left(\frac{1}{4}+\left\lfloor\frac{k}{4}\right\rfloor-\frac{k}{4}\right)\nu_2(\varGamma_0(N))\notag\\{}&+\left(\frac{1}{3}+\left\lfloor\frac{k}{3}\right\rfloor-\frac{k}{3}\right)\nu_3(\varGamma_0(N))-\frac{\nu_\infty(\varGamma_0(N))}{2},\end{align*}starting from the following estimates of G. Martin~\cite{Martin2005}:\begin{align*}\max\{\nu_2(\varGamma_0(N)),\nu_3(\varGamma_0(N))\}\leq2^{4-\frac{\log16}{\log11}}N^{\frac{\log2}{\log11}},\quad \nu_\infty(\varGamma_0(N))\leq\sqrt{N}\prod_{p\mid N}\left( 1+\frac{1}{p} \right).\end{align*}

Now that we have the lower bound\begin{align*}\left(\frac{1}{4}+\left\lfloor\frac{k}{4}\right\rfloor-\frac{k}{4}\right)+\left(\frac{1}{3}+\left\lfloor\frac{k}{3}\right\rfloor-\frac{k}{3}\right)\geq-\frac{7}{12}\end{align*}valid for even integers $k$, we obtain the inequality \begin{align*}\dim\mathcal S_k(\varGamma_0(N))\geq\frac{(k-1)N-6\sqrt{N}}{12}\prod_{p\mid N}\left( 1+\frac{1}{p} \right)-\frac{7}{12}2^{4-\frac{\log16}{\log11}}N^{\frac{\log2}{\log11}}>\frac{3N-6\sqrt{N}}{12}-\frac{7}{12}2^{4-\frac{\log16}{\log11}}N^{\frac{\log2}{\log11}}>0\end{align*}for $ N\geq77$.

Since $ 2\times3\times 5\times7=210$, any integer in the range $ 5\leq N\leq 76$ contains at most 3 distinct prime factors. Thus, within the range $ 30\leq N\leq 76$, a more stringent upper bound\begin{align*}\max\{\nu_2(\varGamma_0(N)),\nu_3(\varGamma_0(N))\}\leq2^3=8\end{align*}allows us to refine the former inequality on dimensions as\begin{align*}\dim\mathcal S_k(\varGamma_0(N))>\frac{3N-6\sqrt{N}}{12}-\frac{7}{12}\times8>0.\end{align*}

 Then, for the regime $ 12\leq N\leq 29$, the improvement\begin{align*}\max\{\nu_2(\varGamma_0(N)),\nu_3(\varGamma_0(N))\}\leq2^{\sum_{p\mid N,p\geq5} 1}=\prod_{p\mid N,p\geq5}2\leq2\end{align*}   leads to\begin{align*}\dim\mathcal S_k(\varGamma_0(N))>\frac{3N-6\sqrt{N}}{12}-\frac{7}{12}\times2>0.\end{align*}

The remaining scenarios $ N\in\{5,6,7,8,9,10,11\}$ can be studied case by case. If $N=6$, $8$ or $9 $, then we have $\nu_2(\varGamma_0(N))=\nu_3(\varGamma_0(N))=0 $ and $ \nu_\infty(\varGamma_0(N))=4$, thus\begin{align*}\dim\mathcal S_k(\varGamma_0(N))=\frac{(k-1)N}{12}\prod_{p\mid N}\left( 1+\frac{1}{p} \right)-2\geq\dim\mathcal S_4(\varGamma_0(N))=\frac{N}{4}\prod_{p\mid N}\left( 1+\frac{1}{p} \right)-2=1.\end{align*} The  explicit formulae    \begin{align*}\dim\mathcal S_k(\varGamma_0(5))={}&2\left\lfloor \frac{k}{4} \right\rfloor-1,&\dim\mathcal S_k(\varGamma_0(7))={}&2\left\lfloor \frac{k}{3} \right\rfloor-1, \notag\\\dim\mathcal S_k(\varGamma_0(10))={}&2\left\lfloor\frac{k}{4}\right\rfloor+k-3,&\dim\mathcal S_k(\varGamma_0(11))={}&k-2\end{align*}rule out the solutions to $ \dim\mathcal S _k(\varGamma_0(N))=0$ for even weights $ k\geq4$ and $ N\in\{5,7,10,11\}$.\item When it comes to the numbers of inequivalent elliptic fixed points $ \nu_2,\nu_3$, the number of inequivalent cusps $ \nu_\infty$, and the genus $g$, the data for $ \varGamma_1(N)$ coincide with those of $ \varGamma_0(N)=\{I,-I\}\varGamma_1(N)$ for $N\in\{2,3,4\}$ (see~\cite[][Figure~3.3]{DiamondShurman}), so   the relations in Eq.~\ref{eq:Gamma_1_cusp_dim} follow from Eq.~\ref{eq:Gamma_0_cusp_dim}.

It is sensible to restrict the rest of our  discussions to $ \varGamma_1(N),N>4$, where we have \cite[][Figure~3.4]{DiamondShurman}\begin{align*}\nu_{2}(\varGamma_1(N))=0,\quad \nu_3(\varGamma_1(N))=0,\quad \nu_\infty(\varGamma_1(N))=\frac{1}{2}\sum_{d\mid N}\varphi(d)\varphi(N/d)\end{align*}and\begin{align*}g(\varGamma_1(N)\backslash\mathfrak H^*)=1+\frac{N^{2}}{24}\prod_{p\mid N}\left( 1-\frac{1}{p^{2}} \right)-\frac{1}{4}\sum_{d\mid N}\varphi(d)\varphi(N/d).\end{align*}Consequently, for even weights $ k\geq4$, we obtain~\cite[][Figure~3.4]{DiamondShurman}\begin{align*}\dim\mathcal S_k(\varGamma_1(N))={}&(k-1)(g(\varGamma_1(N)\backslash\mathfrak H^*)-1)+\left( \frac{k}{2} -1\right)\nu_\infty(\varGamma_1(N))\notag\\={}&\frac{(k-1)N^{2}}{24}\prod_{p\mid N}\left( 1-\frac{1}{p^{2}} \right)-\frac{1}{4}\sum_{d\mid N}\varphi(d)\varphi(N/d).\end{align*}If $ N=p$ turns out to be a prime number greater than or equal to $5$, we immediately have \begin{align*}\dim\mathcal S_k(\varGamma_1(p))\geq\dim\mathcal S_4(\varGamma_1(p))=\frac{3(p^{2}-1)}{24}-\frac{p-1}{2}=\frac{(p-1)(p-3)}{8}>0.\end{align*}For composite numbers $N$, we may use the inequalities\begin{align*}\prod_{p\mid N}\left( 1-\frac{1}{p^{2}} \right)>\prod_p\left( 1-\frac{1}{p^{2}} \right)=\frac{6}{\pi^2}\end{align*}and \begin{align*}\dim\mathcal S_k(\varGamma_1(N))\geq\dim\mathcal S_4(\varGamma_1(N))>\frac{3N^{2}}{4\pi^{2}}-\frac{1}{4}\sum_{d\mid N}\varphi(d)\varphi(N/d).\end{align*}Here, by virtue of the relations $ \varphi(n)\leq n$ and $ n=\sum_{d\mid n}\varphi(d)$, we may estimate\begin{align*}\sum_{d\mid N}\varphi(d)\varphi(N/d)={}&\sum_{\substack{d\mid N\\0<d\leq\sqrt{N}}}\varphi(d)\varphi(N/d)+\sum_{\substack{d\mid N\\\sqrt{N}< d\leq N}}\varphi(d)\varphi(N/d)\notag\\\leq{}&\left(\max_{n\in\mathbb Z\cap(0,\sqrt{N}]}\varphi(n)\right)\sum_{\substack{d\mid N\\0<d\leq\sqrt{N}}}\varphi(N/d)+\left(\max_{n\in\mathbb Z\cap(0,\sqrt{N}]}\varphi(n)\right)\sum_{\substack{d\mid N\\\sqrt{N}< d\leq N}}\varphi(d)\leq 2N^{3/2}.\end{align*} For $ N\geq44$, we thus have\begin{align*}\dim\mathcal S_k(\varGamma_1(N))\geq\dim\mathcal S_4(\varGamma_1(N))>\frac{3N^{2}}{4\pi^{2}}-\frac{1}{4}\sum_{d\mid N}\varphi(d)\varphi(N/d)\geq\frac{3N^{2}}{4\pi^{2}}- \frac{N^{3/2}}{2}>0.\end{align*}For integers in the range $ 27\leq N<44$, we may exploit the relation $ \max_{n\in\mathbb Z\cap(0,\sqrt{44}]}\varphi(n)=\varphi(5)=4$ to improve the estimate:\begin{align*}\sum_{d\mid N}\varphi(d)\varphi(N/d)\leq 8N,\quad \dim\mathcal S_k(\varGamma_1(N))\geq\dim\mathcal S_4(\varGamma_1(N))>\frac{3N^{2}}{4\pi^{2}}-2N>0.\end{align*}By direct computation, we can verify that the inequality\begin{align*}\dim\mathcal S_k(\varGamma_1(N))\geq\dim\mathcal S_4(\varGamma_1(N))>\frac{3N^{2}}{4\pi^{2}}-\frac{1}{4}\sum_{d\mid N}\varphi(d)\varphi(N/d)>0\end{align*}remains valid for $ N=25,\sum_{d\mid 25}\varphi(d)\varphi(25/d)=56$ and  $ N=26,\sum_{d\mid 26}\varphi(d)\varphi(26/d)=48$. Next, within the regime $ 14\leq N\leq 24$, we have $ \max_{n\in\mathbb Z\cap(0,\sqrt{24}]}\varphi(n)=\varphi(3)=\varphi(4)=2$, and accordingly,   \begin{align*}\sum_{d\mid N}\varphi(d)\varphi(N/d)\leq 4N,\quad \dim\mathcal S_k(\varGamma_1(N))\geq\dim\mathcal S_4(\varGamma_1(N))>\frac{3N^{2}}{4\pi^{2}}-N>0.\end{align*}There are only a few composite numbers within the range $ 5\leq N\leq 13$,  for which we may directly compute as follows:\begin{align*}\dim\mathcal S_4&(\varGamma_1(6))=1,\quad \dim\mathcal S_4(\varGamma_1(8))=3,\quad \dim\mathcal S_4(\varGamma_1(9))=5,\notag\\& \dim\mathcal S_4(\varGamma_1(10))=5,\quad \dim\mathcal S_4(\varGamma_1(12))=7.\end{align*}Therefore, for all integers $ N\geq5$ and even weights $ k\geq4$, we have $ \dim\mathcal S_k(\varGamma_1(N))\geq\dim\mathcal S_4(\varGamma_1(N))>0$.  \qedhere\end{enumerate}\end{proof}

\bibliography{AGF}

\bibliographystyle{plain}

\end{document}